\newtheorem{thm}[equation]{Theorem}
\newtheorem{cor}[equation]{Corollary}
\newtheorem{lem}[equation]{Lemma}
\newtheorem{prop}[equation]{Proposition}
\theoremstyle{definition}
\newtheorem{defn}[equation]{Definition}
\newtheorem*{defnnonumber}{Definition}
\theoremstyle{remark}
\newtheorem{rem}[equation]{Remark}
\newtheorem{exm}[equation]{Example}
\def\op{\mathrm{op}}
\def\card{\operatorname{card}}
\def\r{\rightarrow} 
\def\rr{\Rightarrow} 
\def\into{\rightarrowtail}
\def\onto{\twoheadrightarrow}
\newcommand{\Mod}[2]{\mathrm{Mod}_{#1}(#2)}
\newcommand{\aro}[1]{$\operatorname{ARO}_{#1}$}
\newcommand{\arm}[1]{$\operatorname{ARM}_{#1}$}
\def\hom{\operatorname{Hom}}
\def\ho{\operatorname{Ho}}
\def\ext{\operatorname{Ext}}
\def\pext{\operatorname{PExt}}
\def\pd{\operatorname{pd}}
\def\Pgldim{\operatorname{pgd}}
\def\fgd{\operatorname{fgd}}
\def\Sp{\operatorname{Sp}}
\def\D{D}
\def\Hom{\operatorname{Hom}}
\def\Ppd{\operatorname{ppd}}
\def\K{K}
\def\Sh{\operatorname{Sh}}
\newcommand{\SH}{\operatorname{SH}}
\def\Hocolim{\mathop{\operatorname{Hocolim}}}
\newcommand{\C}[1]{\mathscr{#1}}  
\newcommand{\id}[1]{\operatorname{id}_{#1}} 
\def\st{\stackrel} 
\def\To{\longrightarrow}
\newcommand{\add}{\operatorname{Add}\,}
\newcommand{\Ab}{\operatorname{Ab}}
\def\colim{\mathop{\operatorname{colim}}}
\def\coker{\operatorname{Coker}}
\renewcommand{\ker}{\operatorname{Ker}}
\newcommand{\im}{\operatorname{Im}}
\numberwithin{equation}{section}
\begin{document}

\title{Transfinite Adams representability}%
\author{Fernando Muro}%
\address{Universidad de Sevilla,
Facultad de Matem\'aticas,
Departamento de \'Algebra,
Avda. Reina Mercedes s/n,
41012 Sevilla, Spain}
\email{fmuro@us.es}
\author{Oriol Ravent\'os}%
\address{Fakult\"{a}t f\"{u}r Mathematik,
Universit\"{a}t Regensburg,
93040 Regensburg,
Germany}
\email{oriol.raventos-morera@ur.de}
\date{\today}

\thanks{Both authors were partially supported
by the Spanish Ministry
of Economy and Competitiveness under the grants  MTM2010-15831 and MTM2013-42178-P, and the Government of Catalonia under the grant SGR-119-2009. The first author also acknowledges the support of the Andalusian Ministry of Economy, Innovation and Science under the grant FQM-5713. The second author was also supported by the grant SFB 1085 ``Higher invariants'' founded by the German Research Foundation and the project CZ.1.07/2.3.00/20.0003
of the Operational Programme Education for Competitiveness of the Ministry
of Education, Youth and Sports of the Czech Republic.}
\subjclass[2010]{18E30, 55S35}
\keywords{Triangulated category, representability, obstruction theory}

\begin{abstract}
We consider the following problems in a well generated triangulated category $\C T$.  Let $\alpha$ be a regular cardinal and $\C T^{\alpha}\subset \C T$ the full subcategory of $\alpha$\nobreakdash-compact objects. Is every functor $H\colon (\C T^{\alpha})^{\op}\r\operatorname{Ab}$ that preserves products of $<\alpha$ objects and takes exact triangles to exact sequences of the form $H\cong\C T(-,X)_{|_{\C T^{\alpha}}}$ for some $X$ in $\C T$? Is every natural transformation
$\tau\colon \C{T}(-,X)_{|_{\C{T}^{\alpha}}}\r \C{T}(-,Y)_{|_{\C{T}^{\alpha}}}$ of the form $\tau=\C{T}(-,f)_{|_{\C{T}^{\alpha}}}$
for some $f\colon X\rightarrow Y$ in $\C {T}$? If the answer to both questions is positive we say that $\C T$ satisfies $\alpha$\nobreakdash-Adams representability. A classical result going back to Brown and Adams shows that the stable homotopy category satisfies $\aleph_{0}$\nobreakdash-Adams representability. The case $\alpha=\aleph_{0}$ is well understood thanks to the work of Christensen, Keller, and Neeman. In this paper we develop an obstruction theory to decide whether $\C T$ satisfies $\alpha$\nobreakdash-Adams representability. We derive necessary and sufficient conditions of homological nature, and we compute several examples. In particular, we show that there are rings satisfying $\alpha$\nobreakdash-Adams representability  for all $\alpha\geq\aleph_{0}$ and rings which do not satisfy $\alpha$\nobreakdash-Adams representability  for any $\alpha\geq\aleph_{0}$. Moreover, we exhibit rings for which the answer to both questions is~no for all $\aleph_\omega>\alpha\geq\aleph_{2}$.
\end{abstract}

\maketitle
\tableofcontents

\section*{Introduction}

There are two classical representability theorems in the stable homotopy category~$\C T$. Any spectrum $X$ gives rise to a cohomology theory \mbox{$\C T(-,X)\colon \C T^{\op}\r\operatorname{Ab}$}. The \emph{Brown representability} theorem, \cite{Br62}, says that any cohomology theory \mbox{$H\colon \C T^{\op}\r\operatorname{Ab}$} is of the form $H\cong\C T(-,X)$ for some spectrum $X$. The \emph{Adams representability} theorem, \cite{Ad71}, is a kind of analogue for cohomology theories defined only on the full subcategory of compact spectra $\C T^{c}\subset \C T$. It asserts that any cohomology theory \mbox{$H\colon (\C T^{c})^{\op}\r\operatorname{Ab}$} is of the form $H=\C T(-,X)_{|_{\C T^{c}}}$ for some  $X$, and, moreover, any natural transformation
\begin{equation*}
\tau\colon\C T(-,X)_{|_{\C T^{c}}}\To \C T(-,Y)_{|_{\C T^{c}}}
\end{equation*}
is induced by a map $f\colon X\r Y$, $\tau=\C T(-,f)_{|_{\C T^{c}}}$. By Yoneda's lemma, the representing spectrum in Brown's theorem is unique and any natural transformation between cohomology theories on $\C T$ comes from a unique map between the representing
spectra. In Adams' theorem the spectrum $X$ is still unique, but there may be different maps $f$ representing a given natural transformation $\tau$. Maps representing the trivial natural transformation are called \emph{phantoms}. Brown proved Adams' theorem under the restrictive hypothesis that the cohomology theory $H$ takes values in countable abelian groups. Adams' theorem allows to extend cohomology theories which are, in principle, only defined for compact spectra like topological $K$\nobreakdash-theory  defined in terms of vector bundles. Adams' theorem is stronger than Brown's, cf.~\cite{Ad71}, and it also implies the representability of homology theories via the Spanier--Whitehead duality.

The analogue of Brown's representability theorem is satisfied by a wide class of triangulated categories $\C T$ including the \emph{well generated} ones, i.e.~if $\C T$ is well generated any  functor
\mbox{$H\colon\C{T}^\op\r \operatorname{Ab}$}
preserving products and taking exact triangles to exact sequences is of the form $H=\C{T}(-,X)$ for some $X$ in $\C T$ \cite[Theorem~8.3.3]{triang}.
The simplest examples of well generated categories are the compactly generated ones. An
object $C$ in $\C T$ is \emph{compact} if the functor $\C T(C,-)$ preserves direct sums, and  $\C T$ is \emph{compactly generated} if it has coproducts and the full subcategory of compact objects $\C T^{c}$ is essentially small and generates $\C T$, i.e.~an object $X$ in $\C T$ is trivial if and only if $\C T(C,X)=0$ for all $C$ in $\C T^{c}$. A compactly generated category $\C T$ satisfies \emph{Adams representability} if any additive functor
\mbox{$H\colon (\C T^{c})^{\op}\r\operatorname{Ab}$} taking exact triangles to exact sequences is of the form $H=\C T(-,X)_{|_{\C T^{c}}}$ for some  $X$ in $\C T$, and any natural transformation as $\tau$ above
is induced by a map $f\colon X\r Y$, $\tau=\C T(-,f)_{|_{\C T^{c}}}$. Despite the fact that the category of compact objects contains much information about the whole category, Adams representability is seldom satisfied.  It is satisfied, for instance,  when $\C{T}^{c}$ is essentially countable \cite{Ne97}. This covers the stable homotopy category, but not the derived category $D(R)$ of a ring $R$ unless $R$ is countable. Adams representability is thoroughly studied in \cite{Be00} and \cite{CKN01}, with emphasis on derived categories of rings. It turns out to be strongly related to the pure global dimension of the ring $R$, a homological invariant connected to set theory, e.g.~the first part of Adams representability for the derived category $D(\mathbb C \langle x,y\rangle)$ of a non-commutative polynomial ring on two variables over the complex numbers is equivalent to the continuum hypothesis.

Many well generated triangulated categories have not enough compact objects to generate,  e.g.~the homotopy category $K(\operatorname{Proj-}R)$ of complexes of projective right
$R$\nobreakdash-modules over some rings $R$ which are not right coherent \cite[Example 7.16]{Ne08}. There are even some well generated categories with no non-trivial compact objects at all, e.g.~the derived category $D(\Sh\!/M)$ of sheaves of abelian groups on a connected non-compact
paracompact manifold $M$ of $\dim M\geq 1$ \cite{Ne01b}. Therefore, in these contexts, Adams representability does not make much sense as considered above. In such cases, the role of compact objects is played by \emph{$\alpha$\nobreakdash-compact objects} for a regular cardinal $\alpha$. In a well generated category, for a large enough cardinal $\alpha$, the category $\C T^{\alpha}$ of $\alpha$\nobreakdash-compact objects is essentially small, closed under coproducts of $<\alpha$ objects, and generates $\C T$.
In this paper, we consider the following transfinite analogue of Adams representability in~$\C T$.

\begin{defnnonumber}
Let $\alpha$ be a regular cardinal and $\C{T}$ a well generated triangulated category. A functor $H\colon(\C{T}^{\alpha})^\op\r \operatorname{Ab}$ is \emph{cohomological} if it takes exact triangles to exact sequences. We say that $\C T$ satisfies \emph{$\alpha$\nobreakdash-Adams representability} if the following two properties are satisfied:
\begin{itemize}
 \item[\aro{\alpha}] Any cohomological functor \mbox{$H\colon(\C{T}^{\alpha})^\op\r \operatorname{Ab}$} that preserves products of $<\alpha$ objects is isomorphic to $\C{T}(-,X)_{|_{\C{T}^{\alpha}}}$ for some $X$ in $\C{T}$.
\item[\arm{\alpha}] Any natural transformation
\mbox{$\tau\colon \C{T}(-,X)_{|_{\C{T}^{\alpha}}}\r \C{T}(-,Y)_{|_{\C{T}^{\alpha}}}$}
is induced by a morphism \mbox{$f\colon X\rightarrow Y$} in $\C {T}$, $\tau=\C{T}(-,f)_{|_{\C{T}^{\alpha}}}.$
\end{itemize}
\end{defnnonumber}

The only case where these properties hold for obvious reasons for all $\alpha$ is the derived category $D(k)$ of a field $k$, since it is equivalent to the category of $\mathbb Z$-graded $k$-vector spaces. Observe that if $\C{T}$ is compactly generated $\aleph_0$\nobreakdash-Adams representability is the same as Adams representability as considered above. Since \aro{\aleph_{0}} and \arm{\aleph_{0}} fail so often, it is also natural to consider \aro{\alpha} and \arm{\alpha} for $\alpha>\aleph_{0}$ in compactly generated categories.

For $\C T$ a well generated triangulated category with models, Rosick\'y stated in \cite{Ro05}  that \aro{\alpha} and \arm{\alpha} were satisfied for a proper class of regular cardinals $\alpha$. Unfortunately, his proof contains a gap acknowledged in \cite{Ro07} and \cite{Ro}. Nevertheless, this statement is a fairly natural question. Heuristically, since any well generated category is an increasing union of the subcategories of $\alpha$\nobreakdash-compact objects $\C{T}=\cup_{\alpha}\C{T}^{\alpha}$ by \cite[Proposition 8.4.2]{triang}, Brown representability can be  regarded as the limit of \aro{\alpha} and \arm{\alpha} as $\alpha$ runs over all cardinals, and this question suggests that the limit statement is satisfied because it is satisfied in a `cofinal' sequence.

Neeman obtained in \cite{Ne09} striking consequences of Rosick\'y's statement. One of them is that any covariant functor on a well generated triangulated category $H\colon \C T\r\operatorname{Ab}$ preserving products and taking exact triangles to exact sequences would be representable $H\cong\C T(X,-)$. This is \emph{Brown representability for the dual} $\C T^{\op}$. This result cannot be deduced from the Brown representability theorem for well generated categories since the opposite of a well generated category is never well generated. It was known for compactly generated triangulated categories, cf.~\cite{Ne98} and \cite{Kr02}, and it is a major open problem in the field for well generated categories.

In this paper, we show that some well generated triangulated categories do not satisfy $\alpha$\nobreakdash-Adams representability. For instance, we prove that $D(\mathbb{Z})$ satisfies \arm{\alpha} if and only if $\alpha=\aleph_{0}$. This uses the fact that the $\alpha$\nobreakdash-pure global dimension of $\mathbb Z$ is $\Pgldim_{\alpha}(\mathbb Z)>1$ for $\alpha>\aleph_{0}$, cf.~\cite{BG}. The \emph{$\alpha$\nobreakdash-pure global dimension} of a ring $R$ is the smallest $n$ such that, for each right $R$\nobreakdash-module $M$, there is a sequence
$$
0\r P_{n}\r \cdots\r P_{1}\r M\r 0
$$
where each $P_{i}$ is a retract of a direct sum of $\alpha$\nobreakdash-presentable right $R$\nobreakdash-modules, i.e.~with $<\alpha$ generators and relations \cite[Chapter~7]{JL89}, and
$$
0\r \hom_{R}(Q,P_{n})\r \cdots\r \hom_{R}(Q,P_{1})\r \hom_{R}(Q,M)\r 0
$$
is exact for any $\alpha$\nobreakdash-presentable right $R$\nobreakdash-module $Q$.

A ring $R$ is \emph{$\alpha$\nobreakdash-coherent} if the kernel of any morphism between $\alpha$-presentable $R$-modules is $\alpha$\nobreakdash-presentable. Rings of $\card R<\alpha$ are $\alpha$\nobreakdash-coherent since in this case the $\alpha$-presentable $R$-modules are simply the $R$-modules of cardinality $<\alpha$. We prove that, if $R$ is $\alpha$\nobreakdash-coherent for some $\alpha>\aleph_{0}$ and $D(R)$ satisfies \arm{\alpha}, then $\Pgldim_{\alpha}(R)\leq 1$.

A ring $R$ is \emph{hereditary} if it has global dimension $\leq 1$, e.g.~$R=\mathbb Z$, discrete valuation rings (DVRs), and path algebras of quivers over a field. Hereditary rings are $\alpha$\nobreakdash-coherent for all $\alpha\geq\aleph_0$. For hereditary rings, we prove that  \aro{\alpha} is equivalent to  $\Pgldim_{\alpha}(R)\leq 2$ and that \arm{\alpha} is equivalent to  $\Pgldim_{\alpha}(R)\leq 1$, $\alpha>\aleph_0$. The case $\alpha=\aleph_0$  was shown in \cite{CKN01}. As we already mentioned,  $\Pgldim_{\alpha}(\mathbb{Z})>1$ for all $\alpha>\aleph_0$. This property is shared by all DVRs \cite{BS}. The first examples of rings with $\Pgldim_{\alpha}(R)>1$ for all $\alpha\geq \aleph_{0}$ have been obtained by Bazzoni and \v{S}{\v{t}}ov{\'i}\v{c}ek in \cite{BS}, e.g.~$R=k[[x,y]]$ for $k$ a field. \v{S}{\v{t}}ov{\'i}\v{c}ek recently informed us that, in work in progress \cite{Stovicek}, he has obtained the sharper lower bound $\Pgldim_{\aleph_n}(\widehat{\mathbb Z}_p)\geq n+1$ for $n$ a finite
cardinal and $\widehat{\mathbb Z}_p$ the $p$\nobreakdash-adic integers (he is actually extending the result to arbitrary discrete valuation domains, and then he plans to carry it over to the Kronecker algebra and many other examples, as in \cite{BS}). Therefore $D(\widehat{\mathbb Z}_p)$ satisfies neither \arm{\aleph_n} nor \aro{\aleph_n} for any finite $n\geq 2$. This is the first known example of a triangulated category exhibiting this behaviour.

Under the continuum hypothesis, we prove that $\Pgldim_{\aleph_{1}}(\mathbb Z)=\Pgldim_{\aleph_{1}}(\widehat{\mathbb Z}_p)=2$, which implies \aro{\aleph_{1}} for $D(\mathbb{Z})$ and $D(\widehat{\mathbb Z}_p)$, and more generally, if $2^{\aleph_{n-1}}=\aleph_{n}$ then $\Pgldim_{\aleph_{n}}(\mathbb Z)= \Pgldim_{\aleph_{n}}(\widehat{\mathbb Z}_p)=n+1$. In this sense \v{S}{\v{t}}ov{\'i}\v{c}ek lower bounds are optimal. It would be interesting to find out whether these equalities can be obtained without the (generalized) continuum hypothesis.



We would like to remark that the only cardinals $\alpha$ for which we know examples of triangulated categories satisfying \aro{\alpha} but not \arm{\alpha} are $\alpha=\aleph_0,\aleph_1$. For $\alpha=\aleph_0$, the derived category of a tame hereditary algebra over an uncountable field is an example by \cite[Theorem 3.4]{BBL82}. For $\alpha=\aleph_1$, all examples we know depend on the continuum hypothesis, e.g.~$D(\mathbb{Z})$ and $D(\widehat{\mathbb Z}_p)$. It would also be interesting to know if we can dispense with this hypothesis. Property \arm{\alpha} implies \aro{\alpha} for $\alpha=\aleph_0$ \cite[Theorem 11.8]{Be00}. We do not know if these properties are related at all for uncountable $\alpha$ (Beligiannis's proof does not generalize), see Remark \ref{auslander}.

Concerning positive results, we show that the derived category $D(R)$  of a hereditary right pure-semisimple ring, e.g.~the path algebra of a Dynkin quiver over a field, satisfies \aro{\alpha} and \arm{\alpha} for all $\alpha$. Under the continuum hypothesis, we prove \aro{\aleph_{1}} for the following categories, where $R$ denotes a ring of $\card R\leq \aleph_{1}$:
the stable homotopy category, the derived category $D(R)$ of right $R$\nobreakdash-modules,  the homotopy category  $K(\operatorname{Proj-}R)$  of complexes of projective right
$R$\nobreakdash-modules, the homotopy category $K(\operatorname{Inj-}R)$   of complexes of injective right
$R$\nobreakdash-modules if  $R$ is right Noetherian,
 the derived category $D(\Sh\!/M)$ of sheaves of abelian groups on a connected
paracompact manifold, and the stable motivic homotopy category over a Noetherian scheme of finite Krull dimension that can be covered by spectra of  rings of cardinal $\leq \aleph_{1}$. We believe that set-theoretical assumptions are really necessary in these examples, as they are in order for $D(\mathbb C \langle x,y\rangle)$ to satisfy \aro{\aleph_0}. These results obtained under the continuum hypothesis suggest  that for any \emph{specific} cohomological functor $H\colon (\C T^{\aleph_{1}})^{\op}\r\operatorname{Ab}$ preserving countable products there are many chances to find an object $X$ with $H=\C{T}(-,X)_{|_{\C{T}^{\aleph_{1}}}}$, for if such an object did not exist the continuum hypothesis would be false.

We tackle \aro{\alpha} and \arm{\alpha} by means of a fairly general obstruction theory for triangulated categories. 
We consider a well generated  triangulated category $\C T$ and a full subcategory $\C{C}\subset \C T^{\alpha}$ closed under (de)suspensions and coproducts of $<\alpha$ objects which generates $\C T$. We do not require $\C C$ to be triangulated, although in this paper the main example is $\C C=\C T^{\alpha}$.
We consider the  \emph{restricted Yoneda functor},
$$
S_\alpha\colon \C{T}\To\Mod{\alpha}{\C C},\quad S_\alpha(X)=\C{T}(-,X)_{|_{\C{C}}},
$$
 where $\Mod{\alpha}{\C C}$ is the abelian category of \emph{$\alpha$\nobreakdash-continuous (right) $\C{C}$\nobreakdash-modules}, i.e.~functors $\C{C}^\op\rightarrow \operatorname{Ab}$ preserving products of $<\alpha$ objects. Morphisms in the kernel of $S_{\alpha}$ are called \emph{$\C C$\nobreakdash-phantom maps}. We interpolate the functor $S_\alpha$ by an inverse sequence of categories
$$
\C{T}\r\cdots\r \mathbf{Post}_{n+1}^{\simeq}\st{t_{n}}\To \mathbf{Post}_{n}^{\simeq}\r\cdots\r \mathbf{Post}_{0}^{\simeq}\st{\sim}\To\Mod{\alpha}{\C C}.
$$
For each step $t_{n}\colon\mathbf{Post}_{n+1}^{\simeq}\r \mathbf{Post}_{n}^{\simeq}$, we define obstructions to the lifting of objects and morphisms along $t_{n}$. Obstructions take values in $\ext$ groups in $\Mod{\alpha}{\C C}$. The obstructions for the lifting of objects were first considered in \cite{rmtc} for $\alpha=\aleph_{0}$. In addition, we prove that the induced functor
$$
t\colon\C{T}\To \mathbf{Post}_{\infty}^{\simeq}= \lim_{n}\mathbf{Post}_{n}^{\simeq}
$$
is full and essentially surjective. We also analyze the kernel of $t_{n}$ and, moreover, we show that the kernel of the functor $t$ is the ideal of \emph{$\infty$\nobreakdash-$\C C$\nobreakdash-phantom maps}, i.e.~maps $f\colon X\r Y$ in $\C T$ which decompose as a product $f=f_{n}\cdots f_{1}$
of $n$ $\C C$\nobreakdash-phantom maps $f_{i}$, $1\leq i\leq n$, for all $n\geq 1$. Furthermore, we prove that $\infty$\nobreakdash-$\C C$\nobreakdash-phantom maps form a square-zero ideal, i.e.~the composition of two $\infty$\nobreakdash-$\C C$\nobreakdash-phantom maps is always zero. This is a new result even for a compactly generated triangulated category $\C T$ and $\C C=\C T^{c}$.

\subsection*{Organization of the paper}
In Section \ref{ryf}, we fix some terminology and give an equivalent definition of $\alpha$-Adams representability using the restricted Yoneda functor. We start Section \ref{sec-obs} by summarizing  the formal properties of the obstruction theory developed in Section \ref{obstructiontheory}, see Theorem \ref{lololo}. Using this result we first derive  necessary and sufficient conditions for \arm{\alpha} in Corollary \ref{necesariaysuficiente0}, then a sufficient condition for \aro{\alpha} in Corollary \ref{suficiente}, and finally we prove a necessary and sufficient condition for $\alpha$\nobreakdash-Adams representability in Corollary \ref{necesariaysuficiente}.

In Section \ref{sec-for-rings} we study the special case of the derived category of a ring $D(R)$. For an $\alpha$\nobreakdash-coherent ring $R$, Theorem \ref{mainderived} gives a necessary condition so that $D(R)$ satisfies \arm{\alpha}. Moreover, if $R$ is hereditary it gives necessary and sufficient conditions for \arm{\alpha} and \aro{\alpha} in $D(R)$. We use this result, together with some existing computations on the $\alpha$\nobreakdash-pure global dimension of rings, to give explicit examples of triangulated categories that satisfy \aro{\aleph_1} but not \arm{\aleph_1} (assuming the continuum hypothesis), examples that do not satisfy \arm{\alpha} for any $\alpha$, and examples that satisfy neither \arm{\aleph_n} nor  \aro{\aleph_n} for any finite $n\geq 2$. In Section \ref{sec-aleph_1}, we provide more examples of triangulated categories satisfying \aro{\aleph_1} (always under the continuum hypothesis). 

Section \ref{sec-Rosicky-functor} is devoted to Rosick{\' y} functors in well generated triangulated categories. The hypothetical existence of such functors was used by Neeman \cite{Ne09} to prove Brown representability for the dual. We prove (Corollary \ref{Cor-Rosicky-func}) that the existence of Rosick{\' y} functors is equivalent to the fact that the restricted Yoneda functor $S_\alpha$ is itself a Rosick{\' y} functor for some regular cardinal $\alpha$ and an appropriate full subcategory $\C C$ of $\C T^{\alpha}$. We exhibit a very simple example of triangulated category $\C T$ that has a Rosick{\' y} functor, but in which $S_\alpha$ is never a Rosick{\' y} functor when $\C C=\C T^{\alpha}$, for any $\alpha$, see Remark \ref{Rem-Rosicky-func}.

We develop the obstruction theory in Section \ref{obstructiontheory}. This is our main tool for the study of transfinite Adams representability. This section is divided in eight subsections. In the first five subsections, we introduce Adams and Postnikov resolutions, Postnikov systems, and relate them to $\C C$\nobreakdash-phantom maps. As an application, we prove in Corollary \ref{phantom3} that the ideal of $\infty$\nobreakdash-$\C C$\nobreakdash-phantom maps is a square zero ideal. In the sixth and seventh subsections we define the obstruction theory. In the final subsection we explain the connection of our obstruction theory with the Adams spectral sequence.

In Section \ref{sec-first-obs} we show how to compute the first obstruction to the realizability of certain objects in algebraic triangulated categories (Theorem \ref{calculillo}). This is used in the proof of Theorem \ref{mainderived} to obtain examples of triangulated categories that do not satisfy \aro{\alpha}.

Finally, in Section \ref{sec-alphacpt} we give a characterization of $\alpha$\nobreakdash-compact objects in terms of the size of morphism sets which slightly improves \cite[Theorem
C]{Kr02}. This is used in Section \ref{sec-aleph_1} to provide examples of triangulated categories that satisfy \aro{\aleph_1}.

\subsection*{Acknowledgements}
This piece of research started as a result of conversations with Amnon Neeman on Rosick\'{y}'s \cite{Ro05}, we thank him for driving us to these problems. We are grateful to Carles Casacuberta for many useful suggestions on preliminary versions of this article. We would also like to acknowledge fruitful conversations an exchange of ideas with Javier Guti\'errez, Henning Krause, Ji\v{r}i Rosick\'{y}, Markus Spitzweck and Jan \v{S}{\v{t}}ov{\'i}\v{c}ek.

\section{The restricted Yoneda functor}\label{ryf}

Our main references for the part of triangulated category theory which is relevant for this paper are \cite{triang}, \cite{Kr01}, and \cite{Kr}. For the reader's convenience, we start this section by recalling some basic notions. In the next definition we follow Krause's approach \cite{Kr01}.

\begin{defn}\label{defbasica}
Let  $\alpha$ be a regular cardinal and
 $\C{T}$ a triangulated category with coproducts. An object $S$ in $\C T$ is
\emph{$\alpha$\nobreakdash-small} if
any morphism $S\to \amalg_{i\in I} X_i$ in $\C T$ factors through
$\amalg_{i\in I'} X_i$, where $I'\subset I$ and $\card I'<\alpha$.

We say that $\C T$ is
\emph{$\alpha$\nobreakdash-compactly generated} if there is a set $\mathcal S$ of objects in $\C T$ such that:
\begin{itemize}
 \item[(a)] $\mathcal S$ \emph{generates} $\mathcal T$, i.e.~an object $X$ in $\C T$ is zero if and only if $\C T(S,X)=0$ for any $S\in\mathcal S$.
 \item[(b)] $\mathcal S$ is \emph{perfect}, i.e.~given a set of morphisms $\{f_i\colon X_i\r Y_i\}_{i\in I}$ in $\C T$, the map between morphism sets $\C T(S,\amalg_{i\in I}f_i)\colon\C T(S,\amalg_{i\in I}X_i)\r\C T(S,\amalg_{i\in I}Y_i)$ is surjective for all $S\in\mathcal S$ provided $\C T(S, f_i)\colon\C T(S, X_i)\r\C T(S, Y_i)$ is surjective for all $i\in I$ and all $S\in\mathcal S$.
 \item[(c)] $\mathcal S$ consists of $\alpha$\nobreakdash-small objects.
\end{itemize}
Moreover, we say that $\C T$ is \emph{well generated} if it is $\alpha$\nobreakdash-compactly generated for some regular cardinal $\alpha$.

The subcategory $\C T^\alpha$ of $\C T$
is defined as the unique maximal full subcategory formed by $\alpha$\nobreakdash-small objects in $\C T$ such that any morphism $S\to \amalg_{i\in I} X_i$ in $\C T$ with $S$ in $\C T^\alpha$ factors through a coproduct $\amalg_{i\in I}f_i\colon \amalg_{i\in I} S_i\to \amalg_{i\in I} X_i$ of morphisms $f_i\colon S_i\r X_i$ with $S_i$ in $\C T^\alpha$ for all $i\in I$. The existence of $\C T^\alpha$ is proved in \cite[Corollary 3.3.10]{triang}. Objects in $\C T^\alpha$ are called \emph{$\alpha$\nobreakdash-compact objects}.
\end{defn}

In the special case $\alpha=\aleph_0$ the $\aleph_0$\nobreakdash-compact objects are exactly the
$\aleph_0$\nobreakdash-small objects. If $\C T$ is $\alpha$\nobreakdash-compactly generated, the generating set $\mathcal S$ in Definition \ref{defbasica} is contained in $\C T^\alpha$ \cite[Lemma 5]{Kr01}, which is an essentially small triangulated subcategory of $\C T$. Well generated triangulated categories have products too by \cite[Proposition 8.4.6]{triang}. Notice that any $\alpha$\nobreakdash-compactly generated triangulated category is also $\beta$\nobreakdash-compactly generated for any regular cardinal $\beta\geq \alpha$. Moreover, $\alpha$\nobreakdash-compact objects are also $\beta$\nobreakdash-compact \cite[Lemma 4.2.3]{triang}.

Throughout this paper $\alpha$ denotes a regular cardinal,
 $\C{T}$ is a well generated triangulated category with suspension functor $\Sigma$, and
$\C{C}\subset\C{T}$ is an essentially small  full  subcategory such that:
\begin{enumerate}
\item it is closed under (de)suspensions,
\item it has coproducts of less than $\alpha$ objects,
\item it generates $\C{T}$,
\item it is perfect.
\end{enumerate}
In particular, $\C T$ is $\alpha$\nobreakdash-compactly generated and $\C{C}\subset\C{T}^{\alpha}$. We do not require $\C C$ to be triangulated. If it were, then necessarily $\C C=\C T^{\alpha}$ by \cite[Lemma 4.4.5]{triang}.
In order to avoid absurd situations, we assume that both $\C C$ and $\C T$ are non-trivial, i.e.~$\C C$ contains at least one object $X\neq 0$.

Let $\Mod{\alpha}{\C C}$ be the abelian category of functors $\C C^{\op}\r\Ab$ preserving products of less than $\alpha$ objects. Such functors are called \emph{$\alpha$\nobreakdash-continuous (right) $\C C$\nobreakdash-modules}. We will now summarize some properties of $\Mod{\alpha}{\C C}$ which are relevant in this paper, providing references when they are not obvious. For the definitions of $\alpha$\nobreakdash-filtered colimit, $\alpha$\nobreakdash-presentable object and locally $\alpha$\nobreakdash-presentable category we refer to \cite[Chapter~1]{AR94}.


The category $\Mod{\alpha}{\C C}$ is locally $\alpha$\nobreakdash-presentable (compare \cite[Proposition A.1.9]{triang} or \cite[Appendix B]{Kr}) and representable functors form a set of $\alpha$\nobreakdash-presentable projective generators (compare \cite[Lemma 6.4.1 and Lemma 6.4.2]{triang} or \cite[Lemma B.3]{Kr}). Moreover, $\Mod{\alpha}{\C C}$ is an abelian subcategory of $\Mod{\aleph_0}{\C C}$, the inclusion is an exact functor (\cite[Lemma 6.1.4]{triang}), and  $\alpha$\nobreakdash-filtered colimits are exact in $\Mod{\alpha}{\C C}$ and computed pointwise (\cite[Lemma A.1.3]{triang}), i.e.~if $\Lambda$ is an $\alpha$\nobreakdash-filtering category, $\Lambda\r \Mod{\alpha}{\C C}\colon \lambda\mapsto F_{\lambda}$ is a diagram of $\alpha$\nobreakdash-continuous $\C C$\nobreakdash-modules indexed by $\Lambda$, and $C$ is an object in $\C C$, then
$$
(\colim_{\lambda\in\Lambda}F_{\lambda})(C)=\colim_{\lambda\in\Lambda}(F_{\lambda}(C)).
$$
Here the first colimit is taken in $\Mod{\alpha}{\C C}$ and the second one is in the category $\operatorname{Ab}$ of abelian groups.

The \emph{restricted Yoneda functor},
$$
S_\alpha\colon \C{T}\To\Mod{\alpha}{\C C},\quad S_{\alpha}(X)=\C{T}(-,X)_{|_{\C{C}}},
$$
preserves products and coproducts (\cite[Proposition 6.8.1]{Kr}), takes exact triangles to exact sequences, and reflects isomorphisms since $\C C$ generates. If $\add(\C{C})\subset\C{T}$ denotes  the smallest subcategory closed under  coproducts and retracts containing $\C C$, then $S_{\alpha}$
induces an equivalence between $\add(\C{C})$ and the full subcategory of projective objects in $\Mod{\alpha}{\C C}$. Moreover, if $P$ is in $\add(\C{C})$ and $X$ is in $\C T$, then Yoneda's lemma implies that $S_{\alpha}$ induces an isomorphism
$$\C{T}(P,X)\cong\hom_{\alpha, \C C}(S_{\alpha}(P),S_{\alpha}(X)).$$
Here $\hom_{\alpha, \C C}$ denotes morphism sets in $\Mod{\alpha}{\C C}$.

Notice that properties \aro{\alpha} and \arm{\alpha}, defined in the introduction, translate as follows for $\C C=\C T^{\alpha}$:
\begin{itemize}
 \item[\aro{\alpha}] The essential image of $S_{\alpha}$ is the class of cohomological functors in $\Mod{\alpha}{\C T^\alpha}$.
 \item[\arm{\alpha}] The functor $S_{\alpha}$ is full.
\end{itemize}

Denote by $\pd(A)$ the projective dimension of an object $A$ in an abelian category~$\C A$.

\begin{prop}\label{full1}
If $S_{\alpha}$ is full, then $\pd(S_{\alpha}(X))\leq 1$ for all $X$ in~$\C T$.
\end{prop}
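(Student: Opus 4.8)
The plan is to produce, for an arbitrary object $X$ in $\C T$, a length-one projective resolution of $S_{\alpha}(X)$ in $\Mod{\alpha}{\C C}$, using that $S_{\alpha}$ is full together with the structure of $S_{\alpha}$ recalled just above the statement. Since representable functors form a set of projective generators of $\Mod{\alpha}{\C C}$, and since $S_{\alpha}$ identifies $\add(\C C)$ with the projectives of $\Mod{\alpha}{\C C}$, one can choose a projective presentation of $S_{\alpha}(X)$ of the shape
$$
P_{1}\st{g}\To P_{0}\st{q}\To S_{\alpha}(X)\To 0,
$$
with $P_{0},P_{1}$ in the image of $\add(\C C)$, i.e. $P_{i}=S_{\alpha}(\tilde P_{i})$ for $\tilde P_{i}\in\add(\C C)$. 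The goal is then to prove that $g$ is monic, which gives $\pd(S_{\alpha}(X))\leq 1$.

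First I would realize the surjection $q$ geometrically. Because $\C C$ generates $\C T$ and is closed under coproducts of $<\alpha$ objects (and retracts via $\add(\C C)$), there is an epimorphism $p\colon \tilde P_{0}\to X$ in $\C T$ with $\tilde P_{0}\in\add(\C C)$ such that $S_{\alpha}(p)=q$; here I use the isomorphism $\C T(\tilde P_{0},X)\cong\hom_{\alpha,\C C}(S_{\alpha}(\tilde P_{0}),S_{\alpha}(X))$ from the excerpt to lift $q$ to a map $p$, and the fact that $S_{\alpha}$ reflects isomorphisms (equivalently, that $\coker$ is detected) to see that $p$ can be taken so that its cone $K$, fitting in an exact triangle $K\to \tilde P_{0}\st{p}\to X\to \Sigma K$, has $S_{\alpha}(K)$ surjecting onto $\ker q$. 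Next, repeat the construction one step down: pick $\tilde P_{1}\in\add(\C C)$ and an epimorphism $\tilde P_{1}\to \Sigma^{-1}K$ — or more precisely choose $\tilde P_{1}\to \tilde P_{0}$ whose composite with $p$ is null and which realizes $g$ under $S_{\alpha}$. Applying the cohomological, product-preserving functor $S_{\alpha}$ to the triangle, one gets an exact sequence
$$
S_{\alpha}(\Sigma^{-1}X)\To S_{\alpha}(K)\To S_{\alpha}(\tilde P_{0})\st{q}\To S_{\alpha}(X)\To 0,
$$
so that $\ker q = \im\bigl(S_{\alpha}(K)\to P_{0}\bigr)$, and it remains to show this image is projective, i.e. that the map $S_{\alpha}(K)\to P_{0}$ factors through a projective cover of $\ker q$ which is then a direct summand.

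The heart of the argument — and the step where fullness of $S_{\alpha}$ is genuinely used — is the following: lift the chosen projective cover $g\colon P_{1}=S_{\alpha}(\tilde P_{1})\to P_{0}$ of $\ker q\hookrightarrow P_{0}$ to a morphism $\bar g\colon\tilde P_{1}\to\tilde P_{0}$ in $\C T$ (again via the $\add(\C C)$–$\hom_{\alpha,\C C}$ adjunction isomorphism), complete $p\bar g$ (which is a phantom, since $S_{\alpha}(p\bar g)=qg=0$) — actually $p\bar g=0$ already on the nose if $\tilde P_{1}$ is projective, so $\bar g$ factors as $\tilde P_{1}\to K\to\tilde P_{0}$. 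I would then argue that the induced map $\tilde P_{1}\to K$ is split epi after applying $S_{\alpha}$: its cokernel in $\Mod{\alpha}{\C C}$ is zero by construction (since $P_{1}\to\ker q$ is onto), hence it is an epimorphism of $\C C$-modules whose source is projective, and — here fullness enters — any $\C C$-module epimorphism onto $S_{\alpha}(K)$ with projective source, when $S_{\alpha}(K)$ itself is a quotient of a projective inside $\C T$, splits. Concretely: form a map $S_{\alpha}(K)\to P_{1}$ splitting it at the level of modules (projectivity of $P_{1}$ is not what we need; rather we need $S_{\alpha}(K)$ to be a retract), lift this splitting along $S_{\alpha}$ using fullness to a map $K\to\tilde P_{1}$ in $\C T$, and check that the two composites are identities modulo phantoms, which forces $S_{\alpha}(K)$ to be a retract of $P_{1}$, hence projective. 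Therefore $\ker q$, being a quotient of the projective $S_{\alpha}(K)$ with $S_{\alpha}(K)$ projective, is itself projective (as $P_{1}\twoheadrightarrow \ker q$ then splits), so $g\colon P_{1}\to P_{0}$ has projective image with projective kernel split off, giving $\pd(S_{\alpha}(X))\le 1$.

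I expect the main obstacle to be making precise the bookkeeping with phantoms: $S_{\alpha}$ is not faithful, so equalities of maps in $\C T$ only hold "up to phantoms," and one must verify that the retraction witnessing projectivity of $\ker q$ survives passage through $S_{\alpha}$ even though the lifted maps in $\C T$ need not compose to the identity. The clean way around this is to do all the splitting arguments already inside $\Mod{\alpha}{\C C}$ — where $\ker q$ genuinely is a module and $S_{\alpha}(K)\to\ker q$ is a genuine epimorphism of modules with projective source $S_{\alpha}(K)$? — wait, $S_{\alpha}(K)$ need not be projective a priori, which is exactly the point; so instead I would iterate: set $X_{1}=K$, build $X_{2}$ as the analogous "kernel object" for $X_{1}$, and observe that fullness of $S_{\alpha}$ forces the connecting phantom maps $\Sigma^{-1}X\to K\to\cdots$ to vanish after $S_{\alpha}$, collapsing the long exact sequence into the short exact sequence $0\to\ker q\to P_{0}\to S_{\alpha}(X)\to 0$ with $\ker q=S_{\alpha}(K)$ a quotient of $P_{0}$-type object — and then fullness applied to a module-level splitting $\ker q\to P_{1}\twoheadrightarrow\ker q$, lifted to $\C T$, shows $\ker q$ is a retract of a representable-type module, hence projective. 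The delicate verification is precisely that this last lift, a priori only a map in $\C T$ modulo phantoms, induces an actual module retraction; this follows because $\hom_{\alpha,\C C}(S_{\alpha}(\tilde P_{1}),-)$ sees no phantoms out of a projective, so the composite $S_{\alpha}(\tilde P_{1})\to\ker q\to S_{\alpha}(\tilde P_{1})$ is computed honestly and equals the identity.
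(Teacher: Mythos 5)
Your setup (realize a projective presentation $S_{\alpha}(P_1)\to S_{\alpha}(P_0)\onto S_{\alpha}(X)$ by morphisms in $\C T$, pass to the fiber $K$ of $p\colon \tilde P_0\to X$, identify $\ker q$ with $S_{\alpha}(K)$, and try to show it is projective) is reasonable, and several intermediate observations are correct: $p\bar g=0$ on the nose because $\C T(\tilde P_1,X)\cong\hom_{\alpha,\C C}(S_{\alpha}(\tilde P_1),S_{\alpha}(X))$, so $\bar g$ factors through $K$, and the induced map $S_{\alpha}(\tilde P_1)\to S_{\alpha}(K)$ is an epimorphism (the connecting map $S_{\alpha}(\Sigma^{-1}X)\to S_{\alpha}(K)$ vanishes automatically, by compatibility of $S_{\alpha}$ with $\Sigma$ and Lemma \ref{util}; no fullness is needed there). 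The genuine gap is at the step you yourself call the heart of the argument: to conclude that $S_{\alpha}(K)\cong\ker q$ is projective you ``form a map $S_{\alpha}(K)\to P_1$ splitting it at the level of modules'' and then lift it via fullness. But the existence of such a module-level section of the epimorphism $S_{\alpha}(\tilde P_1)\onto S_{\alpha}(K)$ is precisely equivalent to the projectivity of $\ker q$, i.e.\ to the conclusion being proved; nothing in your argument produces it. Fullness only realizes module morphisms that already exist by morphisms of $\C T$; it does not create the section. The iteration sketched in your closing paragraph runs into the same circularity (``fullness applied to a module-level splitting $\ker q\to P_1\onto\ker q$'' presupposes the splitting), and the final remark about $\hom_{\alpha,\C C}(S_{\alpha}(\tilde P_1),-)$ seeing no phantoms addresses only the compatibility of a hypothetical lift, not its existence.

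What the paper does differently, and what your fiber-side route is missing, is a canonical candidate for the would-be section. Working instead with the cone $Y$ of $p_1\colon P_1\to P_0$ (where $p_0p_1=0$, so $p_0$ factors as $p'i$ through $Y$), the universal property of the cokernel $S_{\alpha}(X)=\coker S_{\alpha}(p_1)$ produces for free a module map $\phi\colon S_{\alpha}(X)\to S_{\alpha}(Y)$ with $S_{\alpha}(p')\phi=1$. Only then do fullness and reflection of isomorphisms enter: $\phi=S_{\alpha}(i')$ for some $i'\colon X\to Y$, the composite $p'i'$ is an automorphism, so $X$ splits off $Y$ in $\C T$ itself, and Lemma \ref{lem22} then splits the triangle, giving an exact triangle $P_1'\to P_0\to X$ with $S_{\alpha}(P_1')$ projective and hence the desired length-$1$ resolution. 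On the fiber side there is no analogous universal property supplying a section $S_{\alpha}(K)\to S_{\alpha}(\tilde P_1)$, so your argument as written does not close; to repair it you would essentially have to switch to the cone-side construction.
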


The proof of this proposition is essentially the same as the proof of \cite[Lemma 4.1]{Ne97}. We will use the following elementary lemma.

\begin{lem}\label{lem22}
If $X\st{f}\r Y\r Z\r\Sigma X$ is an exact triangle and $f$ decomposes as $f=\binom{f'}{0}\colon X\r Y'\oplus Y''=Y$, then this exact triangle is the direct sum of an exact triangle
$$
X\st{f'}\To Y'\To Z'\To\Sigma X$$  and $0\r Y''\st{1}\r Y''\r 0$.
In particular $Z\cong Z'\oplus Y''$.
\end{lem}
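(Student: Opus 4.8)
The plan is to prove Lemma~\ref{lem22} by a direct diagram-chase using the axioms of triangulated categories, together with the fact that a direct sum of exact triangles is exact (this follows from \cite{triang}, and is an easy consequence of the axioms). First I would complete $f'\colon X\r Y'$ to an exact triangle $X\st{f'}\r Y'\r Z'\r \Sigma X$; this exists by the axiom (TR1). Adding to it the trivial exact triangle $0\r Y''\st{1}\r Y''\r 0$ produces an exact triangle
$$
X\st{\binom{f'}{0}}\To Y'\oplus Y''\To Z'\oplus Y''\To\Sigma X,
$$
whose first map is precisely $f$ under the given decomposition $Y=Y'\oplus Y''$.

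Next I would invoke the uniqueness of the cone up to (non-canonical) isomorphism: since both $X\st{f}\r Y\r Z\r\Sigma X$ and the triangle just constructed have the same first morphism $f$, the axiom (TR3) produces a morphism of triangles which is the identity on $X$ and on $Y$, and some morphism $Z\r Z'\oplus Y''$ on the third vertices; by the standard $5$-lemma argument for triangulated categories this morphism is an isomorphism. Hence $Z\cong Z'\oplus Y''$, which is the last assertion, and the original triangle is isomorphic to the constructed direct-sum triangle.

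The only subtlety — and the place I would be most careful — is that ``the original triangle is the direct sum of \ldots'' is slightly stronger than merely being isomorphic to that direct sum: one wants the isomorphism of triangles to respect the direct-sum decomposition $Y=Y'\oplus Y''$ on the nose on the middle term. Since the morphism of triangles we produced is literally the identity on $Y$, this is automatic, so there is in fact nothing extra to do; I would just phrase the conclusion as: the original triangle is isomorphic, via an isomorphism that is the identity on $X$ and $Y$, to the direct sum of $X\st{f'}\r Y'\r Z'\r\Sigma X$ and $0\r Y''\st{1}\r Y''\r 0$. I expect no real obstacle here; the lemma is genuinely elementary, and the entire argument is three invocations of the triangulated axioms (TR1, TR3, and the triangulated $5$-lemma) plus the observation that direct sums of triangles are triangles.

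Alternatively, and perhaps more cleanly, one can avoid (TR3) entirely: rotate to $Y''\oplus Y'\to Z\to \Sigma X\to$, note the composite $Y''\hookrightarrow Y\to Z$ together with the splitting $Y\onto Y''$ exhibits $Y''$ as a retract inside the triangle, and peel it off. I would, however, present the (TR3)-based version since it is shorter to write and the cited reference \cite{triang} supplies the $5$-lemma directly.
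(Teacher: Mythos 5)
Your proof is correct. The paper in fact gives no argument for Lemma \ref{lem22} at all --- it is stated as an elementary fact and used directly in the proof of Proposition \ref{full1} --- so there is nothing to compare against; your argument (complete $f'$ to a triangle by TR1, add the contractible triangle $0\r Y''\st{1}\r Y''\r 0$ using that direct sums of exact triangles are exact, then compare with the given triangle via TR3 and the triangulated five lemma) is exactly the standard justification one would supply, and your remark that the resulting isomorphism of triangles is the identity on $X$ and $Y$ is the right reading of the phrase ``is the direct sum of'', which is also all that is needed where the lemma is applied.
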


\begin{proof}[Proof of Proposition \ref{full1}]
Choose a projective presentation of $S_{\alpha}(X)$,
$$S_{\alpha}(P_{1})\To S_{\alpha}(P_{0})\onto S_{\alpha}(X).$$
It comes from  unique morphisms $P_{1}\st{p_1}\To P_{0}\st{p_0}\To X$
with $p_0p_1=0$, therefore $p_0$ factors through the mapping cone of $p_1$ in an exact triangle
$$
\xymatrix{P_{1}\ar[r]^-{p_1}&P_{0}\ar[r]^-{i}\ar[d]_-{p_0}&Y\ar[r]^-{q}\ar[ld]^-{p'}&\Sigma P_{1}\\
&X.&&}
$$
The universal property of a cokernel shows that $S_{\alpha}(i)$ factors  through $S_{\alpha}(p_0)$,
$$
\xymatrix{S_{\alpha}(P_{1})\ar[r]^-{S_{\alpha}(p_1)}&S_{\alpha}(P_{0})\ar[r]^{S_{\alpha}(i)}\ar@{->>}[d]_{S_{\alpha}(p_0)}&S_{\alpha}(Y)\ar[r]^{S_{\alpha}(q)}\ar@{<-_{)}}[ld]^{\phi}&S_{\alpha}(\Sigma P_{1})\\
&S_{\alpha}(X).&&}
$$
Since $S_{\alpha}(p_0)$ is an epimorphism and
$$
S_{\alpha}(p')\phi S_{\alpha}(p_0)=S_{\alpha}(p')S_{\alpha}(i)=S_{\alpha}(p'i)=S_{\alpha}(p_0),
$$
we deduce that $S_{\alpha}(p')\phi=1_{S_{\alpha}(X)}$. Using that the functor $S_{\alpha}$ is full, we can take a morphism $i'\colon X\r Y$ with $\phi=S_{\alpha}(i')$. Hence, $S_{\alpha}(p')\phi=S_{\alpha}(p'i')=1_{S_{\alpha}(X)}$ and, since $S_{\alpha}$ reflects isomorphisms, $p'i'$ is an automorphism of $X$, so $Y$ decomposes as $(i',i'')\colon X\oplus Z\cong Y$ for some $Z$ and $i''$. On the other hand, since the morphism $S_{\alpha}(i)$ factors as $S_{\alpha}(i')S_{\alpha}(p_0)$ and $S_{\alpha}(P_{0})$ is projective, $i$ itself factors as $i=i'p_0$, i.e.~$i$ decomposes as $i=\binom{p_0}{0}\colon P_{0}\r X\oplus Z\cong Y$. Now, Lemma \ref{lem22} shows that $P_{1}\cong P_{1}'\oplus \Sigma^{-1}Z$ and that there is an exact triangle
$$
P_{1}'\To P_{0}\st{p_0}\To X \To \Sigma P_{1}'.
$$
In particular, $S_{\alpha}(P_{1}')$ is projective.
Since $S_{\alpha}(p_0)$ is an epimorphism, the image under $S_{\alpha}$ of the previous exact triangle produces a length $1$ projective resolution of $S_{\alpha}(X)$,
$$S_{\alpha}(P_{1}')\hookrightarrow S_{\alpha}(P_{0})\onto S_{\alpha}(X).$$
\end{proof}

We derive the following necessary condition for \arm{\alpha}.

\begin{cor}\label{necesaria}
If $\C T$ satisfies \arm{\alpha}, then $\pd(S_{\alpha}(X))\leq 1$ for all $X$ in ${\C T}$ and $\C C=\C T^\alpha$.
\end{cor}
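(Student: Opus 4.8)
The plan is to observe that Corollary \ref{necesaria} is an immediate consequence of Proposition \ref{full1} once we unwind what \arm{\alpha} means. Recall from the discussion preceding Proposition \ref{full1} that, in the case $\C C = \C T^{\alpha}$ relevant to the definition in the introduction, property \arm{\alpha} is precisely the assertion that the restricted Yoneda functor $S_{\alpha}\colon \C T\to\Mod{\alpha}{\C C}$ is full: a natural transformation $\tau\colon \C T(-,X)_{|_{\C T^{\alpha}}}\to\C T(-,Y)_{|_{\C T^{\alpha}}}$ is exactly a morphism $S_{\alpha}(X)\to S_{\alpha}(Y)$ in $\Mod{\alpha}{\C C}$, and asking that every such $\tau$ be of the form $\C T(-,f)_{|_{\C T^{\alpha}}}=S_{\alpha}(f)$ is asking that $S_{\alpha}$ be full on morphisms.

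Given this translation, the argument is one step: assuming $\C T$ satisfies \arm{\alpha}, the functor $S_{\alpha}$ is full, so Proposition \ref{full1} applies verbatim and yields $\pd(S_{\alpha}(X))\leq 1$ for every $X$ in $\C T$. No further work is required, since Proposition \ref{full1} has already done the real work — building, from a projective presentation of $S_{\alpha}(X)$, a length-one projective resolution by splitting off the summand $\Sigma^{-1}Z$ from $P_1$ using fullness to realize the section $\phi$ of $S_{\alpha}(p')$ by an honest morphism $i'\colon X\to Y$ and then invoking Lemma \ref{lem22}.

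The only point that deserves a sentence of care is making the reduction to $\C C=\C T^{\alpha}$ explicit, since Proposition \ref{full1} and the running hypotheses of Section \ref{ryf} are stated for a general generating subcategory $\C C\subset \C T^{\alpha}$, whereas \arm{\alpha} as defined in the introduction is about $\C T^{\alpha}$ itself. This is harmless: $\C C=\C T^{\alpha}$ satisfies conditions (1)--(3) of Section \ref{ryf}, so everything in that section, in particular Proposition \ref{full1}, is available for this choice. There is no genuine obstacle here; the corollary is a formal corollary in the literal sense, and the proof consists of quoting Proposition \ref{full1} after recording that \arm{\alpha} means "$S_{\alpha}$ is full."
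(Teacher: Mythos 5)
Your proposal is correct and coincides with the paper's own (implicit) argument: the paper derives Corollary \ref{necesaria} directly from Proposition \ref{full1}, using the translation stated in Section \ref{ryf} that, for $\C C=\C T^{\alpha}$, property \arm{\alpha} is exactly the fullness of $S_{\alpha}$. Your extra remark that $\C C=\C T^{\alpha}$ satisfies the standing hypotheses (1)--(3) is the right point of care and matches the paper's setup.
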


\section{An obstruction theory for the restricted Yoneda functor}\label{sec-obs}

In this section, we describe the formal properties of the obstruction theory developed in Section \ref{obstructiontheory}. We derive a sufficient condition for \aro{\alpha} (Corollary \ref{suficiente}) and necessary and sufficient conditions for \arm{\alpha} (Corollary \ref{necesariaysuficiente0}) and for the $\alpha$\nobreakdash-Adams representability theorem (Corollary \ref{necesariaysuficiente}).

The following notion of exact sequence of categories  generalizes \cite[Definition IV.4.10]{B89} by incorporating an obstruction $\kappa$ to the lifting of objects.

\begin{defn}\label{esc}
Given an additive category $\C B$, a \emph{$\C B$\nobreakdash-bimodule} $M$ is a biadditive functor $M\colon \C B^{\op}\times \C B\r\operatorname{Ab}$. The canonical example is the bimodule defined by morphism sets, that we denote by $\C B=\C B(-,-)$.
As usual, we can restrict scalars along additive functors $\C A\r\C B$, so $\C B$\nobreakdash-bimodules become $\C A$\nobreakdash-bimodules.

An \emph{exact sequence of categories}
$$
\xymatrix{&&M_0&\\M_2\ar[r]^-{\imath}&\C{A}\ar[r]^{t}&\C{B}\ar[r]^-{\theta}\ar[u]^-{\kappa}&M_1}
$$
consists of an additive functor $t$, three $\C{B}$\nobreakdash-bimodules $M_i$, $i=0,1,2$, an exact sequence
$$
\xymatrix{M_2(t(X),t(Y))\ar[r]^-{\imath_{X,Y}}&\C{A}(X,Y)\ar[r]^-{t}&\C{B}(t(X),t(Y))\ar[r]^-{\theta_{X,Y}}&M_1(t(X),t(Y))}
$$
for any two objects $X$ and $Y$ in $\C{A}$, and an element
$$
\kappa(B)\in M_0(B,B)
$$
for any object $B$ in $\C{B}$. The following conditions must be satisfied:
\begin{enumerate}
 \item \emph{Naturality}: for any morphism $f\colon B\r C$ in $\C{B}$,
$$f\cdot\kappa(B)=\kappa(C)\cdot f\in M_0(B,C).$$
\item \emph{Obstruction}: $\kappa(B)=0$ if and only if there exists an object $A$ in $\C{A}$ with $t(A)=B$.
\item \emph{Derivation}: \!given objects $X, Y, Z$ in $\C{A}$ and morphisms $t(X)\st{f}\r t(Y)\st{g}\r t(Z)$ in~$\C{B}$,
$$\theta_{X,Z}(gf)=\theta_{Y,Z}(g)\cdot f+g\cdot \theta_{X,Y}(f)\in M_1(X,Z).$$
\item \emph{Action}: for any object $X$ in $\C{A}$ and any $e\in M_1(t(X),t(X))$ there exists an object $X'=X+e$ in $\C{A}$ with $t(X)=t(X')$ and $\theta_{X,X'}(\id{t(X)})=e$.
\item $\imath$ is a morphism of $\C{A}$\nobreakdash-bimodules.
\end{enumerate}
We sometimes omit the subscripts from $\imath$ and $\theta$ so as not to overload notation.
\end{defn}

In an exact sequence of categories, $\kappa$ is a $0$\nobreakdash-dimensional element in Baues--Wirsching cohomology of categories $H^0(\C B,M_0)$, cf.~\cite{bw}. Moreover, the rest of the exact sequence is determined by a $1$\nobreakdash-dimensional and a $2$\nobreakdash-dimensional cohomology class, compare \cite[Chapter IV]{B89}. Condition (4) guarantees the existence of non-trivial obstructions to the realizability of morphisms as long as the receptacle $M_{1}$ is non-trivial.

A triangulated category $\C{T}$ is regarded as a graded category with graded morphism sets
$$
\C{T}^*(X,Y)=\bigoplus_{n\in\mathbb{Z}}\C{T}(X,\Sigma^nY).
$$
Since $\C{C}$ is closed under (de)suspensions,  $\Sigma$ admits an essentially unique exact extension to $\Mod{\alpha}{\C C}$
compatible with the restricted Yoneda functor, i.e.~the following diagram commutes up to natural isomorphism:
$$
\xymatrix{\C{T}\ar[r]^-\Sigma_\sim\ar[d]_{S_\alpha}&\C{T}\ar[d]^{S_\alpha}\\
\Mod{\alpha}{\C C}\ar[r]^-\Sigma_\sim&\Mod{\alpha}{\C C}.}
$$
The functor $\Sigma$ endows $\Mod{\alpha}{\C C}$ with the structure of a graded abelian category. Graded morphism sets in $\Mod{\alpha}{\C C}$  are defined as in $\C{T}$,
\begin{equation}\label{gradom}
\hom_{\alpha,\C{C}}^{*}(M,N)=\bigoplus_{n\in\mathbb{Z}}\hom_{\alpha,\C{C}}(M,\Sigma^nN).
\end{equation}
In a graded abelian category we also have graded $\ext$ functors that we denote by $\ext_{\alpha,\C{C}}^{p,q}$, where $p$ indicates the length of the extension, i.e.~$\ext_{\alpha,\C{C}}^{p,q}$ is a component of the $p^{\text{th}}$ derived functor of $\hom_{\alpha,\C{C}}^*$ and $q$ is the internal degree coming from the graded $\hom_{\alpha,\C{C}}^{*}$. Notice that $\ext_{\alpha,\C{C}}^{p,q}$ is a  $\Mod{\alpha}{\C C}$\nobreakdash-bimodule. We refer to \cite{street} for additive and abelian category theory in the graded setting.

The following theorem summarizes the main results of Section \ref{obstructiontheory}. 

\begin{thm}\label{lololo}
There is a sequence of exact sequences of categories, $n\geq 0$,
\begin{equation*}
\xymatrix{&&\ext_{\alpha,\C{C}}^{n+3,-1-n}&\\\ext_{\alpha,\C{C}}^{n+1,-1-n}\;\ar[r]^-{\imath_n}&\mathbf{Post}_{n+1}^{\simeq}\ar[r]^{t_{n   }}&\mathbf{Post}_{n}^{\simeq}\ar[r]^-{\theta_n}\ar[u]^-{\kappa_n}&\ext_{\alpha,\C{C}}^{n+2,-1-n}}
\end{equation*}
with $\mathbf{Post}_{0}^{\simeq}\simeq\Mod{\alpha}{\C C}$ and a full and essentially surjective functor
$$\C T\To \mathbf{Post}_{\infty}^{\simeq}=\lim_{n} \mathbf{Post}_{n}^{\simeq}$$
such that the composite $\C T\r \mathbf{Post}_{\infty}^{\simeq}\r \mathbf{Post}_{0}^{\simeq}\simeq\Mod{\alpha}{\C C}$ is naturally isomorphic to the restricted Yoneda functor $S_{\alpha}$.
\end{thm}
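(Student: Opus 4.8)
The plan is to make good on the constructions announced for Section~\ref{obstructiontheory}. The category $\mathbf{Post}_n$ should be the category of \emph{length\nobreakdash-$n$ cellular data} relative to $\add(\C C)$: an object is a tower
$$
X_{0}\To X_{1}\To\cdots\To X_{n}
$$
in $\C T$ in which each successive cofibre is a (de)suspension of an object of $\add(\C C)$ and the resulting attaching maps form a ``complex'' after applying $S_{\alpha}$; morphisms are maps of such towers, and $\mathbf{Post}_{n}^{\simeq}$ is the quotient by the evident homotopy relation on towers, with $t_{n}$ the truncation that forgets the top stage $X_{n+1}$. First I would check $\mathbf{Post}_{0}^{\simeq}\simeq\Mod{\alpha}{\C C}$: by the properties of $S_{\alpha}$ recalled in Section~\ref{ryf}, $\add(\C C)$ is equivalent to the projectives of $\Mod{\alpha}{\C C}$ and $\C T(P,X)\cong\hom_{\alpha,\C C}(S_{\alpha}P,S_{\alpha}X)$ for $P$ in $\add(\C C)$, so a $0$\nobreakdash-stage datum amounts to an $\alpha$\nobreakdash-continuous module. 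The suspension $\Sigma$ shifts the length filtration, which is what produces the internal degree $-1-n$ in the $\ext$ groups of the statement.

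The core of the proof is the construction of the obstruction classes and the verification of the five axioms of Definition~\ref{esc} for each $t_{n}$. To extend an $n$\nobreakdash-stage datum $B$ one more step, one chooses a projective presentation in $\Mod{\alpha}{\C C}$ of the appropriate module built from $B$, lifts the new attaching map along the equivalence $\add(\C C)\simeq\proj{\Mod{\alpha}{\C C}}$ and forms a cofibre as in the proof of Proposition~\ref{full1}; the obstruction to the complex condition at the new stage is a cocycle representing $\kappa_{n}(B)$ in $\ext_{\alpha,\C C}^{n+3,-1-n}$, and one shows it does not depend on the choices modulo homotopy. The naturality identity $f\cdot\kappa_{n}(B)=\kappa_{n}(C)\cdot f$ and the vanishing criterion (axioms~(1) and~(2)) are then formal. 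For morphisms, given $X,Y$ in $\mathbf{Post}_{n+1}^{\simeq}$ and $f\colon t_{n}X\to t_{n}Y$, extending $f$ over the top cell is obstructed by a class $\theta_{n}(f)$ in $\ext_{\alpha,\C C}^{n+2,-1-n}$ whose vanishing is exactly liftability, which gives exactness at $\mathbf{Post}_{n}^{\simeq}(t_{n}X,t_{n}Y)$; the kernel of $t_{n}$ on morphisms is the image of $\imath_{n}$ from $\ext_{\alpha,\C C}^{n+1,-1-n}$, a bimodule map by inspection. The two substantial points are the derivation formula $\theta_{n}(gf)=\theta_{n}(g)\cdot f+g\cdot\theta_{n}(f)$ (axiom~(3)), a Massey\nobreakdash-product\nobreakdash-type computation done by choosing compatible lifts and measuring their defect, and axiom~(4), realizing every class $e$ by the datum $X+e$ obtained by twisting the top attaching map of $X$ by a representative of $e$.

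Next I would build the comparison functor $t\colon\C T\to\mathbf{Post}_{\infty}^{\simeq}=\lim_{n}\mathbf{Post}_{n}^{\simeq}$. Iterating the step of Proposition~\ref{full1} — take a projective presentation of $S_{\alpha}$, realize the attaching map, form the cofibre — produces, for every $X$ in $\C T$, a cellular tower relative to $\add(\C C)$, hence a compatible family of Postnikov data and an object of $\mathbf{Post}_{\infty}^{\simeq}$; on morphisms one lifts a map $X\to Y$ compatibly stage by stage, the ambiguities being absorbed by the homotopy relation. By construction the level\nobreakdash-$0$ component of $t(X)$ is $S_{\alpha}(X)$, so the composite $\C T\to\mathbf{Post}_{\infty}^{\simeq}\to\mathbf{Post}_{0}^{\simeq}\simeq\Mod{\alpha}{\C C}$ is $S_{\alpha}$. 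Essential surjectivity of $t$ is obtained by taking, for a given object of $\mathbf{Post}_{\infty}^{\simeq}$, the homotopy colimit in $\C T$ of the underlying tower of finite cellular data, and checking — using that $S_{\alpha}$ preserves coproducts and exact triangles and reflects isomorphisms — that the Postnikov data of this $\Hocolim$ recover the prescribed family.

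The main obstacle is fullness of $t$. A morphism in $\mathbf{Post}_{\infty}^{\simeq}$ is a sequence of morphisms of finite\nobreakdash-stage data, compatible and each defined only up to homotopy, and one must produce an honest morphism of $\C T$ inducing it. The strategy is to lift inductively: having realized the family through stage $n$ by a genuine morphism of $n$\nobreakdash-stage data, the obstruction $\theta_{n}$ to continuing vanishes because the target family is compatible, so one proceeds, and the successive choices assemble into a tower of abelian groups whose transition maps have cokernels controlled by $\imath_{n}$, i.e.\ by $\ext_{\alpha,\C C}^{n+1,-1-n}$. Passing to the limit introduces a $\lim^{1}$\nobreakdash-type obstruction; identifying this term with the behaviour of $\infty$\nobreakdash-phantom maps between the relevant homotopy colimits — and invoking their square\nobreakdash-zero property, equivalently a Milnor exact sequence for $\C T(-,\Hocolim)$ — is what lets one conclude that the compatible family is nevertheless realized in $\C T$. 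This passage to the inverse limit is the delicate and genuinely new ingredient; compatibility of the $t_{n}$ with (de)suspension, additivity of everything in sight, and the verification that $\mathbf{Post}_{\infty}^{\simeq}$ is a well\nobreakdash-behaved limit of categories are routine.
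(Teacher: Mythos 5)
Your overall architecture --- truncated towers with cofibres in $\add(\C{C})$, the obstructions $\kappa_n$ and $\theta_n$, the inclusion $\imath_n$, the identification $\mathbf{Post}_0^{\simeq}\simeq\Mod{\alpha}{\C C}$, the construction of $t$ by iterating projective presentations, and essential surjectivity via homotopy colimits --- is the paper's (Section \ref{obstructiontheory}); one definitional point you elide is that the paper's $n$\nobreakdash-truncated systems carry the whole sequence $P_*$ with its differentials beyond stage $n+1$, which is what makes $\kappa_n$ a well-defined function of an object of $\mathbf{Post}_n$ in the first place. The genuine gap is in the step you yourself single out as the delicate one: fullness of $t$. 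You invoke the square-zero property of $\infty$\nobreakdash-phantom maps as an input and call it ``equivalently a Milnor exact sequence''; in the paper that property is Corollary \ref{phantom3}, a \emph{consequence} of the homotopy-colimit triangle of Proposition \ref{hocolimpostres} (i.e.\ of the very machinery being built), it is not equivalent to a Milnor sequence, and it plays no role in fullness. Moreover the ${\lim}^{1}$ framing is misplaced: a ${\lim}^{1}$ term would measure non-uniqueness of a realizing map, not obstruct its existence; what fullness needs is that a compatible family of maps out of the stages lifts to $X$, and that is plain surjectivity obtained by applying $\C T(-,Y)$ to the exact triangle $\coprod_{n>0}X_n\to\coprod_{n>0}X_n\to X$ of Proposition \ref{hocolimpostres} --- which is exactly how the paper argues, via the intermediate category $\mathbf{Pres}_\infty^{\simeq}$ of augmented resolutions (Theorems \ref{cociente} and \ref{uneq}). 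So there is no obstruction to kill, and the tool you propose to kill it with is unavailable at that point of the argument.

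Beyond the misdirected mechanism, two technical facts your sketch assumes are precisely where the work lies. (a) Well-definedness of $t$ on morphisms (``the ambiguities being absorbed by the homotopy relation'') is the statement that any two extensions of the same $h$ to morphisms of resolutions are homotopic, equivalently that every extension of $h=0$ is nullhomotopic; this is Proposition \ref{nulinf}, proved by a nontrivial induction using an associated Adams resolution, the $n$\nobreakdash-cellularity of $X_{n-1}$ and the fact that $S_\alpha(g_{n+1})$ is an epimorphism, and your fullness argument needs it too, to know that the map $h$ produced from the colimit induces the \emph{given} family rather than some other one. (b) Your stagewise lifts must be corrected so as to commute strictly with the augmentations to $X$ and $Y$ (and, since you take $\mathbf{Post}_\infty^{\simeq}=\lim_n\mathbf{Post}_n^{\simeq}$ literally, so that a compatible family of homotopy classes rectifies to a strict morphism of towers); the paper's Proposition \ref{sextiende} performs this correction using the splitting of $S_\alpha(p_n)$ from Lemma \ref{unsplit}, and without some such correction the induction does not close. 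With (a), (b) and the homotopy-colimit argument in place your outline becomes the paper's proof; as written, the fullness step rests on an ingredient that is both unnecessary and not yet available.
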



The properties of the functor $\C T\r \mathbf{Post}_{\infty}^{\simeq}$ are in Theorems \ref{cociente} and \ref{uneq}. We introduce $\kappa$ in Definition \ref{kappa}, Proposition \ref{naturality} shows that it is natural, and Proposition \ref{vanishing} proves the obstruction property. The construction of $\theta$ is in Definition \ref{theta}, and Propositions \ref{derivation} and \ref{derivation2} prove the derivation and action properties, respectively. Definition \ref{iota} and Proposition \ref{iprimera} yield $\imath$. The exactness properties are checked in Propositions \ref{dolorprofundo}  and \ref{isegunda}.

Under the hypotheses of the following corollary all obstructions in Theorem \ref{lololo} vanish since the recipient bimodules vanish.

\begin{cor}\label{dimensiones}
Under the standing assumptions:
\begin{enumerate}
\item If $F$ is an $\alpha$\nobreakdash-continuous $\C C$\nobreakdash-module with $\pd(F)\leq 2$, then $F\cong S_{\alpha}(X)$ for some $X$ in $\C T$.
\item If $\pd(S_{\alpha}(X))\leq 1$, then any morphism $\tau\colon S_{\alpha}(X)\r S_{\alpha}(Y)$ is $\tau=S_{\alpha}(f)$ for some $f\colon X\r Y$ in $\C T$.
\end{enumerate}
\end{cor}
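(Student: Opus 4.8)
The plan is to read off both statements directly from the obstruction theory packaged in Theorem~\ref{lololo}; as the remark preceding the corollary indicates, the hypotheses on projective dimension are exactly what is needed to force the relevant obstruction groups to vanish. Throughout, I write $t\colon\C T\to\mathbf{Post}_\infty^\simeq$ for the full and essentially surjective functor produced by Theorem~\ref{lololo}, let $p_n\colon\mathbf{Post}_\infty^\simeq\to\mathbf{Post}_n^\simeq$ be the projection from the inverse limit, and recall that $p_0t$ is naturally isomorphic to $S_\alpha$ under the equivalence $\mathbf{Post}_0^\simeq\simeq\Mod{\alpha}{\C C}$. The first thing I would pin down is that the bimodules $\ext_{\alpha,\C C}^{p,q}$ receiving the obstructions $\kappa_n,\theta_n,\imath_n$ are, by their very construction, restricted along the composites $\mathbf{Post}_n^\simeq\to\mathbf{Post}_0^\simeq\simeq\Mod{\alpha}{\C C}$, so that their values on objects (or pairs of objects) of $\mathbf{Post}_n^\simeq$ are computed on the underlying $\alpha$\nobreakdash-continuous $\C C$\nobreakdash-modules. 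This change-of-coefficients identification is the bridge between the $\pd$\nobreakdash-hypotheses, which are statements about modules, and the obstructions, which live on the Postnikov-type categories, and it is the point I expect to require the most care.

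For part~(1), I would view $F$ as an object of $\mathbf{Post}_0^\simeq$ and lift it up the tower inductively. If $B_n$ is a lift of $F$ to $\mathbf{Post}_n^\simeq$, then by condition~(2) of Definition~\ref{esc} a lift $B_{n+1}$ along $t_n$ exists precisely when $\kappa_n(B_n)\in\ext_{\alpha,\C C}^{n+3,-1-n}(F,F)$ vanishes; since $n+3\ge 3$ and $\pd(F)\le 2$, this group is zero, so $B_{n+1}$ exists and its underlying module is again $F$. The induction therefore continues and assembles to a compatible family, i.e.\ an object $B_\infty$ of $\mathbf{Post}_\infty^\simeq=\lim_n\mathbf{Post}_n^\simeq$ lying over $F$. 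Essential surjectivity of $t$ then yields $X$ in $\C T$ with $t(X)\cong B_\infty$, whence $S_\alpha(X)\cong p_0t(X)\cong F$.

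For part~(2), I would instead lift the \emph{morphism} $\tau$, keeping the objects fixed. Regard $\tau$ as a morphism $p_0t(X)\to p_0t(Y)$ in $\mathbf{Post}_0^\simeq$, and suppose it has been lifted to $\tau_n\colon p_nt(X)\to p_nt(Y)$ in $\mathbf{Post}_n^\simeq$. By the exactness condition in Definition~\ref{esc}, $\tau_n$ lifts further along $t_n$ as soon as $\theta_n(\tau_n)\in\ext_{\alpha,\C C}^{n+2,-1-n}(S_\alpha(X),S_\alpha(Y))$ vanishes, and it does because $n+2\ge 2$ while $\pd(S_\alpha(X))\le 1$. Choosing such lifts at every stage gives a compatible family, hence a morphism $\hat\tau\colon t(X)\to t(Y)$ in $\mathbf{Post}_\infty^\simeq$ with $p_0(\hat\tau)=\tau$; fullness of $t$ then produces $f\colon X\to Y$ in $\C T$ with $S_\alpha(f)\cong p_0t(f)=\tau$. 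Note that uniqueness of $f$ is not claimed and indeed fails in general: the indeterminacy at level $0$ is governed by $\imath_0$ with values in $\ext_{\alpha,\C C}^{1,-1}$, which need not vanish, and this is exactly the room for phantom maps.

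In summary, the routine part is the inductive vanishing argument, driven by $\ext_{\alpha,\C C}^{p,q}(F,-)=0$ for $p\ge 3$ in~(1) and $\ext_{\alpha,\C C}^{p,q}(S_\alpha(X),-)=0$ for $p\ge 2$ in~(2). The part that must be done carefully is the interface with Section~\ref{obstructiontheory}: verifying that the obstruction bimodules restrict along the $p_n$ as described, that $p_n=t_n\circ p_{n+1}$ in the appropriate sense, and that a compatible family of lifts genuinely defines an object, respectively a morphism, of $\lim_n\mathbf{Post}_n^\simeq$. Granting that formalism, no further triangulated-category input is needed beyond Theorem~\ref{lololo}.
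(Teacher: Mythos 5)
Your proposal is correct and follows the paper's intended argument exactly: the corollary is deduced from Theorem~\ref{lololo} by noting that the obstructions $\kappa_n$ and $\theta_n$ land in $\ext_{\alpha,\C C}^{n+3,-1-n}(F,F)$ and $\ext_{\alpha,\C C}^{n+2,-1-n}(S_\alpha(X),S_\alpha(Y))$, which vanish under the respective hypotheses $\pd(F)\leq 2$ and $\pd(S_\alpha(X))\leq 1$, so that objects and morphisms lift up the tower to $\mathbf{Post}_\infty^\simeq$ and are then realized in $\C T$ by essential surjectivity and fullness of the comparison functor. The points you flag as needing care (coefficients of the bimodules restricted along the projections, strictness of the lifts via conditions~(2) and the exact sequence of Definition~\ref{esc}) are precisely what the construction in Section~\ref{obstructiontheory} provides, so nothing further is needed.
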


Combining Corollary \ref{dimensiones} with Proposition \ref{full1} we obtain the following results.

\begin{cor}\label{dimensiones2}
The functor $S_{\alpha}$ is full if and only if its essential image consists of the $\alpha$\nobreakdash-continuous $\C C$\nobreakdash-modules $F$ with $\pd(F)\leq 1$.
\end{cor}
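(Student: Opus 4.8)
The statement to prove is Corollary~\ref{dimensiones2}: $S_\alpha$ is full if and only if its essential image consists of the $\alpha$-continuous $\C C$-modules $F$ with $\pd(F)\le 1$.

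The plan is to prove the two implications separately, drawing on Proposition~\ref{full1}, Corollary~\ref{dimensiones}, and the general fact (recorded just before Proposition~\ref{full1}) that $S_\alpha$ restricts to an equivalence between $\add(\C C)$ and the projectives of $\Mod{\alpha}{\C C}$, together with the observation that $S_\alpha$ takes exact triangles to exact sequences.

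For the forward direction, assume $S_\alpha$ is full. First I would invoke Proposition~\ref{full1} directly: it tells us $\pd(S_\alpha(X))\le 1$ for every $X$ in $\C T$, so every object of the essential image has projective dimension $\le 1$. Conversely, I must show every $\alpha$-continuous $\C C$-module $F$ with $\pd(F)\le 1$ lies in the essential image; but $\pd(F)\le 1 \le 2$, so this is immediate from Corollary~\ref{dimensiones}(1). This settles one implication, identifying the essential image precisely with $\{F : \pd(F)\le 1\}$.

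For the reverse direction, assume the essential image of $S_\alpha$ is exactly the class of $F$ with $\pd(F)\le 1$; I must deduce that $S_\alpha$ is full. Given $X,Y$ in $\C T$ and a morphism $\tau\colon S_\alpha(X)\r S_\alpha(Y)$ in $\Mod{\alpha}{\C C}$, the object $S_\alpha(X)$ is in the essential image, hence by hypothesis $\pd(S_\alpha(X))\le 1$. Now Corollary~\ref{dimensiones}(2) applies verbatim: since $\pd(S_\alpha(X))\le 1$, any such $\tau$ is of the form $S_\alpha(f)$ for some $f\colon X\r Y$ in $\C T$. As $X$, $Y$, and $\tau$ were arbitrary, $S_\alpha$ is full.

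The main point — really the only point requiring care — is simply to check that the hypotheses of the cited results line up: that $\pd(F)\le 1$ falls within the $\pd\le 2$ range needed for Corollary~\ref{dimensiones}(1), and that ``essential image $=\{\pd\le 1\}$'' delivers exactly the $\pd(S_\alpha(X))\le 1$ hypothesis of Corollary~\ref{dimensiones}(2) for an arbitrary object $X\in\C T$. There is no genuine obstacle here; the corollary is a formal repackaging of Proposition~\ref{full1} and Corollary~\ref{dimensiones}, which is presumably why the paper states it as a one-line consequence (``Combining Corollary~\ref{dimensiones} with Proposition~\ref{full1}''). I would write it out in two short paragraphs, one per implication.
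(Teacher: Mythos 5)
Your proof is correct and follows exactly the route the paper intends: Proposition \ref{full1} gives the containment of the essential image in $\{F:\pd(F)\leq 1\}$, Corollary \ref{dimensiones}(1) gives the reverse containment, and Corollary \ref{dimensiones}(2) gives fullness from the hypothesis $\pd(S_\alpha(X))\leq 1$. The paper states this corollary as an immediate combination of those two results, and your write-up is just that combination made explicit.
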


\begin{cor}\label{necesariaysuficiente0}
The category $\C T$ satisfies \arm{\alpha} if and only if $\pd(S_\alpha(X))\leq 1$ for all $X$ in $\C T$ and $\C C=\C T^\alpha$.
\end{cor}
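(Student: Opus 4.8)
The plan is to deduce Corollary \ref{necesariaysuficiente0} by combining the two necessary/sufficient halves already available in the excerpt. Recall that, for $\C C = \C T^{\alpha}$, property \arm{\alpha} is precisely the statement that the restricted Yoneda functor $S_{\alpha}$ is full (as noted right after Proposition \ref{full1}). So the claim is: $S_{\alpha}$ is full if and only if $\pd(S_{\alpha}(X)) \leq 1$ for every $X$ in $\C T$.

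For the forward implication, I would simply invoke Proposition \ref{full1}: if $S_{\alpha}$ is full, then $\pd(S_{\alpha}(X)) \leq 1$ for all $X$ in $\C T$. This is already stated and proved, so nothing new is needed here; equivalently, one can cite Corollary \ref{necesaria} directly, which packages exactly this consequence.

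For the reverse implication, suppose $\pd(S_{\alpha}(X)) \leq 1$ for all $X$ in $\C T$. I want to show $S_{\alpha}$ is full, i.e.\ that every $\tau\colon S_{\alpha}(X) \r S_{\alpha}(Y)$ is of the form $S_{\alpha}(f)$. This is exactly part (2) of Corollary \ref{dimensiones}, whose hypothesis $\pd(S_{\alpha}(X)) \leq 1$ holds by assumption. Hence every morphism between objects in the image of $S_{\alpha}$ lifts, which is the definition of $S_{\alpha}$ being full. (Alternatively, one notes that Corollary \ref{dimensiones2} already gives "$S_{\alpha}$ full $\iff$ essential image $= \{F : \pd(F)\leq 1\}$''; since the essential image is always contained in $\{F : \pd(F)\leq 1\}$ once we assume every $S_{\alpha}(X)$ has $\pd \leq 1$, and the reverse containment is part (1) of Corollary \ref{dimensiones} specialized to $\pd \leq 1 \leq 2$, the two conditions coincide.)

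There is essentially no obstacle: the corollary is a bookkeeping consequence of Proposition \ref{full1} and Corollary \ref{dimensiones}, with the only point requiring a moment's care being the translation between the abstract statement "$\pd(S_\alpha(X))\le 1$ for all $X$'' and the concrete property \arm{\alpha}, which is handled by the remark identifying \arm{\alpha} with fullness of $S_{\alpha}$ for $\C C = \C T^{\alpha}$. I would therefore present the proof in two or three lines, citing Corollary \ref{necesaria} (or Proposition \ref{full1}) for necessity and Corollary \ref{dimensiones}(2) for sufficiency.
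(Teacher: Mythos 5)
Your proof is correct and follows exactly the paper's intended route: the corollary is obtained by combining Proposition \ref{full1} (equivalently Corollary \ref{necesaria}) for necessity with Corollary \ref{dimensiones}(2) for sufficiency, using the identification of \arm{\alpha} with fullness of $S_{\alpha}$ when $\C C=\C T^{\alpha}$.
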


A different approach to the lifting of morphisms along the restricted Yoneda functor for $\alpha=\aleph_0$ is developed in \cite{BK03}.

\begin{rem}\label{alhilo}
We now list some examples of compactly generated triangulated categories $\C T$ and $\C C$, different from $\C T^\alpha$ in general, were the obstruction theory summarized in Theorem \ref{lololo} is interesting. In each case, $\C C$ is the smallest full subcategory closed under (de)suspensions, coproducts of less than $\alpha$ objects, and retracts, containing a certain object that we call \emph{additive generator}. Moreover,  the obstruction theory is independent of the regular cardinal $\alpha$ and
$\Mod{\alpha}{\C C}$ is (equivalent to) a well known graded abelian category.
\begin{enumerate}
\item $\C T=D(R)$ the derived category of a ring $R$, the additive generator is $R$, regarded as a complex concentrated in degree $0$, $\Mod{\alpha}{\C C}=\Mod{}{R}^\mathbb{Z}$ is the category of graded $R$-modules, and the restricted Yoneda functor corresponds to the homology functor $M\mapsto H_{*}(M)$.
\item $\C T$ the stable module category of the group ring $kG$ of a finite $p$-group $G$ over a field $k$ of characteristic $p$,
the additive generator is the trivial representation $k$, $\Mod{\alpha}{\C C}$ is the category of $\widehat H^*(G,k)$-modules, where $\widehat H^*(G,k)$ is the Tate cohomology ring, and the restricted Yoneda functor identifies with the Tate cohomology functor with coefficients $M\mapsto \widehat H^*(G,M)$.
\item $\C T$ the homotopy category of modules over a ring spectrum $R$, the additive generator is $R$, $\Mod{\alpha}{\C C}$ is the category of $\pi_*(R)$-modules, and the restricted Yoneda functor corresponds to the stable homotopy functor $M\mapsto \pi_*(M)$.
\item $\C T$ the derived category of a differential graded algebra $A$, the additive generator is $A$,  $\Mod{\alpha}{\C C}$ is the category of $H_*(A)$-modules, and the restricted Yoneda functor identifies with the homology functor $M\mapsto H_*(M)$
\end{enumerate}
The first obstruction $\kappa_0$ to the realizability of an object has been considered in detail in the last three cases, see \cite{rmtc,Sag08,GH08} respectively. Indeed, \cite{rmtc} is where the obstructions $\kappa_n$ to the realizability of objects were first treated systematically.
\end{rem}

We now consider $\alpha$\nobreakdash-flat objects and their connection with $\alpha$\nobreakdash-Adams represen\-tability.

\begin{defn}
Let $\alpha$ be a regular cardinal and $\C A$ a locally $\alpha$\nobreakdash-presentable abelian category with exact $\alpha$\nobreakdash-filtered colimits and a set of $\alpha$\nobreakdash-presentable projective generators. An \emph{$\alpha$\nobreakdash-flat object} $A$ in $\C A$ is  an $\alpha$\nobreakdash-filtered colimit of $\alpha$\nobreakdash-presentable projective objects $A=\colim_{\lambda\in \Lambda}P_{\lambda}$. The \emph{$\alpha$\nobreakdash-flat global dimension} of $\C A$ is
$$\fgd_{\alpha}(\C A)=\sup\{\pd(A)\mid A\text{ is $\alpha$\nobreakdash-flat}\}.$$
\end{defn}

\begin{rem}\label{canonica}
An $\alpha$\nobreakdash-flat object  $A=\colim_{\lambda\in \Lambda}P_{\lambda}$ has a canonical projective resolution of the form
$$
\cdots\r \bigoplus_{\lambda\r \mu\r\nu\in\Lambda}P_{\lambda} \To \bigoplus_{\lambda\r \mu\in\Lambda}P_{\lambda} \To \bigoplus_{\lambda\in\Lambda} P_{\lambda} \onto A.
$$
These direct sums are indexed by the simplices of the nerve $N\Lambda$ of the  category $\Lambda$ indexing the colimit, i.e.~for each $n$, $N_{n}\Lambda=\{\text{chains of $n$ composable maps in }\Lambda\}$. In particular, for any other object $B$ in $\C A$ the higher $\ext$'s
$$\ext^{n}_{\C A}(A,B)={\lim_{\lambda\in\Lambda}}^{n}\hom_{\C A}(P_{\lambda},B)$$
are the derived functors of the inverse limit.
\end{rem}

\begin{rem}\label{sonlosplanos}
In $\Mod{\alpha}{\C T^\alpha}$, the $\alpha$\nobreakdash-flat objects coincide with the cohomological functors, cf.~\cite[Section~7.2]{triang}.
\end{rem}

The $\alpha$\nobreakdash-flat global dimension of $\C C$ can be bounded above if the cardinal of $\C C$ is not too large.

\begin{defn}
The \emph{cardinal} of a small category $\C C$ is
$$\card\C C=\card\coprod_{x,y\in \mathcal S}\C C(x,y),$$
where $\mathcal S$ is a set of representatives of isomorphism classes of objects in $\C C$.
\end{defn}

\begin{lem}\label{cotainferior}
If $\C C$ is a non-trivial additive category with coproducts of less than $\alpha$ objects, then $\card\C C\geq \alpha$.
\end{lem}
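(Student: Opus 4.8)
The plan is to exhibit, for any cardinal $\beta<\alpha$, a set of at least $\beta$ pairwise non-isomorphic objects of $\C C$, or failing that, to produce enough distinct morphisms between a fixed pair of objects; since $\alpha$ is a cardinal and $\beta<\alpha$ is arbitrary, this forces $\card\C C\geq\alpha$. The natural source of such objects is the fact that $\C C$ is non-trivial and closed under coproducts of $<\alpha$ objects: fix an object $X\neq 0$ in $\C C$, and for each cardinal $\beta<\alpha$ form the coproduct $X^{(\beta)}=\coprod_{i<\beta}X$ inside $\C C$ (this lives in $\C C$ by hypothesis~(2), since $\beta<\alpha$). The key point to establish is that the $X^{(\beta)}$, for $\beta<\alpha$, are pairwise non-isomorphic, or at least that they realize $\alpha$-many isomorphism classes in the aggregate; then $\card\C C\geq\alpha$ is immediate from the definition, as the disjoint union in the definition of $\card\C C$ ranges over a set $S$ of isomorphism classes that must therefore have cardinality $\geq\alpha$.

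First I would record that in an additive category the coproduct $X^{(\beta)}$ of $\beta$ copies of a nonzero object $X$ is itself nonzero, and that the identity endomorphism decomposes as a sum of $\beta$ orthogonal idempotents $e_i$ (the composites $X^{(\beta)}\onto X\into X^{(\beta)}$ onto the $i$-th summand). Consequently $\End_{\C C}(X^{(\beta)})$ contains at least $\beta$ distinct idempotents, namely the partial sums $\sum_{i\in J}e_i$ for $J\subseteq\beta$ — indeed already the $\beta$ elements $e_i$ are pairwise distinct because $e_i e_j=0\neq e_i$ for $i\neq j$ (using $X\neq 0$, so $e_i\neq 0$). Hence $\card\End_{\C C}(X^{(\beta)})\geq\beta$ for every $\beta<\alpha$. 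Since $\End_{\C C}(X^{(\beta)})$ is one of the hom-sets appearing in the disjoint union defining $\card\C C$, we get $\card\C C\geq\beta$ for all $\beta<\alpha$, and therefore $\card\C C\geq\sup_{\beta<\alpha}\beta=\alpha$ (using regularity of $\alpha$, or simply that $\alpha$ is a limit cardinal being infinite; in any case the supremum of all smaller cardinals is $\alpha$). This route via endomorphism rings avoids any delicate argument about when $X^{(\beta)}\cong X^{(\gamma)}$ and keeps everything inside a single hom-set.

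The main obstacle I anticipate is purely bookkeeping: one must be careful that the $e_i$ are genuinely distinct, which requires $X\neq 0$ — this is exactly the non-triviality hypothesis on $\C C$ — and one should check the edge case $\alpha=\aleph_0$, where the $e_i$, $i<\omega$, directly give a countably infinite hom-set. One mild subtlety is that $\card\C C$ is defined using a set $S$ of isomorphism class representatives; since $X^{(\beta)}$ for a fixed $\beta$ is a single object, this causes no trouble — we only ever need the one object $X^{(\beta)}$ (for each $\beta$, or even a single sufficiently large $\beta$ if $\alpha$ is a successor) and its endomorphism set. If $\alpha=\gamma^+$ is a successor cardinal one takes directly $\beta=\gamma$ and concludes $\card\C C\geq\gamma$, hence $\geq\gamma^+=\alpha$ once one notes that an infinite set of size $\geq\gamma$ that is the endomorphism ring of an object with $\geq\gamma$ orthogonal idempotents in fact has size $\geq 2^\gamma\geq\gamma^+$, by taking all partial sums $\sum_{i\in J}e_i$; if $\alpha$ is a limit cardinal the supremum argument suffices directly. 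I would present the partial-sums version uniformly, since $2^\beta\geq\beta^+>\beta$ gives a little room to spare and handles successor cardinals without a separate case.
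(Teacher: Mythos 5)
Your argument is correct and is essentially the paper's proof: the paper also fixes $X\neq 0$, forms $\coprod_{\beta}X$ for $\beta<\alpha$, and counts $2^{\beta}$ morphisms built from zero/identity choices on the summands --- there via $\C C(\coprod_{\beta}X,X)=\prod_{\beta}\C C(X,X)$ and $\card\C C(X,X)\geq 2$ --- before concluding with exactly your successor/limit case split, so your uniform ``partial sums'' version gives the same bound $\card\C C\geq\sup_{\beta<\alpha}2^{\beta}$. Two small repairs: for infinite $J$ the expression $\sum_{i\in J}e_i$ is not a literal sum (infinite sums of morphisms need not exist in an additive category, and likewise the identity of $\coprod_{\beta}X$ does not decompose as an infinite sum of the $e_i$); define it instead as the unique endomorphism restricting to the $i$-th inclusion for $i\in J$ and to $0$ otherwise, distinct $J$ giving distinct maps upon composing with the inclusions. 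Also, your parenthetical that ``the supremum of all smaller cardinals is $\alpha$'' is false when $\alpha$ is a successor (regularity does not help), but the $2^{\beta}$ count in your last paragraph handles precisely that case, as in the paper.
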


\begin{proof}
If $X\neq 0$ the identity in $X$ is non-trivial, so $\card \C C(X,X)\geq 2$. For $\beta<\alpha$,
$$
\C C(\coprod_{\beta}X,X)=\prod_{\beta}\C C(X,X),\qquad \card\prod_{\beta}\C C(X,X)\geq 2^{\beta}.
$$
Hence,
$\card\C C\geq \sup_{\beta<\alpha} 2^{\beta}$. We now distinguish two cases, if $\alpha=\gamma^{+}$ is a successor, then $\sup_{\beta<\alpha} 2^{\beta}=2^{\gamma}\geq \gamma^{+}=\alpha$, and, if $\alpha$ is a limit cardinal, then $\sup_{\beta<\alpha} 2^{\beta} \geq \sup_{\beta<\alpha}{\beta}=\alpha$.
\end{proof}

In Section \ref{sec-alphacpt} we show, under the generalized continuum hypothesis, that there is always a large enough cardinal $\alpha$ such that $\card\C T^{\alpha}=\alpha$ is as small as it can be.

By Lemma \ref{cotainferior}, the hypothesis of the following proposition can only be satisfied if $\alpha\leq\aleph_{n}$.

\begin{prop}\label{cota}
If $\card \C C\leq \aleph_{n}$, then $\fgd_{\alpha} (\Mod{\alpha}{\C C})\leq n+1$.
\end{prop}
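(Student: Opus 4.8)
The plan is to bound the projective dimension of an arbitrary $\alpha$-flat object $A$ in $\Mod{\alpha}{\C C}$ by $n+1$, using the canonical projective resolution of Remark \ref{canonica} together with a cardinality count on the indexing category of the colimit. Write $A=\colim_{\lambda\in\Lambda}P_\lambda$ with each $P_\lambda$ an $\alpha$-presentable projective, where $\Lambda$ is $\alpha$-filtered. By Remark \ref{canonica}, for any $B$ in $\Mod{\alpha}{\C C}$ we have $\ext^{p}_{\alpha,\C C}(A,B)=\lim^{p}_{\lambda\in\Lambda}\hom_{\alpha,\C C}(P_\lambda,B)$, the higher derived functors of the inverse limit over $\Lambda$. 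So the whole statement reduces to showing that $\lim^{p}_{\Lambda}$ vanishes for $p>n+1$ on every such pro-object. The key point is that $\lim^{p}$ over a directed/filtered poset of cofinality $\leq\aleph_n$ vanishes for $p\geq n+2$ — this is the classical Goblot / Mitchell vanishing theorem for higher inverse limits (if the poset has a cofinal subset of cardinality $\aleph_n$, then $\lim^{p}=0$ for $p>n+1$). So the first step is to arrange that $\Lambda$ can be replaced, without changing the colimit, by an $\alpha$-filtered \emph{poset} that is cofinally small of size $\leq\aleph_n$.

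To do this I would argue as follows. The object $A$, being $\alpha$-flat, lies in $\Mod{\alpha}{\C C}$; one can present it as a colimit of representables, and since $\card\C C\leq\aleph_n$ there are at most $\aleph_n$ representables, hence the canonical diagram of representables mapping to $A$ (the "category of elements" restricted to representables) has at most $\aleph_n$ objects. More carefully: a standard fact is that any $\alpha$-flat $A$ is the $\alpha$-filtered colimit of its $\alpha$-presentable projective subobjects-with-map, i.e. of the comma-type category whose objects are maps $P\to A$ with $P$ an $\alpha$-presentable projective; each such $P$ is a retract of a finite-or-$<\alpha$ coproduct of representables, of which there are at most $\card\C C\leq\aleph_n$, and the hom-sets are bounded by $\card\C C\leq\aleph_n$ as well, so this indexing category has cardinality $\leq\aleph_n$ and is $\alpha$-filtered. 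Replacing a filtered category by a cofinal filtered poset of the same cardinality (a standard reduction, e.g. via the poset of finitely generated subcategories, preserving the colimit) gives an $\alpha$-filtered poset $\Lambda'$ with $\card\Lambda'\leq\aleph_n$ and $A=\colim_{\Lambda'}P_\lambda$. Then Remark \ref{canonica} applies with $\Lambda'$, and the Goblot–Mitchell theorem gives $\ext^{p}_{\alpha,\C C}(A,B)=\lim^{p}_{\Lambda'}=0$ for all $p\geq n+2$ and all $B$, i.e. $\pd(A)\leq n+1$.

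The main obstacle I anticipate is the reduction step: showing that the $\alpha$-filtered indexing category witnessing $\alpha$-flatness of $A$ can be taken to be a poset of cardinality $\leq\aleph_n=\card\C C$, while keeping the colimit diagram literally $A=\colim P_\lambda$ with projective $\alpha$-presentable values, so that the explicit resolution of Remark \ref{canonica} is available. The bookkeeping is a count of how many $\alpha$-presentable projectives and how many morphisms between them there can be once $\card\C C$ is fixed; both are controlled by $\card\C C$ because $\alpha$-presentable projectives are retracts of coproducts of $<\alpha$ representables and, by Lemma \ref{cotainferior}, $\alpha\leq\card\C C$, so a coproduct of $<\alpha$ representables is a coproduct of $\leq\card\C C$ objects from a set of size $\leq\card\C C$, giving at most $\card\C C$ isomorphism classes and at most $\card\C C$ morphisms in total. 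The cofinality/poset replacement is then routine category theory. Everything else — invoking the inverse-limit vanishing theorem, and translating $\lim^{p}$ into $\ext^{p}$ via Remark \ref{canonica} — is immediate.
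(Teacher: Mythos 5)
Your reduction step fails, and it is not a bookkeeping issue. The statement must cover \emph{every} $\alpha$\nobreakdash-flat object, and these can be arbitrarily large: for instance any coproduct of $\geq\alpha$ representables is $\alpha$\nobreakdash-flat (it is the $\alpha$\nobreakdash-filtered colimit of its subcoproducts of $<\alpha$ factors), and its values $A(X)$ can have any cardinality. Consequently no $\alpha$\nobreakdash-filtered diagram of $\alpha$\nobreakdash-presentable projectives with colimit $A$ can have $\leq\aleph_n$ objects once $\card A$ exceeds $\aleph_n$ times the size bound of an $\alpha$\nobreakdash-presentable, and the comma category you propose has at least $\card\hom_{\alpha,\C C}(\C C(-,X),A)=\card A(X)$ objects, which is unbounded; a cofinal subposet of size $\leq\aleph_n$ does not exist either, for the same reason. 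So Goblot--Mitchell vanishing only yields $\pd(A)\leq n+1$ for $\alpha$\nobreakdash-flat objects admitting a \emph{small} presentation, not for all of them. (A secondary problem: your count of isomorphism classes of $\alpha$\nobreakdash-presentable projectives as ``at most $\card\C C$'' implicitly uses $\aleph_n^{<\alpha}=\aleph_n$, which is not a theorem of ZFC; but this is minor compared to the size of the comma category.)

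The missing idea is how to handle large flat objects, and this is exactly where the paper goes a different way. Its proof observes that the inclusion $\Mod{\alpha}{\C C}\subset\Mod{\aleph_0}{\C C}$ preserves $\alpha$\nobreakdash-filtered colimits and that $\alpha$\nobreakdash-presentable projectives are also projective and finitely presentable there, so by Remark \ref{canonica} the groups $\ext^{m}_{\alpha,\C C}(H,F)$ and $\ext^{m}_{\aleph_0,\C C}(H,F)$ agree for $H$ $\alpha$\nobreakdash-flat; the bound $\leq n+1$ is then imported from Simson's theorem \cite[Corollary 3.13]{Si77} in the Grothendieck category $\Mod{\aleph_0}{\C C}$, whose proof uses exact (arbitrary) filtered colimits and filtration/Auslander-type induction on the cardinality of the flat object --- machinery that, as the paper's remark following Corollary \ref{necesariaysuficiente} explains, is not available in $\Mod{\alpha}{\C C}$ for $\alpha>\aleph_0$. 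Some such induction over the size of $A$ (or the passage to $\Mod{\aleph_0}{\C C}$) is indispensable; the derived-limit vanishing over a small index poset alone cannot prove the proposition.
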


\begin{proof}
The full inclusion $\Mod{\alpha}{\C C}\subset \Mod{\aleph_{0}}{\C C}$ preserves $\alpha$\nobreakdash-filtered colimits. The $\alpha$\nobreakdash-presentable projective objects in $\Mod{\alpha}{\C C}$ are the retracts of the representable functors $\C C(-,X)$, which also coincide with the $\aleph_{0}$\nobreakdash-presentable objects in $\Mod{\aleph_{0}}{\C C}$. Therefore $\alpha$\nobreakdash-flat objects in $\Mod{\alpha}{\C C}$ are also $\alpha$\nobreakdash-flat in $\Mod{\aleph_{0}}{\C C}$, in particular $\aleph_{0}$\nobreakdash-flat. Moreover, by Remark \ref{canonica},  if $F$ is an $\alpha$\nobreakdash-continuous $\C C$\nobreakdash-module and $H=\colim_{\lambda\in\Lambda}P_{\lambda}$ is an $\alpha$\nobreakdash-flat $\alpha$\nobreakdash-continuous $\C C$\nobreakdash-module, then
$$\ext^{n}_{\alpha,\C C}(H,F)={\lim_{\lambda\in\Lambda}}^{n}\hom_{\alpha,\C C}(P_{\lambda},F)
={\lim_{\lambda\in\Lambda}}^{n}\hom_{\aleph_{0},\C C}(P_{\lambda},F)=\ext^{n}_{\aleph_{0},\C C}(H,F).$$
This is proven in \cite[Proposition 7.5.5]{triang} assuming that $\C C$ is triangulated, but this hypothesis is not really used. If $\card \C C\leq \aleph_{n}$, then any $\aleph_{0}$\nobreakdash-flat $\aleph_{0}$\nobreakdash-continuous $\C C$\nobreakdash-module has projective dimension $\leq n+1$ in $\Mod{\aleph_{0}}{\C C}$, see \cite[Corollary 3.13]{Si77}. Hence the proposition follows from the previous equation.
\end{proof}

We now concentrate on the case $\C C=\C T^{\alpha}$. The following sufficient condition for \aro{\alpha} follows from  Corollary \ref{dimensiones} and Remark \ref{sonlosplanos}.

\begin{cor}\label{suficiente}
If $\fgd_{\alpha} (\Mod{\alpha}{\C T^{\alpha}})\leq 2$ , then $\C T$ satisfies \aro{\alpha}.
\end{cor}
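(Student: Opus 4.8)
The plan is to deduce Corollary \ref{suficiente} directly by feeding the flat global dimension hypothesis into the machinery already assembled. First I would recall from Remark \ref{sonlosplanos} that in $\Mod{\alpha}{\C T^{\alpha}}$ the $\alpha$\nobreakdash-flat objects are exactly the cohomological $\alpha$\nobreakdash-continuous $\C T^{\alpha}$\nobreakdash-modules. Thus the hypothesis $\fgd_{\alpha}(\Mod{\alpha}{\C T^{\alpha}})\leq 2$ says precisely that every cohomological $\alpha$\nobreakdash-continuous functor $F\colon(\C T^{\alpha})^{\op}\r\Ab$ has $\pd(F)\leq 2$.

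Next I would invoke Corollary \ref{dimensiones}(1): any $\alpha$\nobreakdash-continuous $\C C$\nobreakdash-module of projective dimension $\leq 2$ is of the form $S_{\alpha}(X)$ for some $X$ in $\C T$. Applying this with $\C C=\C T^{\alpha}$ to each cohomological $F$ as above produces the desired $X$ with $F\cong S_{\alpha}(X)=\C T(-,X)_{|_{\C T^{\alpha}}}$. By the reformulation of \aro{\alpha} recorded just after the statement of Proposition \ref{full1} — namely that \aro{\alpha} holds iff the essential image of $S_{\alpha}$ is the class of cohomological functors in $\Mod{\alpha}{\C T^{\alpha}}$ — this is all that is required, since $S_{\alpha}$ always takes exact triangles to exact sequences, so its essential image consists of cohomological functors, and we have just shown the reverse inclusion.

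There is really no hard step here: the corollary is a one\nobreakdash-line consequence of Corollary \ref{dimensiones}, Remark \ref{sonlosplanos}, and the translation of \aro{\alpha} into the language of the restricted Yoneda functor. The only point that deserves a moment's care is the direction ``essential image $\subseteq$ cohomological functors'': this follows because $S_{\alpha}$ takes exact triangles to exact sequences, so for any exact triangle in $\C T^{\alpha}\subset\C T$ its image is an exact sequence of abelian groups, i.e.\ $S_{\alpha}(X)$ is cohomological as a functor on $(\C T^{\alpha})^{\op}$; one should also note that $S_{\alpha}(X)$ is $\alpha$\nobreakdash-continuous, which is immediate since $\C T(-,X)$ sends coproducts of $<\alpha$ objects in $\C T$ to products and objects of $\C T^{\alpha}$ generate the relevant coproducts inside $\C T^{\alpha}$. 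Putting these two inclusions together yields \aro{\alpha}.

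Thus the proof is essentially a citation chain, and I would expect the write\nobreakdash-up to be two or three sentences: unpack $\fgd_{\alpha}$ via Remark \ref{sonlosplanos}, apply Corollary \ref{dimensiones}(1) to realize every cohomological functor, and conclude by the reformulation of \aro{\alpha}. The ``main obstacle'', such as it is, has already been overcome inside Theorem \ref{lololo} and its corollaries — the obstruction\nobreakdash-theoretic input guaranteeing that vanishing of the $\ext$\nobreakdash-valued obstructions (which is forced by $\pd\leq 2$) suffices to lift an object along $t$.
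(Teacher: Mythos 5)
Your proposal is correct and follows exactly the paper's route: the paper derives Corollary \ref{suficiente} precisely by combining Remark \ref{sonlosplanos} (cohomological functors in $\Mod{\alpha}{\C T^{\alpha}}$ are the $\alpha$-flat objects) with Corollary \ref{dimensiones}(1) and the reformulation of \aro{\alpha} in terms of the essential image of $S_{\alpha}$. Nothing is missing; your extra remarks on why $S_{\alpha}(X)$ is cohomological and $\alpha$-continuous are routine verifications the paper leaves implicit.
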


For the following corollary we also use Proposition \ref{cota}. The restrictions on the cardinal $\alpha$ are imposed by Lemma \ref{cotainferior}.

\begin{cor}\label{cota2}
Let $\alpha$ be $\aleph_{0}$ or $\aleph_{1}$. If $\card \C T^{\alpha}\leq \aleph_1$, then $\C T$ satisfies \aro{\alpha}.
\end{cor}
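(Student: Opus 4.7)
The plan is to simply chain together Proposition \ref{cota} and Corollary \ref{suficiente}, using Lemma \ref{cotainferior} only to explain why the hypothesis forces $\alpha\leq\aleph_1$ (and hence why restricting to $\alpha\in\{\aleph_0,\aleph_1\}$ is not a loss of generality).

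More precisely, I would argue as follows. Set $\C C=\C T^{\alpha}$. Under our standing assumptions $\C T^{\alpha}$ is essentially small, closed under (de)suspensions, closed under coproducts of $<\alpha$ objects, generates $\C T$, and is non-trivial. Hence Lemma \ref{cotainferior} applies and gives $\card\C T^{\alpha}\geq\alpha$; combined with $\card\C T^{\alpha}\leq\aleph_1$ this forces $\alpha\leq\aleph_1$, so the restriction $\alpha\in\{\aleph_0,\aleph_1\}$ in the statement is the only meaningful range.

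Now apply Proposition \ref{cota} with $\C C=\C T^{\alpha}$ and $n=1$. Since $\card\C T^{\alpha}\leq\aleph_1=\aleph_n$, the proposition yields
\[
\fgd_{\alpha}\bigl(\Mod{\alpha}{\C T^{\alpha}}\bigr)\leq n+1=2.
\]
Finally, Corollary \ref{suficiente} tells us that the inequality $\fgd_{\alpha}(\Mod{\alpha}{\C T^{\alpha}})\leq 2$ is already sufficient for $\C T$ to satisfy \aro{\alpha}, and the conclusion follows.

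There is essentially no hard step: the whole content of the corollary is packaged in the preceding two results, so the only thing to verify is that the cardinality bound $\aleph_1$ matches the dimension bound $2$ required by \aro{\alpha} via Proposition \ref{cota}'s formula $\pd\leq n+1$. The mild subtlety worth noting is that Proposition \ref{cota} is stated for $\card\C C\leq\aleph_n$ with $n$ a non-negative integer, and we are using it for both $\alpha=\aleph_0$ and $\alpha=\aleph_1$ with the same value $n=1$; for $\alpha=\aleph_0$ this is fine because the bound $\card\C T^{\aleph_0}\leq\aleph_1$ still fits the hypothesis $\card\C C\leq\aleph_1$, whereas the case $\card\C T^{\aleph_0}\leq\aleph_0$ would give the sharper bound $\fgd\leq 1$, which is even stronger and also implies \aro{\aleph_0}.
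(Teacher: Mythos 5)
Your proposal is correct and matches the paper's own (very brief) justification: the paper likewise obtains $\fgd_{\alpha}(\Mod{\alpha}{\C T^{\alpha}})\leq 2$ from Proposition \ref{cota} with $\card\C T^{\alpha}\leq\aleph_1$ and then invokes Corollary \ref{suficiente}, with Lemma \ref{cotainferior} only explaining the restriction to $\alpha\in\{\aleph_0,\aleph_1\}$. Nothing further is needed.
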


The following homological characterization of $\alpha$\nobreakdash-Adams representability is a consequence of Corollaries \ref{dimensiones2} and \ref{suficiente} and Remark \ref{sonlosplanos}.

\begin{cor}\label{necesariaysuficiente}
A triangulated category $\C{T}$ satisfies $\alpha$\nobreakdash-Adams representability if and only if $\fgd_{\alpha} (\Mod{\alpha}{\C T^{\alpha}})\leq 1$.
\end{cor}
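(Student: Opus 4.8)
The plan is to prove both implications of Corollary~\ref{necesariaysuficiente} by combining the already-established necessary and sufficient conditions for \aro{\alpha} and \arm{\alpha} with Remark~\ref{sonlosplanos}, which identifies $\alpha$\nobreakdash-flat objects in $\Mod{\alpha}{\C T^\alpha}$ with cohomological functors. Recall that $\fgd_\alpha(\Mod{\alpha}{\C T^\alpha})=\sup\{\pd(F)\mid F \text{ is }\alpha\text{-flat}\}=\sup\{\pd(F)\mid F \text{ cohomological}\}$.

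First I would prove the ``if'' direction. Assume $\fgd_\alpha(\Mod{\alpha}{\C T^\alpha})\leq 1$. Since this dimension is $\leq 1 \leq 2$, Corollary~\ref{suficiente} gives \aro{\alpha} immediately; thus the essential image of $S_\alpha$ is exactly the class of cohomological functors. By Remark~\ref{sonlosplanos} these are the $\alpha$\nobreakdash-flat functors, which all have $\pd\leq 1$ by hypothesis. Hence every object $X$ in $\C T$ has $\pd(S_\alpha(X))\leq 1$, and Corollary~\ref{necesariaysuficiente0} yields \arm{\alpha}. Together \aro{\alpha} and \arm{\alpha} give $\alpha$\nobreakdash-Adams representability.

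For the ``only if'' direction, assume $\C T$ satisfies $\alpha$\nobreakdash-Adams representability, so both \aro{\alpha} and \arm{\alpha} hold. I would take an arbitrary $\alpha$\nobreakdash-flat $F$ in $\Mod{\alpha}{\C T^\alpha}$; by Remark~\ref{sonlosplanos}, $F$ is cohomological. By \aro{\alpha} (in the reformulated version: the essential image of $S_\alpha$ is the class of cohomological functors), we have $F\cong S_\alpha(X)$ for some $X$ in $\C T$. Now \arm{\alpha} says $S_\alpha$ is full, so Corollary~\ref{necesaria} (equivalently Proposition~\ref{full1}) applies and gives $\pd(S_\alpha(X))\leq 1$, that is, $\pd(F)\leq 1$. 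Taking the supremum over all $\alpha$\nobreakdash-flat $F$ gives $\fgd_\alpha(\Mod{\alpha}{\C T^\alpha})\leq 1$.

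There is no real obstacle here: the statement is a clean repackaging of Corollaries~\ref{dimensiones2}, \ref{suficiente}, and \ref{necesariaysuficiente0} via the dictionary of Remark~\ref{sonlosplanos}. The only point requiring minor care is ensuring that the two reformulations of \aro{\alpha} and \arm{\alpha} given just after Proposition~\ref{full1} (essential image equals cohomological functors; $S_\alpha$ full) are used consistently with the $\C C=\C T^\alpha$ specialization, and that $\fgd_\alpha\leq 1$ is genuinely equivalent to ``every cohomological functor has $\pd\leq 1$'' rather than merely implying it --- but this is exactly the content of Remark~\ref{sonlosplanos} together with the definition of $\fgd_\alpha$. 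One could also simply cite Corollary~\ref{dimensiones2}: fullness of $S_\alpha$ is equivalent to its essential image being the $\pd\leq 1$ functors, which combined with Corollary~\ref{suficiente} and Remark~\ref{sonlosplanos} closes the loop.
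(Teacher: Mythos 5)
Your proof is correct and follows essentially the same route as the paper, which deduces Corollary~\ref{necesariaysuficiente} directly from Corollaries~\ref{dimensiones2} and \ref{suficiente} together with the identification of $\alpha$\nobreakdash-flat objects with cohomological functors in Remark~\ref{sonlosplanos}; your substitution of Proposition~\ref{full1}/Corollary~\ref{necesaria} and Corollary~\ref{necesariaysuficiente0} for Corollary~\ref{dimensiones2} is only a cosmetic repackaging of the same ingredients.
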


Using Proposition \ref{cota}, we obtain Neeman's sufficient condition for  $\aleph_{0}$\nobreakdash-Adams representability, cf.~\cite{Ne97}.

\begin{cor}
If $\card \C T^{\aleph_0}\leq \aleph_0$, then $\C T$ satisfies $\aleph_{0}$\nobreakdash-Adams representability.
\end{cor}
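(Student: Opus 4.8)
The plan is to specialize the two homological criteria already obtained in this section to the case at hand. Put $\C C=\C T^{\aleph_{0}}$ and $\alpha=\aleph_{0}$; since $\C T$ is compactly generated these are legitimate choices satisfying the standing assumptions, so both Proposition \ref{cota} and Corollary \ref{necesariaysuficiente} apply.

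The first step is to invoke Proposition \ref{cota} with $n=0$. The hypothesis $\card\C T^{\aleph_{0}}\leq\aleph_{0}$ reads $\card\C C\leq\aleph_{0}$, and hence $\fgd_{\aleph_{0}}(\Mod{\aleph_{0}}{\C T^{\aleph_{0}}})\leq n+1=1$.

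The second step is to feed this bound into Corollary \ref{necesariaysuficiente}, taken at $\alpha=\aleph_{0}$: the inequality $\fgd_{\aleph_{0}}(\Mod{\aleph_{0}}{\C T^{\aleph_{0}}})\leq 1$ is precisely what is needed to conclude that $\C T$ satisfies $\aleph_{0}$\nobreakdash-Adams representability.

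There is no real obstacle to overcome; the entire mathematical content sits inside Proposition \ref{cota} --- whose proof in turn reduces, via Remark \ref{canonica}, to Simson's bound \cite[Corollary 3.13]{Si77} on the projective dimensions of $\aleph_{0}$\nobreakdash-flat modules over a countable category --- together with Corollary \ref{necesariaysuficiente}. One may additionally remark, using Lemma \ref{cotainferior}, that the hypothesis in fact forces $\card\C T^{\aleph_{0}}=\aleph_{0}$, so that this statement recovers exactly Neeman's ``essentially countable'' case \cite{Ne97}; but this refinement is not needed for the argument.
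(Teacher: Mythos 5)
Your proof is correct and follows exactly the route the paper intends: the paper derives this corollary by applying Proposition \ref{cota} with $n=0$ to get $\fgd_{\aleph_0}(\Mod{\aleph_0}{\C T^{\aleph_0}})\leq 1$ and then concluding via the characterization in Corollary \ref{necesariaysuficiente}. Your extra observation via Lemma \ref{cotainferior} that the hypothesis forces $\card\C T^{\aleph_0}=\aleph_0$ is accurate but, as you say, not needed.
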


\begin{rem}\label{auslander}
In the case $\alpha=\aleph_0$, Beligiannis proves in \cite[Theorem~11.8]{Be00} that $\C{T}$ satisfies \arm{\aleph_0} if and only if $\fgd_{\aleph_0} (\Mod{\aleph_0}{\C T^{\aleph_0}})\leq 1$. Thus, by Corollary \ref{necesariaysuficiente}, \arm{\aleph_0} implies \aro{\aleph_0}.

A crucial step in his proof is that, since $\Mod{\aleph_0}{\C T^{\aleph_0}}$ is a Grothen\-dieck category, it follows from \cite[Theorem 2.7]{Si77} that $\fgd_{\aleph_0} (\Mod{\aleph_0}{\C T^{\aleph_0}})=\sup\{\pd(A)\mid A\text{ is $\alpha$\nobreakdash-flat and $\pd(A)<\infty$}\}$.

The fact that $\Mod{\aleph_0}{\C T^{\aleph_0}}$ is Grothendieck is used in order to apply (in each step of an inductive argument) the Auslander Lemma \cite[Lemma VI.2.6]{FS}. This lemma says that if an object $X$ is the union of a well-ordered continuous ascending sequence of subobjects $X_\alpha$ such that $\pd(X_{\alpha+1}/X_{\alpha})\leq k$, then $\pd(X)\leq k$. The second author has proved a generalization of the Auslander Lemma for $\Mod{\aleph_n}{{\C T}^{\aleph_n}}$, which is not Grothendieck. This generalization, under the same hypotheses, yields $\pd(X)\leq k+n$, and not $\leq k$, which hampers the inductive argument.
Using a completely different approach, we will extend Beligiannis' result for $\C T=D(R)$ with $R$ a hereditary ring and any $\alpha$, see Theorem \ref{mainderived} and Corollary \ref{cor-rings1}.
\end{rem}

\section{Transfinite Adams representability in the derived category of a ring}\label{sec-for-rings}

In this section we consider \aro{\alpha} and \arm{\alpha} for the derived category $D(R)$  of an $\alpha$\nobreakdash-coherent ring $R$. The main result is Theorem \ref{mainderived}, which gives a necessary condition for \arm{\alpha}, and also necessary and sufficient conditions for \aro{\alpha} and \arm{\alpha} if $R$ is hereditary. We also prove \aro{\aleph_1} for rings of cardinality $\leq\aleph_1$ under the continuum hypothesis (Proposition \ref{aleph_1-Adams-for-rings}). All modules considered in this section are right modules. We rely on the obstruction theory summarized in Section \ref{sec-obs} and, towards the end, also on the relation between the Adams spectral sequence and the first obstruction to the realization of an $\alpha$-continuous $\C C$-module, which is established independently later in Section \ref{sec-first-obs}.

\begin{defn}
Let $R$ be a ring and $\alpha$ a regular cardinal.
An $R$\nobreakdash-module is \emph{$\alpha$\nobreakdash-generated} if it has a set of generators of cardinal $<\alpha$, it is \emph{$\alpha$\nobreakdash-presentable}  if it is the quotient of two $\alpha$\nobreakdash-generated projective modules.
The ring $R$ is \emph{$\alpha$\nobreakdash-coherent} if all $\alpha$\nobreakdash-generated submodules of free $R$\nobreakdash-modules are $\alpha$\nobreakdash-presentable. It is enough to check this condition on ideals, cf.~\cite[Chapter~7]{JL89}.
\end{defn}

\begin{rem}
Alternatively, an $R$\nobreakdash-module $P$ is $\alpha$\nobreakdash-presentable if it admits a free presentation
$$\bigoplus_{J}R\To \bigoplus_{I}R\onto P$$
with $\card I,\card J<\alpha$. Any $\alpha$\nobreakdash-presentable $R$\nobreakdash-module is $\alpha$\nobreakdash-generated. The converse is true for projective $R$\nobreakdash-modules.

If $\card R<\alpha$, then $R$ is $\alpha$\nobreakdash-coherent since in this case $\alpha$-generated modules are the same as $\alpha$-presentable modules, and the same as modules of cardinality $<\alpha$. Moreover, hereditary rings are $\alpha$\nobreakdash-coherent for all $\alpha$ since ideals are projective.
\end{rem}

We now state the main result of this section. We make use of the $\alpha$\nobreakdash-pure global dimension of a ring $\Pgldim_{\alpha}(R)$ as it was defined in the introduction, cf.~\cite[Chapter~7]{JL89}, although below we give a more general definition for abelian categories.

\begin{thm}\label{mainderived}
Let $R$ be an $\alpha$\nobreakdash-coherent ring, $\alpha>\aleph_{0}$. If $D(R)$ satisfies \arm{\alpha}, then $\Pgldim_{\alpha}(R)\leq 1$. Moreover, if $R$ is hereditary, then
\begin{enumerate}
\item \aro{\alpha} for $D(R)$ $\Leftrightarrow$ $\Pgldim_{\alpha}(R)\leq 2$, and
\item \arm{\alpha} for $D(R)$ $\Leftrightarrow$ $\Pgldim_{\alpha}(R)\leq 1$.
\end{enumerate}
\end{thm}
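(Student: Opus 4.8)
\emph{Proof plan.} The idea is to translate \aro{\alpha}, \arm{\alpha} and $\alpha$\nobreakdash-Adams representability for $D(R)$ into statements about projective dimensions in $\Mod{\alpha}{D(R)^{\alpha}}$, via the results of Section~2, and then to identify those projective dimensions with $\alpha$\nobreakdash-pure homological invariants of $\operatorname{Mod}R$. By Corollary~\ref{necesariaysuficiente0}, \arm{\alpha} holds iff $\pd(S_{\alpha}X)\leq 1$ for every $X$ in $D(R)$; by Corollary~\ref{suficiente}, $\fgd_{\alpha}(\Mod{\alpha}{D(R)^{\alpha}})\leq 2$ implies \aro{\alpha}; and by Corollary~\ref{necesariaysuficiente}, $D(R)$ satisfies $\alpha$\nobreakdash-Adams representability iff $\fgd_{\alpha}(\Mod{\alpha}{D(R)^{\alpha}})\leq 1$. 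So everything comes down to comparing $\pd(S_{\alpha}(-))$ and $\fgd_{\alpha}(\Mod{\alpha}{D(R)^{\alpha}})$ with $\Pgldim_{\alpha}(R)$.

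The main ingredient is a dictionary between $\Mod{\alpha}{D(R)^{\alpha}}$ and the $\alpha$\nobreakdash-pure exact structure on $\operatorname{Mod}R$. For $R$ $\alpha$\nobreakdash-coherent one first describes $D(R)^{\alpha}$ explicitly --- for instance as the complexes quasi-isomorphic to bounded-above complexes of $\alpha$\nobreakdash-presentable projective modules with fewer than $\alpha$ generators in total --- which uses $\alpha$\nobreakdash-coherence precisely to ensure that $\alpha$\nobreakdash-generated submodules of $\alpha$\nobreakdash-generated free modules are $\alpha$\nobreakdash-generated. Restricting the Yoneda embedding to the full subcategory $\operatorname{mod}_{\alpha}R\subset D(R)^{\alpha}$ of $\alpha$\nobreakdash-presentable modules recovers the classical $\alpha$\nobreakdash-pure embedding $\operatorname{Mod}R\hookrightarrow\Mod{\alpha}{\operatorname{mod}_{\alpha}R}$, $M\mapsto\hom_{R}(-,M)$, which sends $\alpha$\nobreakdash-pure exact sequences to exact sequences, $\alpha$\nobreakdash-pure projective modules to projectives, and whose essential image is the class of $\alpha$\nobreakdash-flat objects. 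Lifting a projective resolution of $S_{\alpha}(M)$ in $\Mod{\alpha}{D(R)^{\alpha}}$ to a sequence of triangles in $D(R)$, in the spirit of the proof of Proposition~\ref{full1}, and --- when $R$ is hereditary --- using that $\ext^{i}_{R}=0$ for $i\geq 2$ to kill the off-diagonal connecting maps that appear because an object of $D(R)^{\alpha}$ has components in several internal degrees, I would deduce that $\pd_{\Mod{\alpha}{D(R)^{\alpha}}}(S_{\alpha}M)$ equals the $\alpha$\nobreakdash-pure projective dimension of $M$, up to the degree shift built into the indexing of $\Pgldim_{\alpha}$. For a general $X$ in $D(R)$ with $R$ hereditary one uses the splitting $X\cong\bigoplus_{n}\Sigma^{-n}H^{n}(X)$: since $S_{\alpha}$ preserves coproducts and the connecting map of the coproduct of the lifted triangles vanishes summand by summand, $\pd(S_{\alpha}X)=\sup_{n}\pd(S_{\alpha}H^{n}X)$, and one obtains $\fgd_{\alpha}(\Mod{\alpha}{D(R)^{\alpha}})=\Pgldim_{\alpha}(R)$.

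With the dictionary, the easy conclusions follow: its module case already gives \arm{\alpha} $\Leftrightarrow$ $\pd(S_{\alpha}M)\leq 1$ for all modules $M$ $\Leftrightarrow$ $\Pgldim_{\alpha}(R)\leq 1$, which yields both the general necessary condition for any $\alpha$\nobreakdash-coherent $R$ (using only the forward implication, together with the fact that modules are objects of $D(R)$) and part~(2) in the hereditary case; and $\Pgldim_{\alpha}(R)\leq 2\Rightarrow\fgd_{\alpha}(\Mod{\alpha}{D(R)^{\alpha}})\leq 2\Rightarrow$ \aro{\alpha} gives the ``$\Leftarrow$'' of part~(1).

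The main obstacle is the remaining implication, \aro{\alpha} $\Rightarrow\Pgldim_{\alpha}(R)\leq 2$ for hereditary $R$: since Corollary~\ref{suficiente} is only a sufficient condition, this cannot be read off the $\fgd_{\alpha}$ bound. I would prove the contrapositive by constructing, from a module $M$ whose $\hom_{R}(-,M)$ has projective dimension $\geq 2$ in $\Mod{\alpha}{\operatorname{mod}_{\alpha}R}$, an $\alpha$\nobreakdash-flat (equivalently, by Remark~\ref{sonlosplanos}, cohomological) functor on $D(R)^{\alpha}$ that is not of the form $S_{\alpha}X$; the point is that the explicit description of $D(R)^{\alpha}$ lets one package a ``long'' $\alpha$\nobreakdash-pure resolution of $M$ into a cohomological functor whose realizability would force that resolution to shorten. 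Verifying that this functor is genuinely not representable is, I expect, the technical heart; the rest is bookkeeping with Section~2 and with $\alpha$\nobreakdash-pure homological algebra as above.
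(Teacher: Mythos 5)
Your overall architecture matches the paper's: reduce \arm{\alpha} and \aro{\alpha} to projective--dimension statements in $\Mod{\alpha}{D(R)^{\alpha}}$ (Corollaries \ref{necesariaysuficiente0} and \ref{suficiente}) and translate these via a purity dictionary ($\Ppd_{\alpha}(M)=\pd(S_{\alpha}M)$ together with the splitting $X\cong\bigoplus_{n}\Sigma^{n}H_{n}(X)$ for hereditary $R$), which is the content of Lemma \ref{lem_purity_ring_D(ring)}, Proposition \ref{Thmrings1} and Corollary \ref{cor-rings1}. But there is a genuine gap exactly where you flag it: the implication \aro{\alpha} $\Rightarrow\Pgldim_{\alpha}(R)\leq 2$. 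You only sketch a hope (``package a long $\alpha$-pure resolution of $M$ into a cohomological functor whose realizability would force the resolution to shorten'') without constructing the functor or giving any mechanism for non-representability. In the paper this is Proposition \ref{Prop-macho}, and it is not bookkeeping: the candidate is an extension $S_{\alpha}(\Sigma N)\hookrightarrow F\onto S_{\alpha}(M)$ classified by $e_{F}\in{\lim_{\lambda}}^{1}\ext^{1}_{R}(P_{\lambda},N)$; non-representability is detected by the second differential of the Adams spectral sequence via Lemma \ref{loca}, whose proof rests on the obstruction theory and on Theorem \ref{calculillo} ($\kappa(F)=a\cdot d_{2}(e_{F})\cdot b$), a chain-level statement using the algebraicity of $D(R)$; and $d_{2}(e_{F})\neq 0$ is forced by a degeneration argument that needs $\pext^{3}_{\alpha,R}(M,N)\neq 0$ with $N$ of injective dimension $\leq 1$. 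Note also your threshold is off by one: a module with $\pd(\hom_{R}(-,M))\geq 2$ does not contradict anything; one needs $\Ppd_{\alpha}(M)\geq 3$, i.e.\ $\pext^{3}\neq 0$ after dimension shifting, to negate $\Pgldim_{\alpha}(R)\leq 2$.

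Two further points. First, your claimed unconditional identity $\fgd_{\alpha}(\Mod{\alpha}{D(R)^{\alpha}})=\Pgldim_{\alpha}(R)$ is circular: the splitting argument computes $\pd(S_{\alpha}X)$ for \emph{representable} functors only, whereas $\fgd_{\alpha}$ is a supremum over all $\alpha$-flat functors, and identifying those with restricted representables is precisely \aro{\alpha}. The paper proves $\Pgldim_{\alpha}(R)\leq\fgd_{\alpha}$ unconditionally and the equality only under the extra hypothesis $\fgd_{\alpha}\leq 2$ (Corollary \ref{cor-rings1}), so your step ``$\Pgldim_{\alpha}(R)\leq 2\Rightarrow\fgd_{\alpha}\leq 2$'' is not established by what you wrote. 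Second, the first statement of the theorem concerns arbitrary $\alpha$-coherent rings, so you need $\Ppd_{\alpha}(M)\leq\pd(S_{\alpha}M)$ without hereditariness; your sketch ties that identification to $\ext^{i}_{R}=0$ for $i\geq 2$, whereas the paper obtains it for every $\alpha$-coherent ring by evaluating projective resolutions at $R$ and using that projective resolutions of cohomological functors are $\alpha$-pure (Remark \ref{sonlosplanos}, Lemma \ref{sonpuras}, Lemma \ref{lem_purity_ring_D(ring)}(1)).
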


We prove Theorem \ref{mainderived} at the end of this section. The version for $\alpha=\aleph_{0}$,  proved in \cite[Theorem 2.13]{CKN01}, also requires that finitely presented $R$\nobreakdash-modules have finite projective dimension, which is of course true for $R$ hereditary.

\begin{exm}\label{ejemplillo}
A consequence of Theorem \ref{mainderived} is that \arm{\alpha} is not satisfied for the derived category of $\alpha$\nobreakdash-coherent rings $R$ such that $\Pgldim_{\alpha}(R)> 1$. Hence we can use computations of lower bounds to $\alpha$\nobreakdash-pure projective dimensions in \cite{BL82}, \cite{BG},  and \cite{BS} to show that \arm{\alpha} is not satisfied for rings $R$ and regular cardinals $\alpha$ as indicated:
\begin{enumerate}
\item For $\alpha>\aleph_{0}$:
\begin{enumerate}
 \item $R=\mathbb Z$ .
 \item $R$ a DVR.
\end{enumerate}

\item Let $k$ be an uncountable field and $\alpha$ any regular cardinal or $k$ a countable field and $\alpha>\aleph_{0}$:
\begin{enumerate}
\item  $R=k[x,y]$.

\item $R$ the path algebra of a finite quiver without oriented cycles which is not a Dynkin quiver.

\end{enumerate}
\item $R=k[[x,y]]$ for any field $k$ and any regular cardinal $\alpha$.
\end{enumerate}
\end{exm}

\begin{exm}\label{ejemplillo2}
\v{S}{\v{t}}ov{\'i}\v{c}ek shows in recent work in progress \cite{Stovicek} that, for $p$ any prime and $\widehat{\mathbb Z}_p$ the ring of $p$-adic integers, $\Pgldim_{\aleph_n}(\widehat{\mathbb Z}_p)\geq n+1$ for any finite $n\geq 1$, therefore $D(\widehat{\mathbb Z}_p)$ does not satisfy either \arm{\aleph_n} or \aro{\aleph_n} for $n\geq 2$. This is the first triangulated category which is known not to satisfy any form of $\alpha$-Adams representability for an uncountable $\alpha$. \v{S}{\v{t}}ov{\'i}\v{c}ek is actually extending his result to arbitrary DVRs, the Kronecker algebra, etc. The final version will probably yield a variety of examples.
\end{exm}

\begin{rem}\label{rem-Dynkin}
It is well known that a ring $R$ has $\Pgldim_\alpha(R)=0$ for some $\alpha$ if and only if $\Pgldim_{\aleph_0}(R)=0$, see \cite[Theorem 8.4]{JL89}. These rings are called \emph{(right) pure-semisimple}. They are characterized by the fact that, for any $\alpha\geq\aleph_{0}$,  any $R$-module decomposes as a direct sum of $\alpha$-presentable $R$-modules. Rings of finite representation type are (two-sided) pure-semisimple \cite[Theorem 8.8]{JL89}. If $R$ is hereditary and pure-semisimple, e.g.~the path algebra of a Dynkin quiver over a field, then $D(R)$ satisfies $\alpha$-Adams representability for all $\alpha\geq\aleph_0$. So far we do not know of any ring $R$ with $\Pgldim_\alpha (R)=1$ for some $\alpha>\aleph_0$.
\end{rem}

\begin{rem}
Although Theorem \ref{mainderived} only characterizes $\alpha$-Adams representability for derived categories of hereditary rings, we can also compute some non-hereditary examples. Let $k$ be a field and $R=k[\varepsilon]/(\varepsilon^{2})$ its ring of dual numbers. Any object in $D(R)$ decomposes as a direct sum of complexes of the form
\begin{equation*}
\cdots\r 0\r \underbrace{R\st{\varepsilon}\r R\st{\varepsilon}\r R\r\cdots \r R}_{n\text{ copies of }R, \; n\geq 1,}\r 0\r\cdots,
\end{equation*}
which are $\aleph_{0}$-compact, and (de)suspended copies of $k$, compare \cite[\S3.3]{hcimdda}. The $R$-module $k=R/(\varepsilon)$ has infinite projective dimension, so it is not $\aleph_{0}$-compact in $D(R)$. However, it is  $\aleph_{1}$-compact since  it has a resolution by finitely generated projective $R$-modules
\[\cdots\r R\st{\varepsilon}\r R\st{\varepsilon}\r R\r 0\r\cdots,\]
see \cite[Theorem 20]{Mu}.

Therefore, $S_{\alpha}(X)$ is projective in $\Mod{\alpha}{D(R)^{\alpha}}$ for any $X$ in $D(R)$ and any $\alpha>\aleph_{0}$. For $\alpha=\aleph_{0}$, $S_{\aleph_{0}}(k)$ has projective dimension $1$. Indeed, the previous projective resolution of the $R$-module $k$ is the (homotopy) colimit of its naive truncations, and applying $S_{\aleph_{0}}$ to  the homotopy colimit exact triangle \eqref{hcet} we obtain a length $1$ projective resolution of $S_{\aleph_{0}}(k)$ in $\Mod{\aleph_{0}}{D(R)^{\aleph_{0}}}$. We deduce from Corollary \ref{necesariaysuficiente0} that $D(R)$ satisfies \arm{\alpha} for any $\alpha$. By \cite[Theorem~11.8]{Be00}, $D(R)$ also satisfies \aro{\aleph_{0}}.
For $\alpha>\aleph_{0}$, we lave  \aro{\alpha}  as an exercise for the reader.

This example extends to the finite-dimensional algebras $A_{n}$ in  \cite[\S3.1]{hcimdda}, $n\geq 1$.
\end{rem}


The following result proves \aro{\aleph_1} for rings of cardinality $\leq \aleph_1$ under the continuum hypothesis. The proof is given after some preliminary considerations.

\begin{prop}\label{aleph_1-Adams-for-rings}
Let $\alpha$ be an inaccessible cardinal or $\alpha=\beta^+=2^\beta$.
If  $R$ is a ring of $\card
R\leq\alpha$, then
$\card  D(R)^{\alpha}\leq \alpha$. In particular, if $\card
R\leq\aleph_n=2^{\aleph_{n-1}}$ then $\Pgldim_{\aleph_n}(R)\leq n+1$. Moreover, if $\card
R\leq\aleph_1$ and the continuum hypothesis holds then $\Pgldim_{\aleph_1}(R)\leq 2$ and $D(R)$ satisfies \aro{\aleph_1}.
\end{prop}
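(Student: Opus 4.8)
The plan is to prove the cardinality estimate $\card D(R)^{\alpha}\leq\alpha$ first, and then read off the two stated consequences from results already established in this section. The only set-theoretic input is that both hypotheses on $\alpha$ force $\alpha^{<\alpha}=\alpha$ and $2^{\aleph_{0}}\leq\alpha$: if $\alpha$ is inaccessible it is a regular strong limit, so $\beta^{\gamma}<\alpha$ whenever $\beta,\gamma<\alpha$, hence $\alpha^{\gamma}=\alpha$ for every $\gamma<\alpha$; and if $\alpha=\beta^{+}=2^{\beta}$, then every $\gamma<\alpha$ satisfies $\gamma\leq\beta$, so $\alpha^{\gamma}\leq\alpha^{\beta}=(2^{\beta})^{\beta}=2^{\beta}=\alpha$.

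For the main estimate I would first pin down convenient representatives of $\alpha$\nobreakdash-compact objects. Since $D(R)$ is compactly generated by $R$, Neeman's description of $\alpha$\nobreakdash-compact objects realizes $D(R)^{\alpha}$ as the smallest full subcategory of $D(R)$ containing $R$ and closed under suspensions, coproducts of fewer than $\alpha$ objects, mapping cones and direct summands. Building this closure by transfinite recursion, and using that $K$\nobreakdash-projective complexes of projective modules are themselves closed under all these operations, one checks that every object of $D(R)^{\alpha}$ is isomorphic to a $K$\nobreakdash-projective complex $P^{\bullet}$ of projective $R$\nobreakdash-modules with $\bigoplus_{n}P^{n}$ an $\alpha$\nobreakdash-generated module (ranks add and $\alpha$ is regular; summands are absorbed as in the classical treatment of perfect complexes).

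Then I would count. Each $P^{n}$ is a retract of some $R^{(\beta_{n})}$ with $\beta_{n}<\alpha$, so $\card P^{n}\leq\alpha$ because $\card R\leq\alpha$, and the support $\{n:P^{n}\neq 0\}$ is a subset of $\mathbb{Z}$, hence countable. Choosing the support, the modules $P^{n}$ and the differentials therefore produces at most $2^{\aleph_{0}}\cdot\alpha^{<\alpha}=\alpha$ isomorphism classes of such complexes; and for two of them, $K$\nobreakdash-projectivity of $P^{\bullet}$ gives
$$\hom_{D(R)}(P^{\bullet},Q^{\bullet})=H^{0}\big(\Hom_{R}^{\bullet}(P^{\bullet},Q^{\bullet})\big),$$
a subquotient of a product of fewer than $\alpha$ groups $\Hom_{R}(P^{n},Q^{n})$, each of cardinality $\leq(\card Q^{n})^{\beta_{n}}\leq\alpha^{<\alpha}=\alpha$, so of cardinality $\leq\alpha^{<\alpha}=\alpha$. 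By the definition of the cardinal of a category, $\card D(R)^{\alpha}\leq\alpha$, and Lemma \ref{cotainferior} gives the reverse inequality, so in fact $\card D(R)^{\alpha}=\alpha$.

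Finally I would deduce the consequences. Taking $\alpha=\aleph_{n}=2^{\aleph_{n-1}}$ with $\card R\leq\aleph_{n}$, the estimate gives $\card D(R)^{\aleph_{n}}\leq\aleph_{n}$, so Proposition \ref{cota} yields $\fgd_{\aleph_{n}}(\Mod{\aleph_{n}}{D(R)^{\aleph_{n}}})\leq n+1$; combined with the comparison between this flat global dimension and $\Pgldim_{\aleph_{n}}(R)$ supplied by the preliminary material of this section (equivalently, by applying Proposition \ref{cota} to the category of $\aleph_{n}$\nobreakdash-presentable $R$\nobreakdash-modules, whose cardinal is again $\leq\aleph_{n}$), this bounds $\Pgldim_{\aleph_{n}}(R)$ as asserted. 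For the last assertion, if $\card R\leq\aleph_{1}$ and the continuum hypothesis holds then $\aleph_{1}=\aleph_{0}^{+}=2^{\aleph_{0}}$, so $\card D(R)^{\aleph_{1}}\leq\aleph_{1}$; since $\aleph_{1}$ is one of $\aleph_{0},\aleph_{1}$, Corollary \ref{cota2} shows that $D(R)$ satisfies \aro{\aleph_{1}} (alternatively, $\fgd_{\aleph_{1}}(\Mod{\aleph_{1}}{D(R)^{\aleph_{1}}})\leq 2$ and Corollary \ref{suficiente}). I expect the main obstacle to be the cardinality bound itself: one must choose representatives of $\alpha$\nobreakdash-compact objects small enough (short $K$\nobreakdash-projective complexes of projectives) for the arithmetic to close up, and this closing up is exactly what the identity $\alpha^{<\alpha}=\alpha$ — i.e. the two hypotheses on $\alpha$ — provides; the passage from there to the statements about $\Pgldim$ and \aro{\aleph_{1}} is then essentially formal, modulo the comparison results of this section and Theorem \ref{mainderived}.
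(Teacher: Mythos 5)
Your proposal follows essentially the same route as the paper: you represent $\alpha$\nobreakdash-compact objects by $K$\nobreakdash-projective complexes of free (or projective) modules with fewer than $\alpha$ generators — where the paper simply cites Murfet's theorem, you sketch the transfinite closure argument — then count chain maps using $\alpha^{<\alpha}=\alpha$ to get $\card D(R)^{\alpha}\leq\alpha$, and deduce the consequences from Proposition \ref{cota} and Corollaries \ref{cota2} and \ref{cor-rings1}, exactly as the paper does. One caveat: what this argument (yours and the paper's alike) actually yields is $\Pgldim_{\aleph_n}(R)\leq n+1$ and, under the continuum hypothesis, $\Pgldim_{\aleph_1}(R)\leq 2$ — the bounds stated in the introduction and in the remark following the proposition — so the ``$\leq n$'' and ``$\leq 1$'' in the statement as printed are off by one, and your ``as asserted'' should be read against these corrected bounds rather than the literal wording.
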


\begin{rem}
Recall from Example \ref{ejemplillo} that for the following rings $R$, $\Pgldim_{\aleph_1}(R)>1$.
\begin{enumerate}
\item $R=\mathbb Z$.
\item $R$ a DVR of $\card R\leq\aleph_1$.
\item Let $k$ be a field of $\card k\leq\aleph_1$:
\begin{enumerate}
\item  $R=k[x,y]$.

\item $R$ the path algebra of a finite quiver without oriented cycles which is not a Dynkin quiver.
\end{enumerate}
\item $R=k[[x,y]]$ for a field $k$ of $\card k\leq\aleph_1$.
\end{enumerate}
The last part of Proposition \ref{aleph_1-Adams-for-rings} applies to these rings, therefore, under the continuum hypothesis, $\Pgldim_{\aleph_1} (R)=2$ and $D(R)$ satisfies \aro{\aleph_1}.

Proposition \ref{aleph_1-Adams-for-rings} under the hypothesis $\aleph_n=2^{\aleph_{n-1}}$, $n\geq 1$ finite, combined with \cite{Stovicek}, shows that the $p$-adic integers satisfy $\Pgldim_{\aleph_n} (\widehat{\mathbb Z}_p)=n+1$. We wonder whether this can be proved without set theoretical assumptions.

\end{rem}

\begin{defn}\label{def_alpha_exact_seq}
Let $\alpha$ be a regular cardinal and $\C A$ a locally $\alpha$\nobreakdash-presentable abelian category with exact $\alpha$\nobreakdash-filtered colimits and a set of $\alpha$\nobreakdash-presentable projective generators.
A short exact sequence  $A\hookrightarrow B\onto C$ is  \emph{$\alpha$\nobreakdash-pure} if
$$
 \C A(P,A)\hookrightarrow \C A(P,B)\onto
\C A(P,C)
$$
is short exact for any $\alpha$\nobreakdash-presentable object $P$, or equivalently, if it is an $\alpha$\nobreakdash-filtered colimit of split short exact sequences.

A sequence $\cdots\r A_{n+1}\r A_{n}\st{d_{n}}\r A_{n-1}\r\cdots$ in $\C A$ is \emph{$\alpha$\nobreakdash-pure exact} if
it is exact and $\ker d_{n}\hookrightarrow A_{n}\onto \im d_{n}$ is $\alpha$\nobreakdash-pure for all $n\in\mathbb{Z}$.

An object $Q$ in $\C A$ is \emph{$\alpha$\nobreakdash-pure projective} if $\hom_{R}(Q,-)$ takes $\alpha$\nobreakdash-pure exact sequences to exact sequences, this is equivalent to say that $Q$ is a retract of a direct sum of $\alpha$\nobreakdash-presentables.

The notions of $\alpha$\nobreakdash-pure projective resolution, $\alpha$\nobreakdash-pure projective dimension $\Ppd_{\alpha}(A)$ of an object $A$ in $\C A$, etc., are defined in the obvious way. The \emph{$\alpha$\nobreakdash-pure global dimension} of $\C A$ is denoted by $$\Pgldim_{\alpha}(\C A)=\sup\{\Ppd_{\alpha}(A)\mid A\text{ in }\C A\}.$$
If $\C A=\Mod{}{R}$ is the category of modules over a ring $R$ we abbreviate $\Pgldim_{\alpha} (R)=\Pgldim_{\alpha}(\Mod{}{R})$.

Given $A$ and $B$ in $\C A$, the \emph{$\alpha$\nobreakdash-pure extension groups}
$$\pext^{n}_{\alpha,\C A}(A,B)$$
are defined as the cohomology of an $\alpha$\nobreakdash-pure projective resolution of $A$ with coefficients in $B$.
\end{defn}

\begin{rem}\label{lapuri}
Any object $A$ in $\C A$ is an $\alpha$\nobreakdash-filtered colimit of $\alpha$\nobreakdash-presentable objects $A=\colim_{\lambda\in\Lambda}P_{\lambda}$, hence the construction in Remark \ref{canonica} yields  an $\alpha$\nobreakdash-pure projective resolution of $A$, in particular
$$\pext^{n}_{\alpha,\C A}(A,B)={\lim_{\lambda\in\Lambda}}^{n}\hom_{\C A}(P_{\lambda},B).$$
If $A$ is $\alpha$\nobreakdash-flat we can take $P_{\lambda}$ projective for all $\lambda\in\Lambda$ and the projective resolution of $A$  in Remark \ref{canonica} is also $\alpha$\nobreakdash-pure, so $\pext^{n}_{\alpha,\C A}(A,B)=\ext^{n}_{\C A}(A,B)$ in this case. This proves that
$$\fgd_{\alpha}(\C A)\leq \Pgldim_{\alpha}(\C A).$$
For an arbitrary object $A$, the spectral sequence for the composition of functors $\hom_{\C A}(A,B)=\hom_{\C A}(\colim_{\lambda\in\Lambda}P_{\lambda},B)=\lim_{\lambda\in\Lambda}\hom_{\C A}(P_{\lambda},B)$ is of the form
$$E^{p,q}_{2}={\lim_{\lambda\in\Lambda}}^{p}\ext^{q}_{\C A}(P_{\lambda},B)\Longrightarrow \ext^{p+q}_{\C A}(A,B).$$
The comparison homomorphism between $\alpha$\nobreakdash-pure and ordinary extensions groups is part of this spectral sequence,
$$\pext^{n}_{\alpha,\C A}(A,B)=E^{n,0}_{2}\onto E^{n,0}_{\infty}\subset \ext^{n}_{\C A}(A,B).$$
\end{rem}

\begin{lem}\label{sonpuras}
Any short exact sequence $A\hookrightarrow B\onto C$ where $C$ is $\alpha$\nobreakdash-flat is $\alpha$\nobreakdash-pure.
\end{lem}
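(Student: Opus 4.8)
The plan is to exhibit the short exact sequence $A\hookrightarrow B\onto C$ directly as an $\alpha$\nobreakdash-filtered colimit of split short exact sequences, which is one of the equivalent formulations of $\alpha$\nobreakdash-purity recorded in Definition \ref{def_alpha_exact_seq}. Write $C=\colim_{\lambda\in\Lambda}P_{\lambda}$ as an $\alpha$\nobreakdash-filtered colimit of $\alpha$\nobreakdash-presentable projective objects, with structure morphisms $c_{\lambda}\colon P_{\lambda}\r C$, witnessing that $C$ is $\alpha$\nobreakdash-flat. For each $\lambda$, I would pull back the epimorphism $B\onto C$ along $c_{\lambda}$ to obtain an object $B_{\lambda}$ fitting into a short exact sequence $A\hookrightarrow B_{\lambda}\onto P_{\lambda}$; indeed, in an abelian category the pullback of an epimorphism is again an epimorphism and the kernel of $B_{\lambda}\onto P_{\lambda}$ agrees with the kernel of $B\onto C$, which is $A$. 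Since $P_{\lambda}$ is projective, this sequence splits, $B_{\lambda}\cong A\oplus P_{\lambda}$.

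These pullbacks are functorial in $\lambda$, so they assemble into an $\alpha$\nobreakdash-filtered diagram of split short exact sequences over $\Lambda$, equipped with a compatible morphism to the constant diagram $A\hookrightarrow B\onto C$. Passing to the colimit and using that $\alpha$\nobreakdash-filtered colimits in $\C A$ are exact, I obtain a short exact sequence $A\hookrightarrow\colim_{\lambda}B_{\lambda}\onto C$, since $\colim_{\lambda}A=A$ (the indexing category is connected) and $\colim_{\lambda}P_{\lambda}=C$ by construction. The canonical comparison morphism $\colim_{\lambda}B_{\lambda}\r B$ is then an isomorphism by the five lemma, as it is the identity on $A$ and on $C$. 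Hence $A\hookrightarrow B\onto C$ is an $\alpha$\nobreakdash-filtered colimit of split short exact sequences, and therefore $\alpha$\nobreakdash-pure. Alternatively, and somewhat more economically, one can verify $\alpha$\nobreakdash-purity through the other formulation: since $\C A(P,-)$ is left exact it suffices to show that $\C A(P,B)\r\C A(P,C)$ is surjective for every $\alpha$\nobreakdash-presentable $P$; given $f\colon P\r C$, the fact that $P$ is $\alpha$\nobreakdash-presentable and $\Lambda$ is $\alpha$\nobreakdash-filtered lets one factor $f$ as $c_{\lambda}g$ with $g\colon P\r P_{\lambda}$, and then projectivity of $P_{\lambda}$ lifts $c_{\lambda}$ along $B\onto C$, producing the desired preimage of $f$.

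I do not expect a genuine obstacle here: the argument is a formal manipulation with $\alpha$\nobreakdash-filtered colimits. The only points requiring (standard) care are that $\alpha$\nobreakdash-presentable objects commute with $\alpha$\nobreakdash-filtered colimits, that $\alpha$\nobreakdash-filtered colimits are exact, that projective objects lift along epimorphisms, and that pullbacks of short exact sequences behave as expected in an abelian category---all of which are guaranteed by the standing hypotheses on $\C A$ in Definition \ref{def_alpha_exact_seq}.
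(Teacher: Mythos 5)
Your argument is correct and is essentially the paper's own proof: pull back the epimorphism $B\onto C$ along the structure maps $P_{\lambda}\r C$ of an $\alpha$\nobreakdash-filtered presentation $C=\colim_{\lambda}P_{\lambda}$ by projectives, observe that each resulting sequence $A\hookrightarrow B_{\lambda}\onto P_{\lambda}$ splits by projectivity of $P_{\lambda}$, and identify the original sequence with the $\alpha$\nobreakdash-filtered colimit of these split sequences. You merely spell out the colimit identification (via exactness of $\alpha$\nobreakdash-filtered colimits and the five lemma) and add an equivalent Hom-lifting verification, both of which the paper leaves implicit.
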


\begin{proof}
Since $C=\colim_{\lambda\in \Lambda}P_{\lambda}$ is an $\alpha$\nobreakdash-filtered colimit of $\alpha$\nobreakdash-presentable projective objects, taking pullback along the canonical morphisms
$P_{\lambda}\r C$
$$\xymatrix{A\ar@{=}[d]\ar@{^{(}->}[r]&Q_{\lambda}\ar@{->>}[r]\ar[d]\ar@{}[rd]|{\text{pull}}&P_{\lambda}\ar[d]\\
A\ar@{^{(}->}[r]&B\ar@{->>}[r]&C}$$
we can express the short exact sequence below as an $\alpha$\nobreakdash-filtered colimit
$$
\colim_{\lambda\in\Lambda}(A\hookrightarrow Q_{\lambda}\onto P_{\lambda})
$$
of short exact sequences which split since $P_\lambda$ is projective.
\end{proof}

The following lemma admits the same proof as \cite[Theorem 20]{Mu}. There, it is assumed that $R$ is right Noetherian or $\card R<\alpha$ but actually only $\alpha$\nobreakdash-coherence is used.

\begin{lem}\label{lem-alpha-compact}
Let $R$ be an $\alpha$\nobreakdash-coherent ring for some $\alpha>\aleph_{0}$.  A complex $X$ in $D(R)$ is $\alpha$\nobreakdash-compact if and only if $H_{n}(X)$ is an $\alpha$\nobreakdash-presentable $R$\nobreakdash-module for all $n\in\mathbb Z$.
\end{lem}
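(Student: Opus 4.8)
The plan is to run the proof of \cite[Theorem~20]{Mu} and to observe that the only property of $R$ it uses is $\alpha$\nobreakdash-coherence, which for $\alpha>\aleph_0$ can stand in for the hypotheses ``right noetherian'' or ``$\card R<\alpha$'' assumed there. The algebraic input needed is that over an $\alpha$\nobreakdash-coherent ring the $\alpha$\nobreakdash-generated modules coincide with the $\alpha$\nobreakdash-presentable ones, form an abelian subcategory of $\Mod{}{R}$, and are closed under retracts and coproducts of $<\alpha$ objects. Indeed, quotients, extensions and coproducts of $<\alpha$ copies of $\alpha$\nobreakdash-generated modules are $\alpha$\nobreakdash-generated since $\alpha$ is regular, hence $\alpha$\nobreakdash-presentable by hypothesis; and if $f\colon M\r N$ has $\alpha$\nobreakdash-generated source and target, then $\im f$ is $\alpha$\nobreakdash-generated, hence $\alpha$\nobreakdash-presentable, so $\ker f=\ker(M\onto\im f)$ is $\alpha$\nobreakdash-generated by the standard kernel-of-a-surjection argument. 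The same reasoning shows that the syzygies of an $\alpha$\nobreakdash-generated module are $\alpha$\nobreakdash-generated, whence every $\alpha$\nobreakdash-generated module has a resolution by $\alpha$\nobreakdash-generated free modules.

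For the implication ``$H_n(X)$ $\alpha$\nobreakdash-generated for all $n$ $\Rightarrow$ $X\in D(R)^\alpha$'' I would argue in four steps. First, since $\aleph_0<\alpha$, a sequential homotopy colimit of objects of $D(R)^\alpha$ again lies in $D(R)^\alpha$, being the cofiber of $1-\text{shift}$ on a coproduct of $<\alpha$ of them. Second, for an $\alpha$\nobreakdash-generated module $M$ and any $k$, $M[k]\in D(R)^\alpha$: a resolution of $M$ by $\alpha$\nobreakdash-generated free modules is the filtered colimit, hence the homotopy colimit, of its brutal truncations, each of which is a finite complex of retracts of $R^{(I)}$ with $\card I<\alpha$ and therefore lies in $D(R)^\alpha$. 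Third, a bounded-below complex $Y$ with $\alpha$\nobreakdash-generated homology satisfies $Y=\Hocolim_n\tau_{\le n}Y$, and each $\tau_{\le n}Y$ lies in $D(R)^\alpha$ by induction along the triangles $\tau_{\le k-1}Y\r\tau_{\le k}Y\r H_k(Y)[k]\r\Sigma\tau_{\le k-1}Y$ using the second step; so $Y\in D(R)^\alpha$. Finally, for arbitrary $X$ pick a quasi-isomorphism $P\r X$ with $P$ a complex of $\alpha$\nobreakdash-generated projectives (this is where $\alpha$\nobreakdash-coherence is essential, to keep the syzygies small) and write $P$ as the increasing union of the subcomplexes $Q^{(n)}$ obtained from the brutal truncation of $P$ in degrees $\ge -n$ by replacing the bottom term $P_{-n}$ with $\ker(P_{-n}\r P_{-n-1})$. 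Then $Q^{(n)}$ is bounded below, has the homology of $X$ in degrees $\ge -n$ and zero below, so $Q^{(n)}\in D(R)^\alpha$ by the third step, and $P=\colim_n Q^{(n)}=\Hocolim_n Q^{(n)}\in D(R)^\alpha$.

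For the converse, let $\C G\subset D(R)$ be the full subcategory of complexes all of whose homology modules are $\alpha$\nobreakdash-generated. The long exact homology sequence together with the algebraic fact above shows $\C G$ is a thick subcategory, and it is closed under coproducts of $<\alpha$ objects because homology commutes with coproducts and $\alpha$ is regular; moreover $R\in\C G$. Since $D(R)^\alpha$ is the smallest thick subcategory of $D(R)$ containing $R$ and closed under coproducts of $<\alpha$ objects, cf.~\cite{triang}, we get $D(R)^\alpha\subseteq\C G$, as required.

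The hard part is the last step of the first implication. Truncating $X$ only exhibits it as a homotopy \emph{limit} of bounded-below complexes, and homotopy limits of $\alpha$\nobreakdash-compact objects need not be $\alpha$\nobreakdash-compact; one therefore has to build a genuine increasing exhaustion of a complex of $\alpha$\nobreakdash-generated projectives by bounded-below subcomplexes with $\alpha$\nobreakdash-generated homology, and to check that $\alpha$\nobreakdash-coherence really produces a termwise $\alpha$\nobreakdash-generated replacement of an arbitrary complex with $\alpha$\nobreakdash-generated homology. These are exactly the points handled in \cite[Theorem~20]{Mu}, where one verifies that $\alpha$\nobreakdash-coherence, rather than the stronger hypotheses stated there, suffices throughout.
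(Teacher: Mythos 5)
Your proposal is correct and follows essentially the same route as the paper, which likewise proves the lemma by running the argument of \cite[Theorem 20]{Mu} and observing that only $\alpha$\nobreakdash-coherence (rather than right noetherianity or $\card R<\alpha$) is used. Your additional sketch of the two implications and of where coherence enters is accurate, but it does not diverge from the paper's (much terser) proof.
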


\begin{lem}\label{lem_purity_ring_D(ring)}
Let $R$ be an $\alpha$\nobreakdash-coherent ring, $\alpha>\aleph_0$.
\begin{enumerate}
\item The functor $H_{0}\colon \Mod{\alpha}{D(R)^{\alpha}}\r \Mod{}{R}$ defined as $H_0(F)=F(R)$ takes projective objects to $\alpha$\nobreakdash-pure projective $R$\nobreakdash-modules and preserves $\alpha$\nobreakdash-filtered colimits and $\alpha$\nobreakdash-pure exact sequences.

\item The functor $\mathbf{y}\colon \Mod{}{R}\subset D(R)\st{S_{\alpha}}\To \Mod{\alpha}{D(R)^{\alpha}}$ takes $\alpha$\nobreakdash-pure projective $R$\nobreakdash-modules to projective objects and preserves $\alpha$\nobreakdash-filtered colimits and $\alpha$\nobreakdash-pure exact sequences.
\end{enumerate}
\end{lem}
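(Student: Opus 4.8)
The plan is to identify the two functors explicitly on projective and $\alpha$-pure-projective generators, and then extend by colimit-preservation. For part (1), recall from Section \ref{ryf} that the projective objects in $\Mod{\alpha}{D(R)^{\alpha}}$ are exactly the retracts of coproducts of representables $D(R)(-,C)$ with $C$ in $D(R)^{\alpha}$; equivalently, they are $S_{\alpha}(P)$ for $P$ in $\add(D(R)^{\alpha})$. For such a $P$ one has $H_{0}(S_{\alpha}(P))=D(R)(R,P)=H_{0}(P)$. By Lemma \ref{lem-alpha-compact}, every object of $D(R)^{\alpha}$ has $\alpha$-generated homology in each degree, and a retract of a coproduct of such complexes therefore has $H_{0}$ a retract of a direct sum of $\alpha$-generated, hence $\alpha$-presentable (using $\alpha$-coherence), $R$-modules — that is, an $\alpha$-pure-projective module. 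The preservation of $\alpha$-filtered colimits is immediate: both $H_{0}$ on $\Mod{\alpha}{D(R)^{\alpha}}$ (by the pointwise formula for $\alpha$-filtered colimits recalled in Section \ref{ryf}, applied at the object $R\in D(R)^{\alpha}$) and the identity functor on $\Mod{}{R}$ preserve them. For $\alpha$-pure exact sequences, I would argue that an $\alpha$-pure exact sequence in $\Mod{\alpha}{D(R)^{\alpha}}$ is, by Definition \ref{def_alpha_exact_seq}, an $\alpha$-filtered colimit of split exact sequences; applying $H_{0}$ and using the colimit-preservation just established yields an $\alpha$-filtered colimit of split exact sequences of $R$-modules, which is $\alpha$-pure exact by the same definition.

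For part (2), the functor $\mathbf{y}$ is the composite of the inclusion $\Mod{}{R}\hookrightarrow D(R)$ (a module placed in degree $0$) with $S_{\alpha}$. If $Q$ is an $\alpha$-presentable $R$-module, then $Q$, viewed as a complex concentrated in degree $0$, lies in $D(R)^{\alpha}$ by Lemma \ref{lem-alpha-compact} (its only nonzero homology is $Q$ itself, which is $\alpha$-generated), so $S_{\alpha}(Q)=D(R)(-,Q)_{|_{D(R)^{\alpha}}}$ is a representable, hence projective, object of $\Mod{\alpha}{D(R)^{\alpha}}$. Since $S_{\alpha}$ preserves coproducts and retracts, and $\alpha$-pure-projective $R$-modules are precisely retracts of direct sums of $\alpha$-presentables (Definition \ref{def_alpha_exact_seq}), $\mathbf{y}$ sends $\alpha$-pure-projective modules to projective objects. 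Preservation of $\alpha$-filtered colimits follows because $\alpha$-filtered colimits of $\alpha$-generated modules are computed in $D(R)$ by placing the colimit module in degree $0$ (the inclusion $\Mod{}{R}\hookrightarrow D(R)$ preserves $\alpha$-filtered colimits of $\alpha$-presentables as these land in $D(R)^{\alpha}$), and $S_{\alpha}$ preserves all colimits. Finally, for $\alpha$-pure exact sequences: by Definition \ref{def_alpha_exact_seq} an $\alpha$-pure exact sequence of $R$-modules is an $\alpha$-filtered colimit of split exact sequences; apply the inclusion into $D(R)$ and then $S_{\alpha}$, both of which preserve this $\alpha$-filtered colimit and carry split exact sequences to split exact sequences, to land on an $\alpha$-filtered colimit of split exact sequences in $\Mod{\alpha}{D(R)^{\alpha}}$, which is $\alpha$-pure exact.

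The step I expect to be the main obstacle is the careful verification that the inclusion $\Mod{}{R}\hookrightarrow D(R)$ interacts correctly with $\alpha$-compactness and with $\alpha$-filtered colimits — specifically, that an $\alpha$-filtered colimit of $\alpha$-presentable modules, when computed inside $D(R)$, is still the module-level colimit in degree $0$ and not something with higher homology. This rests squarely on Lemma \ref{lem-alpha-compact} (so that degree-$0$ $\alpha$-generated modules are genuinely $\alpha$-compact in $D(R)$) together with the exactness of $\alpha$-filtered colimits; I would spell out that $\alpha$-coherence is exactly what is needed to pass freely between $\alpha$-generated and $\alpha$-presentable. Everything else — the identification on projectives, preservation of split exactness, the reduction of "$\alpha$-pure" to "$\alpha$-filtered colimit of split" — is formal once these identifications are in place.
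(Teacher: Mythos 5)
Your part (1) and the first assertion of part (2) (that $\mathbf y$ sends $\alpha$\nobreakdash-pure projective modules to projective objects) follow the paper's proof essentially verbatim: identify $H_0S_\alpha(X)=H_0(X)$ for $X$ in $D(R)^\alpha$, use Lemma \ref{lem-alpha-compact} plus $\alpha$\nobreakdash-coherence, and reduce $\alpha$\nobreakdash-purity to ``split plus $\alpha$\nobreakdash-filtered colimits''. The gap is in the remaining claim of part (2), that $\mathbf y$ preserves $\alpha$\nobreakdash-filtered colimits, and this is not a cosmetic omission: the purity statement of (2) is deduced from it, so without it half of the lemma is unproven. Your justification rests on two assertions, and neither holds up. First, ``$S_\alpha$ preserves all colimits'' is not meaningful here: $D(R)$ is triangulated and has no filtered colimits, and the content of the claim is precisely that the canonical map $\colim_{\lambda}S_\alpha(M_\lambda)\r S_\alpha(\colim_{\lambda}M_\lambda)$ in $\Mod{\alpha}{D(R)^{\alpha}}$ is an isomorphism, equivalently (colimits being pointwise) that $\colim_{\lambda}D(R)(X,M_\lambda)\r D(R)(X,\colim_{\lambda}M_\lambda)$ is bijective for \emph{every} $X$ in $D(R)^{\alpha}$. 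Second, Lemma \ref{lem-alpha-compact} does not deliver this: $\alpha$\nobreakdash-compactness in Neeman's sense controls behaviour with respect to coproducts, not commutation of $D(R)(X,-)$ with module-level $\alpha$\nobreakdash-filtered colimits, and the colimit module $M$ need not be $\alpha$\nobreakdash-compact at all (it is an arbitrary module). So ``degree-$0$ $\alpha$\nobreakdash-generated modules are $\alpha$\nobreakdash-compact'' does not yield the comparison isomorphism you need.

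The paper closes exactly this point by a d\'evissage which your sketch does not contain: let $\C S\subset D(R)$ be the full subcategory of objects $X$ for which the comparison map above is an isomorphism. Then $\C S$ contains $\Sigma^nR$ for all $n$ (for $n\neq 0$ both sides vanish, for $n=0$ it is the identity of $\colim_\lambda M_\lambda$), it is closed under coproducts of fewer than $\alpha$ objects because $\alpha$\nobreakdash-filtered colimits commute with products of fewer than $\alpha$ objects in $\operatorname{Ab}$, and it is closed under exact triangles by the five lemma; hence $\C S\supset D(R)^{\alpha}$, which is what is needed. Note that this argument uses the generation of $D(R)^{\alpha}$ by $R$ rather than Lemma \ref{lem-alpha-compact}. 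If you add this step (or an equivalent one, e.g.\ via resolutions of $\alpha$\nobreakdash-compact complexes by complexes of small free modules), the rest of your proposal goes through as written.
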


\begin{proof}
If $X$ is in $D(R)^{\alpha}$, then $H_{0}S_{\alpha}(X)=S_{\alpha}(X)(R)=D(X)(R,X)=H_{0}(R)$, which is $\alpha$\nobreakdash-presentable by the Lemma \ref{lem-alpha-compact}, hence $H_{0}$ takes projective objects to $\alpha$\nobreakdash-pure projective $R$\nobreakdash-modules. In $\Mod{\alpha}{D(R)^{\alpha}}$, $\alpha$\nobreakdash-filtered colimits are computed pointwise hence $H_{0}$ preserves these colimits. Since $H_{0}$ preserves split short exact sequences and $\alpha$\nobreakdash-filtered colimits, we deduce that it also preserves $\alpha$\nobreakdash-pure short exact sequences. This finishes the proof of (1).

If $M$ is an $\alpha$\nobreakdash-presentable $R$\nobreakdash-module, then $M$ is $\alpha$\nobreakdash-compact in $D(R)$ by Lemma \ref{lem-alpha-compact}, so  $S_{\alpha}(M)$ is projective in $\Mod{\alpha}{D(R)^{\alpha}}$. It follows that $\mathbf y$ takes $\alpha$\nobreakdash-pure projective $R$\nobreakdash-modules to projective objects.

Let $M=\colim_{\lambda\in\Lambda}M_{\lambda}$ be an $\alpha$\nobreakdash-filtered colimit of $R$\nobreakdash-modules. Denote by $\C S\subset D(R)^\alpha$ the full subcategory of objects $X$ such that the natural morphism
$$
(\colim_{\lambda\in\Lambda}S_{\alpha}(M_{\lambda}))(X)=\colim_{\lambda\in\Lambda}D(R)(X,M_{\lambda})\To D(R)(X,\colim_{\lambda\in\Lambda}M_{\lambda})=(S_{\alpha}(M))(X)
$$
is an isomorphism. The category $\C S$ contains $\Sigma^{n} R$, $n\in\mathbb Z$. Indeed, for $n\neq 0$ this morphism is $0\r 0$ and for $n=0$ it is the identity in  $\colim_{\lambda\in\Lambda}M_{\lambda}$. The category of abelian groups is locally finitely presentable, hence $\alpha$\nobreakdash-filtered colimits commute with products of less than $\alpha$ objects \cite[Proposition 1.59]{AR94}. This shows that $\C S$ is closed under coproducts of less than $\alpha$ objects. The category $\C S$ is also closed under exact triangles by the five lemma. Therefore $\C S=D(R)^{\alpha}$ and hence $\mathbf y$ preserves $\alpha$\nobreakdash-filtered colimits.

Any $\alpha$\nobreakdash-pure short exact sequence of $R$\nobreakdash-modules is an $\alpha$\nobreakdash-filtered colimit of split ones. Since $\mathbf y$  preserves split short exact sequences and $\alpha$\nobreakdash-filtered colimits we deduce that $\mathbf y$  preserves $\alpha$\nobreakdash-pure short exact sequences. This concludes the proof of (2).
\end{proof}

\begin{cor}\label{fomul}
Given an $\alpha$\nobreakdash-coherent ring $R$, $\alpha >\aleph_0$, and an $R$\nobreakdash-module $M=\colim_{\lambda}P_\lambda$ expressed as an $\alpha$\nobreakdash-filtered colimit of $\alpha$\nobreakdash-presentable $R$\nobreakdash-modules $P_\lambda$,
$$\ext^{n}_{\alpha, D(R)^\alpha}(S_{\alpha}(M),F)={\lim_{\lambda\in\Lambda}}^{n}F(P_{\lambda}).$$
In particular, for $F=S_{\alpha}(\Sigma^{j}N)$, $j\in\mathbb Z$,
$$\ext^{n}_{\alpha, D(R)^\alpha}(S_{\alpha}(M),S_{\alpha}(\Sigma^{j}N))={\lim_{\lambda\in\Lambda}}^{n}\ext^{j}_{R}(P_{\lambda},N).$$
\end{cor}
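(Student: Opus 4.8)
The plan is to show that $S_{\alpha}(M)$ is an $\alpha$\nobreakdash-flat object of $\Mod{\alpha}{\C C}$, where $\C C=D(R)^{\alpha}$, and then to read off the formula from Remark \ref{canonica} together with Yoneda's lemma. First I would note that each $P_{\lambda}$, being an $\alpha$\nobreakdash-presentable $R$\nobreakdash-module over the $\alpha$\nobreakdash-coherent ring $R$, is $\alpha$\nobreakdash-compact in $D(R)$ by Lemma \ref{lem-alpha-compact}; hence $P_{\lambda}$ lies in $\C C$, and $S_{\alpha}(P_{\lambda})=\C C(-,P_{\lambda})$ is a representable, and therefore $\alpha$\nobreakdash-presentable projective, object of $\Mod{\alpha}{\C C}$. (Projectivity of $S_{\alpha}(P_{\lambda})$ here comes from $\alpha$\nobreakdash-compactness of $P_{\lambda}$ in $D(R)$, not from any projectivity of $P_{\lambda}$ as an $R$\nobreakdash-module.) Since the functor $\mathbf y=S_{\alpha}|_{\Mod{}{R}}$ preserves $\alpha$\nobreakdash-filtered colimits by Lemma \ref{lem_purity_ring_D(ring)}(2), the presentation $M=\colim_{\lambda}P_{\lambda}$ gives $S_{\alpha}(M)=\colim_{\lambda}S_{\alpha}(P_{\lambda})$, which exhibits $S_{\alpha}(M)$ as an $\alpha$\nobreakdash-filtered colimit of $\alpha$\nobreakdash-presentable projectives over the very same indexing category $\Lambda$, i.e.~as an $\alpha$\nobreakdash-flat object equipped with the canonical projective resolution of Remark \ref{canonica} indexed by the nerve $N\Lambda$.

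Next I would apply the last displayed identity of Remark \ref{canonica} with $A=S_{\alpha}(M)$ and $B=F$ in $\C A=\Mod{\alpha}{\C C}$, obtaining
$$
\ext^{n}_{\alpha,\C C}(S_{\alpha}(M),F)={\lim_{\lambda\in\Lambda}}^{n}\hom_{\alpha,\C C}(S_{\alpha}(P_{\lambda}),F).
$$
By Yoneda's lemma applied in $\Mod{\alpha}{\C C}$ (legitimate since $F$ is $\alpha$\nobreakdash-continuous and each $S_{\alpha}(P_{\lambda})=\C C(-,P_{\lambda})$ is representable, and $\Mod{\alpha}{\C C}$ is full in $\PSh(\C C)$), one has $\hom_{\alpha,\C C}(\C C(-,P_{\lambda}),F)\cong F(P_{\lambda})$ naturally in $\lambda$, which yields the first formula. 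For the ``in particular'' statement I would substitute $F=S_{\alpha}(\Sigma^{j}N)$ and compute $F(P_{\lambda})=S_{\alpha}(\Sigma^{j}N)(P_{\lambda})=D(R)(P_{\lambda},\Sigma^{j}N)=\ext^{j}_{R}(P_{\lambda},N)$, the last identification being the usual description of morphism groups in the derived category out of a module (here $P_\lambda$ viewed as a complex concentrated in degree $0$), and then feed this into the first formula.

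The only points that need a little care, rather than being genuine obstacles, are that the indexing category $\Lambda$ occurring in Remark \ref{canonica} can be taken to be literally the one presenting $M$ (it can, since $S_{\alpha}$ transports the colimit diagram $\Lambda\to\Mod{}{R}$ verbatim to a colimit diagram in $\Mod{\alpha}{\C C}$), and that Yoneda is invoked inside the full subcategory of $\alpha$\nobreakdash-continuous modules (harmless, since both $F$ and the $S_{\alpha}(P_{\lambda})$ lie there). No set-theoretic input and no homological estimates are required: all the substantive content has already been packaged into Lemmas \ref{lem-alpha-compact} and \ref{lem_purity_ring_D(ring)} and into Remark \ref{canonica}, and the corollary is essentially a bookkeeping assembly of these facts.
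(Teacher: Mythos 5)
Your proof is correct and follows essentially the same route as the paper: the paper applies $S_\alpha$ to the canonical $\alpha$\nobreakdash-pure projective resolution of $M$ from Remark \ref{lapuri} (using Lemma \ref{lem_purity_ring_D(ring)}(2)) and computes $\ext$ with it, which, since $S_\alpha$ preserves the relevant coproducts and colimits, is exactly the nerve-indexed resolution of $S_\alpha(M)=\colim_\lambda S_\alpha(P_\lambda)$ that you obtain via Remark \ref{canonica} and Yoneda.
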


\begin{proof}
Take the $\alpha$\nobreakdash-pure projective resolution of $M$ in Remark \ref{lapuri}. Applying $S_\alpha$ we obtain a projective resolution of $S_\alpha(M)$ by Lemma \ref{lem_purity_ring_D(ring)} (2). Using this resolution to compute $\ext^{n}_{\alpha, D(R)^\alpha}(S_{\alpha}(M),F)$ we obtain the equation in the statement.
\end{proof}

\begin{prop} \label{Thmrings1}
Given an $\alpha$\nobreakdash-coherent ring  $R$,
$\alpha>\aleph_0$, if $H$ is a cohomological functor in $\Mod{\alpha}{D(R)^{\alpha}}$, then $\Ppd_{\alpha}(H(R))\leq \pd(H)$. Moreover, for any $R$\nobreakdash-module $M$,
$
\Ppd_\alpha(M)=\pd(S_\alpha(M)).
$
\end{prop}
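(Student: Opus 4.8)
The plan is to prove both assertions by combining Corollary \ref{fomul} with the two adjoint-like functors $H_0$ and $\mathbf{y}$ studied in Lemma \ref{lem_purity_ring_D(ring)}, using the elementary fact that an additive functor preserving projectives and exact sequences does not raise projective dimension. First I would settle the second, sharper statement $\Ppd_\alpha(M)=\pd(S_\alpha(M))$ for an $R$-module $M$, and then deduce the inequality $\Ppd_\alpha(H(R))\leq\pd(H)$ for a general cohomological functor $H$ from (a piece of) the same circle of ideas together with Corollary \ref{fomul}.

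For the equality: write $M=\colim_{\lambda\in\Lambda}P_\lambda$ as an $\alpha$-filtered colimit of $\alpha$-presentable $R$-modules, as in Remark \ref{lapuri}, so that the canonical complex of Remark \ref{canonica} is an $\alpha$-pure projective resolution of $M$. Applying $\mathbf{y}=S_\alpha$ to it gives, by Lemma \ref{lem_purity_ring_D(ring)}(2), a projective resolution of $S_\alpha(M)$ in $\Mod{\alpha}{D(R)^\alpha}$ of the same length, and more generally $\mathbf{y}$ carries any $\alpha$-pure projective resolution of $M$ to a projective resolution of $S_\alpha(M)$; since $\mathbf{y}$ also preserves $\alpha$-pure exactness, it takes an $\alpha$-pure projective resolution of length $n$ to a projective resolution of length $n$, giving $\pd(S_\alpha(M))\leq\Ppd_\alpha(M)$. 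For the reverse inequality, apply $H_0(F)=F(R)$: by Lemma \ref{lem_purity_ring_D(ring)}(1) it sends projectives to $\alpha$-pure projective $R$-modules and is exact on $\alpha$-pure exact sequences, and $H_0 S_\alpha(M)=D(R)(R,M)=H_0(M)=M$ (here using that $M$ is a module concentrated in degree $0$), so applying $H_0$ to a length-$n$ projective resolution of $S_\alpha(M)$ produces an $\alpha$-pure projective resolution of $M$ of length $n$; one must check that a projective resolution in $\Mod{\alpha}{D(R)^\alpha}$ is in particular $\alpha$-pure exact, which holds because its image under the exact restricted Yoneda-type functors is built from kernels with $\alpha$-flat (indeed cohomological) quotients, hence $\alpha$-pure by Lemma \ref{sonpuras}. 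This yields $\Ppd_\alpha(M)\leq\pd(S_\alpha(M))$ and hence equality.

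For the inequality $\Ppd_\alpha(H(R))\leq\pd(H)$ with $H$ cohomological: let $n=\pd(H)$ and take a length-$n$ projective resolution $0\to Q_n\to\cdots\to Q_0\to H\to 0$ in $\Mod{\alpha}{D(R)^\alpha}$. Again this resolution is $\alpha$-pure exact (its terms are projective, so $\alpha$-flat; the syzygies are $\alpha$-flat too as $\fgd_\alpha\leq\Pgldim_\alpha$ need not be invoked—rather, each short exact piece has a projective, hence $\alpha$-flat, kernel-quotient, so Lemma \ref{sonpuras} applies step by step from the right). Applying $H_0$ and invoking Lemma \ref{lem_purity_ring_D(ring)}(1), we obtain an exact complex $0\to Q_n(R)\to\cdots\to Q_0(R)\to H(R)\to 0$ of $R$-modules with each $Q_i(R)$ being $\alpha$-pure projective and each short exact constituent $\alpha$-pure (images under $H_0$ of $\alpha$-pure short exact sequences). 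Thus $H(R)$ has an $\alpha$-pure projective resolution of length $\leq n$, i.e.\ $\Ppd_\alpha(H(R))\leq n=\pd(H)$, as desired.

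The step I expect to be the main obstacle is the verification that the syzygies appearing in a projective resolution inside $\Mod{\alpha}{D(R)^\alpha}$ are genuinely $\alpha$-flat (equivalently cohomological, by Remark \ref{sonlosplanos}), so that Lemma \ref{sonpuras} can be applied to conclude $\alpha$-purity of the resolution and hence legitimately transport it along $H_0$; a priori a subobject of a projective in $\Mod{\alpha}{D(R)^\alpha}$ need not be cohomological. The resolution is the most efficient path around this: one does not in fact need the syzygies to be $\alpha$-flat, only that each short exact sequence $0\to Z_i\to Q_i\to Z_{i-1}\to 0$ is $\alpha$-pure, and this can be checked directly because $Q_i$ is projective so the sequence splits after applying $\Mod{\alpha}{D(R)^\alpha}(P,-)$ for $P$ projective $\alpha$-presentable, which is exactly $\alpha$-purity by Definition \ref{def_alpha_exact_seq}. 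With that observation in hand the transport along $H_0$ in Lemma \ref{lem_purity_ring_D(ring)}(1) is immediate, and dually the construction via $\mathbf{y}$ in part (2) gives the other inequality; the equality for modules then drops out, and the general cohomological case uses only the $H_0$ half.
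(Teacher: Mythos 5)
There is a genuine gap, and it sits exactly at the step you flagged as the main obstacle. Your patch for it is wrong: Definition \ref{def_alpha_exact_seq} tests $\alpha$\nobreakdash-purity of $Z_i\hookrightarrow Q_i\onto Z_{i-1}$ against \emph{all} $\alpha$\nobreakdash-presentable objects $P$ of $\Mod{\alpha}{D(R)^{\alpha}}$, not only the projective ones; under your reading (projective test objects only) every exact sequence would be $\alpha$\nobreakdash-pure, since $\hom_{\alpha,\C{C}}(P,-)$ is exact for projective $P$, and the whole notion of pure projective dimension would trivialize. Moreover, the reason you give ("$Q_i$ is projective so the sequence splits after applying $\hom(P,-)$") conflates projectivity of the middle term with projectivity of the test object: what makes $\hom_{\alpha,\C{C}}(P,Q_i)\to\hom_{\alpha,\C{C}}(P,Z_{i-1})$ surjective is a lifting property of $P$ (or $\alpha$\nobreakdash-flatness of the quotient $Z_{i-1}$), not any property of $Q_i$. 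So as written, the transport of a projective resolution of $H$ (or of $S_\alpha(M)$) along $H_0$ is not justified beyond its first step, where Lemma \ref{sonpuras} applies because $H$ itself is $\alpha$\nobreakdash-flat.

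The correct way to close the gap -- and, tersely, the paper's way -- is to show that the syzygies \emph{are} $\alpha$\nobreakdash-flat, i.e.\ cohomological (Remark \ref{sonlosplanos}): in a short exact sequence $Z\hookrightarrow Q\onto H'$ of $\alpha$\nobreakdash-continuous modules with $Q$ and $H'$ cohomological, the kernel $Z$ is again cohomological (a diagram chase using that the sequence is pointwise exact and that consecutive maps in a triangle compose to zero; note that projectives are cohomological, being retracts of coproducts of representables, which are $\alpha$\nobreakdash-filtered colimits of representables). By induction every cokernel occurring in a projective resolution of a cohomological functor is $\alpha$\nobreakdash-flat, so Lemma \ref{sonpuras} applies at every stage and the resolution is $\alpha$\nobreakdash-pure; then Lemma \ref{lem_purity_ring_D(ring)}(1) gives $\Ppd_\alpha(H(R))\leq\pd(H)$, and with $H=S_\alpha(M)$ (so $H(R)=M$) also $\Ppd_\alpha(M)\leq\pd(S_\alpha(M))$, which is how the paper deduces the module case rather than treating it separately. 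The remaining direction, $\pd(S_\alpha(M))\leq\Ppd_\alpha(M)$ via applying $\mathbf{y}=S_\alpha$ to an $\alpha$\nobreakdash-pure projective resolution and Lemma \ref{lem_purity_ring_D(ring)}(2), is correct in your proposal and coincides with the paper's argument.
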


\begin{proof}
By Remark \ref{sonlosplanos} and Lemma \ref{sonpuras}, any projective resolution of $H$ is also an $\alpha$\nobreakdash-pure projective resolution, hence Lemma \ref{lem_purity_ring_D(ring)} (1) proves the first part. Since $S_{\alpha}(M)(R)=M$, this also proves $\Ppd_\alpha(M)\leq\pd(S_\alpha(M))$. The other inequality follows from Lemma \ref{lem_purity_ring_D(ring)} (2).
\end{proof}

\begin{cor}\label{cor-rings1}
If $R$ is an $\alpha$\nobreakdash-coherent ring, $\alpha>\aleph_{0}$, then $$\Pgldim_{\alpha} (R)\leq \fgd_{\alpha} (\Mod{\alpha}{D(R)^{\alpha}}).$$ {Moreover, if $R$ is hereditary and $\fgd_{\alpha} (\Mod{\alpha}{D(R)^{\alpha}})\leq 2$, then the equality holds $\fgd_{\alpha} (\Mod{\alpha}{D(R)^{\alpha}})=\Pgldim_{\alpha} (R)$}.
\end{cor}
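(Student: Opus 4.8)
The plan is to deduce both parts from Proposition~\ref{Thmrings1} and Remark~\ref{sonlosplanos}, together with, for the hereditary case, Corollary~\ref{suficiente} and the splitting of objects of $D(R)$ into their shifted homology. For the inequality $\Pgldim_{\alpha}(R)\leq\fgd_{\alpha}(\Mod{\alpha}{D(R)^{\alpha}})$: given an $R$\nobreakdash-module $M$, viewed as a complex concentrated in degree $0$, the functor $S_{\alpha}(M)$ carries exact triangles to exact sequences, hence is cohomological, hence $\alpha$\nobreakdash-flat by Remark~\ref{sonlosplanos}; so $\pd(S_{\alpha}(M))\leq\fgd_{\alpha}(\Mod{\alpha}{D(R)^{\alpha}})$. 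By the last assertion of Proposition~\ref{Thmrings1}, $\Ppd_{\alpha}(M)=\pd(S_{\alpha}(M))$, and taking the supremum over all $M$ gives the claim.

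For the \emph{moreover} part, assume $R$ hereditary and $\fgd_{\alpha}(\Mod{\alpha}{D(R)^{\alpha}})\leq 2$; it remains to prove $\fgd_{\alpha}(\Mod{\alpha}{D(R)^{\alpha}})\leq\Pgldim_{\alpha}(R)$. Let $F$ be an $\alpha$\nobreakdash-flat object of $\Mod{\alpha}{D(R)^{\alpha}}$; by Remark~\ref{sonlosplanos} it is cohomological, and the bound $\fgd_{\alpha}\leq 2$ lets us apply Corollary~\ref{suficiente}, so $D(R)$ satisfies \aro{\alpha} and $F\cong S_{\alpha}(X)$ for some $X$ in $D(R)$. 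I will then use that over a hereditary ring every object of $D(R)$ splits as $X\cong\bigoplus_{n\in\mathbb{Z}}\Sigma^{n}H_{n}(X)$: represent $X$ by a $K$-projective complex of projective $R$\nobreakdash-modules $X_{\bullet}$; since submodules of projectives over a hereditary ring are projective, each short exact sequence $Z_{n}\hookrightarrow X_{n}\onto B_{n-1}$ of cycles and boundaries splits, which exhibits $X_{\bullet}$ as the coproduct of the two-term complexes $B_{n}\hookrightarrow Z_{n}$ placed in degrees $n+1,n$, and each of these is a projective resolution of $\Sigma^{n}H_{n}(X)$.

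Now $S_{\alpha}$ preserves coproducts, so $F\cong\bigoplus_{n}\Sigma^{n}S_{\alpha}(H_{n}(X))$. Coproducts of projectives are projective in $\Mod{\alpha}{D(R)^{\alpha}}$ (the projectives there are the retracts of coproducts of representables) and $\Sigma$ is exact, so $\pd(F)\leq\sup_{n}\pd(S_{\alpha}(H_{n}(X)))$; by Proposition~\ref{Thmrings1} each term equals $\Ppd_{\alpha}(H_{n}(X))\leq\Pgldim_{\alpha}(R)$, hence $\pd(F)\leq\Pgldim_{\alpha}(R)$. Taking the supremum over all $\alpha$\nobreakdash-flat $F$ yields $\fgd_{\alpha}(\Mod{\alpha}{D(R)^{\alpha}})\leq\Pgldim_{\alpha}(R)$, which with the first inequality gives the claimed equality. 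The step needing most care is the splitting $X\cong\bigoplus_{n}\Sigma^{n}H_{n}(X)$ for possibly unbounded complexes; note too that the hypothesis $\fgd_{\alpha}\leq 2$ enters only through Corollary~\ref{suficiente}, to guarantee that $F$ is realizable as some $S_{\alpha}(X)$.
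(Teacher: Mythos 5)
Your argument is correct and is essentially the paper's own proof: the inequality $\Pgldim_{\alpha}(R)\leq\fgd_{\alpha}(\Mod{\alpha}{D(R)^{\alpha}})$ comes from Proposition~\ref{Thmrings1} together with Remark~\ref{sonlosplanos}, and the reverse inequality under $\fgd_{\alpha}\leq 2$ from Corollary~\ref{suficiente}, the hereditary splitting $X\cong\bigoplus_{n}\Sigma^{n}H_{n}(X)$, and Proposition~\ref{Thmrings1} again. The extra details you supply — the $K$-projective proof of the splitting, and the bound $\pd(\bigoplus_{n}\Sigma^{n}S_{\alpha}(H_{n}(X)))\leq\sup_{n}\pd(S_{\alpha}(H_{n}(X)))$, which tacitly uses that coproducts of resolutions are again resolutions in $\Mod{\alpha}{D(R)^{\alpha}}$ — are precisely what the paper's one-line proof leaves implicit.
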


\begin{proof}
The first part follows directly from Proposition \ref{Thmrings1} and Remark \ref{sonlosplanos}. By Corollary \ref{suficiente}, if $\fgd_{\alpha} (\Mod{\alpha}{D(R)^{\alpha}})\leq 2$, then every $\alpha$\nobreakdash-flat object is representable and the result follows from Proposition \ref{Thmrings1} and the fact that, if $R$ is hereditary, then any complex $X$ splits as $X\cong\bigoplus_{n\in\mathbb{Z}}\Sigma^n H_n(X)$.
\end{proof}

We can now prove Proposition \ref{aleph_1-Adams-for-rings}.

\begin{proof}[Proof of Proposition \ref{aleph_1-Adams-for-rings}]
Let $S$ be the set of $K$\nobreakdash-projective complexes formed by free $R$\nobreakdash-modules of the form $\bigoplus_{i\in I} R$ with $\card I<\alpha$.
By \cite[Theorem 15]{Mu}, any $\alpha$\nobreakdash-compact complex in $D(R)$ is isomorphic to an object in $S$.
The morphism set between two of those free $R$\nobreakdash-modules
$$
\Hom_{R}(\bigoplus_{i\in I} R, \bigoplus_{j\in J}
R)\cong\prod_{i\in I}\Hom_{R}(R, \bigoplus_{j\in J}
R)\cong \prod_{i\in I}\bigoplus_{j\in J}\hom_{R}(R,R)\cong \prod_{i\in I}\bigoplus_{j\in J}R
$$
has cardinal $\leq \alpha^{\card I}$, and, under our assumptions, $\alpha^{\card I}\leq\alpha$, compare \cite[Theorem 5.20]{Je03}. This shows that $\card S\leq \alpha$, and moreover that the set of chain maps between two objects $X$ and $Y$ in $S$ has cardinal $\leq \alpha$. Since $D(R)(X,Y)$ is the quotient of the set of chain maps by the homotopy relation, we deduce that $\card D(R)^{\alpha}\leq \alpha$.

For the last part of the statement we use Proposition \ref{cota} and Corollaries \ref{cota2} and \ref{cor-rings1}.
\end{proof}

The following result gives a necessary condition for the representability of cohomological functors in $\Mod{\alpha}{D(R)^{\alpha}}$ which fit into an extension of restricted representables.

\begin{lem}\label{loca}
Let $M$ and $N$ be $R$\nobreakdash-modules and $S_{\alpha}(\Sigma^{j}N)\st{a}\hookrightarrow F\st{b}\onto S_{\alpha}(M)$ an extension in $\Mod{\alpha}{D(R)^{\alpha}}$, $j>0$, $\alpha>\aleph_0$, classified by an element
$$
e_{F}\in\ext^{1}_{\alpha, D(R)^{\alpha}}(S_{\alpha}(M),S_{\alpha}(\Sigma^{j}N))={\lim_{\lambda}}^{1}\ext^{j}_{R}(P_{\lambda},N)=E^{1,j}_{2}.
$$
Here $M=\colim_{\lambda\in\Lambda}P_{\lambda}$ is an $\alpha$\nobreakdash-filtered colimit of $\alpha$\nobreakdash-presentable $R$\nobreakdash-modules.
If $F=S_{\alpha}(X)$ for some $X$ in $D(R)$, then the second differential of the  spectral sequence in Remark \ref{lapuri} maps $e_{F}$ to zero,
$$
d_{2}\colon E^{1,j}_{2}\longrightarrow E^{3,j-1}_{2},\quad d_{2}(e_{F})=0.
$$
\end{lem}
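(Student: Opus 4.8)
The plan is to show that $e_{F}$ survives to the $E_{\infty}$\nobreakdash-page of the spectral sequence, which immediately forces $d_{2}(e_{F})=0$. First I would rewrite that spectral sequence. By Corollary \ref{fomul} the spectral sequence of Remark \ref{lapuri}, taken with $\C A=\Mod{}{R}$, $A=M$ and $B=N$, reads
$$
E^{p,q}_{2}=\ext^{p}_{\alpha,D(R)^{\alpha}}\big(S_{\alpha}(M),S_{\alpha}(\Sigma^{q}N)\big)\Longrightarrow \ext^{p+q}_{R}(M,N)=D(R)(M,\Sigma^{p+q}N),
$$
so $e_{F}\in E^{1,j}_{2}$. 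Since $E^{p,q}_{r}=0$ for $p<0$, no differential lands at the spot $(1,j)$ on any page, so $E^{1,j}_{r+1}=\ker(d_{r}\colon E^{1,j}_{r}\r E^{1+r,j-r+1}_{r})$ and $E^{1,j}_{\infty}=\bigcap_{r\geq 2}\ker d_{r}$ inside $E^{1,j}_{2}$; hence it suffices to prove $e_{F}\in E^{1,j}_{\infty}$. By convergence $E^{1,j}_{\infty}=F^{1}/F^{2}$, a subquotient of $D(R)(M,\Sigma^{j+1}N)$, where $F^{1}$ is the kernel of the edge map into $E^{0,j+1}_{2}=\lim_{\lambda}\ext^{j+1}_{R}(P_{\lambda},N)$, i.e.\ $F^{1}=\{\,\xi\mid \xi\circ\phi_{\lambda}=0 \text{ in }\ext^{j+1}_{R}(P_{\lambda},N)\ \text{for all }\lambda\,\}$, the $\phi_{\lambda}\colon P_{\lambda}\r M$ being the structure maps of $M=\colim_{\lambda}P_{\lambda}$. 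So the task reduces to exhibiting $\xi\in F^{1}$ whose class in $F^{1}/F^{2}$ is $e_{F}$.

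Now I would use the hypothesis $F=S_{\alpha}(X)$. Evaluating the short exact sequence $S_{\alpha}(\Sigma^{j}N)\hookrightarrow F\onto S_{\alpha}(M)$ at the $\alpha$\nobreakdash-compact objects $\Sigma^{k}R$, and using $S_{\alpha}(Y)(\Sigma^{k}R)=H_{k}(Y)$ together with the fact that $H_{k}(\Sigma^{j}N)$ is $N$ for $k=j$ and $0$ otherwise while $H_{k}(M)$ is $M$ for $k=0$ and $0$ otherwise (here $j>0$), one gets $H_{k}(X)=0$ for $k\neq 0,j$, $H_{0}(X)\cong M$ and $H_{j}(X)\cong N$. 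Thus $X$ has a two\nobreakdash-stage Postnikov tower, $\tau_{\geq 1}X\cong\Sigma^{j}N$ and $\tau_{\leq 0}X\cong M$, sitting in a triangle
$$
\Sigma^{j}N\st{u}\To X\st{\pi}\To M\st{\xi}\To\Sigma^{j+1}N
$$
with $k$\nobreakdash-invariant $\xi\in D(R)(M,\Sigma^{j+1}N)=\ext^{j+1}_{R}(M,N)$. To see $\xi\in F^{1}$: lift each projective morphism $S_{\alpha}(\phi_{\lambda})\colon S_{\alpha}(P_{\lambda})\r S_{\alpha}(M)$ along the epimorphism $b\colon S_{\alpha}(X)\onto S_{\alpha}(M)$; since $P_{\lambda}$ is $\alpha$\nobreakdash-compact (Lemma \ref{lem-alpha-compact}) hence lies in $\add(\C C)$, this lift is $S_{\alpha}(g_{\lambda})$ for a unique $g_{\lambda}\colon P_{\lambda}\r X$. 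One checks that the given extension is the image under $S_{\alpha}$ of the Postnikov triangle, i.e.\ $S_{\alpha}(\xi)=0$ and $b=S_{\alpha}(\pi)$ (both are epimorphisms onto $S_{\alpha}(M)$ with kernel the image of $S_{\alpha}(\Sigma^{j}N)$, as is forced by the homology computation); then $\pi g_{\lambda}=\phi_{\lambda}$, whence $\xi\phi_{\lambda}=\xi\pi g_{\lambda}=0$ and $\xi\in F^{1}$.

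Finally I would identify the class of $\xi$ in $F^{1}/F^{2}=E^{1,j}_{\infty}\subset\lim^{1}_{\lambda}\ext^{j}_{R}(P_{\lambda},N)$ with $e_{F}$. By the standard description, for $\lambda\r\mu$ the morphism $g_{\mu}\phi_{\lambda\mu}-g_{\lambda}\colon P_{\lambda}\r X$ lifts $0$ along $\pi$, hence factors as $u\circ c_{\lambda\mu}$ with $c_{\lambda\mu}\in D(R)(P_{\lambda},\Sigma^{j}N)=\ext^{j}_{R}(P_{\lambda},N)$, and $[(c_{\lambda\mu})]$ is that class; unwinding the identification $D(R)(P_{\lambda},\Sigma^{j}N)\cong\hom_{\alpha,\C C}(S_{\alpha}(P_{\lambda}),S_{\alpha}(\Sigma^{j}N))$ of Corollary \ref{fomul} and using $a=S_{\alpha}(u)$, $b=S_{\alpha}(\pi)$, the cocycle $(c_{\lambda\mu})$ is, up to sign, the cocycle of partial splittings of $b$ over the $P_{\lambda}$ that represents $e_{F}$. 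Hence $e_{F}\in E^{1,j}_{\infty}$ and $d_{2}(e_{F})=0$. I expect the delicate point to be precisely the comparison $b=S_{\alpha}(\pi)$ (equivalently $S_{\alpha}(\xi)=0$) and the ensuing matching of the two $\lim^{1}$\nobreakdash-cocycles. A way to make this transparent is to work, from the outset, with the explicit realization $X=\Hocolim_{\lambda}\big(\Sigma^{j}N\oplus P_{\lambda}\big)$ whose transition maps are the identity on $\Sigma^{j}N$, are $\phi_{\lambda\mu}$ on $P_{\lambda}$, and have off\nobreakdash-diagonal component $\widetilde{\theta}_{\lambda\mu}\colon P_{\lambda}\r\Sigma^{j}N$ lifting (uniquely, as $P_{\lambda}\in\add(\C C)$) the off\nobreakdash-diagonal entries of the transition maps of the split pullbacks $F\times_{S_{\alpha}(M)}S_{\alpha}(P_{\lambda})\cong S_{\alpha}(\Sigma^{j}N)\oplus S_{\alpha}(P_{\lambda})$; since $S_{\alpha}$ preserves $\alpha$\nobreakdash-filtered homotopy colimits (the subcategory on which it does contains the $\Sigma^{k}R$ and is closed under triangles and coproducts of $<\alpha$ objects, hence equals $D(R)^{\alpha}$), this $X$ realizes $F$ together with its extension, and the required lifts $g_{\lambda}=\big(P_{\lambda}\hookrightarrow\Sigma^{j}N\oplus P_{\lambda}\r X\big)$ are built in, so that $c_{\lambda\mu}=\pm\widetilde{\theta}_{\lambda\mu}$ on the nose.
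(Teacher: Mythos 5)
Your overall strategy (show that $e_{F}$ is a permanent cycle, detected in filtration one by the $k$\nobreakdash-invariant of $X$) is not the paper's route, and the step you yourself call delicate is a genuine gap that neither of your two attempts closes. From $F\cong S_{\alpha}(X)$ you correctly compute $H_{*}(X)$ (short exact sequences in $\Mod{\alpha}{D(R)^{\alpha}}$ are pointwise exact, so evaluate at the stalks $\Sigma^{k}R$) and obtain the truncation triangle $\Sigma^{j}N\to X\st{\pi}\to M\st{\xi}\to\Sigma^{j+1}N$. But to run the detection argument you need two things: that $\xi$ is a phantom map, i.e.\ that $S_{\alpha}(\pi)$ is an epimorphism — a condition that must be tested against \emph{all} $\alpha$\nobreakdash-compact objects, whereas the homology computation only controls evaluation at the stalks $\Sigma^{k}R$ (a natural transformation such as $S_{\alpha}(\xi)$ can vanish on every $\Sigma^{k}R$ and still be nonzero, e.g.\ on two-term complexes); and that the extension class of the triangle-induced sequence $S_{\alpha}(\Sigma^{j}N)\into S_{\alpha}(X)\onto S_{\alpha}(M)$ equals the given $e_{F}$ — an abstract isomorphism of middle terms does not identify extension classes. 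Neither assertion is ``forced by the homology computation''; the hypothesis is only that $F$ is representable as an object, not that $a$ and $b$ are induced by morphisms of $D(R)$, and bridging that gap is exactly the hard content here.

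Your proposed patch in the last paragraph is worse, because it never uses the hypothesis $F=S_{\alpha}(X)$: if it worked, it would prove that $d_{2}$ vanishes on \emph{every} class of $E^{1,j}_{2}$, which would trivialize Proposition \ref{Prop-macho}, whose proof manufactures classes with $d_{2}\neq 0$ precisely via this lemma. The illegitimate step is forming $\Hocolim_{\lambda}(\Sigma^{j}N\oplus P_{\lambda})$: your diagram exists only in $D(R)$, and for an $\alpha$\nobreakdash-filtered index category with $\alpha>\aleph_{0}$ a bare triangulated category has no such homotopy colimit; one would have to rectify the diagram at the chain level (this is what makes Lemma \ref{lem_purity_ring_D(ring)}(2) work for honest colimits of modules), and the obstruction to such a realization is exactly what $d_{2}(e_{F})$ and $\kappa(F)$ measure, so the argument begs the question. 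The paper's proof avoids matching the given extension with any triangle on $X$: it identifies the spectral sequence of Remark \ref{lapuri} with the Adams spectral sequence via Corollary \ref{fomul}, invokes Theorem \ref{calculillo} to get $\kappa(F)=a\cdot d_{2}(e_{F})\cdot b$, notes $\kappa(F)=0$ because $F$ is representable (Proposition \ref{pequeno}), and then proves by a degree argument (using $q<0$ and $j>0$) that $x\mapsto a\cdot x\cdot b$ is injective on $\ext^{3,-1}_{\alpha,D(R)^{\alpha}}(S_{\alpha}(M),S_{\alpha}(\Sigma^{j}N))$, which is the bridge from an invariant intrinsic to $F$ back to the specific class $e_{F}$ that your argument is missing.
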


\begin{proof}
The spectral sequence in Remark \ref{lapuri} identifies with the Adams spectral sequence in Section \ref{SS} below abutting to $D(R)(M,\Sigma^{j}N)=\ext^{j}_{R}(M,N)$ via the second equation in Corollary \ref{fomul}. Hence, the statement follows from Theorem \ref{calculillo} and the fact that
the following morphism is injective for $p=3$ and $q=-1$,
\begin{align}
\label{inyectiva} \ext^{p,q}_{\alpha,D(R)^{\alpha}}(S_\alpha(M),S_\alpha(\Sigma^{j}N))&\To \ext^{p,q}_{\alpha,D(R)^{\alpha}}(F,F),\\
\nonumber x&\;\mapsto\; a\cdot x\cdot b.
\end{align}
We show that it is injective for $p\geq 0$ and $q<0$.
Indeed, this morphism decomposes as
$$\ext^{p,q}_{\alpha,D(R)^{\alpha}}(S_\alpha(M),S_\alpha(\Sigma^{j}N))\st{a\cdot-}\To  \ext^{p,q}_{\alpha,D(R)^{\alpha}}(S_\alpha(M),F)\st{-\cdot b}\To \ext^{p,q}_{\alpha,D(R)^{\alpha}}(F,F).$$
The kernel of the first arrow is the image of a morphism from
$$\ext^{p-1,q}_{\alpha,D(R)^{\alpha}}(S_\alpha(M),S_\alpha(M))={\lim_{\lambda}}^{p-1}\ext_{R}^{q}(P_{\lambda},M)=0,$$
which vanishes since $q<0$. The kernel of the second arrow is the image of a morphism from the middle term of the following  exact sequence
$$\begin{array}{c}
  \ext^{p-1,q}_{\alpha,D(R)^{\alpha}}(S_\alpha(\Sigma^{j}N),S_\alpha(\Sigma^{j}N))\\
\downarrow\\
\ext^{p-1,q}_{\alpha,D(R)^{\alpha}}(S_\alpha(\Sigma^{j}N),F)\\
\downarrow\\
 \ext^{p-1,q}_{\alpha,D(R)^{\alpha}}(S_\alpha(\Sigma^{j}N),S_\alpha(M))
\end{array}$$
which vanishes since
\begin{align*}
\ext^{p-1,q}_{\alpha,D(R)^{\alpha}}(S_\alpha(\Sigma^{j}N),S_\alpha(\Sigma^{j}N))&
=
\ext^{p-1,q}_{\alpha,D(R)^{\alpha}}(S_\alpha(N),S_\alpha(N))\\
&={\lim_{\lambda}}^{p-1}\ext_{R}^{q}(N_{\lambda},N)=0,\\
\ext^{p-1,q}_{\alpha,D(R)^{\alpha}}(S_\alpha(\Sigma^{j}N),S_\alpha(M))&
=
\ext^{p-1,q}_{\alpha,D(R)^{\alpha}}(S_\alpha(N),S_\alpha(\Sigma^{-j} M))\\
&=
{\lim_{\lambda}}^{p-1}\ext_{R}^{q-j}(N_{\lambda},M)=0.
\end{align*}
Here we use that $q<0<j$.
\end{proof}

As a consequence, we obtain a sufficient condition for the existence of non-representable cohomological functors in $\Mod{\alpha}{D(R)^{\alpha}}$.

\begin{prop}\label{Prop-macho}
Let $R$ be an $\alpha$\nobreakdash-coherent ring, $\alpha>\aleph_0$. If there is an $R$\nobreakdash-module $N$ with injective dimension $\leq 1$ but $\pext_{\alpha,R}^{n}(M,N)\neq 0$ for some $R$\nobreakdash-module $M$ and some $n\geq 3$, then \aro{\alpha} fails for $D(R)$.
\end{prop}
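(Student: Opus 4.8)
The plan is to produce, from the hypothesis, a cohomological functor in $\Mod{\alpha}{D(R)^{\alpha}}$ which fails to be representable, thereby contradicting \aro{\alpha}. The natural candidate is built from the nonvanishing $\pext$ group. First I would reduce to the case $n=3$: if $\pext^{n}_{\alpha,R}(M,N)\neq 0$ for some $n\geq 3$, then by dimension-shifting through an $\alpha$\nobreakdash-pure projective resolution of $M$ one obtains an $R$\nobreakdash-module $M'$ (a high enough syzygy) with $\pext^{3}_{\alpha,R}(M',N)\neq 0$; note that $N$ still has injective dimension $\leq 1$, so the hypotheses are preserved. Next, using that $N$ has injective dimension $\leq 1$, I would identify $\pext^{3}_{\alpha,R}(M',N)$ with a ${\lim_{\lambda}}^{3}$-term: writing $M'=\colim_{\lambda\in\Lambda}P_{\lambda}$ as an $\alpha$\nobreakdash-filtered colimit of $\alpha$\nobreakdash-presentables, Remark \ref{lapuri} gives $\pext^{3}_{\alpha,R}(M',N)={\lim_{\lambda}}^{3}\hom_{R}(P_{\lambda},N)$, and since $\mathrm{id}(N)\leq 1$ the relevant rows of the spectral sequence in Remark \ref{lapuri} degenerate enough that $E^{1,1}_{2}={\lim_{\lambda}}^{1}\ext^{1}_{R}(P_{\lambda},N)$ survives to carry a $d_{2}$ landing in $E^{3,0}_{2}={\lim_{\lambda}}^{3}\hom_{R}(P_{\lambda},N)$, and this $d_{2}$ can be arranged to hit a nonzero class. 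Concretely, via Corollary \ref{fomul} this $d_{2}$ is exactly the differential appearing in Lemma \ref{loca} with $j=1$.

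Then I would invoke Lemma \ref{loca} in the contrapositive. Choose an extension class $e_{F}\in\ext^{1}_{\alpha,D(R)^{\alpha}}(S_{\alpha}(M'),S_{\alpha}(\Sigma N))=E^{1,1}_{2}$ with $d_{2}(e_{F})\neq 0$ in $E^{3,0}_{2}$; such a class exists by the previous paragraph. Let
$$
S_{\alpha}(\Sigma N)\hookrightarrow F\onto S_{\alpha}(M')
$$
be the corresponding extension in $\Mod{\alpha}{D(R)^{\alpha}}$. This $F$ is cohomological: it is an extension of two restricted representables, each of which is cohomological (equivalently $\alpha$\nobreakdash-flat, by Remark \ref{sonlosplanos}), and the class of cohomological functors is closed under extensions since it is closed under the relevant (co)limits and the long exact $\ext$-sequence controls the middle term. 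Finally, if $F$ were representable, $F=S_{\alpha}(X)$ for some $X$ in $D(R)$, then Lemma \ref{loca} would force $d_{2}(e_{F})=0$, a contradiction. Hence $F$ is a cohomological functor in $\Mod{\alpha}{D(R)^{\alpha}}$ which is not in the essential image of $S_{\alpha}$, so \aro{\alpha} fails for $D(R)$.

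The main obstacle I anticipate is the reduction step together with the identification of the target of $d_{2}$: one must check that dimension-shifting a nonvanishing $\pext^{n}$ down to $\pext^{3}$ genuinely produces a module $M'$ for which the spectral-sequence differential $d_{2}\colon E^{1,1}_{2}\r E^{3,0}_{2}$ (rather than some later differential or an edge map) is responsible for the nonvanishing, and that the source $E^{1,1}_{2}$ is an honest $\ext^{1}$ of restricted representables so that it classifies an actual extension $F$. The hypothesis $\mathrm{id}(N)\leq 1$ is exactly what is needed here: it kills $\ext^{q}_{R}(-,N)$ for $q\geq 2$, collapsing the spectral sequence of Remark \ref{lapuri} to two rows $q=0,1$, so that the only possibly nonzero differential out of the $q=1$ row is $d_{2}$ into the $q=0$ row, and $\pext^{3}_{\alpha,R}(M',N)$ is then an extension of $E^{3,0}_{\infty}$ by (a subquotient of) $E^{1,1}_{2}$ — forcing the $d_{2}$ in question to be nonzero once $\pext^{3}_{\alpha,R}(M',N)$ is small enough, e.g.\ after one more syzygy reduction making $E^{3,0}_{\infty}=0$. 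Once this bookkeeping is in place, the rest is a direct appeal to Lemma \ref{loca} and closure of cohomological functors under extensions.
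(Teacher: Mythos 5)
Your strategy is the same as the paper's: reduce to $n=3$ by pure dimension shifting, use the hypothesis that $N$ has injective dimension $\leq 1$ to collapse the spectral sequence of Remark \ref{lapuri} to the rows $q=0,1$, produce a class $e_F\in E^{1,1}_2=\ext^{1}_{\alpha,D(R)^{\alpha}}(S_{\alpha}(M'),S_{\alpha}(\Sigma N))$ with $d_2(e_F)\neq 0$, and conclude via Lemma \ref{loca} that the corresponding extension $F$, which is cohomological as an extension of restricted representables, is not representable. However, there is a genuine gap at the decisive step, namely the claim that $d_2\colon E^{1,1}_2\r E^{3,0}_2$ can be arranged to hit a nonzero class: what you need, and never establish, is that $E^{3,0}_{\infty}=0$. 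This is where the injective dimension hypothesis enters a \emph{second} time. Since every differential leaving $E^{3,0}_r$ lands in negative internal degree, $E^{3,0}_2$ surjects onto $E^{3,0}_{\infty}$, and by the edge inclusion of Remark \ref{lapuri} one has $E^{3,0}_{\infty}\subset\ext^{3}_{R}(M',N)=0$ because $N$ has injective dimension $\leq 1$. Combined with the collapse $E^{p,q}_2=0$ for $q\geq 2$, which rules out incoming differentials $d_r$ for $r\geq 3$, this forces $d_2\colon E^{1,1}_2\r E^{3,0}_2=\pext^{3}_{\alpha,R}(M',N)\neq 0$ to be surjective, hence nonzero.

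Your proposed substitute for this observation, ``one more syzygy reduction making $E^{3,0}_{\infty}=0$'', does not work and is not needed. Replacing $M'$ by a pure syzygy $M''$ (the kernel of a pure epimorphism from an $\alpha$\nobreakdash-pure projective) gives $\pext^{3}_{\alpha,R}(M'',N)\cong\pext^{4}_{\alpha,R}(M',N)$, which may vanish; so this move can destroy the nonvanishing of the target $E^{3,0}_2$ on which your whole argument rests, while the vanishing of $E^{3,0}_{\infty}$ that you were trying to arrange already holds for $M'$ itself, for the reason above. Once this point is repaired, the rest of your proposal --- the reduction to $n=3$, the identification of the relevant $d_2$ with the differential of Lemma \ref{loca} for $j=1$ via Corollary \ref{fomul}, and the closure of cohomological functors under extensions --- is correct and coincides with the paper's proof.
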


\begin{proof}
If $n>3$ we can take an $\alpha$\nobreakdash-pure short exact sequence
 $M'\hookrightarrow P\onto M$ with $\alpha$\nobreakdash-pure projective $P$, so
$\pext_{\alpha,R}^{n}( M, N)\cong \pext_{\alpha,R}^{n-1}( M', N)$,
hence we may assume that $n=3$.

By Lemma \ref{loca} it is enough to show that $d_{2}\colon E^{1,1}_{2}\r E^{3,0}_{2}$ is non-trivial. The target is non-trivial $E^{3,0}_{2}=\pext_{\alpha,R}^{3}(M,N)\neq 0$. By degree reasons, there are no non-trivial differentials out of $E^{3,0}_{n}$, hence $E^{3,0}_{2}$ surjects onto $E_{\infty}^{3,0}\subset \ext^{3}_{R}(M,N)=0$. Therefore, all elements in $E^{3,0}_{2}$ must be in the image of an incoming differential. Since $E^{0,2}_{2}=\lim_{\lambda}\ext^{2}_R(P_{\lambda},N)=0$, then $E^{0,2}_{3}=0$ and the only possibly non-trivial incoming differential is $d_{2}\colon E^{1,1}_{2}\r E^{3,0}_{2}$, which must be surjective.
\end{proof}

We finally prove Theorem \ref{mainderived}.

\begin{proof}[Proof of Theorem \ref{mainderived}]
The first part of the statement follows from Corollary \ref{necesaria} and Proposition \ref{Thmrings1}. If $R$ is hereditary, any complex $X$ splits as a direct sum of its shifted homologies  $X\cong\bigoplus_{n\in\mathbb{Z}}\Sigma^nH_n(X)$. Therefore, on the one hand, (2) follows from Corollary \ref{necesariaysuficiente0} and Proposition \ref{Thmrings1}, and on the other hand (1) is an immediate consequence of Corollaries \ref{suficiente} and \ref{cor-rings1} and Proposition \ref{Prop-macho}.
\end{proof}

\section{On $\aleph_{1}$-Adams representability for objects and the continuum hypothesis}\label{sec-aleph_1}\numberwithin{equation}{subsection}

We already know by Corollary \ref{cota2} that if $\C T$ is an $\aleph_{1}$\nobreakdash-compactly generated triangulated category with $\card\C T^{\aleph_{1}}= \aleph_{1}$, then $\C T$ satisfies \aro{\aleph_{1}}. We have applied this result to derived categories of rings (Proposition \ref{aleph_1-Adams-for-rings}). In this section, we give further examples \emph{assuming the continuum hypothesis}. Some of these examples are based on Corollary  \ref{saltito}, the last result of this paper.

\subsection{Stable homotopy category of spectra}
The stable homotopy category of spectra $\C T=\ho(\Sp)$  is $\aleph_{0}$\nobreakdash-compactly generated and $\card \ho(\Sp)^{\aleph_{0}}\leq \aleph_{0}< \aleph_{1}$. Then $\card \ho(\Sp)^{\aleph_{1}}=\aleph_{1}$ by Corollary \ref{saltito}.

\subsection{Homotopy category of projectives modules} Let $\C T= K(\operatorname{Proj-}R)$ be the homotopy category of complexes of projective (right) modules over a ring $R$ of $\card R\leq \aleph_{1}$. This category is often not $\aleph_{0}$\nobreakdash-compactly generated, but it is always $\aleph_{1}$\nobreakdash-compactly generated, cf.~\cite{Ne08}.

\begin{prop}
Under the continuum hypothesis $\card K(\operatorname{Proj-}R)^{\aleph_{1}}\leq \aleph_{1}$.
\end{prop}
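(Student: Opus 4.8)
The plan is to exhibit an explicit small category whose closure under coproducts of $<\aleph_1$ objects and retracts recovers (up to isomorphism) all $\aleph_1$-compact objects of $\C T=K(\operatorname{Proj-}R)$, and then to count morphisms in it. By Neeman's analysis in \cite{Ne08}, $K(\operatorname{Proj-}R)$ is $\aleph_1$-compactly generated and the $\aleph_1$-compact objects can be described concretely: every $\aleph_1$-compact complex is isomorphic, in $K(\operatorname{Proj-}R)$, to a complex $P$ of projective modules each of which is a summand of a free module $\bigoplus_{i\in I}R$ with $\card I<\aleph_1$, and moreover such a $P$ may be taken with only countably many nonzero terms concentrated in a bounded-below (or suitably controlled) range, or more precisely to lie in the smallest subcategory closed under $<\aleph_1$-coproducts and retracts containing the ``small'' free modules. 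This is the analogue of \cite[Theorem 15]{Mu} used in the proof of Proposition \ref{aleph_1-Adams-for-rings}, and it is the input I would quote rather than reprove.

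First I would fix the candidate set $S$ of representing complexes: complexes $P_\bullet$ of projective right $R$-modules with each $P_n$ a direct summand of $\bigoplus_{i\in I_n}R$, $\card I_n\le\aleph_1$, subject to whatever boundedness/countability constraint the cited structure theorem provides, so that every $\aleph_1$-compact object is isomorphic to a member of $S$. Next I would bound $\card S$ and the morphism sets. A single projective $P_n$ is a summand of $\bigoplus_{i\in I_n}R$ with $\card R\le\aleph_1$ and $\card I_n\le\aleph_1$, hence there are at most $(\aleph_1)^{\aleph_1}=2^{\aleph_1}$ such modules a priori; here is where the continuum hypothesis enters, exactly as in the proof of Proposition \ref{aleph_1-Adams-for-rings}, via $\aleph_1^{\aleph_0}=2^{\aleph_0}=\aleph_1$ (cf.\ \cite[Theorem 5.20]{Je03} / \cite[Theorem 5.20]{Je03}-style cardinal arithmetic), so that in fact one only needs $\aleph_0$-indexed data or $\aleph_1$-indexed data with countable cofinality in each term, giving at most $\aleph_1$ choices per term and at most $\aleph_1$ terms, hence $\card S\le \aleph_1^{\aleph_1\cdot\aleph_0}$ — which I would need to arrange, via the structure theorem, to collapse to $\aleph_1$. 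The morphism computation mimics \cite{Mu}: $\Hom_R(\bigoplus_{i\in I}R,\bigoplus_{j\in J}R)\cong\prod_{i\in I}\bigoplus_{j\in J}R$ has cardinal $\le\aleph_1^{\aleph_1}$, again reduced to $\le\aleph_1$ under CH once the indexing is controlled; a chain map between two objects of $S$ is a $\ZZ$-indexed family of such module maps, so the set of chain maps has cardinal $\le\aleph_1^{\aleph_0}=\aleph_1$; and $K(\operatorname{Proj-}R)(X,Y)$ is a quotient of chain maps by the homotopy relation, so $\card K(\operatorname{Proj-}R)(X,Y)\le\aleph_1$. Summing over the $\le\aleph_1$ isomorphism classes gives $\card K(\operatorname{Proj-}R)^{\aleph_1}\le\aleph_1$.

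I expect the main obstacle to be the first step: pinning down a genuinely small (cardinality $\le\aleph_1$) set of explicit representatives for $\aleph_1$-compact objects of $K(\operatorname{Proj-}R)$. Unlike $D(R)$, where $K$-projective resolutions by small free complexes are available off the shelf (\cite[Theorem 15]{Mu}), in $K(\operatorname{Proj-}R)$ one works without inverting quasi-isomorphisms, so one must control both the module in each degree (a non-free projective that is merely an $\aleph_1$-generated summand) and the unbounded shape of the complex; the point is to show the relevant $\aleph_1$-compact objects live in the $<\aleph_1$-coproduct-and-retract closure of a countable-or-$\aleph_1$-sized generating set, invoking Neeman's generation results for $K(\operatorname{Proj-}R)$ from \cite{Ne08}. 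Once that reduction is in place, the cardinal bookkeeping is routine and is identical in spirit to the derived-category case, with CH used exactly where $\aleph_1^{\aleph_0}=\aleph_1$ is needed. Then the statement follows, and combined with Corollary \ref{cota2} it yields \aro{\aleph_1} for $K(\operatorname{Proj-}R)$ under CH.
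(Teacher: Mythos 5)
Your overall strategy---quote Neeman's analysis of $K(\operatorname{Proj-}R)$ from \cite{Ne08} to get explicit representatives of the $\aleph_1$\nobreakdash-compact objects, then count morphisms exactly as in the proof of Proposition \ref{aleph_1-Adams-for-rings}---is the same route the paper takes. However, the step you yourself flag as the main obstacle is a genuine gap, and the cardinal arithmetic you sketch around it would not close it. The input the paper uses is \cite[Theorem 5.9]{Ne08}: a complex of projective $R$\nobreakdash-modules is $\aleph_1$\nobreakdash-compact in $K(\operatorname{Proj-}R)$ if and only if it is isomorphic in $K(\operatorname{Proj-}R)$ to a complex of \emph{free} modules with $<\aleph_1$ generators, i.e.\ countably many generators in each degree. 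This is strictly sharper than the description you posit (degreewise summands of free modules $\bigoplus_{i\in I}R$ with $\card I\leq\aleph_1$), and the difference is exactly what makes the count work: with countable index sets one gets $\card\Hom_R(\bigoplus_{i\in I}R,\bigoplus_{j\in J}R)\leq\aleph_1^{\card I}\leq\aleph_1^{\aleph_0}=2^{\aleph_0}=\aleph_1$ under CH, whereas your bound $\aleph_1^{\aleph_1}=2^{\aleph_1}$ cannot be ``reduced to $\leq\aleph_1$ under CH'', since $2^{\aleph_1}\geq\aleph_2$ in ZFC. So the hedged claim that CH tames the $\aleph_1$\nobreakdash-indexed version is false as written; the argument goes through only because the representatives are free and countably generated.

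Once the correct form of the structure theorem is in hand, the rest is routine and is exactly the computation of Proposition \ref{aleph_1-Adams-for-rings}: under CH and $\card R\leq\aleph_1$ there are at most $\aleph_1$ such free complexes, chain-map sets between them have cardinality at most $\aleph_1^{\aleph_0}=\aleph_1$, and $K(\operatorname{Proj-}R)(X,Y)$ is a quotient of the set of chain maps, giving $\card K(\operatorname{Proj-}R)^{\aleph_1}\leq\aleph_1$. Your concerns about $K$\nobreakdash-projective resolutions and the unbounded shape of the complexes are unnecessary here: Neeman's theorem characterizes the $\aleph_1$\nobreakdash-compact objects of $K(\operatorname{Proj-}R)$ directly, with no resolution step and no need to pass through the derived category.
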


\begin{proof}
By \cite[Theorem 5.9]{Ne08},  a complex of projective
$R$\nobreakdash-modules is $\aleph_1$\nobreakdash-compact in $K(\operatorname{Proj-}R)$ if and only if it is isomorphic
in $\K(R$\nobreakdash-{\rm Proj}$)$ to a complex of free $R$\nobreakdash-modules with  $<\aleph_1$ generators. Since we are assuming the continuum hypothesis and $\card
R\leq \aleph_1$, we can proceed exactly as in the proof of
Proposition \ref{aleph_1-Adams-for-rings}.
\end{proof}

\subsection{Homotopy category of injectives modules} Let $R$ be a right Noetherian ring of $\card R\leq \aleph_{1}$. The homotopy category $\C T=K(\operatorname{Inj-}R)$  of injective (right) $R$\nobreakdash-modules is $\aleph_{0}$\nobreakdash-compactly generated  \cite{Kr05}.

\begin{prop}
Under the continuum hypothesis $\card K(\operatorname{Inj-}R)^{\aleph_{0}}\leq \aleph_{1}$.
\end{prop}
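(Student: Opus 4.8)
The plan is to bound the cardinality of the full subcategory of $\aleph_0$\nobreakdash-compact objects $K(\operatorname{Inj-}R)^{\aleph_0}$ by producing a small set of representatives and then counting morphisms between them, exactly paralleling the argument for Proposition \ref{aleph_1-Adams-for-rings}. The starting point is Krause's theory of $K(\operatorname{Inj-}R)$ for $R$ right noetherian: the category is $\aleph_0$\nobreakdash-compactly generated, and its subcategory of compact objects is equivalent to $D^b(\operatorname{mod-}R)$, the bounded derived category of finitely presented $R$\nobreakdash-modules (cf.~\cite{Kr05}). So the first step is to invoke this equivalence and reduce the problem to counting objects and morphisms in $D^b(\operatorname{mod-}R)$.

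Next I would count. Since $R$ is right noetherian with $\card R\leq\aleph_1$, every finitely presented (equivalently, finitely generated) $R$\nobreakdash-module is a quotient of some $R^n$ by a finitely generated submodule, and there are at most $\aleph_1$ such submodules of each $R^n$ (a submodule is finitely generated, hence determined by a finite tuple of elements of $R^n$, and $\card R^n\leq\aleph_1$, so the number of finite tuples is $\aleph_1^{<\aleph_0}=\aleph_1$ using the continuum hypothesis, compare \cite[Theorem 5.20]{Je03}). Hence there are at most $\aleph_1$ isomorphism classes of objects in $\operatorname{mod-}R$, and $\operatorname{Hom}_R(M,N)$ has cardinality at most $\card N^{\,|\text{generators of }M|}\leq\aleph_1^{\aleph_0}=\aleph_1$ for each such pair. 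A bounded complex of finitely presented modules is then a finite string of such modules together with finitely many differentials, so the set of such complexes (up to the obvious choices) has cardinality $\leq\aleph_1$, and the chain maps between two of them form a set of cardinality $\leq\aleph_1$. Since morphisms in $D^b(\operatorname{mod-}R)$ are obtained from chain maps by inverting quasi-isomorphisms — equivalently, as a filtered colimit (or quotient) over roofs indexed by a set of size $\leq\aleph_1$ — we get $\card D^b(\operatorname{mod-}R)\leq\aleph_1$. Transporting along Krause's equivalence, $\card K(\operatorname{Inj-}R)^{\aleph_0}\leq\aleph_1$.

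The main obstacle I anticipate is the passage from chain maps to morphisms in the derived category: unlike the situation in Proposition \ref{aleph_1-Adams-for-rings}, where $K$\nobreakdash-projective free resolutions give morphisms directly as a quotient of chain maps by homotopy, here one must control the roofs $X\xleftarrow{\sim}X'\to Y$ (or calculus of fractions), and argue that it suffices to let $X'$ range over a set of size $\leq\aleph_1$ of finitely presented complexes. This follows because $D^b(\operatorname{mod-}R)$ is a small category — it has a skeleton — so really the cardinality bound is cleanest if one simply fixes a skeleton of $D^b(\operatorname{mod-}R)$ at the outset and counts objects and Hom-sets there; the only genuine input is that $R$ noetherian with $\card R\leq\aleph_1$ forces each $\operatorname{mod-}R$ Hom-set and the set of iso-classes of objects to have size $\leq\aleph_1$ under CH. Once that is in place the conclusion is immediate, and as in the previous subsections it combines with Proposition \ref{cota} and Corollaries \ref{cota2} and \ref{cor-rings1} to yield \aro{\aleph_1} for $K(\operatorname{Inj-}R)$.
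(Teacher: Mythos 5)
Your argument is correct, and it starts from the same key input as the paper, namely Krause's theorem from \cite{Kr05} that $K(\operatorname{Inj-}R)^{\aleph_0}\simeq D^b(\operatorname{mod-}R)$ for $R$ right noetherian; but the final counting is done differently. The paper uses noetherianness a second time to identify $D^b(\operatorname{mod-}R)$ with the full subcategory of $K(\operatorname{Proj-}R)^{\aleph_0}$ spanned by bounded below complexes of finitely generated projectives with bounded cohomology, so that morphisms become homotopy classes of chain maps and the count proceeds verbatim as in Proposition \ref{aleph_1-Adams-for-rings}; this avoids any discussion of the localization, at the price of working with infinite (bounded below) complexes, which is where $\aleph_1^{\aleph_0}=\aleph_1$, i.e.\ the continuum hypothesis, enters. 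You instead stay inside $D^b(\operatorname{mod-}R)$ and count directly: isomorphism classes of finitely generated modules, bounded complexes, chain maps, and then morphisms in the derived category via roofs. That step is the only delicate one and you handle it correctly: since quasi-isomorphisms admit a calculus of fractions, every morphism $X\to Y$ is an equivalence class of roofs $X\leftarrow X'\to Y$, and the apex may be taken in a fixed skeleton of size $\leq\aleph_1$, so each hom-set is a quotient of a set of size $\leq\aleph_1$ (merely saying the category ``has a skeleton'' proves nothing by itself; the roof argument is the real content, and you do state it). A small remark: because your complexes are bounded and the modules finitely generated, all the exponents in your cardinal estimates are finite, so the appeals to $\aleph_1^{<\aleph_0}$ and $\aleph_1^{\aleph_0}$ are overkill and your route in fact barely uses the continuum hypothesis at this stage, whereas the paper's route through infinite projective resolutions genuinely does. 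Both arguments then feed into Proposition \ref{cota} and Corollaries \ref{cota2} and \ref{cor-rings1} in the same way.
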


\begin{proof}
By \cite{Kr05},  $\K(\text{Inj-}R)^{\aleph_0}$ is equivalent to  the derived
category $\D^b(\operatorname{mod}(R))$ of bounded complexes of finitely presentable
$R$\nobreakdash-mod\-ules. Since $R$ is right Noetherian, $\D^b(\operatorname{mod}(R))$ is equivalent to the full subcategory of $\K(\text{Proj-}R)^{\aleph_0}$ spanned by bounded below complexes of finitely presentable projective $R$\nobreakdash-modules with bounded cohomology. Now proceed as in the proof of Proposition \ref{aleph_1-Adams-for-rings}.
\end{proof}

Finally, $\card K(\operatorname{Inj-}R)^{\aleph_{1}}\leq \aleph_{1}$ by Corollary \ref{saltito}.

\subsection{Derived category of sheaves on a non-compact manifold}
Let $M$ be a connected paracompact manifold and
$\D(\Sh\!/M)$ the derived category of the abelian category $\Sh\!/M$ of sheaves of abelian groups over $M$.
Neeman \cite{Ne01b} proved that if $M$ is  non-compact, connected and $\dim M\geq 1$, then
$\D(\Sh\!/M)$ has no non-zero
compact object, so it cannot be  $\aleph_{0}$\nobreakdash-compactly generated.

\begin{prop}
The category $\D(\Sh\!/M)$ is $\aleph_1$\nobreakdash-compactly generated and, under the continuum hypothesis, $\card \D(\Sh\!/M)^{\aleph_{1}}\leq \aleph_{1}$.
\end{prop}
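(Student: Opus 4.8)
The statement to prove is that $\D(\Sh\!/M)$ is $\aleph_1$-compactly generated and, under the continuum hypothesis, $\card \D(\Sh\!/M)^{\aleph_1}\leq \aleph_1$. The strategy parallels the previous subsections: identify a convenient generating set of $\aleph_1$-compact objects, bound the cardinality of the morphisms between them, and conclude. First I would recall that $\Sh\!/M$ is a Grothendieck abelian category, so $\D(\Sh\!/M)$ has all (co)products and is well generated; the real point is to exhibit an explicit $\aleph_1$-compact generating set. For a paracompact manifold $M$ of dimension $d$, every open set has covering dimension $\leq d$, and more importantly the topology of $M$ has a countable basis of open sets (a connected manifold is second countable). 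Hence the constant sheaves $\ZZ_U$ extended by zero, for $U$ ranging over this countable basis, together with their shifts, form a countable set of objects that generates $\D(\Sh\!/M)$ in the appropriate sense; the issue is only that such sheaves need not be $\aleph_0$-compact (indeed Neeman showed there are no nonzero compact objects).

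Next I would invoke the general machinery: in a well generated category, for a suitable regular cardinal, the $\alpha$-compact objects are generated by a set, and one checks directly that the objects $\ZZ_U[n]$ above are $\aleph_1$-compact. The key input is that sheaf cohomology $H^*(U;\ZZ)$ and, more generally, $\D(\Sh\!/M)(\ZZ_U[n],F)=H^{-n}(U;F)$ commutes with $\aleph_1$-filtered colimits in $F$ — which follows because on a paracompact space of finite covering dimension, cohomology is computed by a bounded (length $\leq d+1$) complex and hence commutes with all filtered colimits, a fortiori with $\aleph_1$-filtered ones. Combined with the fact that there are only countably many such generators, one concludes via the standard criterion (as in \cite[Section 8]{triang}) that $\D(\Sh\!/M)$ is $\aleph_1$-compactly generated.

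For the cardinality bound I would argue as in the proof of Proposition \ref{aleph_1-Adams-for-rings}. By the characterization of $\aleph_1$-compact objects, every object of $\D(\Sh\!/M)^{\aleph_1}$ is a retract of (equivalently, isomorphic in the derived category to a direct summand of) a complex built by iterated extensions from $<\aleph_1$ of the countably many generators $\ZZ_U[n]$. Such complexes form a set, and one estimates the number of them together with the morphisms between them. A morphism in the derived category between two such objects is a fraction, or a zig-zag of chain maps and quasi-isomorphisms; chain maps between complexes built from $<\aleph_1$ copies of our generators are controlled by the cardinality of the relevant $\Hom$-groups in $\Sh\!/M$, which in turn is bounded using $\card M^{\aleph_1}$-type estimates under the continuum hypothesis (so that $\aleph_1^{\aleph_0}=\aleph_1$, etc.). Assembling these counts gives $\card \D(\Sh\!/M)^{\aleph_1}\leq \aleph_1$.

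\textbf{The main obstacle.} The delicate step is establishing $\aleph_1$-compactness of the generators cleanly, i.e.\ that $H^*(U;-)$ commutes with $\aleph_1$-filtered colimits of sheaves (or complexes) on a paracompact manifold. One must be careful because filtered colimits of sheaves are computed as sheafifications of the pointwise colimit, and cohomology does not in general commute with colimits on arbitrary spaces; the finite covering dimension of $M$ is exactly what rescues the argument, via a bounded soft (or flabby) resolution. Getting the bound uniform across all of $M$'s open sets, and then feeding this correctly into Neeman's well-generation criterion to get precisely $\aleph_1$ rather than some larger cardinal, is where the care is needed; the cardinality count at the end is then routine bookkeeping of the same flavour as in Proposition \ref{aleph_1-Adams-for-rings}.
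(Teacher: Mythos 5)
There is a genuine gap at the step you yourself flag as delicate. The claim that, on a paracompact finite-dimensional manifold, sheaf cohomology ``is computed by a bounded complex and hence commutes with all filtered colimits, a fortiori with $\aleph_1$-filtered ones'' is false, and in fact contradicts the very result of Neeman you cite: already in degree $0$, $\Gamma(U;-)$ fails to commute with filtered colimits on a non-compact $U$ (a filtered union of finite direct sums of skyscrapers at a locally finite infinite set of points has global sections the full product, not the direct sum), and this failure is precisely why $\D(\Sh\!/M)$ has no non-zero compact objects. Finite cohomological dimension does not repair this, since the problem is in degree $0$. The weaker statement you actually need, about $\aleph_1$-filtered colimits, is plausible (countable covers suffice on a second countable space), but even granting it, ``$\C T(\mathbb{Z}_U[n],-)$ commutes with $\aleph_1$-filtered colimits of sheaves'' is not Neeman's definition of $\aleph_1$-compactness (which involves $\aleph_1$-smallness and $\aleph_1$-perfection inside the triangulated category, where filtered colimits do not even exist), so it cannot be fed directly into the well-generation criterion; likewise your final count of derived-category morphisms as ``zig-zags'' in the unbounded derived category of the Grothendieck category $\Sh\!/M$ is not routine and is left unjustified.

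The paper circumvents exactly these two difficulties by never proving $\aleph_1$-compactness of the $\mathbb{Z}_{U_i}$ directly in $\D(\Sh\!/M)$. It takes a countable basis $\{U_i\}$ with connected (geodesically convex) intersections, observes that the full subcategory $\C R\subset\Sh\!/M$ on the sheaves $\mathbb{Z}_{U_i}$ is a countable ring with several objects, and invokes the Alonso--Jerem\'ias--Souto result realizing $\D(\Sh\!/M)$ as a Bousfield localization $D(\C R)/\C L_{\Sh\!/M}$. The many-object version of Murfet's theorem shows the generators of $\C L_{\Sh\!/M}$ are $\aleph_1$-compact in $D(\C R)$, so Neeman's localization theorem yields both $\aleph_1$-compact generation and the identification $\D(\Sh\!/M)^{\aleph_1}=D(\C R)^{\aleph_1}/\C L_{\Sh\!/M}^{\aleph_1}$; under the continuum hypothesis the cardinality bound then comes from the many-object version of Proposition \ref{aleph_1-Adams-for-rings} applied to $D(\C R)$ (where morphisms are controlled by $K$-projective complexes of free modules) together with the explicit description of the Verdier quotient. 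If you want to salvage your direct approach, you would have to prove $\aleph_1$-smallness and $\aleph_1$-perfection of the $\mathbb{Z}_{U_i}[n]$ by hand and separately control derived homs on the sheaf side; the detour through $D(\C R)$ is what makes both points manageable.
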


\begin{proof}
Since $M$ is paracompact, we can take  a countable basis  $\{U_i\}_{i\in I}$ of the topology formed by  connected open sets of $M$.
By \cite[Section
1.9]{Gr57}, a set of generators of $\Sh\!/M$ is given by
$\{\mathbb{Z}_{U_i}\}_{i\in I}$, where
$\mathbb{Z}_{U_{i}}$ is the extension by zero of the constant sheaf $\mathbb{Z}$ on $U_{i}$.
The full small
subcategory $\C{R}$  of $\Sh\!/M$ spanned by these sheaves is the $\mathbb Z$-linear category of the quiver with vertex set   $\{U_i\}_{i\in I}$ and an arrow $U_{j}\rightarrow U_{i}$ whenever $U_{j}\subset U_{i}$. This category is clearly countable. We regard $\C R$  as a ring with several objects. The derived category $\D(\Sh\!/M)$ is a Bousfield localization $\D(\Sh\!/M)=D(\C R)/\C L_{\Sh\!/M}$ \cite[Proposition 5.1]{AJS00}. Since $\card\C R<\aleph_{1}$, the many object version of \cite[Theorem 20]{Mu} proves that the generators of the localizing subcategory $\mathcal L_{\Sh\!/M}$ described in the proof of \cite[Proposition 5.1]{AJS00} are $\aleph_{1}$\nobreakdash-compact. Hence
$\D(\Sh\!/M)$ is $\aleph_{1}$\nobreakdash-compactly generated by \cite[Theorem 4.4.9]{triang}, and the subcategory of $\aleph_{1}$\nobreakdash-compact objects is $\D(\Sh\!/M)^{\aleph_{1}}=D(\C R)^{\aleph_{1}}/\C L_{\Sh\!/M}^{\aleph_{1}}$.

Now, let us assume the continuum hypothesis. The many objects version of Proposition \ref{aleph_1-Adams-for-rings} shows that $\card D(\C R)^{\aleph_{1}}\leq \aleph_{1}$, and the explicit description of  the Verdier quotient $\D(\Sh\!/M)^{\aleph_{1}}=D(\C R)^{\aleph_{1}}/\C L_{\Sh\!/M}^{\aleph_{1}}$ proves that \mbox{$\card D(\Sh\!/M)^{\aleph_{1}}\leq \aleph_{1}$} too.
\end{proof}

\subsection{Stable motivic homotopy category}
Let $S$ be a Noetherian scheme of finite Krull
dimension. The  stable  motivic  homotopy  category  $\SH(S)$  of Morel and Voevodsky is a compactly generated triangulated category which intuitively models a homotopy theory of schemes over $S$ where the affine line $\mathbb A^{1}$ plays the role of
the unit interval in classical homotopy theory. In practice, we start with the category $Sm/S$  of smooth schemes of
finite type over $S$ endowed with the Nisnevich topology. We perform two left Bousfield localizations on the category of simplicial presheaves on $Sm/S$, one to turn maps inducing isomorphisms on homotopy sheaves into weak equivalences and another one to contract the affine line $\mathbb{A}^1$. Then we consider spectra  with respect to the suspension functor defined by smashing with the projective line $\mathbb P^{1}\simeq\mathbb{S}^1\wedge(\mathbb{A}^1-0)$ pointed at $\infty$. This yields a stable model category whose homotopy category is $\SH(S)$.

It was stated in \cite[Proposition 5.5]{Vo98} and proved in \cite[Theorem 13]{NS} that if $S$ can be covered by spectra of countable rings, then $\card\SH(S)^{\aleph_{0}}\leq \aleph_{0}<\aleph_1$, hence under the continuum hypothesis $\card\SH(S)^{\aleph_{1}}\leq \aleph_{1}$, see Corollary \ref{saltito}. The results in \cite{NS} extend straightforwardly to show that, if $S$ can be covered by spectra of rings of cardinal $\leq \aleph_{1}$, then $\card\SH(S)^{\aleph_{0}}\leq \aleph_{1}$. Therefore $\card\SH(S)^{\aleph_{1}}\leq \aleph_{1}$ under the continuum hypothesis, again by Corollary \ref{saltito}.

\section{Neeman's conjecture on Rosick\'y functors}\label{sec-Rosicky-functor}\numberwithin{equation}{section}

The following definition is due to Neeman \cite[Definition 1.19]{Ne09}.

\begin{defn}\label{def_Rosicky-functor}
Let $\C T$ be a triangulated category with (co)products.
A \emph{Rosick\'y functor} is a functor $H\colon \C T\r\C A$ to an abelian category with (co)products which takes exact triangles to exact sequences, is full, reflects isomorphisms, preserves (co)products, and there is  a small full subcategory $\C P\subset \C T$ closed under (de)susp\-ensions, formed by $\alpha$\nobreakdash-small objects in $\C T$ for some regular cardinal $\alpha$, and such that $\{H(P)\mid P\in\operatorname{Ob}\C P\}$ is a set of projective generators of $\C A$ and $H$ induces a bijection $\C T(P,X)\cong\C A(H(P),H(X))$ whenever $P$ is in $\C P$.
\end{defn}

Under the standing assumptions of Section \ref{ryf}, the restricted Yoneda functor $S_{\alpha}$ satisfies all properties of a Rosick\'y functor, taking $\C P= \C C$,  except for being full. Moreover, if $\C P=\C C=\C T^\alpha$ then $S_\alpha$ is a Rosick\'y functor if and only if \arm{\alpha} holds.

Neeman conjectured that a triangulated category has a Rosick\'y functor if and only if it is well generated in \cite[Conjecture 1.27]{Ne09}. It is easy to see that, if $\C T$ has a Rosick\'y functor, then it is well generated. We give a proof, first discovered by Rosick\'y, in this section. Neeman's conjecture is still open in the other direction. A consequence of Corollary \ref{Cor-Rosicky-func} is that it is enough to look for Rosick\'y functors of the form $S_\alpha$ for an appropriate $\C C$. Example \ref{ejemplillo} shows that we cannot always take $\C C=\C T^\alpha$ for some $\alpha$, which was the experts' first guess. Nevertheless, it is still an open question whether categories such as $D(k[[x,y]])$ possess a Rosick\'y functor.

\begin{prop}
Let $\C T$ be a triangulated category with coproducts. If there exists a Rosick\'y functor $H\colon \C T\r\C A$ then the category $\C T$ is well generated. Moreover, if  $\C C$ is the completion of $\C P$ by  coproducts of $<\alpha$ objects, then $\C C$ satisfies assumptions (1--3) in Section \ref{ryf} and $S_{\alpha}$ factors as
$$S_{\alpha}\colon \C T\st{H}\To \C A\st{i}\To \Mod{\alpha}{\C C},$$
where $i$ is fully faithful and exact.
\end{prop}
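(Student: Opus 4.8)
The plan is to verify the two assertions in turn: first that the existence of a Rosick\'y functor $H\colon\C T\to\C A$ forces $\C T$ to be well generated, and second that, taking $\C C$ to be the closure of $\C P$ under coproducts of $<\alpha$ objects, the restricted Yoneda functor $S_\alpha$ factors through $H$ via a fully faithful exact embedding $i\colon\C A\to\Mod{\alpha}{\C C}$.

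For the first part, I would start from the set $\C P$ in the definition of a Rosick\'y functor, whose objects are $\alpha$\nobreakdash-small in $\C T$ for some regular $\alpha$, and show that $\C P$ (equivalently its closure $\C C$) is a set of $\alpha$\nobreakdash-small generators in the sense needed to invoke \cite[Theorem 4.4.9]{triang}, which characterises well generated categories. Two things need checking: (a) that $\C P$ \emph{generates} $\C T$, i.e.\ $\C T(P,X)=0$ for all $P\in\C P$ implies $X=0$ — this follows because $\{H(P)\}_{P\in\C P}$ is a set of projective generators of $\C A$, so $H(X)=0$, and $H$ reflects isomorphisms (apply this to $0\to X$); and (b) an $\alpha$\nobreakdash-smallness/solution-set condition on coproducts, which one gets by transporting the corresponding property of the projective generators $H(P)$ in the abelian category $\C A$ along the bijection $\C T(P,\coprod X_i)\cong\C A(H(P),\coprod H(X_i))=\C A(H(P),H(\coprod X_i))$ (using that $H$ preserves coproducts) together with the fact that $H(P)$ is $\alpha$\nobreakdash-presentable, hence $\C A(H(P),-)$ commutes with the relevant filtered colimits. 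This is the routine-but-technical heart of the argument, and I expect the precise bookkeeping of cardinals here — matching the ``$\alpha$\nobreakdash-small'' hypothesis on $\C P$ with the generator axiom of \cite[Theorem 4.4.9]{triang} — to be the main obstacle, though it is purely formal once one unwinds the definitions.

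For the second part, having established that $\C T$ is well generated and that $\C C$ (the coproduct-completion of $\C P$ by $<\alpha$ coproducts) satisfies assumptions (1)--(3) of Section \ref{ryf}, I would define $i\colon\C A\to\Mod{\alpha}{\C C}$ by $i(A)=\C A(H(-),A)_{|_{\C C}}$. One checks that $i(A)$ is indeed $\alpha$\nobreakdash-continuous because $H$ sends $<\alpha$ coproducts in $\C C\subset\C T$ to $<\alpha$ coproducts in $\C A$, which $\C A(-,A)$ turns into products; that $i$ is exact because $\{H(P)\}$ is a set of \emph{projective} generators, so $\C A(H(P),-)$ is exact and these detect exactness; and that $i$ is fully faithful because $\{H(P)\mid P\in\C P\}$ generates $\C A$ under colimits and $\C A(H(P),i(A))=\C A(H(P),A)$ naturally (projective generators give a dense, full embedding into their module category — this is the abelian analogue of the fact in Section \ref{ryf} that $S_\alpha$ identifies $\add(\C C)$ with the projectives). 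Finally, the factorisation $S_\alpha=i\circ H$ is immediate on objects since $(i\circ H)(X)=\C A(H(-),H(X))_{|_{\C C}}\cong\C T(-,X)_{|_{\C C}}=S_\alpha(X)$ by the defining bijection $\C T(P,X)\cong\C A(H(P),H(X))$ for $P$ in $\C P$, extended to all of $\C C$ by additivity and $<\alpha$-coproduct-compatibility; naturality in $X$ is then a diagram chase. The only subtlety is to confirm that $\C C$, and not merely $\C P$, still consists of representables inside $\Mod{\alpha}{\C C}$ and that $i$ really lands in the $\alpha$\nobreakdash-continuous subcategory — both handled by the $<\alpha$-coproduct compatibility of $H$ noted above.
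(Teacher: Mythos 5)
Your second half is essentially the paper's argument: the paper defines $i(A)=\C A(H(-),A)$ exactly as you do, gets $\alpha$\nobreakdash-continuity from $H$ preserving coproducts, full faithfulness and exactness from the fact that the $H(C)$, $C$ in $\C C$, are again projective generators, and verifies $iH\cong S_\alpha$ by the same computation $\C T(\coprod_{i}P_i,X)=\prod_i\C T(P_i,X)\cong\prod_i\C A(H(P_i),H(X))=\C A(H(\coprod_i P_i),H(X))$ that you indicate. Your brevity on full faithfulness is comparable to the paper's own.

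The genuine gap is in the first half, the proof that $\C T$ is well generated. The hypothesis on $\C P$ only gives $\alpha$\nobreakdash-\emph{smallness} of its objects in $\C T$; what is actually needed (whether you phrase it as Neeman's $\alpha$\nobreakdash-compactness to feed into \cite[Theorem 4.4.9]{triang}, or as Krause's criterion) is the additional perfectness-type condition — in Krause's formulation (G2): if $\C T(P,f_i)$ is surjective for every $P$ in $\C P$ and every $i$, then $\C T(P,\coprod_i f_i)$ is surjective. Your proposed route to this, via ``$H(P)$ is $\alpha$\nobreakdash-presentable, hence $\C A(H(P),-)$ commutes with the relevant filtered colimits,'' does not work as stated: $\alpha$\nobreakdash-presentability of $H(P)$ in $\C A$ is not part of the definition of a Rosick\'y functor and does not follow from it ($\C A$ is merely an abelian category with (co)products and projective generators, not assumed locally presentable), and in any case presentability is not the relevant property. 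The paper's argument for (G2) uses instead: since $\{H(P)\}$ is a set of projective generators, surjectivity of $\C T(P,f_i)=\C A(H(P),H(f_i))$ for all $P$ forces each $H(f_i)$ to be an epimorphism; a coproduct of epimorphisms is an epimorphism in $\C A$; $H$ preserves coproducts, so $H(\coprod_i f_i)$ is an epimorphism; and projectivity of $H(P)$ then gives surjectivity of $\C A(H(P),H(\coprod_i f_i))=\C T(P,\coprod_i f_i)$. This step (together with (G1), which you do handle correctly via generation and reflection of isomorphisms, and (G3), which is the given $\alpha$\nobreakdash-smallness) is what yields well generation, and it is also what guarantees $\C P\subset\C T^\alpha$, hence that $\C C$ satisfies assumption that it lies in $\C T^\alpha$ — a point your sketch leaves unaddressed.
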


\begin{proof}
Let us first show that $\C T$ is well generated. This fact was first discovered by Rosick\'y (unpublished). We follow Definition \ref{defbasica}. The set of objects of $\C  P$ clearly satisfies (a) and (c). We now check (b). Assume that $\{f_i\colon X_i\r Y_i\}_{i\in I}$ is a set of morphisms in $\C T$ such that $\C T(P,f_i)$ is an epimorphism for all $i\in I$ and $P$ in $\C P$. Since $\C T(P,f_i)=\C A(H(P),H(f_i))$ and the objects $H(P)$ form a set of projective generators, $H(f_i)$ is an epimorphism in $\C A$ for all $i\in I$. In an abelian category, a coproduct of epimorphisms is an epimorphism. Since $H$ preserves coproducts we deduce that $H(\amalg_{
i\in I}f_i)$ is an epimorphism, hence, $\C T(P,\amalg_{i\in I}f_i)=\C A(H(P),H(\amalg_{i\in I}f_i))$ is
surjective for all $P$ in $\C P$. This proves (b). Moreover, by \cite[Lemma 5]{Kr01}, $\C P\subset\C T^\alpha$, therefore $\C C$ satisfies (1--3) in Section \ref{ryf}.

The functor $i$ is defined by $i(A)=\C A(H(-),A)$. This $\C C$\nobreakdash-module is $\alpha$\nobreakdash-continuous since $H$ preserves coproducts.
The properties of Rosick\'y functors show that  $H$ induces an equivalence between $\C C$ and its full image in $\C A$. Hence $\{H(C)\mid C\in\operatorname{Ob}\C C\}$ is also a set of projective generators of $\C A$ and $i$ is fully faithful. The composite $iH$ is naturally isomorphic to $S_{\alpha}$ since for any $X$ in $\C T$ and any coproduct $\amalg_{i\in I}P_{i}$ with $P_{i}$ in $\C P$ and $\card I<\alpha$,
\begin{align*}
S_{\alpha}(X)(\coprod_{i\in I}P_{i})&=\C T(\coprod_{i\in I}P_{i},X)
=\prod_{i\in I}\C T(P_{i},X)
\st{H}\cong
\prod_{i\in I}\C A(H(P_{i}),H(X))\\&=
\C A(\bigoplus_{i\in I}H(P_{i}),H(X))=
\C A(H(\coprod_{i\in I}P_{i}),H(X))=iH(X)(\coprod_{i\in I}P_{i}).
\end{align*}
\end{proof}

\begin{cor}\label{Cor-Rosicky-func}
A triangulated category  $\C T$ admits a Rosick\'y functor if and only if it is well generated and $S_{\alpha}$ is full for some $\C C\subset \C T$ satisfying (1--3) in Section \ref{ryf}.
\end{cor}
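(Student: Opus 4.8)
The statement is an ``if and only if,'' and the plan is to extract both directions from the preceding Proposition with essentially no extra work beyond unwinding it.

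For the forward implication, I would begin with a Rosick\'y functor $H\colon\C T\r\C A$. The previous Proposition already gives that $\C T$ is well generated and that, with $\C C$ the completion of the distinguished subcategory $\C P$ under coproducts of $<\alpha$ objects, this $\C C$ satisfies assumptions (1--3) of Section \ref{ryf} and the restricted Yoneda functor factors as $S_\alpha\cong iH$ with $i\colon\C A\r\Mod{\alpha}{\C C}$ fully faithful (and exact). Since $H$ is full by the definition of a Rosick\'y functor and $i$ is fully faithful, their composite $S_\alpha$ is full: a morphism $S_\alpha(X)\r S_\alpha(Y)$ corresponds under $i$ to some morphism $H(X)\r H(Y)$, which equals $H(f)$ for some $f\colon X\r Y$. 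Hence $\C T$ is well generated and $S_\alpha$ is full for this particular $\C C$, which is exactly the right-hand side of the statement.

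For the converse, I would assume $\C T$ well generated and $S_\alpha$ full for some $\C C$ satisfying (1--3), and then simply verify that $S_\alpha\colon\C T\r\Mod{\alpha}{\C C}$ is itself a Rosick\'y functor, taking $\C P=\C C$ as the distinguished subcategory. Most of the axioms are already recorded in Section \ref{ryf}: $\Mod{\alpha}{\C C}$ is abelian with products and coproducts, being locally $\alpha$\nobreakdash-presentable; $S_\alpha$ sends exact triangles to exact sequences, reflects isomorphisms, and preserves products and coproducts; and $S_\alpha$ is full by hypothesis. The subcategory $\C C$ is essentially small and closed under (de)suspensions by assumption (1), and its objects, lying in $\C T^\alpha$, are $\alpha$\nobreakdash-small in $\C T$. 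The set $\{S_\alpha(C)\mid C\in\operatorname{Ob}\C C\}$ consists precisely of the representable functors $\C C(-,C)$, which form a set of projective generators of $\Mod{\alpha}{\C C}$ by Section \ref{ryf}; and for $C\in\C C\subset\add(\C C)$ the functor $S_\alpha$ induces the required bijection $\C T(C,X)\cong\hom_{\alpha,\C C}(S_\alpha(C),S_\alpha(X))$, again by Section \ref{ryf}. Therefore $S_\alpha$ is a Rosick\'y functor.

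The one point that deserves a sentence of justification, and the nearest thing to an obstacle here, is the verification that the objects of $\C T^\alpha$ are $\alpha$\nobreakdash-small in Neeman's sense, so that $\C C$ is admissible as a ``$\C P$'': this is standard, since $\alpha$\nobreakdash-compact objects are $\alpha$\nobreakdash-small, cf.~\cite{triang}. One should also note that the same regular cardinal $\alpha$ is used throughout, namely the $\alpha$ attached to $\C C$ is exactly the one fed into the definition of a Rosick\'y functor, so no change of cardinal is involved in either direction. Everything else is bookkeeping against Section \ref{ryf} and the previous Proposition.
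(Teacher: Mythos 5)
Your proof is correct and follows exactly the route the paper intends: the forward direction is read off from the preceding Proposition (well generatedness plus the factorization $S_\alpha\cong iH$ with $i$ fully faithful and $H$ full), and the converse is the observation, already recorded right after the definition of a Rosick\'y functor, that $S_\alpha$ satisfies every axiom except fullness, so fullness is the only missing ingredient. No substantive difference from the paper's argument.
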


Recall that Corollary \ref{dimensiones2} gives us a criterion for the restricted Yoneda functor $S_\alpha$ to be full.

\begin{rem}\label{Rem-Rosicky-func}
Let $Q$ be a finite quiver without oriented cycles which is not a Dynkin quiver, $k$ an uncountable field, $kQ$ its path algebra over $k$, which is hereditary, and $\alpha$ any regular cardinal. As we showed in Example \ref{ejemplillo}, for $\C T=D(kQ)$ and $\C C=\C T^\alpha$ the functor $S_\alpha$ is never a Rosick\'y functor. Nevertheless, if $R$ is any hereditary ring, the homology functor $H_*\colon D(R)\r\Mod{}{R}^\mathbb{Z}$ to the category of $\mathbb Z$\nobreakdash-graded $R$\nobreakdash-modules is a Rosick\'y functor for $\C P$ the full subcategory spanned by $\{\Sigma^n R\}_{n\in \mathbb Z}$, here $\alpha=\aleph_0$.

These are the only known Rosick\'y functors different from the restricted Yoneda functor $S_\alpha$ with $\C C=\C T^\alpha$ for $\C T$ a category satisfying \arm{\alpha}. Triangulated categories possessing a Rosick\'y functor satisfy further properties of interest, e.g.~the Brown representability theorem for the dual, see \cite{Ne09}. Hence it would be interesting to know if there are more kinds of Rosick\'y functors.
\end{rem}

\section{Obstruction theory in triangulated categories}\label{obstructiontheory}\numberwithin{equation}{subsection}

Recall that we are under the standing assumptions of Section \ref{ryf}. In diagrams, the degree of a homogeneous morphism in $\C T$ or $\Mod{\alpha}{\C C}$ is indicated by a label in the arrow, e.g.
$$\xymatrix{X\ar[r]^f_{+n}&Y}$$
is a morphism  $f\colon X\to \Sigma^{n} Y$. We mostly consider homogeneous morphisms. We do not explicitly indicate the degree when it is $0$, when it is understood, or when it is irrelevant. Hence an exact triangle $X\r Y\r Z\r\Sigma X$ in $\C{T}$ looks like
$$\xymatrix@!=5pt{X\ar[rr]^{f}&&Y\ar[ld]^{i}\\
&Z.\ar[lu]^{q}_{\scriptscriptstyle +1}&}$$

\subsection{Phantom maps and cellular objects}

\begin{defn}
A morphism $f\colon X\r Y$ in $\C T$ is a \emph{$\C C$\nobreakdash-phantom map} if $S_\alpha(f)=0$. Moreover,  $f$ is an \emph{$n$\nobreakdash-$\C C$\nobreakdash-phantom map} if it decomposes as a product of $n$ ordinary $\C C$\nobreakdash-phantom maps, i.e.~$f=f_1\cdots f_n$ with $f_i$ a $\C C$\nobreakdash-phantom map, $1\leq i\leq n$. An \emph{$\infty$\nobreakdash-$\C C$\nobreakdash-phantom map} is a morphism $f$ which is an $n$\nobreakdash-$\C C$\nobreakdash-phantom map for all $n>0$.
\end{defn}

\begin{rem}\label{classical}
Classical phantom maps in a compactly generated triangulated category $\C T$ are precisely the $\C T^{\aleph_{0}}$-phantom maps \cite[Definition 5.1]{Ch98}. Hence,  $\C T^{\alpha}$-phantom maps deserve to be called \emph{$\alpha$-phantom maps}.

Classical phantom maps have been much studied in the literature in different contexts, see for instance \cite{Ne97, CS98, Ch98, BG99, Be00, BG01, Ben02, FH13, Ben14}. The (non)vanishing of $n$-phantom maps for $n$ high enough has attracted special attention. However, to the best of our knowledge, examples of non-trivial $\infty$-phantom maps have not been previously produced in the literature. The reader can find some in Remark \ref{oo0} below.

In the four examples of Remark \ref{alhilo}, $\C C$-phantom maps are called \emph{ghosts} \cite{Ch98,ccm}, and $\alpha$-phantom maps are ghosts for any $\alpha$. The (non)vanishing of $n$-ghosts has also drawn attention, see in addition \cite{HL09, HL11}. Remark \ref{oo0}  also contains examples of non-trivial $\infty$-ghosts, which we believe to be new.
\end{rem}

The following result is a consequence of the fact that $S_\alpha$ takes exact triangles to exact sequences.

\begin{lem}\label{util}
In an exact triangle
$$\xymatrix@!=5pt{X\ar[rr]^{f}&&Y\ar[ld]^{i}\\
&Z\ar[lu]^{q}&}$$
where we deliberately do not specify which morphism is of degree $+1$,
the following statements are equivalent:
\begin{itemize}
 \item $f$ is a $\C C$\nobreakdash-phantom map.
\item $S_\alpha(i)$ is a monomorphism.
\item $S_\alpha(q)$ is an epimorphism.
\item $S_{\alpha}(Y)\st{S_{\alpha}(i)}\hookrightarrow S_{\alpha}(Z)\st{S_{\alpha}(q)}\onto S_{\alpha}(X)$ is a short exact sequence.
\end{itemize}
\end{lem}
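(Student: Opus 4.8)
The plan is to deduce all four equivalences from the single fact, established in Section~\ref{ryf}, that $S_{\alpha}$ carries exact triangles to exact sequences, together with the compatibility of $S_{\alpha}$ with the suspension $\Sigma$, which is an exact autoequivalence of $\Mod{\alpha}{\C C}$. Concretely, applying $S_{\alpha}$ to the given exact triangle and to its two rotations — which are again exact triangles — and using the natural isomorphism $S_{\alpha}\Sigma\cong\Sigma S_{\alpha}$ to identify terms, I obtain in the graded abelian category $\Mod{\alpha}{\C C}$ a $3$\nobreakdash-term graded exact sequence
$$
S_{\alpha}(X)\st{S_{\alpha}(f)}\To S_{\alpha}(Y)\st{S_{\alpha}(i)}\To S_{\alpha}(Z)\st{S_{\alpha}(q)}\To S_{\alpha}(X)
$$
that is exact at $S_{\alpha}(X)$, at $S_{\alpha}(Y)$ and at $S_{\alpha}(Z)$; here the unlabelled arrow is the one of degree $+1$, and which of $f,i,q$ carries that degree is irrelevant precisely because $S_{\alpha}$ commutes with $\Sigma$ and $\Sigma$ reflects zero morphisms. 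Note that a graded morphism $g$ in $\C T$ is phantom if and only if $S_{\alpha}(g)=0$, and since $S_{\alpha}(\Sigma^{n}g)\cong\Sigma^{n}S_{\alpha}(g)$, this holds if and only if $\Sigma^{n}g$ is phantom for any (equivalently every) $n$.

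Next I would run the two short diagram chases. Exactness at $S_{\alpha}(Y)$ gives $\ker S_{\alpha}(i)=\im S_{\alpha}(f)$, so $S_{\alpha}(i)$ is a monomorphism if and only if $\im S_{\alpha}(f)=0$, that is, if and only if $S_{\alpha}(f)=0$, i.e.~$f$ is a phantom map; this is the equivalence of the first two items. Dually, exactness at $S_{\alpha}(X)$ (reading $S_{\alpha}(Z)\st{S_{\alpha}(q)}\To S_{\alpha}(X)\st{S_{\alpha}(f)}\To S_{\alpha}(Y)$) gives $\im S_{\alpha}(q)=\ker S_{\alpha}(f)$, so $S_{\alpha}(q)$ is an epimorphism if and only if $\ker S_{\alpha}(f)$ is everything, again if and only if $S_{\alpha}(f)=0$; this is the equivalence of the first and third items. (Equivalently, one may apply the already proved first equivalence to a rotation of the triangle.)

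Finally, for the fourth item: assuming $f$ is phantom, the previous paragraph shows $S_{\alpha}(i)$ is a monomorphism and $S_{\alpha}(q)$ an epimorphism, while exactness of the displayed sequence at $S_{\alpha}(Z)$ yields $\im S_{\alpha}(i)=\ker S_{\alpha}(q)$; hence
$$
S_{\alpha}(Y)\st{S_{\alpha}(i)}\hookrightarrow S_{\alpha}(Z)\st{S_{\alpha}(q)}\To S_{\alpha}(X)
$$
is a short exact sequence. Conversely, if this is a short exact sequence then $S_{\alpha}(i)$ is in particular a monomorphism, which by the first equivalence forces $f$ to be a phantom map. This closes the cycle of implications. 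There is no genuine obstacle here: the whole argument is a formal manipulation of the long exact sequence produced by $S_{\alpha}$, and the only point that needs a little attention is the bookkeeping of suspensions around the unspecified degree $+1$ arrow, which is disposed of once and for all by the compatibility of $S_{\alpha}$ with $\Sigma$.
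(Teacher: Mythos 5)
Your argument is correct and is exactly the intended one: the paper itself derives this lemma as an immediate consequence of $S_\alpha$ being a homological functor compatible with $\Sigma$, and your diagram chase through the long exact sequence (with the bookkeeping of the unspecified degree $+1$ arrow handled via $S_\alpha\Sigma\cong\Sigma S_\alpha$) fills in precisely those details.
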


\begin{rem}
The class of $\C C$\nobreakdash-phantom maps forms an ideal $\C I\subset\C T$ and $n$\nobreakdash-$\C C$\nobreakdash-phantom maps form its $n^{\text{th}}$ power ideal, $\C I^{n}=\C{I}\st{n}\cdots\C{I}\subset\C T$. Moreover, $\infty$\nobreakdash-$\C C$\nobreakdash-phantom maps are the intersection ideal
$$\C I^{\infty}=\bigcap_{n>0}\C I^{n}\subset\C T.$$
\end{rem}

\begin{defn}
A \emph{$0$\nobreakdash-$\C C$-cellular object} is a trivial object in $\C T$. Moreover, $X$ is \emph{$n$\nobreakdash-$\C C$-cellular} for $n>0$ if it is a retract of an object $X'$ fitting into an exact triangle
$$
\xymatrix@!=5pt{P\ar[rr]&&Y\ar[ld]\\
&X'\ar[lu]^{\scriptscriptstyle +1}&}
$$
where $Y$ is $(n-1)$\nobreakdash-$\C C$-cellular and $P$ is in $\add(\C C)$. A \emph{$\C C$-cellular} object is an object which is $n$\nobreakdash-$\C C$-cellular for some $n\geq 0$.
\end{defn}

\begin{prop}\label{phancell}
Let $1\leq n\leq\infty$. A morphism $f\colon X\r Y$ in $\C T$ is an $n$\nobreakdash-$\C C$\nobreakdash-phantom map if and only if for any morphism $g\colon Z\r X$ from an $n$\nobreakdash-$\C C$-cellular object $Z$ we have $fg=0$. Moreover, $Z$ is an $n$\nobreakdash-$\C C$-cellular object if and only if for any morphism $g\colon Z\r X$ and any $n$\nobreakdash-$\C C$\nobreakdash-phantom map $f\colon X\r Y$ we have $fg=0$.
\end{prop}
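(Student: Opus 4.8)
The plan is to prove the two biconditionals simultaneously by induction on $n$, treating the finite case $1\le n<\infty$ first and then deducing $n=\infty$ by a limiting argument. The base case $n=1$ is essentially Lemma~\ref{util}: a morphism $f\colon X\r Y$ is a phantom map iff $S_\alpha(f)=0$ iff $f$ kills every map from an object $Z$ with $S_\alpha(Z)=0$, and the $1$-cellular objects are, up to retract, exactly the mapping cones of maps between objects of $\add(\C C)$, which are the objects whose image under $S_\alpha$ vanishes (by Lemma~\ref{util} applied to such a triangle, since $\add(\C C)$ is sent to projectives and $S_\alpha$ is full and faithful on $\add(\C C)$); so ``$f$ kills all maps from $0$-cellular (=trivial) objects'' is vacuous while ``$f$ kills all maps from $1$-cellular objects'' recovers phantomness, and dually for the characterization of $1$-cellular objects.

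For the inductive step, assume the equivalences hold for $n-1$. First I would prove the statement about $n$-phantom maps. If $f=f_nf_{n-1}\cdots f_1$ is a product of phantoms and $g\colon Z\r X$ with $Z$ $n$-cellular, write $Z$ (up to retract) in an exact triangle $P\r W\r Z\r\Sigma P$ with $W$ $(n-1)$-cellular and $P\in\add(\C C)$; since $f_1$ is a phantom and $W$ is $(n-1)$-cellular, the composite $W\r Z\st{g}\r X\st{f_1}\r X_1$ need not vanish — instead one uses that $f_1g$ restricted to $W$ is killed (as $W$ is $(n-1)$-cellular, hence in particular... ) — more cleanly: by the $(n-1)$-case, $g$ composed with the $(n-1)$-phantom $f_nf_{n-1}\cdots f_2$ kills all maps out of $(n-1)$-cellular objects, so $f_nf_{n-1}\cdots f_2 \circ (\text{restriction of } g \text{ to } W)=0$, hence $f_nf_{n-1}\cdots f_2\circ g$ factors through $\Sigma P$, and then composing with the phantom $f_1$... this is the wrong order. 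The correct bookkeeping is: $f_1g\colon Z\r X_1$, and since $f_1$ is a phantom, $S_\alpha(f_1g)=0$; one shows by downward induction on the cellular filtration of $Z$ that $f_1 g$ factors through a $1$-cellular object mapping to $X_1$... Rather, the clean inductive statement to carry is the dual pair, and the key technical device — which I expect to be the \emph{main obstacle} — is the following lifting/factorization lemma: if $f\colon X\r Y$ is a phantom and $Z$ is $n$-cellular with structural triangle $P\r W\r Z\r\Sigma P$ ($W$ being $(n-1)$-cellular, $P\in\add(\C C)$), then for any $g\colon Z\r X$ the composite $fg$ factors as $Z\r \Sigma W' \r Y$ where $W'$ is $(n-1)$-cellular; iterating and using the $n=1$ case at the bottom gives $f_n\cdots f_1 g=0$.

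For the converse direction (characterizing $n$-cellular objects), suppose $Z$ is such that $fg=0$ for every $n$-phantom $f$ and every $g\colon Z\r X$. Build a canonical ``cellular tower'' for $Z$: choose a projective cover $S_\alpha(P_0)\onto S_\alpha(Z)$ with $P_0\in\add(\C C)$, lift to $P_0\r Z$ (possible since $S_\alpha$ is full on $\add(\C C)$ and full in general on the relevant hom's), complete to a triangle $\Sigma^{-1}Z_1\r P_0\r Z\st{h_1}\r Z_1$; then $h_1$ is a phantom by Lemma~\ref{util} (as $S_\alpha(P_0)\onto S_\alpha(Z)$ is epi), and iterate on $Z_1$ to get a tower of phantoms $Z\st{h_1}\r Z_1\st{h_2}\r Z_2\r\cdots$. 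If $Z$ is assumed to satisfy the stated property with respect to $n$-phantoms, then applying it to the $n$-phantom $h_n\cdots h_1\colon Z\r Z_n$ forces $h_n\cdots h_1=0$; an octahedral/standard argument then exhibits $Z$ as a retract of an iterated extension of objects of $\add(\C C)$ of length $n$, i.e.\ as $n$-cellular. The hard part here is organizing the octahedra so that the vanishing of the length-$n$ composite of the tower maps collapses into the cellular structure — this is a diagram chase with $n$ stacked triangles and is where I expect to spend most of the effort.

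Finally, for $n=\infty$: an $\infty$-phantom is by definition an $n$-phantom for all $n$, and an $\infty$-cellular object is (by the definition given) an $n$-cellular object for all $n$ — more precisely a retract of something built with the structural triangles available at every finite stage — so the characterization for $n=\infty$ follows by intersecting the finite-level statements: $f$ is $\infty$-phantom iff it kills every $g$ from an $n$-cellular object for every $n$ iff it kills every $g$ from an $\infty$-cellular object, and dually. I would close the argument by remarking that the tower construction above, run without stopping, is exactly the ``cellular resolution'' witnessing $\infty$-cellularity, so the two notions match on the nose. The only subtlety to flag is that $\infty$-cellular as defined is a nested condition rather than a colimit, so no homotopy-colimit argument is needed — the finite cases assemble formally.
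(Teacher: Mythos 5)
Your plan diverges from the paper, which disposes of this proposition in two lines: $(\add(\C C),\C I)$ is a projective class by \cite[Lemma 3.2]{Ch98}, and the statement is exactly \cite[Theorem 3.5]{Ch98} applied to its powers. A direct proof is a legitimate alternative, but as written yours has three genuine gaps. First, the base case is wrong: since $0$\nobreakdash-cellular means zero, the $1$\nobreakdash-cellular objects are precisely the objects of $\add(\C C)$ (retracts of $\Sigma P$ with $P$ in $\add(\C C)$), not mapping cones of maps between such objects, and they certainly do not have vanishing image under $S_\alpha$ --- their images are the projective modules. Your indexing is off by one and the vanishing claim is false, which contaminates the induction. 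Second, the forward implication for finite $n$ rests entirely on your unproven ``key lemma'' (phantom composed with a map from an $n$\nobreakdash-cellular object factors through an $(n-1)$\nobreakdash-cellular one). That lemma happens to be true --- one can prove it by induction on the cell structure, pushing the factorization out along the bottom cell --- but you give no proof, and the abandoned ``wrong order'' attempt shows the argument was not in hand. It is also unnecessary: peel the phantoms off the other end. If $Z$ is (a retract of) the cone in a triangle $P\r W\r Z\r\Sigma P$ with $W$ $(n-1)$\nobreakdash-cellular, then by induction $(f_{n-1}\cdots f_1)g$ vanishes on $W$, hence factors through $Z\r\Sigma P$, and the last phantom $f_n$ kills anything out of $\Sigma P\in\add(\C C)$; this gives $f_n\cdots f_1g=0$ in three lines. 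Third, the implication ``if $fg=0$ for every $g$ from an $n$\nobreakdash-cellular object then $f$ is an $n$\nobreakdash-phantom'' is never addressed. It needs the approximation triangle $C_n\r X\r X^{(n)}$ in which $C_n$ is $n$\nobreakdash-cellular and $X\r X^{(n)}$ is a composite of $n$ phantoms (the same stacked-octahedra fact your converse argument also needs); the hypothesis then forces $f$ to factor through the $n$\nobreakdash-phantom $X\r X^{(n)}$.

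The $n=\infty$ case is also mishandled. Every $n$\nobreakdash-cellular object is $(n+1)$\nobreakdash-cellular (take $P=0$ in the defining triangle), so your reading ``$\infty$\nobreakdash-cellular $=$ $n$\nobreakdash-cellular for all $n$'' collapses to $1$\nobreakdash-cellular, i.e.\ to $\add(\C C)$, and the asserted equivalence at $n=\infty$ would then say that every phantom is an $\infty$\nobreakdash-phantom, which fails whenever $\C I^2\subsetneq\C I$. The correct class (Christensen's $\C P_\infty$, which is what the paper is implicitly citing) consists of retracts of homotopy colimits of countable towers with projective cells attached at each stage, and establishing the orthogonality there is precisely where a telescope argument enters --- the opposite of your closing remark that ``no homotopy-colimit argument is needed.'' In short: the skeleton of your converse (the tower of universal phantoms plus octahedra) is the right idea and is how Christensen proves the theorem, but the base case, the key lemma, the missing third implication, and the $\infty$ case all need to be repaired or supplied before this is a proof.
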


Since $\C{C}$ is essentially small, $(\add({\C{C}}),\C{I})$ is a projective class by \cite[Lemma 3.2]{Ch98}, hence Proposition \ref{phancell} follows from \cite[Theorem 3.5]{Ch98}.

\subsection{Adams and Postnikov resolutions}

Adams resolutions go back to Adams' construction of the spectral sequence that bears his name. The definition below is due to Christensen, cf.~\cite{Ch98}.

\begin{defn}\label{ar}
An \emph{Adams resolution} $(X,W_*,P_*)$ of an object $X$ in $\C{T}$ is a countable sequence of exact triangles
\begin{equation*}
\xymatrix@!R@!C=8pt{X\ar[rr]^{j_{0}}&&W_0\ar[ld]^{\scriptscriptstyle +1}_{r_{0}}\ar[rr]^{j_{1}}&&W_1\ar[ld]^{\scriptscriptstyle +1}_{r_{1}}\ar[rr]^{j_{2}}&&W_2\ar[ld]^{\scriptscriptstyle +1}_{r_{2}}\ar[rr]^{j_{3}}&&W_3\ar[ld]^{\scriptscriptstyle +1}_{r_{3}}\ar@{}[rd]|{\displaystyle\cdots}\\
&P_0\ar[lu]^{g_{0}}&&P_1\ar[lu]^{g_{1}}&&P_2\ar[lu]^{g_{2}}&&P_3\ar[lu]^{g_{3}}&&}
\end{equation*}
such $j_{n}$ is a $\C C$\nobreakdash-phantom map and $P_{n}$ is in $\add({\C{C}})$, $n\geq 0$.
\end{defn}

\begin{rem}\label{rem-ar}
An Adams resolution of $X$ can be easily constructed by induction, as usual projective resolutions. We start with an epimorphism from a projective object $S_\alpha(P_0)\onto S_\alpha(X)$, i.e.~$P_0$ is in $\add({\C{C}})$. This morphism is represented by a unique $g_0\colon P_0\r X$. If we extend $g_0$ to an exact triangle we obtain $r_0$ and $j_0$, which is a $\C C$\nobreakdash-phantom map by Lemma \ref{util}. If we have constructed the first $n$ triangles, $n\geq 1$, we take an epimorphism from a projective object $S_\alpha(P_{n})\onto S_\alpha(W_{n-1})$ and proceed in the same way. Observe that, since $j_{n}$ is a $\C C$-phantom map for all $n\geq 0$, $S_\alpha(W_{n-1})$ is the kernel of $S_{\alpha}(r_{n-2}g_{n-1})$ for $n\geq 2$ and the kernel of $S_{\alpha}(g_{0})$ for $n=1$.

By Lemma \ref{util}, for any Adams resolution $(X,W_*,P_*)$ the restricted Yoneda functor $S_{\alpha}$ maps
$$\xymatrix@C=15pt{
0&X\ar[l]&&\ar[ll]^-{g_{0}}
P_{0}
\ar@{<-}[rr]^-{\scriptscriptstyle +1}_-{ r_{0}g_{1} }&&
 P_{1}
\ar@{<-}[rr]^-{\scriptscriptstyle +1}_-{ r_{1}g_{2} }&&
 P_{2}
\ar@{<-}[rr]^-{\scriptscriptstyle +1}_-{ r_{2}g_{3} }&&
 P_{3} \ar@{<-}[r]&\cdots}
$$
to a projective resolution of $S_\alpha(X)$ in $\Mod{\alpha}{\C C}$.
\end{rem}

Postnikov resolutions are an enrichment of Postnikov systems, whose definition we recall below, see Definition \ref{postnikov0}.

\begin{defn}\label{postrel}
A \emph{Postnikov resolution} $(X,X_{*},P_{*})$ of an object $X$ in $\C{T}$ is a diagram
$$\xymatrix@!R@!C=8pt{
X&&&&&&&&&
\\
0\ar[rr]^{i_{0}}
\ar[u]
&&X_0\ar[ld]^{\scriptscriptstyle +1}_{q_{0}}\ar[rr]^{i_{1}}\ar@(u,l)@{-}@<-.25ex>[llu]^<(.15){\scriptscriptstyle +1}_<(.05){p_{0}}&&
X_1\ar[ld]^{\scriptscriptstyle +1}_{q_{1}}\ar[rr]^{i_{2}}\ar@(u,l)@{-}[llllu]^<(.15){\scriptscriptstyle +1}_<(.05){p_{1}}&&
X_2\ar[ld]^{\scriptscriptstyle +1}_{q_{2}}\ar[rr]^{i_{3}}\ar@(u,l)@{-}[llllllu]^<(.15){\scriptscriptstyle +1}_<(.05){p_{2}}&&
X_3\ar[ld]^{\scriptscriptstyle +1}_{q_{3}}\ar@{}[rd]|{\displaystyle\cdots}
\ar@{}[ru]|{\displaystyle\cdots}
\ar@(u,l)@{->}[llllllllu]^<(.15){\scriptscriptstyle +1}_<(.05){p_{3}}\\
&P_0\ar[lu]^{f_{0}}&&P_1\ar[lu]^{f_{1}}&&P_2\ar[lu]^{f_{2}}&&P_3\ar[lu]^{f_{3}}&&}$$
consisting of a countable sequence of exact triangles and commutative triangles, $p_{n}=p_{n+1}i_{n+1}$, $n\geq 0$,
such that $S_{\alpha}$ maps
\begin{equation}\label{postrel-eq}
\xymatrix@C=15pt{
0&X\ar[l]&&\ar[ll]^-{p_{0}q_{0}^{-1}}
P_{0}
\ar@{<-}[rr]^-{\scriptscriptstyle +1}_-{ q_{0}f_{1} }&&
 P_{1}
\ar@{<-}[rr]^-{\scriptscriptstyle +1}_-{ q_{1}f_{2} }&&
 P_{2}
\ar@{<-}[rr]^-{\scriptscriptstyle +1}_-{ q_{2}f_{3} }&&
 P_{3} \ar@{<-}[r]&\cdots}
\end{equation}
to a projective resolution of $S_\alpha(X)$. In particular $X_{n}$ is $(n+1)$\nobreakdash-$\C C$-cellular.

We will denote the structure morphisms by
$f_{n}^{X}$, $i_{n}^{X}$, $q_{n}^{X}$, and $p_{n}^{X}$ when we need to distinguish between different Postnikov resolutions.
\end{defn}

\begin{lem}\label{postnikovaadams}
Given an object $X$ in $\C{T}$ and an Adams  resolution $(X,W_*,P_*)$, there exists a Postnikov resolution $(X,X_*,P_*)$ fitting in octahedra as follows, $n\geq 0$,
$$\xymatrix{
W_{n-1}
\ar[rrr]^{j_n}
\ar[ddd]_{\phi_{n-1}}
&&&
W_{n}
\ar[ddd]^{\phi_{n}}
\ar[ldd]^{r_{n}}_<(.5){\scriptscriptstyle+1\!\!}
\\
&&
X
\ar[llu]_{\;j_{n-1}\cdots j_{0}}
\ar[ru]^{j_{n}\cdots j_{0}\!\!\!}
\ar@{<-}[lldd]|!{"3,3";"1,1"}{\hole}_{p_{n-1}}^<(.4){\!\!\!\scriptscriptstyle+1}
\ar@{<-}[rdd]|!{"1,4";"3,3"}{\hole}_<(.4){\scriptscriptstyle+1\!\!}^{p_n}
&\\
&&P_{n}
\ar[lluu]^{g_{n}}
\ar[lld]^{f_n}
&\\
X_{n-1}\ar[rrr]_{i_n}
&&&
X_{n}\ar[lu]_<(.5){\scriptscriptstyle\!\!\!\!+1}^{q_n}
}$$
Here, for $n=0$ we use the convention $X_{-1}=0$, $W_{-1}=X$, and $X\r W_{-1}$ the identity  morphism. Conversely, if a Postnikov  resolution $(X,X_*,P_*)$ is given, then there exists an Adams resolution $(X,W_*,P_*)$ fitting in octahedra as above.
\end{lem}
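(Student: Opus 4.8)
The plan is to prove both implications by one and the same device: an induction on $n$ in which a single application of the octahedral axiom at stage $n$ converts stage $n$ of one resolution into stage $n$ of the other, the octahedron produced being, up to the usual rotations, exactly the one displayed in the statement.

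\textbf{From Adams to Postnikov.} Put $W_{-1}=X$, $X_{-1}=0$, and let $j_{-1}\cdots j_0$ denote $\id{X}$. Suppose inductively that the triangle $X\st{j_{n-1}\cdots j_0}\r W_{n-1}\st{\phi_{n-1}}\r X_{n-1}\st{p_{n-1}}\r\Sigma X$ has been constructed; for $n=0$ this is $X\st{\id{X}}\r X\r 0\r\Sigma X$. I would apply the octahedral axiom to the composite $X\st{j_{n-1}\cdots j_0}\To W_{n-1}\st{j_n}\To W_n$, using on the first map the triangle above and on the second map the rotation $W_{n-1}\st{j_n}\r W_n\st{r_n}\r\Sigma P_n\st{-\Sigma g_n}\r\Sigma W_{n-1}$ of the $n$-th Adams triangle. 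The octahedron it produces is the displayed one: I take its triangle on the composite $X\r W_n$ as the defining triangle $X\r W_n\st{\phi_n}\r X_n\st{p_n}\r\Sigma X$ of $X_n,\phi_n,p_n$; its connecting triangle, rotated, is the $n$-th Postnikov triangle $P_n\st{f_n}\r X_{n-1}\st{i_n}\r X_n\st{q_n}\r\Sigma P_n$, defining $f_n,i_n,q_n$; and its remaining edges supply the compatibility relations, among which $p_ni_n=p_{n-1}$, $q_n\phi_n=r_n$ and $f_n=\phi_{n-1}g_n$. It then remains to verify that $(X,X_*,P_*)$ is a Postnikov resolution: that $X_n$ is $(n+1)$\nobreakdash-cellular follows inductively from the triangle $P_n\st{f_n}\r X_{n-1}\r X_n\r\Sigma P_n$, which has exactly the shape in the definition of cellular objects ($P_n\in\add(\C C)$, $X_{n-1}$ is $n$\nobreakdash-cellular); the relation $p_n=p_{n+1}i_{n+1}$ is one of the compatibilities; and $S_\alpha$ sends (\ref{postrel-eq}) to a projective resolution of $S_\alpha(X)$ because the differentials satisfy $q_nf_{n+1}=q_n\phi_ng_{n+1}=r_ng_{n+1}$ (and the augmentation is checked directly at $n=0$, where $X_0\cong\Sigma P_0$ and $q_0$ is invertible), so that (\ref{postrel-eq}) coincides with the Adams complex of Remark \ref{rem-ar}, whose $S_\alpha$\nobreakdash-image is a projective resolution of $S_\alpha(X)$.

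\textbf{From Postnikov to Adams.} Dually, put $W_{-1}=X$ and, having built the triangle $X\r W_{n-1}\st{\phi_{n-1}}\r X_{n-1}\st{p_{n-1}}\r\Sigma X$, apply the octahedral axiom to $X_{n-1}\st{i_n}\To X_n\st{p_n}\To\Sigma X$, with the rotated $n$-th Postnikov triangle on $i_n$ and the triangle just built — read as the triangle on the composite $p_ni_n=p_{n-1}$ — so that the triangle on $p_n$, read off as $X\r W_n\st{\phi_n}\r X_n\st{p_n}\r\Sigma X$, defines $W_n$ and $\phi_n$. Its desuspended connecting triangle is the $n$-th Adams triangle $P_n\st{g_n}\r W_{n-1}\st{j_n}\r W_n\st{r_n}\r\Sigma P_n$ with $P_n\in\add(\C C)$, and the compatibilities $q_n\phi_n=r_n$, $f_n=\phi_{n-1}g_n$ again identify the Adams complex $\{r_ng_{n+1}\}$ (augmented by $g_0=p_0q_0^{-1}$) with the Postnikov complex, so $S_\alpha$ maps it to a projective resolution of $S_\alpha(X)$. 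What is left — and the only step requiring genuine work — is that each $j_n$ be a phantom map. By Lemma \ref{util} applied to the rotation of the $n$-th Adams triangle, $j_n$ is phantom precisely when $S_\alpha(g_n)$ is an epimorphism; for $n=0$ this is the surjectivity of the augmentation of the resolution, and for $n\geq1$ it follows, by a short diagram chase keeping track of internal degrees, from exactness of $S_\alpha$ of the (Adams $=$ Postnikov) complex at $S_\alpha(P_{n-1})$ together with the inductive hypothesis that $j_{n-1}$ is phantom, which by Lemma \ref{util} makes $S_\alpha(r_{n-1})$ a monomorphism through which $S_\alpha(g_n)$ is detected. This closes the induction.

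\textbf{Where the difficulty lies.} All the structural content is carried by the two octahedra, and the one step that is more than bookkeeping is the phantom-ness of the reconstructed maps $j_n$ in the Postnikov$\,\Rightarrow\,$Adams direction: this is precisely the place where the defining resolution property of a Postnikov resolution has to be used, and not merely the triangulated structure. Everything else reduces to invoking the octahedral axiom and chasing the compatibility morphisms it outputs, where the only care needed concerns signs and which edge of each octahedron carries internal degree $+1$.
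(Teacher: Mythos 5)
Your proposal is correct and takes essentially the same route as the paper: both directions are built by the same induction, applying the octahedral axiom once per stage (you merely enter the octahedron through the composites $X\r W_{n-1}\r W_{n}$, resp.\ $X_{n-1}\st{i_n}\r X_n\st{p_n}\r\Sigma X$, instead of first completing $f_n=\phi_{n-1}g_n$, resp.\ $p_n$, as the paper does — the resulting octahedron and compatibilities are the same), and the identification $q_nf_{n+1}=r_ng_{n+1}$ reduces the resolution condition to Remark \ref{rem-ar}. Your phantom-check for $j_n$ is also sound and even slightly leaner than the paper's (which carries all the short exact sequences for $k<n$ and treats $n=1$ separately): since $d_{n-1}r_{n-1}=0$ and $\ker S_\alpha(d_{n-1})=\im S_\alpha(d_n)=\im\bigl(S_\alpha(r_{n-1})S_\alpha(g_n)\bigr)$, the monomorphism $S_\alpha(r_{n-1})$ coming from $j_{n-1}$ being phantom indeed forces $S_\alpha(g_n)$ to be an epimorphism.
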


\begin{proof}
The Postnikov resolution together with the octahedra are constructed inductively. The step $n=0$ is essentially given in the statement. We just need to choose a degree $+1$ isomorphism $q_{0}$, e.g.~$X_{0}=\Sigma P_{0}$ and $q_{0}$ the identity. In the $n^{\text{th}}$ step, we first complete $f_{n}=\phi_{n-1}g_{n}$ to an exact triangle, this yields $i_n$ and $q_n$. Then we obtain $\phi_{n}$ and $p_{n}$ by applying the octahedral axiom.

Let us tackle the converse. The Adams resolution together with the octahedra are also defined by induction. For the step $n=0$, we just need to complete $g_0=p_0q_{0}^{-1}$ to an exact triangle. This yields $j_0$, $r_0$ and $\phi_0=q_{0}^{-1}r_{0}$. Notice that $j_0$ is a $\C C$\nobreakdash-phantom map since $S_\alpha(p_0)$ is an epimorphism in $\Mod{\alpha}{\C C}$.

In the $n^{\text{th}}$ step, we first complete $p_n$ to an exact triangle, this yields $\phi_n$ and the morphism $X\r W_n$, which a fortiori will be $j_n\cdots j_0$ (so far we do not have a $j_n$). We also obtain $r_n=q_n\phi_n$. We then apply the octahedral axiom. This produces $g_n$ and $j_n$. We must check that $j_{n}$ is a $\C C$\nobreakdash-phantom, or equivalently that $S_{\alpha}(g_{n})$ is an epimorphism.

For $n=1$, we have an exact sequence
$$
\xymatrix{0&S_\alpha(X)\ar[l]&&\ar[ll]_-{S_{\alpha}(p_{0}q_{0}^{-1})}
S_\alpha(P_{0})
\ar@{<-}[rr]_-{\scriptscriptstyle +1}^-{S_\alpha(q_{0}f_{1}) }&&
 S_\alpha(P_{1}).}
$$
Since $q_0$ is an isomorphism, $\im S_\alpha(f_1)=\ker S_\alpha(p_0)$ and the exact triangle
$$
\xymatrix@!=5pt{X \ar[rd]_{j_0}&&X_0\ar[ll]^{p_0}_{\scriptscriptstyle +1} \\
&W_0\ar[ru]_{\phi_0}&}
$$
shows that $\ker S_\alpha(p_0)=\im S_\alpha(\phi_0)$. Since $j_0$ is $\C C$\nobreakdash-phantom $S_\alpha(\phi_0)$ is a monomorphism. Hence, $S_{\alpha}(g_{1})$ must an epimorphism since $f_{1}=\phi_{0}g_{1}$ and $\im S_\alpha(f_1)=\im S_\alpha(\phi_0)$.

Let $n>1$. By induction hypothesis, for $0\leq k<n$, $j_{k}$ is a $\C C$\nobreakdash-phantom and the sequences
$$
\xymatrix{0&S_\alpha(W_{k-1})\ar[l]&\ar@{->>}[l]_-{S_{\alpha}(g_k)}
S_\alpha(P_{k})
&
 S_\alpha(W_{k}) \ar@{_{(}->}[l]^-{\scriptscriptstyle +1}_-{S_\alpha(r_k) } & 0 \ar[l]}
$$
are short exact. Moreover, in the following diagram

$$
\xymatrix{
S_\alpha(P_{n-2})&&\ar[ll]^-{\scriptscriptstyle +1}_-{S_{\alpha}(q_{n-2}f_{n-1})}
S_\alpha(P_{n-1}) \ar@{->>}[ld]^-{S_{\alpha}(g_n-1)}
&&
 S_\alpha(P_{n}) \ar[ll]^-{\scriptscriptstyle +1}_-{S_\alpha(q_{n-1}f_{n}) } \ar[ld]^-{S_{\alpha}(g_{n})} \\
 & S_\alpha(W_{n-2}) \ar@{_{(}->}[lu]_-{\scriptscriptstyle +1}^-{S_\alpha(r_{n-2})} && S_\alpha(W_{n-1}) \ar@{_{(}->}[lu]_-{\scriptscriptstyle +1}^-{S_\alpha(r_{n-1})} & }
$$
the horizontal row is also an exact sequence. Hence, $
\im S_\alpha(r_{n-1})
=\im S_{\alpha}(q_{n-1}f_{n})$ and therefore $S_{\alpha}(g_{n})$ must be an epimorphism.
\end{proof}

\begin{cor}\label{postexis}
Any object $X$ in $\C T$ has a Postnikov resolution.
\end{cor}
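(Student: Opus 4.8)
The plan is to deduce the statement directly from the two preceding results: Remark~\ref{rem-ar}, which produces an Adams resolution of any object, and Lemma~\ref{postnikovaadams}, which converts an Adams resolution into a Postnikov resolution.

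First I would construct an Adams resolution $(X,W_*,P_*)$ of the given object $X$, proceeding by induction exactly as indicated in Remark~\ref{rem-ar}. Since the representable functors form a set of projective generators of $\Mod{\alpha}{\C C}$, and $\add(\C C)$ is equivalent via $S_\alpha$ to the full subcategory of projectives (both recalled among the standing properties of $S_\alpha$ in Section~\ref{ryf}), there is an epimorphism $S_\alpha(P_0)\onto S_\alpha(X)$ with $P_0$ in $\add(\C C)$. By the isomorphism $\C T(P_0,X)\cong\hom_{\alpha,\C C}(S_\alpha(P_0),S_\alpha(X))$ stated there, this epimorphism is $S_\alpha(g_0)$ for a unique $g_0\colon P_0\r X$. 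Completing $g_0$ to an exact triangle yields $r_0$ and $j_0$, and $j_0$ is a phantom map by Lemma~\ref{util} because $S_\alpha(g_0)$ is an epimorphism. Having built the first $n$ triangles, one repeats the argument with $S_\alpha(P_n)\onto S_\alpha(W_{n-1})$ in place of $S_\alpha(P_0)\onto S_\alpha(X)$; this produces the full countable sequence of exact triangles with each $j_n$ phantom and each $P_n$ in $\add(\C C)$, i.e.\ an Adams resolution.

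Then I would invoke the first assertion of Lemma~\ref{postnikovaadams}: given the Adams resolution $(X,W_*,P_*)$ just constructed, there exists a Postnikov resolution $(X,X_*,P_*)$ of $X$ fitting into the octahedra displayed in that lemma. This is exactly what is being claimed, so the corollary follows.

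There is no genuine obstacle here, since both inputs are already in place; the only point needing care is that the construction of the Adams resolution relies on $\Mod{\alpha}{\C C}$ having enough projectives of the form $\add(\C C)$ and on $S_\alpha$ being fully faithful on $\add(\C C)$, so that each chosen projective epimorphism lifts uniquely to a morphism in $\C T$ — both recorded in Section~\ref{ryf}. A self-contained alternative would bypass Adams resolutions and build the Postnikov resolution directly by induction, taking at stage $n$ an epimorphism onto $S_\alpha(X_{n-1})$ (with $X_{-1}=0$) from a projective, lifting it to $f_n\colon P_n\r X_{n-1}$, completing the relevant composite to an exact triangle to obtain $i_n$ and $q_n$, and producing $p_n$ from $p_{n-1}$ by the octahedral axiom while checking that \eqref{postrel-eq} is a projective resolution of $S_\alpha(X)$; but since this merely unwinds the proof of Lemma~\ref{postnikovaadams}, the two-line deduction above is the cleanest route.
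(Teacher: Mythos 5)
Your proposal is correct and follows exactly the paper's route: the paper also deduces the corollary from the existence of Adams resolutions (Remark \ref{rem-ar}) combined with the first part of Lemma \ref{postnikovaadams}. The extra details you spell out about lifting projective epimorphisms along $S_\alpha$ are accurate and consistent with the standing assumptions of Section \ref{ryf}.
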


This follows from Lemma \ref{postnikovaadams} and the fact that any object in $\C T$ has an Adams resolution, see Remark \ref{rem-ar}.

\subsection{Postnikov resolutions and $\infty$-phantom maps}\label{hcpr}

In this section we define a homotopy category of Postnikov resolutions. This is one of the key ingredients of our obstruction theory.

\begin{defn}\label{morpostres}
A \emph{morphism of Postnikov resolutions}
\begin{equation}\label{mpr}
(h,\psi_{*},\varphi_{*})\colon (X,X_{*},P_{*})\To (Y,Y_{*},Q_{*})
\end{equation}
is given by morphisms $h\colon X\r Y$, $\psi_{n}\colon X_{n}\r Y_{n}$, $\varphi_{n}\colon P_{n}\r Q_{n}$ in $\C T$, $n\geq 0$,
such that the obvious triangles and squares commute,
$$\xymatrix@!C=0pt@!R=0pt{
X\ar[ddd]_h &\\
&\ar[lu]0\ar[rrr]\ar[ddd]&&&
\ar@(lu,r)[llllu]X_0\ar[lld]^{
}\ar[rrr]\ar[ddd]^{\psi_{0}}&&&
\ar@(lu,r)@{-}[lllllllu]X_1\ar[lld]^{
}\ar[rrr]\ar[ddd]^{\psi_{1}}&&&
\ar@(lu,r)@{-}[llllllllllu]X_2\ar[lld]^{
}\ar[ddd]^{\psi_{2}} &\cdots\\
&&P_0\ar[lu]\ar[ddd]^<(.1){\varphi_{0}}&&&P_1\ar[lu]\ar[ddd]^<(.1){\varphi_{1}}&&&P_2\ar[lu]\ar[ddd]^<(.1){\varphi_{2}}&&&&\\
Y&&&&\\
&\ar[lu]0\ar[rrr]|!{"2,3";"5,3"}{\hole} &&&
\ar@(lu,r)[llllu]|!{"2,3";"5,3"}{\hole}|!{"2,2";"5,2"}{\hole}
Y_0\ar[lld]^{
}\ar[rrr]|!{"2,6";"5,6"}{\hole}
&&&
\ar@(lu,r)@{-}[lllllllu]|!{"2,6";"5,6"}{\hole}|!{"2,5";"5,5"}{\hole}|!{"2,3";"5,3"}{\hole}|!{"2,2";"5,2"}{\hole}
Y_1\ar[lld]^{
}\ar[rrr]|!{"2,9";"5,9"}{\hole}&&&
\ar@(lu,r)@{-}[llllllllllu]|!{"2,9";"5,9"}{\hole}|!{"2,8";"5,8"}{\hole}|!{"2,6";"5,6"}{\hole}|!{"2,5";"5,5"}{\hole}|!{"2,3";"5,3"}{\hole}|!{"2,2";"5,2"}{\hole}
Y_2\ar[lld]^{
} &\dots\\
&&Q_0\ar[lu]&&&Q_1\ar[lu]&&&Q_2\ar[lu]&&&}$$
A pair of morphisms of Postnikov resolutions
\begin{equation*}
(h,\psi_{*},\varphi_{*}),(\bar h,\bar \psi_{*},\bar \varphi_{*})\colon (X,X_{*},P_{*})\To (Y,Y_{*},Q_{*})
\end{equation*}
are \emph{homotopic} $(h,\psi_{*},\varphi_{*})\simeq(\bar h,\bar \psi_{*},\bar \varphi_{*})$ if, for all $n>0$, the following equivalent conditions hold:
\begin{enumerate}
\item $\psi_{n}i_{n}^X=\bar{\psi}_{n}i_{n}^X$,
 \item $i_{n}^Y\psi_{n-1}=i_{n}^Y\bar \psi_{n-1}$,
\item $\psi_{n}- \bar\psi_{n}$ factors through $q_{n}^X\colon X_{n}\r P_{n}$,
\item $\psi_{n-1}- \bar\psi_{n-1}$ factors through $f_{n}^Y\colon Q_{n}\r Y_{n-1}$.
\end{enumerate}
This natural equivalence relation is additive: two morphisms are homotopic iff their difference $(h-\bar h,\psi_{*}-\bar \psi_{*},\varphi_{*}-\bar \varphi_{*})$ is nullhomotopic, i.e.~homotopic to the trivial map. We denote by $\mathbf{Pres}_\infty$ the category of Postnikov resolutions and $\mathbf{Pres}_\infty^\simeq$ its homotopy category. Both of them are additive.
\end{defn}

The following theorem is the main result of this section. It establishes the existence of a functor with a certain property. Usually, when defining a functor, the complicated part is to show that composition is preserved. In this case the complicated part is the definition of the functor on morphisms, once this is achieved compatibility with composition is obvious.

\begin{thm}\label{cociente}
There exists an essentially unique functor
$$\Psi\colon\C T\To \mathbf{Pres}_\infty^\simeq$$
sending an object $X$ to a Postnikov resolution $\Psi(X)$ of $X$ and a map $h\colon X\r Y$ to the homotopy class
$\Psi(h)$ of a morphism with first coordinate $h$. This functor is additive, full and essentially surjective. Moreover, the kernel of $\Psi$ is the ideal $\C I^{\infty}$ of $\infty$\nobreakdash-$\C C$\nobreakdash-phantom maps, hence $\Psi$ induces an equivalence of categories
$\C T/\C I^{\infty}\simeq  \mathbf{Pres}_\infty^\simeq$.
\end{thm}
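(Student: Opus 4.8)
The proof has three independent tasks: (1) construct $\Psi$ on objects, (2) construct $\Psi$ on morphisms and verify it is well-defined on homotopy classes, (3) identify the kernel. Task (1) is free: Corollary \ref{postexis} provides a Postnikov resolution for every $X$, so pick one and call it $\Psi(X)$. For task (3), once $\Psi$ is shown to be a full, additive, essentially surjective functor, the final equivalence $\C T/\C I^\infty\simeq\mathbf{Pres}_\infty^\simeq$ is formal provided we identify $\ker\Psi$ with $\C I^\infty$. So the real content is entirely in task (2), with a subsidiary computation for the kernel.

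\textbf{Constructing $\Psi$ on morphisms.} Given $h\colon X\r Y$ and chosen Postnikov resolutions $\Psi(X)=(X,X_*,P_*)$ and $\Psi(Y)=(Y,Y_*,Q_*)$, I would build the data $(\psi_*,\varphi_*)$ by induction on $n$, lifting $h$ up the tower. The key tool is that $S_\alpha$ sends the bottom rows \eqref{postrel-eq} of both resolutions to projective resolutions of $S_\alpha(X)$ and $S_\alpha(Y)$ in $\Mod{\alpha}{\C C}$; since $S_\alpha(h)$ is a chain map between these, the comparison theorem for projective resolutions gives a chain map $\{\tilde\varphi_n\}$ over $S_\alpha(h)$, unique up to chain homotopy. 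Because $P_n,Q_n\in\add(\C C)$ and $S_\alpha$ is fully faithful on $\add(\C C)$ (the isomorphism $\C T(P,X)\cong\hom_{\alpha,\C C}(S_\alpha P,S_\alpha X)$ from Section \ref{ryf}), each $\tilde\varphi_n$ lifts uniquely to $\varphi_n\colon P_n\r Q_n$. Then I construct $\psi_n\colon X_n\r Y_n$ inductively using the exact triangles $P_n\st{f_n}\r X_{n-1}\r X_n\st{q_n}\r\Sigma P_n$: given $\psi_{n-1}$ and $\varphi_n$ with $\psi_{n-1}f_n^X=f_n^Y\varphi_n$ (the square commuting on the nose, which I can arrange since $S_\alpha$ detects this composite up to a phantom — here one must be careful, and I expect this is where homotopy indeterminacy first enters), the triangle axiom produces a fill-in $\psi_n$ completing the morphism of triangles; compatibility $p_n^Y\psi_n = \psi\cdot p_n^X$ with the global maps $X_n\r X$ is then checked by a diagram chase using that $p_n$ is the composite of the octahedral maps. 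The base case $n=0$ uses $q_0$ an isomorphism, so $\psi_0$ is forced. This shows $\Psi(h)$ exists; functoriality ($\Psi(h'h)=\Psi(h')\Psi(h)$, $\Psi(\id{})=\id{}$) is immediate once well-definedness on homotopy classes is known, since composites of the chosen lifts are again lifts.

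\textbf{Well-definedness up to homotopy and the kernel.} I must show: (a) any two morphisms of Postnikov resolutions with the \emph{same} first coordinate $h$ are homotopic, and (b) a morphism $h$ lies in $\ker\Psi$ (i.e.\ admits a \emph{nullhomotopic} lift) iff $h$ is an $\infty$-phantom. For (a): given $(h,\psi_*,\varphi_*)$ and $(h,\bar\psi_*,\bar\varphi_*)$, their difference is a morphism of Postnikov resolutions over $0$; using the chain-homotopy uniqueness of the comparison map downstairs and lifting the chain homotopy through $\add(\C C)$ via fullness of $S_\alpha$ there, one produces the required factorizations in conditions (1)--(4) of Definition \ref{morpostres}. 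For (b), the $\subseteq$ direction: if $h$ has a lift $(h,\psi_*,\varphi_*)\simeq 0$ then each $\psi_n-0$ factors through $q_n^X$, and chasing through the octahedra relating the $X_n$ to an Adams resolution (Lemma \ref{postnikovaadams}) expresses $h$ as $p_n^Y\psi_n$ through the $(n{+}1)$-cellular object $X_n$ in a way that forces $h$ to kill all $(n{+}1)$-cellular objects, hence $h\in\C I^{n+1}$ for all $n$ by Proposition \ref{phancell}; thus $h\in\C I^\infty$. Conversely, if $h\in\C I^\infty$ then $h$ vanishes on every cellular object, in particular $h\circ p_n^X = 0$ for the $(n{+}1)$-cellular $X_n$, which lets one inductively choose all the lift data $\psi_n,\varphi_n$ to be zero (or to factor appropriately), producing a nullhomotopic lift.

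\textbf{Main obstacle.} The delicate point is the inductive construction of $\psi_n$ together with the compatibility $p_n^Y\psi_n = h\,p_n^X$: the octahedral fill-in is only determined up to a map factoring through $q_n^X\colon X_n\r P_n$, which is exactly the source of the homotopy relation, so one must simultaneously (i) show \emph{some} choice makes the square with $p_n$ commute — this requires knowing that the two candidate composites into $Y$ differ by a map that $S_\alpha$ kills and that lives in the indeterminacy — and (ii) show the homotopy class is independent of all choices. Getting the bookkeeping of these indeterminacies to close up cleanly, rather than accumulating obstructions, is the crux; the fact that $S_\alpha$ is fully faithful on $\add(\C C)$ and that \eqref{postrel-eq} is a genuine projective resolution is what makes it work.
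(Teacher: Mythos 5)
Your overall architecture coincides with the paper's (choose a resolution for each object; prove every $h\colon X\r Y$ lifts to a morphism of Postnikov resolutions; prove any two lifts of the same $h$ are homotopic; identify the kernel with $\C I^{\infty}$; additivity, fullness and essential surjectivity are then formal), but both central steps are left unproved, and the way you propose to carry them out would not work as written. For existence of a lift you fix the entire chain map $\varphi_*$ in advance by the comparison theorem and then try to fill in the $\psi_n$ by triangle fill-ins. With $\varphi_*$ frozen, the square $\psi_{n-1}f_n^X=f_n^Y\varphi_n$ is not something you ``can arrange'': the comparison theorem only gives $\varphi_{n-1}d_n^X=d_n^Y\varphi_n$, and the obstruction to rigidifying this into a strictly commuting ladder with \emph{prescribed} $\varphi_*$ is precisely the class $\theta$ of Definition \ref{theta}, which is nonzero in general (it represents an Adams spectral sequence differential). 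Likewise, your claim that the compatibility $p_n^Y\psi_n=h\,p_n^X$ ``is then checked by a diagram chase'' is false for an arbitrary fill-in: it holds only after correcting the fill-in $\psi_n'$ by a term $\gamma q_n^X$, where $\gamma$ is obtained from the splitting of $S_{\alpha}(p_n)$ for $n>0$ (Lemma \ref{unsplit}) and one must also verify $q_n^Y\gamma=0$ so that the correction does not destroy the triangle-morphism conditions. The paper's Proposition \ref{sextiende} avoids your difficulty exactly by \emph{not} fixing $\varphi_{n+1}$ in advance: it constructs $\psi_n$ first (with the above correction) and only afterwards chooses $\varphi_{n+1}$, using exactness of $S_{\alpha}(Q_{n+1})\r S_{\alpha}(Y_n)\r S_{\alpha}(Y)$ together with projectivity of $S_{\alpha}(P_{n+1})$. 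You identify this issue yourself as ``the crux'', but naming it is not closing it.

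The second gap is your step (a), homotopy uniqueness, which carries essentially all the remaining content: once one knows that every lift of an $\infty$-phantom (in particular of $0$) is nullhomotopic, well-definedness, functoriality, fullness, essential surjectivity and the kernel computation follow formally (and your observation for the converse inclusion, that $(h,0,0)$ is a valid, visibly nullhomotopic lift when $h\,p_n^X=0$ for all $n$, via Proposition \ref{phantom1}, is fine). But your proposed argument --- lift a chain homotopy between the $\varphi_*$'s through $\add(\C{C})$ --- does not deliver what is needed: the homotopy relation of Definition \ref{morpostres} is a factorization condition on the $\psi_n$, which are maps out of the objects $X_n$; these are not in $\add(\C{C})$, where $S_{\alpha}$ is neither full nor faithful, so a nullhomotopy of $\varphi_*$ in $\Mod{\alpha}{\C C}$ does not by itself produce factorizations $\psi_n=f_{n+1}^Y\beta_n q_n^X$. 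The paper's Proposition \ref{nulinf} proves this by an induction of a different nature: from $h\,p_n^X=0$ one factors $\psi_n$ through the term $W_n$ of an associated Adams resolution, uses that $\C T(X_{n-1},\phi_n)$ is injective because $j_n\cdots j_0$ is an $(n+1)$-phantom and $X_{n-1}$ is $n$-cellular, and then lifts through the epimorphism $S_{\alpha}(g_{n+1})$. Without an argument of this kind your step (a), and with it the theorem, remains unproved.
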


We prove Theorem \ref{cociente} at the end of this section. See Remark \ref{oo0} below for explicit examples where the ideal $\C I^{\infty}$ of $\infty$\nobreakdash-$\C C$\nobreakdash-phantom maps is non-trivial and hence $\Psi$ is not an equivalence of categories.

\begin{lem}\label{unsplit}
Given a Postnikov resolution $(X,X_{*},P_{*})$, the following sequence is exact for $n\geq 0$,
$$\xymatrix{S_{\alpha}(P_{n+1})\ar[rr]^{S_{\alpha}(f_{n+1})}&& S_{\alpha}(X_{n})\ar@{->>}[rr]^{S_{\alpha}(p_{n})}_-{\scriptscriptstyle +1}&&  S_{\alpha}(X).}$$
Moreover, $S_{\alpha}(p_{n})$ splits for $n>0$.
\end{lem}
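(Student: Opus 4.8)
The plan is to reduce everything to the octahedra relating the given Postnikov resolution to an Adams resolution, and then chase diagrams using that $S_\alpha$ takes exact triangles to exact sequences and kills phantom maps. First I would invoke Lemma~\ref{postnikovaadams} to choose an Adams resolution $(X,W_*,P_*)$ fitting with $(X,X_*,P_*)$ into the octahedra displayed there. For each $n\geq 0$ this supplies an exact triangle
$$
X\xrightarrow{\ j_n\cdots j_0\ }W_n\xrightarrow{\ \phi_n\ }X_n\xrightarrow{\ p_n\ }\Sigma X
$$
(one of the triangular faces of the $n$\nobreakdash-th octahedron) together with the octahedral identity $i_n\phi_{n-1}=\phi_nj_n$. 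Since $j_n\cdots j_0$ is a composite of phantom maps it is itself a phantom, so $S_\alpha(j_n\cdots j_0)=0$. Applying $S_\alpha$ to the triangle above, the two maps flanking $S_\alpha(W_n)\to S_\alpha(X_n)\to\Sigma S_\alpha(X)$ in the resulting long exact sequence vanish, so we get a short exact sequence
$$
0\longrightarrow S_\alpha(W_n)\xrightarrow{\ S_\alpha(\phi_n)\ }S_\alpha(X_n)\xrightarrow{\ S_\alpha(p_n)\ }\Sigma S_\alpha(X)\longrightarrow 0 .
$$
In particular $S_\alpha(p_n)$ is an epimorphism and $\ker S_\alpha(p_n)=\im S_\alpha(\phi_n)$ for all $n\geq 0$.

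To prove exactness of $S_\alpha(P_{n+1})\xrightarrow{S_\alpha(f_{n+1})}S_\alpha(X_n)\xrightarrow{S_\alpha(p_n)}\Sigma S_\alpha(X)$, note first that $p_nf_{n+1}=p_{n+1}i_{n+1}f_{n+1}=0$, because $i_{n+1}f_{n+1}=0$ in the exact triangle $P_{n+1}\xrightarrow{f_{n+1}}X_n\xrightarrow{i_{n+1}}X_{n+1}$; hence $\im S_\alpha(f_{n+1})\subseteq\ker S_\alpha(p_n)$. Conversely, the octahedral identity gives $S_\alpha(i_{n+1}\phi_n)=S_\alpha(\phi_{n+1})S_\alpha(j_{n+1})=0$ because $j_{n+1}$ is a phantom, so $\im S_\alpha(\phi_n)\subseteq\ker S_\alpha(i_{n+1})$; and applying $S_\alpha$ to $P_{n+1}\xrightarrow{f_{n+1}}X_n\xrightarrow{i_{n+1}}X_{n+1}$ identifies $\ker S_\alpha(i_{n+1})=\im S_\alpha(f_{n+1})$. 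Putting this together with the short exact sequence above,
$$
\ker S_\alpha(p_n)=\im S_\alpha(\phi_n)\subseteq\ker S_\alpha(i_{n+1})=\im S_\alpha(f_{n+1})\subseteq\ker S_\alpha(p_n),
$$
so all four terms coincide and the sequence is exact.

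For the splitting of $S_\alpha(p_n)$ when $n>0$, I would use the relation $p_ni_n=p_{n-1}$ together with the exactness just established at level $n-1$ (legitimate since $n-1\geq 0$). Applying $S_\alpha$ to the exact triangle $P_n\xrightarrow{f_n}X_{n-1}\xrightarrow{i_n}X_n$ gives $\ker S_\alpha(i_n)=\im S_\alpha(f_n)$, which by exactness at level $n-1$ equals $\ker S_\alpha(p_{n-1})$. Hence $S_\alpha(i_n)$ induces a monomorphism $\bar\imath\colon S_\alpha(X_{n-1})/\ker S_\alpha(p_{n-1})\hookrightarrow S_\alpha(X_n)$, while $S_\alpha(p_{n-1})$, an epimorphism with the same kernel, induces an isomorphism $\bar p\colon S_\alpha(X_{n-1})/\ker S_\alpha(p_{n-1})\xrightarrow{\ \sim\ }\Sigma S_\alpha(X)$; from $p_ni_n=p_{n-1}$ one checks $S_\alpha(p_n)\bar\imath=\bar p$, so $\bar\imath\,\bar p^{\,-1}$ is a section of $S_\alpha(p_n)$. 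The hypothesis $n>0$ is essential: when $n=0$ one has $X_{-1}=0$, so $S_\alpha(X_0)\cong\Sigma S_\alpha(P_0)$ is projective, and a splitting of $S_\alpha(p_0)$ would make $\Sigma S_\alpha(X)$ a direct summand of a projective, which fails in general.

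The step I expect to be the real obstacle is the very first one. A Postnikov resolution as in Definition~\ref{postrel} carries neither the objects $W_n$, nor the triangle $W_n\to X_n\to\Sigma X\to\Sigma W_n$, nor the identity $i_n\phi_{n-1}=\phi_nj_n$, and it is exactly these data that let the vanishing of $S_\alpha$ on phantoms do the work. So everything rests on carefully reading off that auxiliary structure from the octahedra of Lemma~\ref{postnikovaadams}; once it is available the remaining arguments are routine diagram chases in $\Mod{\alpha}{\C C}$.
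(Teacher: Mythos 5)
Your proof is correct and takes essentially the same route as the paper: you pass to an associated Adams resolution via Lemma \ref{postnikovaadams}, use that the maps $j_k$ are phantom (Lemma \ref{util}) to get the short exact sequence $S_\alpha(W_n)\hookrightarrow S_\alpha(X_n)\twoheadrightarrow S_\alpha(X)$, and then identify $\ker S_\alpha(p_n)$ with $\im S_\alpha(f_{n+1})$ by a small chase through the octahedra (the paper uses $f_{n+1}=\phi_n g_{n+1}$ where you use $i_{n+1}\phi_n=\phi_{n+1}j_{n+1}$, a negligible difference). Your splitting argument via $p_n i_n=p_{n-1}$ and the quotient by $\ker S_\alpha(p_{n-1})$ is likewise sound and in fact yields the same section the paper constructs by factoring $S_\alpha(i_n\cdots i_1 q_0^{-1})$ through the epimorphism $S_\alpha(p_0 q_0^{-1})$.
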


\begin{proof}
For $n=0$ it holds by definition since $q_{0}$ is an isomorphism. For $n>0$, consider an associated Adams resolution via Lemma \ref{postnikovaadams}. Since $j_{n}\cdots j_{0}$ and $j_{n+1}$ are $\C C$\nobreakdash-phantoms
$$\xymatrix@C=40pt@R=5pt{S_{\alpha}(W_{n})\ar@{^{(}->}[r]^{S_{\alpha}(\phi_{n})}& S_{\alpha}(X_{n})\ar@{->>}[r]^{S_{\alpha}(p_{n})}_-{\scriptscriptstyle +1}&  S_{\alpha}(X)\\
S_{\alpha}(W_{n+1})\ar@{^{(}->}[r]^{S_{\alpha}(r_{n+1})}_-{\scriptscriptstyle +1}& S_{\alpha}(P_{n+1})\ar@{->>}[r]^{S_{\alpha}(g_{n+1})}&  S_{\alpha}(W_{n})}$$
are short exact by Lemma \ref{util}, and $f_{n+1}=\phi_{n}g_{n+1}$, hence the sequence in the statement is exact.

Now let $n>0$. Recall that the sequence
$$
\xymatrix{0&S_\alpha(X)\ar[l]&&\ar[ll]_-{S_{\alpha}(p_{0}q_{0}^{-1})}
S_\alpha(P_{0})
\ar@{<-}[rr]_-{\scriptscriptstyle +1}^-{S_\alpha(q_{0}f_{1}) }&&
 S_\alpha(P_{1}).}
$$
is exact. The map $S_{\alpha}(i_{n}\cdots i_{1}q_{0}^{-1})\colon S_{\alpha}(P_{0})\st{\scriptscriptstyle -1}\r S_{\alpha}(X_{n})$ factors uniquely through $S_{\alpha}(p_{0}q_{0}^{-1})\colon S_{\alpha}(P_{0})\onto S_{\alpha}(X)$ since $(i_{n}\cdots i_{1}q_{0}^{-1})(q_{0}f_{1})=i_{n}\cdots i_{1}f_{1}=0$. The factorization $S_{\alpha}(X)\st{\scriptscriptstyle -1}\rightarrow S_{\alpha}(X_{n})$ composed with $S_{\alpha}(p_{n})$ is the identity in $S_{\alpha}(X)$ since $p_{n}(i_{n}\cdots i_{1}q_{0}^{-1})=p_{0}q_{0}^{-1}$, hence we are done.
\end{proof}

\begin{cor}
If $X$ is an object in $\C T$ such that $S_{\alpha}(X)$ has projective dimension $\leq n$ in $\Mod{\alpha}{\C C}$ then $X$ is $(n+1)$-$\C C$-cellular.
\end{cor}

\begin{proof}
Let $S_{\alpha}(P_{*})$ be a projective resolution of $S_{\alpha}(X)$ of length $\leq n$. Using Remark \ref{rem-ar}, we can construct an Adams resolution $(X,W_{*},P_{*})$, and using Lemma \ref{postnikovaadams}, a Postnikov resolution $(X,X_{*},P_{*})$. By the condition on the length and Lemma \ref{unsplit}, $S_{\alpha}(p_{n})$ is an isomorphism. Hence, $p_{n}\colon X_{n}\cong X$  is also an isomorphism, and $X_{n}$ is $(n+1)$-$\C C$-cellular by definition.
\end{proof}

\begin{prop}\label{sextiende}
Given a morphism $h\colon X\r Y$ in $\C T$ and Postnikov resolutions $(X,X_*,P_*)$ and $(Y,Y_*,Q_*)$ there exists a morphism of Postnikov resolutions as in \eqref{mpr} extending $h$.
\end{prop}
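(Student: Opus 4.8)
The plan is to construct the morphism $(h,\psi_*,\varphi_*)$ by induction on $n$, building $\varphi_n$ and $\psi_n$ simultaneously and verifying that each commutativity relation in Definition \ref{morpostres} is satisfied. The starting point is that both Postnikov resolutions map, via $S_\alpha$, to projective resolutions of $S_\alpha(X)$ and $S_\alpha(Y)$ respectively, as in \eqref{postrel-eq}. The morphism $S_\alpha(h)\colon S_\alpha(X)\r S_\alpha(Y)$ therefore lifts (non-uniquely, up to chain homotopy) to a chain map between these projective resolutions; since $S_\alpha$ induces an isomorphism $\C T(P,Z)\cong\hom_{\alpha,\C C}(S_\alpha(P),S_\alpha(Z))$ for $P$ in $\add(\C C)$, each component $S_\alpha(P_n)\r S_\alpha(Q_n)$ of this chain map is $S_\alpha(\varphi_n)$ for a unique $\varphi_n\colon P_n\r Q_n$, and these $\varphi_n$ assemble into a chain map \emph{in $\C T$} between the two complexes $\{P_*,q_nf_{n+1}\}$ and $\{Q_*,q_nf_{n+1}\}$ — at least after applying $S_\alpha$; one has to be slightly careful that the relations $\varphi_n (q^Y_n f^Y_{n+1}) = (q^X_n f^X_{n+1})\varphi_{n+1}$ hold on the nose in $\C T$, which again follows because $S_\alpha$ is faithful on maps out of objects in $\add(\C C)$.

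Having fixed the $\varphi_n$, I would then produce the $\psi_n$ by induction using the octahedral/triangle structure. For the base case $n=0$, one uses that $q^X_0, q^Y_0$ are degree $+1$ isomorphisms, so $\psi_0$ is essentially forced by $\varphi_0$ together with $h$: explicitly $\psi_0 = q_0^{Y\,-1}\Sigma\varphi_0 \cdot(\text{something})$, matched against $p^X_0,p^Y_0$ using $p_0 = (\text{iso})^{-1}\circ g_0$ and the lift condition $S_\alpha(h)S_\alpha(g^X_0)=S_\alpha(g^Y_0)S_\alpha(\varphi_0)$. For the inductive step, suppose $\psi_{n-1}$ has been constructed compatibly. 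Consider the exact triangle $Q_n \st{f^Y_n}\r Y_{n-1}\st{i^Y_n}\r Y_n\st{q^Y_n}\r \Sigma Q_n$. To lift along it, one needs a morphism $X_n\r Y_n$ making the relevant square commute; the obstruction to extending $(\varphi_n,\psi_{n-1})$ to $\psi_n$ is that $\psi_{n-1} f^X_n - f^Y_n\varphi_n$ must vanish, or rather that the composite $X_n\r Y_{n-1}\r\Sigma Q_n$... — here one uses that $\psi_{n-1}f^X_n = f^Y_n\varphi_n$ (which holds because $f_n = \phi_{n-1}g_n$ and the already-verified compatibilities plus $S_\alpha$-faithfulness on $\add(\C C)$), so that $i^Y_n\psi_{n-1}f^X_n = i^Y_n f^Y_n\varphi_n = 0$, whence $i^Y_n\psi_{n-1}$ factors through $q^X_n\colon X_{n-1}\r \Sigma P_n$... and then one extends across the triangle on $X_n$ to get $\psi_n$ with $\psi_n i^X_n = i^Y_n\psi_{n-1}$, and checks $q^Y_n\psi_n = \Sigma\varphi_n \cdot q^X_n$ and $p^Y_n\psi_n = h p^X_n$ using the universal property of the triangle and Lemma \ref{unsplit}.

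The main obstacle I anticipate is bookkeeping rather than a genuine conceptual difficulty: one has to keep straight four commuting relations per level ($p$, $q$, $i$, $f$), the degree-$+1$ shifts attached to the $q_n$ and $p_n$, and the fact that the lift of $S_\alpha(h)$ to a chain map is only unique up to chain homotopy — so the $\psi_n$ are only determined up to the homotopy relation of Definition \ref{morpostres}, which is exactly what one wants for Theorem \ref{cociente} but requires care here to state the existence cleanly. The only place where something could conceivably fail is the extension step across the triangle $X_n\r\Sigma P_n$ (or $Q_n\r Y_{n-1}$): I would handle this by noting that $i^Y_n\psi_{n-1}$, precomposed with $f^X_n$, is zero, hence by the long exact sequence $\C T(X_n, Y_n)\r\C T(X_{n-1},Y_n)\r\C T(P_n,\Sigma Y_n)$ (or its rotation) the map $i^Y_n\psi_{n-1}\colon X_{n-1}\r Y_n$ lifts against $i^X_n$; composing with a retraction-type argument from Lemma \ref{unsplit} pins down the remaining compatibilities. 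Since every obstruction group that could obstruct the lift is forced to vanish by the phantomness of the $j_k$ (equivalently, exactness of the $S_\alpha$-images), the construction goes through at every stage.
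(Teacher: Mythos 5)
Your overall strategy (induct on $n$, fill in $\psi_n$ with the triangle axiom, invoke Lemma \ref{unsplit}) is close to the paper's, but there is a genuine gap at the heart of the inductive step. The fill-in axiom gives a morphism $\psi_n'$ with $\psi_n' i_n^X=i_n^Y\psi_{n-1}$ and $q_n^Y\psi_n'=\varphi_n q_n^X$, but the third required compatibility $p_n^Y\psi_n'=hp_n^X$ is \emph{not} something one can simply ``check using the universal property of the triangle'': triangle fill-ins have no universal property, and an arbitrary fill-in genuinely fails this relation. All one gets for free is that $hp_n^X-p_n^Y\psi_n'$ vanishes after precomposition with $i_n^X$, hence factors as $\beta q_n^X$ for some $\beta\colon P_n\r Y$, and $\beta$ need not be zero. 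The paper's proof at exactly this point corrects the fill-in: using projectivity of $S_\alpha(P_n)$ and the explicit splitting of $S_\alpha(p_n^Y)$ from Lemma \ref{unsplit} (induced by $S_\alpha(i_n\cdots i_1q_0^{-1})$), it produces $\gamma\colon P_n\r Y_n$ of degree $-1$ with $p_n^Y\gamma=\beta$ and $q_n^Y\gamma=0$, and replaces $\psi_n'$ by $\psi_n=\psi_n'+\gamma q_n^X$, which then satisfies all three relations. This correction is the essential content of the proposition and is absent from your proposal; without the $p$-compatibility the first coordinate of your morphism is not tied to $h$ and the induction does not close.

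The gap propagates to the other step you wave at. Your claim that $\psi_{n-1}f_n^X=f_n^Y\varphi_n$ holds ``by $S_\alpha$-faithfulness on $\add(\C C)$'' is unsubstantiated as written: faithfulness only reduces it to showing $S_\alpha(\psi_{n-1}f_n^X-f_n^Y\varphi_n)=0$, and the correct argument is that this difference is killed both by $S_\alpha(q_{n-1}^Y)$ (from the $q$-compatibility plus your chain-map condition on the $\varphi_*$) and by $S_\alpha(p_{n-1}^Y)$ (from the $p$-compatibility plus $p_{n-1}f_n=0$), while $\ker S_\alpha(q_{n-1}^Y)\cap\ker S_\alpha(p_{n-1}^Y)=0$ by Lemma \ref{unsplit}. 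So even this step hinges on the $p$-relation you have not secured. (Pre-fixing all the $\varphi_n$ as a chain lift is a legitimate variant of the paper's proof, which instead chooses $\varphi_{n+1}$ after $\psi_n$, but it only works once the correction argument above is supplied.) Likewise, the closing appeal to ``every obstruction group vanishes by phantomness of the $j_k$'' does not identify any actual vanishing statement; the relevant mechanism is the concrete splitting-plus-projectivity correction, not an abstractly vanishing obstruction group.
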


\begin{proof}
We proceed by induction. The morphisms $\varphi_{0}$ and $\varphi_{1}$ can be constructed by completing the following diagram of exact rows
$$\xymatrix@C=40pt{
S_{\alpha}(P_{1})\ar[r]_-{\scriptscriptstyle +1}^-{S_{\alpha}(q_{0}f_{1})}&S_{\alpha}(P_{0})\ar@{->>}[r]^-{S_{\alpha}(p_{0}q_{0}^{-1})}&S_{\alpha}(X)\ar[d]^{S_{\alpha}(h)}\\
S_{\alpha}(Q_{1})\ar[r]_-{\scriptscriptstyle +1}^-{S_{\alpha}(q_{0}f_{1})}&S_{\alpha}(Q_{0})\ar@{->>}[r]^-{S_{\alpha}(p_{0}q_{0}^{-1})}&S_{\alpha}(Y)
}$$
to commutative squares, and $\psi_0=(q_0^Y)^{-1}\varphi_0q_0^X$.

Assume we have constructed up to the following diagram of solid arrows
$$\xymatrix@!C=0pt@!R=0pt{
X\ar[ddd]_h\\
&&\cdots&&
\ar@(lu,r)[llllu]X_{n-2}\ar[rrr]\ar[ddd]_{\psi_{n-2}}&&&
\ar@(lu,r)@{-}[lllllllu]X_{n-1}\ar[lld]^{
}\ar[rrr]\ar[ddd]_{\psi_{n-1}}&&&
\ar@(lu,r)@{-}[llllllllllu]X_{n}\ar@{-->}[ddd]^{\psi'_n}\ar[lld]^{
}&\cdots\\
&&&&&P_{n-1}\ar[lu]\ar[ddd]^<(.1){\varphi_{n-1}}&&&P_{n}\ar[lu]\ar[ddd]^<(.1){\varphi_{n}}&&&\\
Y&&&\\
&&\cdots&&
\ar@(lu,r)[llllu]
Y_{n-2}\ar[rrr]|!{"2,6";"5,6"}{\hole}
&&&
\ar@(lu,r)@{-}[lllllllu]|!{"2,6";"5,6"}{\hole}|!{"2,5";"5,5"}{\hole}
Y_{n-1}\ar[lld]^{
}\ar[rrr]|!{"2,9";"5,9"}{\hole}&&&
\ar@(lu,r)@{-}[llllllllllu]|!{"2,9";"5,9"}{\hole}|!{"2,8";"5,8"}{\hole}|!{"2,6";"5,6"}{\hole}|!{"2,5";"5,5"}{\hole}
Y_{n}\ar[lld]^{
}&\cdots\\
&&&&&Q_{n-1}\ar[lu]&&&Q_{n}\ar[lu]&&&}$$
for some $n>0$. We choose $\psi_n'$ (the dashed arrow) extending $\psi_{n-1}$ and $\varphi_{n}$ to a morphism of exact triangles. In general,
\begin{equation}\label{cuadradotonto}
\xymatrix{
X_{n}\ar[r]^-{p_{n}^X}_-{\scriptscriptstyle +1}\ar[d]_{\psi_{n}'}&X\ar[d]^{h}\\
Y_{n}\ar[r]^-{p_{n}^Y}_-{\scriptscriptstyle +1}&Y
}
\end{equation}
does not commute, but precomposing with $i^X_{n}\colon X_{n-1}\r X_{n}$,
\begin{align*}
p_{n}^{Y}\psi_{n}'i_{n}^{X}&=p_{n}^{Y}i_{n}^{Y}\psi_{n-1}=p_{n-1}^{Y}\psi_{n-1}=hp_{n-1}^{X}=hp_{n}^{X}i_{n}^{X}.
\end{align*}
Hence, $hp_{n}^{X}-p_{n}^{Y}\psi_{n}'$ factors as
$$
\xymatrix{X_{n}\ar[r]_{\scriptscriptstyle +1}^{q_{n}^X} & P_{n}\ar[r]^{\beta} & Y.}
$$
The composite
$$\xymatrix{
S_\alpha(P_{n})\ar[r]^-{S_\alpha(\beta)} & S_\alpha(Y)\ar@{^{(}->}[rr]_-{\scriptscriptstyle-1}^-{
\begin{array}{c}
\scriptstyle\text{splitting in}\\[-5pt]\scriptstyle\text{the proof of}\\[-5pt]\scriptstyle\text{Lemma \ref{unsplit}}
\end{array}}&&  S_\alpha(Y_{n}),
}$$
is the image by $S_\alpha$ of a unique $\gamma\colon P_{n}\st{\scriptscriptstyle -1}\r Y_{n}$ since $S_\alpha(P_n)$ is projective. This morphism satisfies $p_n^Y\gamma=\beta$ and $q_{n}^{Y}\gamma =0$. The first equation holds by the splitting condition. For the second equation it is enough to check that $S_{\alpha}(q_{n}^{Y}\gamma) =0$, and this holds since the splitting of $S_{\alpha}(p_{n})$ in Lemma \ref{unsplit} is induced by $S_{\alpha}(i_{n}\cdots i_{1}q_{0}^{-1})$ and $q_{n}^Y i_{n}^Y=0$. Hence, the morphism $\psi_{n}=\psi_{n}'+\gamma q_{n}^{X}$ still extends $\psi_{n-1}$ and $\varphi_{n}$ to a morphism of exact triangles since
\begin{align*}
q_n^Y\psi_n&=q_n^Y\psi_n'+q_n^Y\gamma q_n^X=\varphi_nq_n^X+0q_n^X=\varphi_nq_n^X,\\
\psi_ni_n^X&=\psi_n'i_n^X+\gamma q_n^X i_n^X=i_n^Y\psi_{n-1}+\gamma0=i_n^Y\psi_{n-1}.
\end{align*}
Moreover, the square \eqref{cuadradotonto} commutes if we replace $\psi_{n}'$ with $\psi_{n}$ since
\begin{align*}
p_n^Y\psi_n&=p_n^Y\psi_n'+p_n^Y\gamma q_n^X=p_n^Y\psi_n'+\beta q_n^X=p_n^Y\psi_n'+hp_n^X-p_n^Y\psi_n'=hp_n^X.
\end{align*}

In order to conclude the induction step we must take $\varphi_{n+1}\colon P_{n+1}\r Q_{n+1}$ completing
$$\xymatrix{P_{n+1}\ar[r]^-{f_{n+1}^X}&X_{n}\ar[d]^{\psi_{n}}\ar[r]^-{p_{n}^X}_-{\scriptscriptstyle +1}&X\ar[d]^{h}\\
Q_{n+1}\ar[r]^-{f_{n+1}^Y}&Y_{n}\ar[r]^-{p_{n}^Y}_-{\scriptscriptstyle +1}&Y}$$
to a commutative diagram. This can be done. Actually, by Lemma \ref{unsplit}, it is enough to notice that $S_\alpha(P_{n+1})$ is projective and that $p_{n}^{Y}\psi_{n}f_{n+1}^{X}=hp_{n+1}^{X}f_{n+1}^{X}=h0=0$.
\end{proof}

\begin{prop}\label{phantom1}
If $(X,X_*,P_*)$ is a Postnikov resolution, then $h\colon X\r Y$ is an $n$\nobreakdash-$\C C$\nobreakdash-phantom map, $n>0$, if and only if $hp_{n-1}=0$. In particular, $h$ is an $\infty$\nobreakdash-$\C C$\nobreakdash-phantom map if and only if $hp_{n}=0$ for all $n\geq 0$.
\end{prop}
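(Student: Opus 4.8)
The plan is to reduce both implications to Proposition~\ref{phancell}, which trades $n$-phantom maps against $n$-cellular objects, together with two ingredients already at hand: the fact (recorded in Definition~\ref{postrel}) that $X_{n-1}$ is $n$-cellular, and the exact triangle supplied by the octahedra of Lemma~\ref{postnikovaadams}. Concretely, I would fix $n>0$, pass via Lemma~\ref{postnikovaadams} to an Adams resolution $(X,W_*,P_*)$ fitting in those octahedra with the given Postnikov resolution, and read off the exact triangle
$$
\xymatrix@!=5pt{X\ar[rr]^-{j_{n-1}\cdots j_0}&&W_{n-1}\ar[ld]^-{\phi_{n-1}}\\
&X_{n-1}\ar[lu]^-{p_{n-1}}_-{\scriptscriptstyle+1}&}
$$
whose leg $X\to W_{n-1}$ is the composite of the $n$ phantom maps $j_0,\dots,j_{n-1}$ --- hence an $n$-phantom map --- and whose third vertex $X_{n-1}$ is $n$-cellular.

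For ``$h$ an $n$-phantom map $\Rightarrow hp_{n-1}=0$'': since the $n$-cellular objects are closed under (de)suspension, $\Sigma^{-1}X_{n-1}$ is $n$-cellular, and $\Sigma^{-1}p_{n-1}\colon\Sigma^{-1}X_{n-1}\to X$ is a morphism out of an $n$-cellular object; as $h$ is $n$-phantom, Proposition~\ref{phancell} gives $h\circ\Sigma^{-1}p_{n-1}=0$, i.e.\ $hp_{n-1}=0$.

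For the converse, suppose $hp_{n-1}=0$, equivalently $h\circ\Sigma^{-1}p_{n-1}=0$. By Proposition~\ref{phancell} it is enough to check $hg=0$ for every morphism $g\colon Z\to X$ with $Z$ an $n$-cellular object. Given such a $g$, the composite $(j_{n-1}\cdots j_0)\circ g$ vanishes --- $j_{n-1}\cdots j_0$ is $n$-phantom and $Z$ is $n$-cellular, so this is again Proposition~\ref{phancell} --- so, rotating the triangle above, $g$ factors as $g=\Sigma^{-1}p_{n-1}\circ\tilde g$ for some $\tilde g\colon Z\to\Sigma^{-1}X_{n-1}$, whence $hg=h\circ\Sigma^{-1}p_{n-1}\circ\tilde g=0$. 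This proves the equivalence for each finite $n>0$, and the ``in particular'' follows at once: $h$ is an $\infty$-phantom map iff it is $n$-phantom for all $n>0$, iff $hp_{n-1}=0$ for all $n>0$, iff $hp_m=0$ for all $m\geq0$.

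I expect the only real obstacle to be pinning down the triangle above with its left leg correctly identified as the $n$-fold composite $j_{n-1}\cdots j_0$ of phantoms. It is precisely this identification --- not merely the fact that $S_\alpha(p_{n-1})$ is an epimorphism (cf.\ Lemma~\ref{unsplit}), which would only exhibit some completion of $p_{n-1}$ as a phantom map --- that forces a map from an arbitrary $n$-cellular object into $X$ to factor through $\Sigma^{-1}p_{n-1}$, and hence to be annihilated by $h$. Everything else is formal bookkeeping with the two characterizations of Proposition~\ref{phancell} and the elementary stability of phantom maps, $n$-phantom maps, and $n$-cellular objects under (de)suspension.
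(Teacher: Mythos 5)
Your proof is correct and relies on the same ingredients as the paper's: the forward direction is identical (Proposition~\ref{phancell} applied to the $n$-cellular object $X_{n-1}$, up to the harmless desuspension handling the degree of $p_{n-1}$), and the converse rests on the same exact triangle from Lemma~\ref{postnikovaadams} whose leg $X\to W_{n-1}$ is the $n$-phantom composite $j_{n-1}\cdots j_{0}$. The only difference is cosmetic: the paper concludes the converse directly by factoring $h$ itself through $j_{n-1}\cdots j_{0}$ (so $h$ is $n$-phantom because phantoms form an ideal), whereas you verify the cellular-vanishing criterion of Proposition~\ref{phancell} by factoring each test map $g$ through $\Sigma^{-1}p_{n-1}$; both are valid.
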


\begin{proof}
Since $X_{n-1}$ is $n$\nobreakdash-$\C C$-cellular, if $h$ is an $n$\nobreakdash-$\C C$\nobreakdash-phantom map, then $hp_{n-1}=0$, see Proposition \ref{phancell}. Conversely, by Lemma \ref{postnikovaadams} the morphism $p_{n-1}$ fits in an exact triangle
$$\xymatrix{X_{n-1}\ar[rr]^{p_{n-1}}_{\scriptscriptstyle +1}&& X\ar[ld]^{j_{n-1}\cdots j_{0}}\\
&W_{n-1}\ar[lu]^{\phi_{n-1}}&}$$
with $j_{n-1}\cdots j_{0}$ an $n$\nobreakdash-$\C C$\nobreakdash-phantom map. Therefore, if $hp_{n-1}=0$ then $h$ factors through $j_{n-1}\cdots j_{0}$, so $h$ is an $n$\nobreakdash-$\C C$\nobreakdash-phantom map too.
\end{proof}

\begin{prop}\label{nulinf}
A morphism of Postnikov resolutions as in \eqref{mpr} is nullhomotopic if and only if $h$ is an $\infty$\nobreakdash-$\C C$\nobreakdash-phantom map.
\end{prop}

\begin{proof}
If we assume that $(h,\psi_{*},\varphi_{*})$ is  nullhomotopic, then $\psi_n$ factors through $f_{n+1}^Y$  for all $n\geq 0$. By Lemma \ref{unsplit}, $p_n^Yf_{n+1}^Y=0$ and then, $hp_n^X=p_n^Y\psi_n=0$. Hence, $h$ is an $\infty$\nobreakdash-$\C C$\nobreakdash-phantom map by Corollary \ref{phantom1}.

Assume now that $h$ is an $\infty$\nobreakdash-$\C C$\nobreakdash-phantom map. We construct by induction on $n\geq 0$ a map $\beta_n\colon P_n\st{\scriptscriptstyle-1}\r Q_{n+1}$ such that the following square commutes
$$
\xymatrix{X_n \ar[r]^{q_n^X}_{\scriptscriptstyle +1} \ar[d]_{\psi_n} & P_n \ar[d]^{\beta_n}_{\scriptscriptstyle -1} \\ Y_n & Q_{n+1}.\ar[l]^{f^Y_{n+1}}}
$$
For $n=0$, the following diagram with exact rows
$$\xymatrix@C=40pt{
S_{\alpha}(P_{1})\ar[r]_{\scriptscriptstyle +1}^-{S_{\alpha}(q_{0}f_{1})}\ar[d]^{S_{\alpha}(\varphi_{1})}&S_{\alpha}(P_{0})\ar@{->>}[r]^-{S_{\alpha}(p_{0}q_{0}^{-1})}\ar[d]^{S_{\alpha}(\varphi_{0})}&S_{\alpha}(X)\ar[d]^{S_{\alpha}(h)=0}\\
S_{\alpha}(Q_{1})\ar[r]_{\scriptscriptstyle +1}^-{S_{\alpha}(q_{0}f_{1})}&S_{\alpha}(Q_{0})\ar@{->>}[r]^-{S_{\alpha}(p_{0}q_{0}^{-1})}&S_{\alpha}(Y)
}$$
shows that we can take $\beta_{0}\colon P_{0}\r Q_{1}$ with $\varphi_{0}=q_{0}^{Y}f_{1}^{Y}\beta_{0}$. This choice of $\beta_0$ works since $\psi_{0}=(q_{0}^{Y})^{-1}\varphi_{0}q_{0}^{X}$.

Assume we have checked our claim up to $n-1$.
Choose an Adams resolution $(Y,W_*,Q_*)$ associated to the Postnikov resolution $(Y,Y_*,Q_*)$ in the sense of Lemma \ref{postnikovaadams}. We use the notation therein, exchanging $X$ and $P$  with $Y$ and $Q$, respectively. Since $h$ is an $\infty$\nobreakdash-$\C C$\nobreakdash-phantom map, by Corollary \ref{phantom1},
\begin{align*}
p_n^Y\psi_n&
=hp_n^X=0,
\end{align*}
so $\psi_{n}$ factors as $X_n\st{\gamma_n}\r W_n\st{\phi_{n}}\r Y_{n}$. By induction hypothesis,
\begin{align*}
\phi_n\gamma_n i_{n}^X &=\psi_{n}i_n^X=i_n^Y \psi_{n-1}=i_n^Yf_n^Y\beta_{n-1}q_{n-1}^X=0\beta_{n-1}q_{n-1}^X=0.
\end{align*}
Since $j_n\cdots j_0$ is an $(n+1)$\nobreakdash-$\C C$\nobreakdash-phantom map and $X_{n-1}$, the source of $i_n^X$, is $n$\nobreakdash-$\C C$-cellular, the homomorphism
$$\C{T}(X_{n-1},\phi_n)\colon \C{T}(X_{n-1},W_n)\To \C{T}(X_{n-1},Y_n)$$
is injective, so the previous equation yields $\gamma_n i_{n}^X=0$.
Hence, $\gamma_{n}$ factors as
$$\xymatrix{X_{n}\ar[r]^-{q_{n}^X}_-{\scriptscriptstyle +1}& P_{n}\ar[r]^-{\varepsilon_{n}}_-{\scriptscriptstyle -1}& W_{n}.}$$
Furthermore, since $j_{n+1}$ is a $\C C$\nobreakdash-phantom map, $S_\alpha(g_{n+1})\colon S_\alpha(Q_{n+1})\r S_\alpha(W_{n})$ is an epimorphism and we can factor $\varepsilon_{n}$ as
$$\xymatrix{P_{n}\ar[r]^-{\beta_{n}}_-{\scriptscriptstyle -1}& Q_{n+1}\ar[r]^-{g_{n+1}}& W_{n}.}$$
Finally, $f_{n+1}^Y\beta_nq_n^X=\phi_ng_{n+1}\beta_nq_n^X=\phi_n\varepsilon_nq_n^X=\phi_n\gamma_n=\psi_n$.
\end{proof}

\begin{proof}[Proof of Theorem \ref{cociente}]
Any object $X$ in $\C T$ has a Postnikov resolution $\Psi(X)$ by Corollary \ref{postexis}. We choose one. Proposition \ref{sextiende} proves that there are choices for $\Psi(h)$ as in the statement. Moreover, the choice is unique in the homotopy category by Proposition \ref{nulinf}. By uniqueness, $\Psi$ must be an additive functor. Propositions \ref{sextiende} and \ref{nulinf} prove that any two Postnikov resolutions of $X$ are isomorphic in $\mathbf{Pres}_\infty^\simeq$ via a unique homotopy class extending the identity in $X$, hence $\Psi$ is essentially unique. Moreover,  $\Psi$ is full since the homotopy class of an arbitrary morphism $(h,\psi_*,\varphi_*)\colon \Psi(X)\r\Psi(Y)$ is $\Psi(h)$. Finally, the kernel of $\Psi$ is $\C I^{\infty}$ by Proposition~\ref{nulinf}.
\end{proof}

\subsection{Homotopy colimits and Postnikov resolutions}

Recall that a \emph{homotopy colimit} \cite[Definition 1.6.4]{triang} of a sequence in a triangulated category with countable coproducts $\C{T}$
\begin{equation*}
\xymatrix@C=20pt{X_{0}\ar[r]^-{i_{1}}&X_{1}\ar[r]^-{i_{2}}&X_{2}\ar[r]^-{i_{3}}&X_{3}\ar[r]&\cdots}
\end{equation*}
is an exact triangle
\begin{equation}\label{hcet}
\xymatrix{\displaystyle\coprod\limits_{n> 0}X_{n}\ar[rr]^{\eqref{hcet2}}&&\displaystyle\coprod\limits_{n> 0}X_{n}\ar[ld]^{\scriptscriptstyle +1}_{(p_{n}')_{n> 0}}\\
&\Hocolim\limits_{n}X_{n}\ar[lu]^{\delta'}&}
\end{equation}
where the upper arrow is given by the following matrix
\begin{equation}\label{hcet2}
\left(
\begin{array}{ccccc}
1&0&0&0&\cdots\\\
-i_{2}&1&0&0\\
0&-i_{3}&1&0\\
0&0&-i_{4}&1\\
\vdots&&&&\ddots
\end{array}
\right).
\end{equation}
Usually, $\delta'$ is taken to be the degree $+1$ map, but the previous convention is more convenient for our purposes. Moreover, $X_{0}$ and $i_{1}$ are usually not neglected in \eqref{hcet} and \eqref{hcet2}, but the construction turns out to be equivalent, see \cite[Lemma 1.7.1]{triang}.

\begin{prop}\label{hocolimpostres}
Given a Postnikov resolution $(X,X_{*},P_{*})$, there is a homotopy colimit given by an exact triangle of the form
\begin{equation*}
\xymatrix{\displaystyle\coprod\limits_{n> 0}X_{n}\ar[rr]^{\eqref{hcet2}}&&\displaystyle\coprod\limits_{n> 0}X_{n}\ar[ld]^{\scriptscriptstyle +1}_{(p_{n})_{n> 0}}\\
&X.\ar[lu]^{\delta}&}
\end{equation*}
\end{prop}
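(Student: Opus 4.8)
The plan is to recognise $X$, together with the morphisms $p_n$, as a homotopy colimit of the sequence $X_0\xrightarrow{i_1}X_1\xrightarrow{i_2}\cdots$, and to check this by applying the restricted Yoneda functor $S_\alpha$, which reflects isomorphisms. First I would complete the morphism $\eqref{hcet2}$, call it $d\colon\coprod_{n>0}X_n\to\coprod_{n>0}X_n$, to an exact triangle; by definition its third vertex is a homotopy colimit $\Hocolim_n X_n$, with structure morphisms $c\colon\coprod_{n>0}X_n\to\Hocolim_n X_n$ and $\delta'$ as in \eqref{hcet}. The relations $p_n=p_{n+1}i_{n+1}$ from Definition \ref{postrel} say precisely that $(p_n)_{n>0}\circ d=0$ (the $j$\nobreakdash-th column of $d$ contributes $p_j-p_{j+1}i_{j+1}$), so $(p_n)_{n>0}$ factors as $\phi\,c$ for some $\phi\colon\Hocolim_n X_n\to X$. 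If $\phi$ is an isomorphism the proof is complete: transporting the defining triangle along $\phi$ yields the exact triangle in the statement, with $\delta=\delta'\phi^{-1}$.

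To prove $\phi$ an isomorphism it is enough, since $S_\alpha$ reflects isomorphisms, to show $\C T(C,\phi)$ is an isomorphism for every $C$ in $\C C$. Fix such a $C$. Since $C$ is $\alpha$\nobreakdash-compact and $\{n>0\}$ has cardinality $<\alpha$, the functor $\C T(C,-)$ sends $d$ to the endomorphism $1-\mathrm{shift}$ of the abelian group $\coprod_{n>0}\C T(C,X_n)$, which is a monomorphism with cokernel $\colim_n\C T(C,X_n)$. Hence, in the long exact sequence obtained by applying $\C T(C,-)$ to the defining triangle, the connecting map out of $\C T(C,\Hocolim_n X_n)$ vanishes, giving an identification $\C T(C,\Hocolim_n X_n)\cong\colim_n\C T(C,X_n)$ under which $\C T(C,\phi)$ is the canonical comparison morphism $\colim_n\C T(C,X_n)\to\C T(C,X)$ induced by the $p_n$. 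It remains to show this comparison morphism is an isomorphism.

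For this I would use Lemma \ref{unsplit}: for $n>0$, $S_\alpha(p_n)$ is a split epimorphism, so $S_\alpha(X_n)\cong S_\alpha(X)\oplus K_n$ with $K_n=\ker S_\alpha(p_n)$. The canonical splittings in the proof of Lemma \ref{unsplit} (induced by $i_n\cdots i_1 q_0^{-1}$) are compatible with the $i_n$, and under these decompositions each $S_\alpha(i_{n+1})$ becomes block\nobreakdash-diagonal, the identity on the $S_\alpha(X)$\nobreakdash-summand; so the system $\{S_\alpha(X_n)\}_{n>0}$ is the direct sum of the constant system at $S_\alpha(X)$ and the system $\{K_n\}$. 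Passing to an Adams resolution associated to the Postnikov resolution via Lemma \ref{postnikovaadams}, the exact triangle $X_n\xrightarrow{p_n}X\xrightarrow{j_n\cdots j_0}W_n\xrightarrow{\phi_n}\Sigma X_n$ together with $S_\alpha(j_n\cdots j_0)=0$ identifies $S_\alpha(W_n)$ with $\Sigma K_n$ via $S_\alpha(\phi_n)$, and the octahedra of Lemma \ref{postnikovaadams} make these identifications compatible with the transition maps; since each $j_{n+1}$ is a phantom map, $S_\alpha(j_{n+1})=0$, so all transition maps of $\{K_n\}$ vanish and $\colim_n K_n(C)=0$. Therefore $\colim_n\C T(C,X_n)=\colim_n\bigl(S_\alpha(X)(C)\oplus K_n(C)\bigr)=\C T(C,X)$, with the comparison morphism the identity on this summand; in particular it is an isomorphism, so $\C T(C,\phi)$ is an isomorphism for all $C$ in $\C C$, and hence $\phi$ is.

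The main obstacle is the last step: one has to follow the splittings of Lemma \ref{unsplit} and the phantom maps $j_{n+1}$ through the octahedra of Lemma \ref{postnikovaadams} to see that the ``syzygy part'' $\{K_n\}$ of the telescope dies in the colimit. An alternative is to work directly in $\Mod{\alpha}{\C C}$, showing that $d$ becomes a split monomorphism of $\coprod_{n>0}S_\alpha(X_n)$ with cokernel $S_\alpha(X)$ via $(S_\alpha(p_n))_{n>0}$ — by an explicit retraction on the constant summand and the same analysis of the system $\{K_n\}$ — but here one must be careful, because for $\alpha>\aleph_0$ countable coproducts and sequential colimits in $\Mod{\alpha}{\C C}$ need not be computed pointwise.
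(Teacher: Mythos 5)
There is a genuine gap at the central step of your main argument. You assert that, since $C$ is $\alpha$\nobreakdash-compact and the index set $\{n>0\}$ has cardinality $<\alpha$, applying $\C T(C,-)$ turns \eqref{hcet2} into the endomorphism $1-\mathrm{shift}$ of the direct sum $\bigoplus_{n>0}\C T(C,X_n)$. This presupposes $\C T(C,\coprod_{n>0}X_n)\cong\bigoplus_{n>0}\C T(C,X_n)$, which is precisely what $\alpha$\nobreakdash-compactness does \emph{not} give when $\alpha>\aleph_0$: $\alpha$\nobreakdash-smallness only says a map from $C$ into a coproduct factors through a subcoproduct of $<\alpha$ summands, which is vacuous for a countable family. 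The comparison map $\bigoplus_n\C T(C,X_n)\r\C T(C,\coprod_nX_n)$ is in general not surjective (e.g.\ in $D(\mathbb Z)$ with $\alpha=\aleph_1$, $C=\bigoplus_{\mathbb N}\mathbb Z$ and $X_n=\mathbb Z$ one compares $\bigoplus_n\prod_{\mathbb N}\mathbb Z$ with $\prod_{\mathbb N}\bigoplus_n\mathbb Z$). Consequently the identification $\C T(C,\Hocolim_nX_n)\cong\colim_n\C T(C,X_n)$, and the rest of your pointwise computation, is unjustified. Note that this is exactly the phenomenon you flag at the end for $\Mod{\alpha}{\C C}$: the group $\C T(C,\coprod_nX_n)$ is the value at $C$ of the coproduct $\coprod_nS_\alpha(X_n)$ in $\Mod{\alpha}{\C C}$, which is not computed pointwise, so your "main" route suffers from the same defect as the alternative you were wary of.

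Your structural observations are nevertheless the right ones, and they are what the paper uses — but run inside $\Mod{\alpha}{\C C}$ rather than pointwise. The paper's proof uses that $S_\alpha$ preserves coproducts and the splittings $S_\alpha(X_n)\cong S_\alpha(X)\oplus\im S_\alpha(f_{n+1})$ of Lemma \ref{unsplit}, which are compatible with the transition maps because $i_{n+1}f_{n+1}=0$ and $p_nf_{n+1}=0$ (your remark that the induced maps $\ker S_\alpha(p_n)\r\ker S_\alpha(p_{n+1})$ vanish is the same point). With these, $S_\alpha\eqref{hcet2}$ is identified with the block sum of \eqref{split2} on $\coprod_nS_\alpha(X)$ and the identity on $\coprod_n\im S_\alpha(f_{n+1})$; an explicit matrix retraction shows it is a split monomorphism, its cokernel is computed to be $S_\alpha(X)$ via $(S_\alpha(p_n))_{n>0}$, and Lemma \ref{replace} (where the fact that $S_\alpha$ reflects isomorphisms enters) then produces the exact triangle. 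Because the resulting short exact sequence is split, no pointwise computation of coproducts or sequential colimits in $\Mod{\alpha}{\C C}$ is ever needed, which disposes of the caveat in your alternative; the detour through an associated Adams resolution and the octahedra of Lemma \ref{postnikovaadams} is also unnecessary. So to repair your proof, replace the evaluation-at-$C$ argument by this splitting argument carried out with the coproducts of $\Mod{\alpha}{\C C}$, and conclude either via Lemma \ref{replace} or, equivalently, via your map $\phi\colon\Hocolim_nX_n\r X$, whose image under $S_\alpha$ is then an isomorphism by the long exact sequence in $\Mod{\alpha}{\C C}$.
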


In the proof of Proposition \ref{hocolimpostres} we use the following lemma.

\begin{lem}\label{replace}
Given morphisms $$\xymatrix{X\ar[r]^{f} & Y\ar[r]^{i}_{\scriptscriptstyle +1} & Z}$$ such that $if=0$ and
$$\xymatrix{S_{\alpha}(X)\ar@{^{(}->}[r]^{S_{\alpha}(f)}& S_{\alpha}(Y)\ar@{->>}[r]^{S_{\alpha}(i)}_{\scriptscriptstyle +1}& S_{\alpha}(Z)}$$
is a short exact sequence, there is an exact triangle
$$\xymatrix{X\ar[rr]^{f}&&Y\ar[ld]^{\scriptscriptstyle +1}_{i}\\
&Z.\ar[lu]^{q}&}$$
\end{lem}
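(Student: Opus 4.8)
The plan is to realize the wanted triangle as the mapping cone triangle of $f$, after identifying its cone object with $\Sigma Z$. First I would complete $f\colon X\to Y$ to an exact triangle $X\xrightarrow{f}Y\xrightarrow{i'}C\xrightarrow{q'}\Sigma X$ in $\C T$. Since $if=0$, applying $\C T(-,\Sigma Z)$ to this triangle shows that $i$ factors as $i=\bar\imath\, i'$ for some morphism $\bar\imath\colon C\to\Sigma Z$. The whole point will be to prove that $\bar\imath$ is an isomorphism, and for that it suffices, since $S_\alpha$ reflects isomorphisms, to check that $S_\alpha(\bar\imath)$ is one.

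To do so I would apply $S_\alpha$ to the triangle above and to its rotation $\Sigma^{-1}C\to X\xrightarrow{f}Y\xrightarrow{i'}C$, using that $S_\alpha$ sends exact triangles to exact sequences and commutes with $\Sigma$. Exactness at $S_\alpha(X)$ together with the hypothesis that $S_\alpha(f)$ is a monomorphism forces $S_\alpha(q')=0$; then exactness at $S_\alpha(C)$ gives that $S_\alpha(i')$ is an epimorphism, so $S_\alpha(i')\colon S_\alpha(Y)\twoheadrightarrow S_\alpha(C)$ exhibits $S_\alpha(C)$ as $\coker S_\alpha(f)$. On the other hand, the assumption that $S_\alpha(X)\hookrightarrow S_\alpha(Y)\twoheadrightarrow S_\alpha(\Sigma Z)$ is short exact says exactly that $S_\alpha(i)\colon S_\alpha(Y)\twoheadrightarrow S_\alpha(\Sigma Z)$ also realizes $\coker S_\alpha(f)$. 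Since $S_\alpha(i)=S_\alpha(\bar\imath)S_\alpha(i')$, the map $S_\alpha(\bar\imath)$ is the canonical comparison morphism between two cokernels of $S_\alpha(f)$, hence an isomorphism.

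Once $\bar\imath$ is known to be an isomorphism, I would transport the triangle $X\xrightarrow{f}Y\xrightarrow{i'}C\xrightarrow{q'}\Sigma X$ along it to the isomorphic exact triangle $X\xrightarrow{f}Y\xrightarrow{\ i\ }\Sigma Z\xrightarrow{q'\bar\imath^{-1}}\Sigma X$, and set $q=\Sigma^{-1}(q'\bar\imath^{-1})\colon Z\to X$. Rotating yields the exact triangle $Z\xrightarrow{q}X\xrightarrow{f}Y\xrightarrow{i}\Sigma Z$, which is precisely the one displayed in the statement. I do not expect a genuine obstacle here: the argument is essentially the characterization already recorded in Lemma \ref{util}, and the only things to be careful with are the compatibility of $S_\alpha$ with suspension and the choice of which rotation of the cone triangle to feed into the exactness argument.
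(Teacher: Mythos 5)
Your argument is correct and follows the same route as the paper's proof: complete $f$ to a triangle, use $if=0$ to factor $i$ through the cone, deduce that the comparison map becomes an isomorphism after applying $S_\alpha$ (the paper gets the epimorphism statement directly from Lemma \ref{util} rather than rederiving it from the rotated triangle), and conclude via the fact that $S_\alpha$ reflects isomorphisms, taking $q$ from the transported triangle.
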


\begin{proof}
Complete $f$ to an exact triangle
$$\xymatrix{X\ar[rr]^{f}&&Y\ar[ld]^{\scriptscriptstyle +1}_{i'}\\
&Z'.\ar[lu]^{q'}&}$$
Since $if=0$, $i$ factors as $Y\st{i'}\r Z'\st{\phi}\r Z$. Since $S_{\alpha}(f)$ is a monomorphism, the sequence
$$\xymatrix{S_{\alpha}(X)\ar@{^{(}->}[r]^{S_{\alpha}(f)}& S_{\alpha}(Y)\ar@{->>}[r]^{S_{\alpha}(i')}_{\scriptscriptstyle +1}& S_{\alpha}(Z')}$$
is also short exact by Lemma \ref{util}. Therefore, $S_{\alpha}(\phi)$ is an isomorphism. Finally, since $S_{\alpha}$ reflects isomorphisms, $\phi$ is an isomorphism and we can take $q=q'\phi^{-1}$.
\end{proof}

\begin{proof}[Proof of Proposition \ref{hocolimpostres}]
Clearly, $(p_{n})_{n>0}\eqref{hcet2}=0$ since $p_{n}=p_{n+1}i_{n+1}$, $n>0$. Using the splitting $S_{\alpha}(X_{n})\cong S_{\alpha}(X)\oplus\im S_{\alpha}(f_{n+1})$ given by Lemma \ref{unsplit}, $n>0$, and the fact that $S_{\alpha}$ preserves coproducts, we can identify   $S_{\alpha} \eqref{hcet2}$ with  the endomorphism of
\begin{equation}\label{split1}
\displaystyle\left(\bigoplus\limits_{n> 0}S_{\alpha}(X)\right)\oplus \left(\bigoplus\limits_{n> 0}\im S_{\alpha}(f_{n+1})\right)
\end{equation}
which decomposes as the identity on the second factor, since $i_{n}f_{n}=0$, and the endomorphism defined by the matrix
\begin{equation}\label{split2}
\left(
\begin{array}{rrrrr}
1&0&0&0&\cdots\\
-1&1&0&0\\
0&-1&1&0\\
0&0&-1&\;\;1\\
\vdots&&&&\ddots
\end{array}
\right)
\end{equation}
on the first factor, since the splitting $S_\alpha(X)\hookrightarrow S_\alpha(X_n)$ is induced by $S_\alpha(i_n\cdots i_1q_0^{-1})$, see the proof of Lemma \ref{unsplit}.

The endomorphism \eqref{split2}, and hence $S_{\alpha}\eqref{hcet2}$, is a split monomorphism. The matrix
$$\left(
\begin{array}{rrrrr}
0&-1&-1&-1&\cdots\\
0&0&-1&-1\\
0&0&0&-1\\
0&0&0&0\\
\vdots&&&&\ddots
\end{array}
\right)$$
defines a retraction of \eqref{split2}. The cokernel of $\eqref{split2}\coprod \id{}$ is $S_{\alpha}(X)$. The natural projection is $0$ on the second factor of \eqref{split1} and
$$\left(\begin{array}{ccccc}
1&1&1&1&\cdots
\end{array}\right)
$$
on the first factor. This morphism identifies with $S_{\alpha}(p_{n})_{n>0}$ via the direct sum decomposition, since $p_{n}f_{n+1}=0$ by Lemma \ref{unsplit}. Therefore, Lemma \ref{replace} applies.
\end{proof}

\begin{cor}\label{phantom2.5}
In the conditions of the statement of  Proposition \ref{hocolimpostres}, $h\colon X\r Y$ is an $\infty$-$\C C$-phantom  if and only if it factors as $h=h'\delta$,
$$X\st{\delta}\To \coprod_{n>0}X_{n}\st{h'}\To Y.$$
In particular, $\delta$ is an $\infty$-$\C C$-phantom map.
\end{cor}

\begin{proof}
It is enough to notice that, by Proposition \ref{phantom1}, $h$ is an $\infty$\nobreakdash-$\C C$\nobreakdash-phantom if and only if $0=(hp_{n})_{n>0}=h(p_{n})_{n>0}$. The rest follows from elementary properties of the homotopy colimit exact triangle in  Proposition \ref{hocolimpostres}.
\end{proof}

The following corollary is a new result. It should be compared to the fact that if $\aleph_0$\nobreakdash-Adams representability holds then the ideal of $\C C$\nobreakdash-phantom maps is a square zero ideal, cf.~\cite{Ne97}. Actually, one can check along the same lines that this is also true under $\alpha$\nobreakdash-Adams representability. For an arbitrary well generated triangulated category we have the following result.

\begin{cor}\label{phantom3}
The ideal $\C I^{\infty}$ of $\infty$-$\C C$\nobreakdash-phantom maps is a square zero ideal $(\C I^{\infty})^{2}=0$, i.e.~if $h\colon X\r Y$ and $k\colon Y\r Z$ are $\infty$\nobreakdash-$\C C$\nobreakdash-phantom maps, then $kh=0$.
\end{cor}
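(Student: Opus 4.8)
The plan is to use the homotopy colimit triangle attached to a Postnikov resolution (Proposition~\ref{hocolimpostres}) to factor an $\infty$\nobreakdash-phantom map through a coproduct of cellular objects, and then to annihilate that coproduct with a second $\infty$\nobreakdash-phantom map by means of Proposition~\ref{phancell}.

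So let $h\colon X\to Y$ and $k\colon Y\to Z$ be $\infty$\nobreakdash-phantom maps, and fix a Postnikov resolution $(X,X_{*},P_{*})$ of $X$, which exists by Corollary~\ref{postexis}. Applying Proposition~\ref{hocolimpostres} and rotating the homotopy colimit triangle obtained there, one gets an exact triangle
$$
X\xrightarrow{\delta}\coprod_{n>0}X_{n}\longrightarrow\coprod_{n>0}X_{n}\xrightarrow{(p_{n})_{n>0}}\Sigma X,
$$
whose middle morphism is \eqref{hcet2} and whose last morphism has the $p_{n}$ as components. Applying the cohomological functor $\C{T}(-,Y)$ to this triangle and using exactness, $h$ lies in the image of $\delta^{*}\colon\C{T}(\coprod_{n>0}X_{n},Y)\to\C{T}(X,Y)$ if and only if $h$ composed with a desuspension of $(p_{n})_{n>0}$ vanishes, that is, if and only if $hp_{n}=0$ for every $n>0$. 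Since $h$ is an $\infty$\nobreakdash-phantom map this holds by Proposition~\ref{phantom1}, so $h=\bar h\,\delta$ for some $\bar h\colon\coprod_{n>0}X_{n}\to Y$.

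It now suffices to show $k\bar h=0$, since then $kh=k\bar h\,\delta=0$. As $\C{T}$ has coproducts, a morphism out of $\coprod_{n>0}X_{n}$ vanishes if and only if its restriction to each summand does, and the restriction of $k\bar h$ to $X_{n}$ is $k\circ(\bar h\iota_{n})$, where $\iota_{n}\colon X_{n}\to\coprod_{n>0}X_{n}$ is the canonical inclusion. But $X_{n}$ is $(n+1)$\nobreakdash-cellular by Definition~\ref{postrel}, and $k$, being an $\infty$\nobreakdash-phantom map, is in particular an $(n+1)$\nobreakdash-phantom map, so $k\circ(\bar h\iota_{n})=0$ by Proposition~\ref{phancell}. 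Hence $k\bar h=0$, and therefore $kh=0$, which proves $(\C{I}^{\infty})^{2}=0$.

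The step I expect to require care is the degree and suspension bookkeeping in Proposition~\ref{hocolimpostres}: one must identify the connecting morphism of the rotated triangle precisely enough to be sure that the obstruction to factoring $h$ through $\delta$ is exactly the family of equations $hp_{n}=0$, $n>0$, supplied by Proposition~\ref{phantom1}. Once the triangle is arranged as displayed above, the remaining steps are formal and rely only on results already proved.
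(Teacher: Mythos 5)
Your proof is correct and follows essentially the same route as the paper: factor $h$ through $\delta$ via the homotopy colimit triangle of Proposition~\ref{hocolimpostres} together with Proposition~\ref{phantom1}, then kill the composite with $k$ summandwise using the cellularity of the $X_n$ and Proposition~\ref{phancell}. The suspension bookkeeping you flag is handled exactly as you describe (the paper's degree conventions in \eqref{hcet} make the connecting map $(p_n)_{n>0}$, so the obstruction to factoring is precisely $hp_n=0$ for all $n$), and your use of $(n+1)$-cellularity is consistent with Definition~\ref{postrel}.
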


\begin{proof}
Factor $h$ as in Corollary \ref{phantom2.5}. Since $k$ is an $\infty$\nobreakdash-$\C C$\nobreakdash-phantom map and each $X_n$ is $(n+1)$\nobreakdash-$\C C$-cellular, $n>0$, $kh'=0$ by Proposition \ref{phancell}. Hence, $kh=kh'\delta=0\delta=0$.
\end{proof}

\begin{rem}\label{wlinext}
Theorem \ref{cociente} and Corollary \ref{phantom3} show that
$$\C I^\infty\into\C T\st{\Psi}\onto \mathbf{Pres}_\infty^\simeq$$
is a weak linear extension \cite[Definition II.1.7]{B91}, therefore the $\C T$\nobreakdash-bimodule $\C I^\infty$ is actually a $\mathbf{Pres}_\infty^\simeq$\nobreakdash-bimodule and the weak linear extension is classified up to equivalence by a class in cohomology of categories
$$\{\C T\}\in H^2(\mathbf{Pres}_\infty^\simeq,\C I^\infty).$$
This can be compared to the fact that, under $\aleph_0$\nobreakdash-Adams representability (and also under $\alpha$\nobreakdash-Adams representability replacing $\aleph_0$ with $\alpha$, as one can easily deduce from the results of this paper) $\C T$ is a linear extension of the full subcategory of $\aleph_0$\nobreakdash-flat objects in $\Mod{\aleph_0}{\C T^{\aleph_0}}$ by $\C I$, cf.~\cite[\S5]{CS98}.
\end{rem}

\begin{rem}\label{oo0}
Here we construct examples of non-trivial $\infty$-$\C C$-phantom maps in the derived category $D(R)$ of appropriate rings $R$.
 This shows that the functor $\Psi$ in Theorem \ref{cociente} is not always an equivalence of categories. Our $\C C$ is such that $\C C$-phantom maps are the same as ghosts, i.e.~morphisms in the derived category inducing trivial morphisms in homology. For some of these rings, $\C C=D(R)^{\aleph_{0}}$, so we obtain non-trivial $\infty$-phantom maps in the classical sense, see Remark \ref{classical}.

Let us place ourselves in the context of Remark \ref{alhilo} (1), i.e.~$\C T=D(R)$ is the derived category of a ring $R$, $\alpha$ is any cardinal and $\C C\subset\C T$ is the smallest full subcategory closed under (de)suspensions, coproducts of less than $\alpha$ objects, and retracts, containing $R$. Up to isomorphism, an object in $\C C$ is a complex of $\alpha$\nobreakdash-presentable projective $R$-modules with trivial differential, in addition bounded if $\alpha=\aleph_{0}$. We have an equivalence with the category graded $R$-modules $\Mod{\alpha}{\C C}\simeq\Mod{}{R}^{\mathbb Z}$ for all $\alpha$, and the restricted Yoneda functor identifies with the homology functor $X\mapsto H_{*}(X)$. Hence, $\C C$-phantom maps are precisely ghosts.

Assuming the existence of a simple $R$-module $X$ of infinite projective dimension, we now construct a non-trivial $\infty$-$\C C$-phantom map out of $X$ in $D(R)$. For instance, $R$ can be a commutative Noetherian local ring of infinite global dimension and $X$ the residue field. Maybe a more important example is $R$ the free Boolean algebra on a set $S$ of cardinality $\geq\aleph_{\omega}$ and $X=R/(S)=\mathbb F_{2}$, see \cite{tgdbr}. In this case, since $R$ is von Neumann regular, $\C C=\C T^{\aleph_{0}}$ for $\alpha=\aleph_{0}$ and we obtain an example of a non-trivial $\infty$-phantom map.

Out of any projective resolution of $X$ in the category of $R$-modules,
\[\xymatrix@C=10pt{
0&X\ar[l]&\ar[l]_{\varepsilon}
P_{0}
\ar@{<-}[r]&
 P_{1}
\ar@{<-}[r]&
 P_{2}
\ar@{<-}[r]&
 P_{3} \ar@{<-}[r]&\cdots,}\]
we can form the following Postnikov resolution 
of $X$ in $D(R)$
\[\xymatrix@!R@!C=8pt{
X&&&&&&&&&
\\
0\ar[rr]^{i_{0}}
\ar[u]
&&X_0\ar[ld]^{\scriptscriptstyle +1}_{q_{0}}\ar[rr]^{i_{1}}\ar@(u,l)@{-}@<-.25ex>[llu]^<(.15){\scriptscriptstyle +1}_<(.05){p_{0}}&&
X_1\ar[ld]^{\scriptscriptstyle +1}_{q_{1}}\ar[rr]^{i_{2}}\ar@(u,l)@{-}[llllu]^<(.15){\scriptscriptstyle +1}_<(.05){p_{1}}&&
X_2\ar[ld]^{\scriptscriptstyle +1}_{q_{2}}\ar[rr]^{i_{3}}\ar@(u,l)@{-}[llllllu]^<(.15){\scriptscriptstyle +1}_<(.05){p_{2}}&&
X_3\ar[ld]^{\scriptscriptstyle +1}_{q_{3}}\ar@{}[rd]|{\displaystyle\cdots}
\ar@{}[ru]|{\displaystyle\cdots}
\ar@(u,l)@{->}[llllllllu]^<(.15){\scriptscriptstyle +1}_<(.05){p_{3}}\\
&P_0\ar[lu]^{f_{0}}&&\Sigma P_1\ar[lu]^{f_{1}}&&\Sigma^{2} P_2\ar[lu]^{f_{2}}&&\Sigma^{3} P_3\ar[lu]^{f_{3}}&&}\]
where $\Sigma^{-1}X_{n}$ is the naive truncation,
\[\xymatrix@C=10pt{
\cdots\ar@{<-}[r]&0\ar@{<-}[r]&
P_{0}
\ar@{<-}[r]&
 P_{1}
\ar@{<-}[r]&\cdots\ar@{<-}[r]&P_{n}\ar@{<-}[r]&0\ar@{<-}[r]&\cdots,}\]
all morphisms $p_{n}$ are defined by $\varepsilon$ in the projective resolution, the morphisms $i_{n}$ are the obvious inclusions, $f_{n}$ is given up to sign by the differential $P_{n}\r P_{n-1}$, and $q_{n}$ is represented by the projection onto the quotient $X_{n}/X_{n-1}=\Sigma^{n+1}P_{n}$.

By Corollary \ref{phantom2.5},  $\delta$ in the homotopy colimit exact triangle of Proposition \ref{hocolimpostres}  is an $\infty$-$\C C$-phantom map. Let us check that it is not zero, or equivalently that $(p_{n})_{n>0}$ does not have a section.

Assume to the contrary that a section exists, i.e.~a morphism
\[s\colon X\st{\scriptscriptstyle -1}\To \coprod_{n>0}X_{n}\]
such that
\[(p_{n})_{n>0}s=\id{X}.\]
By functoriality, $H_{0}(s)$ cannot be a trivial morphism since $X$ is a non-trivial $R$-module.

The homology of $X_{n}$ is $H_{1}(X_{n})=X$, $H_{n+1}(X_{n})=Z_{n}$, the kernel of $P_{n}\r P_{n-1}$ (the $n$-cycles of the projective resolution of $X$), and zero elsewhere. Consider the exact triangle
\[\Sigma^{n}Z_{n}\To \Sigma^{-1}X_{n}\st{p_{n}}\To X\st{e_{n}}\To \Sigma^{n+1}Z_{n}\]
coming from the standard $t$-structure on $D(R)$. Here,
\[0\neq e_{n}\in\C T(X,\Sigma^{n+1}Z_{n})=\ext_{R}^{n+1}(X,Z_{n})\]
is non-trivial since $X$ has infinite projective dimension. In the following commutative diagram, where the bottom row is the coproduct of the previous exact triangles, the diagonal morphism is $H_{0}(s)$,
\[\xymatrix@u@C=30pt{
\amalg_{n> 0}\Sigma^{n}Z_{n}\ar[d]&\\
\amalg_{n> 0}\Sigma^{-1}X_{n}
\ar[d]^-{\amalg_{n> 0}p_{n}}
&
X\ar
[l]_{s}
\ar
[ld]^{H_{0}(s)}
\\
\amalg_{n> 0}X\ar[d]^-{\amalg_{n> 0}e_{n}}&\\
\amalg_{n> 0}\Sigma^{n+1}Z_{n}.&}\]
In particular, the composite $(\amalg_{n> 0}e_{n})H_{0}(s)$ should be zero. The target coproduct is also a product in $D(R)$ since its factors are $R$-modules concentrated in different degrees,
\[\coprod_{n> 0}\Sigma^{n+1}Z_{n}=\prod_{n> 0}\Sigma^{n+1}Z_{n}.\]
Hence,
\[
(\amalg_{n> 0}e_{n})H_{0}(s)\in \C T(X,\prod_{n> 0}\Sigma^{n+1}Z_{n})=\prod_{n>0}\ext_{R}^{n+1}(X,Z_{n}).
\]
We now compute this element.

Since $X$ is finitely generated, the $R$-module morphism
\[H_{0}(s)\colon X\st{\scriptscriptstyle -1}\To \coprod_{n>0}X\]
factors through a finite subcoproduct, i.e.~it is defined by a sequence of endomorphisms $(t_{n}\colon X\r X)_{n>0}$ which are almost all zero. Moreover, if
\[t_{n}^{*}\colon \ext_{R}^{n+1}(X,Z_{n})\To \ext_{R}^{n+1}(X,Z_{n})\]
denotes the homomorphism defined by functoriality of $\ext_{R}^{n+1}$ in the first variable, then
\[
(\amalg_{n> 0}e_{n})H_{0}(s)=(t_{n}^{*}(e_{i}))_{n>0}\in \prod_{n>0}\ext_{R}^{n+1}(X,Z_{n}).
\]
Since $H_{0}(s)\neq 0$, at least one of the $t_{n}$'s should be non-trivial, i.e.~there exists some $i>0$ such that $0\neq t_{i}\colon X\r X$. The $R$-module $X$ is simple, so $t_{i}$ is actually an automorphism, therefore $t_{i}^{*}$ is injective and $t_{i}^{*}(e_{i})\neq 0$ since $e_{i}\neq 0$. This contradicts the fact that $(\amalg_{n> 0}e_{n})H_{0}(s)$ should be zero.
\end{rem}

\subsection{Postnikov systems}

Postnikov systems were introduced in \cite{Ka88}. 
In this section, we make them the objects of a certain category where we define a natural homotopy relation. The main result of this section establishes an equivalence between the homotopy category of Postnikov resolutions, defined in Section \ref{hcpr}, and the homotopy category of Postnikov systems.

\begin{defn}\label{postnikov0}
A \emph{Postnikov system} $(X_{*},P_{*})$ is a countable sequence of exact triangles
\begin{equation*}
\xymatrix@!R@!C=8pt{0\ar[rr]^{i_{0}}&&X_0\ar[ld]^{\scriptscriptstyle +1}_{q_{0}}\ar[rr]^{i_{1}}&&X_1\ar[ld]^{\scriptscriptstyle +1}_{q_{1}}\ar[rr]^{i_{2}}&&X_2\ar[ld]^{\scriptscriptstyle +1}_{q_{2}}\ar[rr]^{i_{3}}&&X_3\ar[ld]^{\scriptscriptstyle +1}_{q_{3}}\ar@{}[rd]|{\displaystyle\cdots}\\
&P_0\ar[lu]^{f_{0}}&&P_1\ar[lu]^{f_{1}}&&P_2\ar[lu]^{f_{2}}&&P_3\ar[lu]^{f_{3}}&&}
\end{equation*}
such that $S_{\alpha}$ maps
$$\xymatrix@C=15pt{ P_{0}
\ar@{<-}[rr]^-{\scriptscriptstyle +1}_-{ q_{0}f_{1} }&&
 P_{1}
\ar@{<-}[rr]^-{\scriptscriptstyle +1}_-{ q_{1}f_{2} }&&
 P_{2}
\ar@{<-}[rr]^-{\scriptscriptstyle +1}_-{ q_{2}f_{3} }&&
 P_{3} \ar@{<-}[r]&\cdots}.
$$
to an exact sequence of projective objects in $\Mod{\alpha}{\C C}$. In particular, $X_n$ is $(n+1)$\nobreakdash-$\C C$-cellular. We will denote the structure morphisms by $f_{n}^{X}$, $i_{n}^{X}$ and $q_{n}^{X}$
when we need to distinguish between different Postnikov systems.

A \emph{morphism of Postnikov systems}
$$(\psi_{*},\varphi_{*}) \colon(X_{*},P_{*}) \To (Y_{*},Q_{*}) $$
is a sequence of exact triangle morphisms as follows
$$\xymatrix@!C=0pt@!R=0pt{0\ar[rrr]\ar[dd]&&&X_0\ar[lld]^{\scriptscriptstyle +1}\ar[rrr]\ar[dd]^{\psi_{0}}&&&X_1\ar[lld]^{\scriptscriptstyle +1}\ar[rrr]\ar[dd]^{\psi_{1}}&&&X_2\ar[lld]^{\scriptscriptstyle +1}\ar[rrr]\ar[dd]^{\psi_{2}}&&&X_3\ar[lld]^{\scriptscriptstyle +1}\ar[dd]^{\psi_{3}}\\
&P_0\ar[lu]\ar[dd]^<(.2){\varphi_{0}}&&&P_1\ar[lu]\ar[dd]^<(.2){\varphi_{1}}&&&P_2\ar[lu]\ar[dd]^<(.2){\varphi_{2}}&&&P_3\ar[lu]\ar[dd]^<(.2){\varphi_{3}}&&&\cdots\\
0\ar[rrr]|!{"2,2";"4,2"}{\hole} &&&Y_0\ar[lld]^{\scriptscriptstyle +1}\ar[rrr]|!{"2,5";"4,5"}{\hole}
&&&Y_1\ar[lld]^{\scriptscriptstyle +1}\ar[rrr]|!{"2,8";"4,8"}{\hole}&&&Y_2\ar[lld]^{\scriptscriptstyle +1}\ar[rrr]|!{"2,11";"4,11"}{\hole}&&&Y_3\ar[lld]^{\scriptscriptstyle +1}\\
&Q_0\ar[lu]&&&Q_1\ar[lu]&&&Q_2\ar[lu]&&&Q_3\ar[lu]&&&}$$
Composition of morphisms of Postnikov systems is defined in the obvious way.

A pair of morphisms
$$
(\psi_*,\varphi_*),(\bar\psi_*,\bar\varphi_*)\colon(X_{*},P_{*}) \To (Y_{*},Q_{*})
$$
are \emph{homotopic} $(\psi_*,\varphi_*)\simeq(\bar\psi_*,\bar\varphi_*)$ if the four equivalent conditions (1--4) in Definition \ref{morpostres} are satisfied. This natural equivalence relation is additive: two morphisms are homotopic iff their difference $(\psi_{*}-\bar \psi_{*},\varphi_{*}-\bar \varphi_{*})$ is nullhomotopic. We denote by $\mathbf{Post}_\infty$ the category of Postnikov systems and $\mathbf{Post}_\infty^{\simeq}$ its homotopy category. Both of them are additive.
\end{defn}

\begin{thm}\label{uneq}
The forgetful functor
$$
\Phi\colon \mathbf{Pres}_{\infty}^{\simeq}\To \mathbf{Post}_{\infty}^{\simeq},\quad
\Phi (X,X_{*},P_{*})= (X_{*},P_{*}),
$$
is an equivalence of categories surjective on objects.
\end{thm}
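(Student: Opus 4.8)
To prove Theorem \ref{uneq} the plan is to check that $\Phi$ is faithful, full, and surjective on objects. \emph{Faithfulness} is immediate: by Definitions \ref{morpostres} and \ref{postnikov0} the homotopy relation on morphisms of Postnikov resolutions and the one on morphisms of Postnikov systems are given by the \emph{same} conditions (1)--(4), which only involve the maps $\psi_{n}$ together with the structure maps $i_{n}^{X},i_{n}^{Y},q_{n}^{X},f_{n}^{Y}$ of the cellular towers, and not the components $h$ or $\varphi_{n}$. Hence a homotopy $\Phi(m)\simeq\Phi(\bar m)$ is literally a homotopy $m\simeq\bar m$; in particular $\Phi$ is well defined on homotopy categories and is faithful.

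\emph{Fullness.} Let $(X,X_{*},P_{*})$ and $(Y,Y_{*},Q_{*})$ be Postnikov resolutions and $(\psi_{*},\varphi_{*})\colon(X_{*},P_{*})\to(Y_{*},Q_{*})$ a morphism of the underlying Postnikov systems; I must lift it. By Proposition \ref{hocolimpostres}, $X$ and $Y$ fit into homotopy colimit triangles whose first map is the matrix \eqref{hcet2} on $\coprod_{n>0}X_{n}$, resp.\ on $\coprod_{n>0}Y_{n}$, and whose second map is $(p_{n})_{n>0}$. Since $(\psi_{*},\varphi_{*})$ is a morphism of Postnikov systems we have $\psi_{n}i_{n}^{X}=i_{n}^{Y}\psi_{n-1}$ for all $n$, and a direct matrix computation shows this is exactly what makes the square formed by $\coprod_{n>0}\psi_{n}$ and the two instances of \eqref{hcet2} commute. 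By the morphism of triangles axiom this square extends to a morphism of triangles, yielding $h\colon X\to Y$ with $hp_{n}^{X}=p_{n}^{Y}\psi_{n}$ for all $n>0$; the case $n=0$ follows since $hp_{0}^{X}=hp_{1}^{X}i_{1}^{X}=p_{1}^{Y}\psi_{1}i_{1}^{X}=p_{1}^{Y}i_{1}^{Y}\psi_{0}=p_{0}^{Y}\psi_{0}$. Thus $(h,\psi_{*},\varphi_{*})$ is a morphism of Postnikov resolutions with $\Phi(h,\psi_{*},\varphi_{*})=(\psi_{*},\varphi_{*})$, so no homotopy correction is needed.

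\emph{Surjectivity on objects.} Given a Postnikov system $(X_{*},P_{*})$, put $X:=\Hocolim_{n}X_{n}$; the homotopy colimit triangle \eqref{hcet} supplies maps $p_{n}\colon X_{n}\to X$ with $p_{n}=p_{n+1}i_{n+1}$ (take $i_{0}\colon 0\to X_{0}$ to be the unique such map). I claim $(X,X_{*},P_{*})$ is a Postnikov resolution, i.e.\ that $S_{\alpha}$ sends \eqref{postrel-eq} to a projective resolution of $S_{\alpha}(X)$. Exactness of the image of the $P_{*}$-part at $S_{\alpha}(P_{n})$ for $n\geq 1$ is the defining condition of a Postnikov system. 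Applying the exact, coproduct-preserving functor $S_{\alpha}$ to \eqref{hcet}, and using that after $S_{\alpha}$ the first map of that triangle is a monomorphism on a countable coproduct (a one-line induction), the associated long exact sequence degenerates to an isomorphism $S_{\alpha}(X)\cong\colim_{n}S_{\alpha}(X_{n})$, with colimit taken along the $S_{\alpha}(i_{n})$ and structure maps the $S_{\alpha}(p_{n})$. It then remains to see that $S_{\alpha}(p_{0}q_{0}^{-1})$ is an epimorphism whose kernel is $\im S_{\alpha}(q_{0}f_{1})$; this is (the proof of) Lemma \ref{postnikovaadams} together with Remark \ref{rem-ar} read backwards: one produces the Adams data $(X,W_{*},P_{*})$ associated with $(X,X_{*},P_{*})$ and uses that the syzygy summands of $S_{\alpha}(X_{n})$ are killed by $S_{\alpha}(i_{n+1})$, so that already $S_{\alpha}(p_{0})$ is onto and \eqref{postrel-eq} maps to a resolution of $\colim_{n}S_{\alpha}(X_{n})=S_{\alpha}(X)$.

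The main obstacle is this last verification inside the surjectivity step: it is the only point where the purely diagrammatic Postnikov-system hypothesis has to be converted into a statement about $S_{\alpha}(X)$, and doing so amounts to re-running the kernel/cokernel bookkeeping of Lemma \ref{postnikovaadams}. Faithfulness and fullness, by contrast, are formal once one has Proposition \ref{hocolimpostres} and the observation that the two homotopy relations are identical. Assembling the three points shows that $\Phi$ is an equivalence of categories which is surjective on objects.
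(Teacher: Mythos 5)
Your skeleton coincides with the paper's: faithfulness because the two homotopy relations are given by the same conditions, fullness by extending the commutative square of \eqref{hcet2}-matrices coming from Proposition \ref{hocolimpostres} to a morphism of exact triangles (your explicit check of the case $n=0$ via $p_0=p_1i_1$ is fine), and surjectivity by setting $X=\Hocolim_nX_n$. The faithfulness and fullness parts are correct. The gap is in the surjectivity verification, exactly at the point you single out as the main obstacle, and it has two parts. First, your claim that $S_\alpha\eqref{hcet2}$ becomes a monomorphism ``by a one-line induction'' is not justified in $\Mod{\alpha}{\C C}$: for $\alpha>\aleph_0$ a countable coproduct is a coproduct of $<\alpha$ objects, it is \emph{not} computed pointwise, and the category need not be Grothendieck, so there is no element chase; even arguing through the component projections would require knowing that the canonical map $\coprod_nS_\alpha(X_n)\to\prod_nS_\alpha(X_n)$ is a monomorphism, which you have not established. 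The paper sidesteps this by Lemma \ref{iso}: under the decomposition $S_\alpha(X_n)\cong H_0S_\alpha(P_*)\oplus\ker S_\alpha(i_{n+1})$ the map $S_\alpha\eqref{hcet2}$ is the identity on the second summands and the unipotent matrix \eqref{split2} on the first, hence a \emph{split} monomorphism with an explicit retraction, whose cokernel is $H_0S_\alpha(P_*)$; Lemma \ref{util} then identifies this cokernel with $S_\alpha(X)$.

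Second, your way of finishing --- ``one produces the Adams data $(X,W_*,P_*)$ associated with $(X,X_*,P_*)$'' via Lemma \ref{postnikovaadams} and Remark \ref{rem-ar} --- is circular: the converse direction of Lemma \ref{postnikovaadams} takes a Postnikov \emph{resolution} as input, i.e.\ it already assumes that $S_\alpha$ sends \eqref{postrel-eq} to a projective resolution of $S_\alpha(X)$ (its proof uses, for instance, that $S_\alpha(p_0)$ is an epimorphism), and this is precisely what you must prove for the triple $(\Hocolim_nX_n,X_*,P_*)$. The non-circular ingredient is the one you only gesture at (``the syzygy summands of $S_\alpha(X_n)$ are killed by $S_\alpha(i_{n+1})$''), namely Lemma \ref{iso}, which needs only the Postnikov-system hypothesis: the transition maps kill $\ker S_\alpha(i_{n+1})$ and restrict to isomorphisms on the canonical copies of $H_0S_\alpha(P_*)$, so the cokernel of $S_\alpha\eqref{hcet2}$ is $H_0S_\alpha(P_*)$ compatibly with the augmentation, and $S_\alpha(p_0q_0^{-1})$ becomes the canonical surjection $S_\alpha(P_0)\onto H_0S_\alpha(P_*)$ with kernel $\im S_\alpha(q_0f_1)$. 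With Lemma \ref{iso} substituted both for the monomorphism claim and for the appeal to Lemma \ref{postnikovaadams}, your argument becomes the paper's proof.
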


Notice that every equivalence of categories is essentially surjective on objects, but we will actually prove that this one is strictly surjective on objects. This theorem is proved after the following lemma.

\begin{lem}\label{iso}
In a Postnikov system $(X_{*},P_{*})$,  $S_{\alpha}(i_{1}q_0^{-1})$ induces a degree $-1$ isomorphism $H_{0}S_{\alpha}(P_{*})\cong \im S_{\alpha}(i_{1})$, and $S_{\alpha}(i_{n+1})$ induces a degree $0$ isomorphism $\im S_{\alpha}(i_{n})\cong \im S_{\alpha}(i_{n+1})$, $n>0$. In particular, $S_{\alpha}(X_{n})\cong H_{0}S_{\alpha}(P_{*})\oplus \ker S_{\alpha}(i_{n+1})$ for $n>0$.
\end{lem}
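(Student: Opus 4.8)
The plan is to deduce everything from the long exact sequences obtained by applying the exact functor $S_\alpha$ to the defining triangles $P_n\xrightarrow{f_n}X_{n-1}\xrightarrow{i_n}X_n\xrightarrow{q_n}\Sigma P_n$, together with the extra input built into the definition of a Postnikov system, namely that the complex $S_\alpha(P_*)$ with differential $\partial_n=S_\alpha(q_{n-1}f_n)$ is exact at $S_\alpha(P_n)$ for every $n\geq1$. From exactness of $S_\alpha$ I read off, for $n\geq1$, the identities $\ker S_\alpha(i_n)=\im S_\alpha(f_n)$, $\im S_\alpha(i_n)=\ker S_\alpha(q_n)$ and $\im S_\alpha(q_n)=\ker S_\alpha(\Sigma f_n)$, while for $n=0$ I use that $q_0$ is invertible and $i_0=f_0=0$. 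The remaining ingredients are the consecutivity relations $i_nf_n=0$, $q_ni_n=0$ and $(\Sigma f_n)q_n=0$ in $\C T$.

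For the first isomorphism, I would first note that $S_\alpha(i_1q_0^{-1})\circ\partial_1=S_\alpha(i_1f_1)=0$, so $S_\alpha(i_1q_0^{-1})$ factors through $H_0S_\alpha(P_*)=\coker\partial_1$, and that it also factors through $S_\alpha(i_1)$, so its image lies in $\im S_\alpha(i_1)$. Surjectivity onto $\im S_\alpha(i_1)$ is immediate because $q_0$ is an isomorphism. For injectivity, if $S_\alpha(i_1q_0^{-1})(z)=0$ then $S_\alpha(q_0^{-1})(z)\in\ker S_\alpha(i_1)=\im S_\alpha(f_1)$, say it equals $S_\alpha(f_1)(w)$; applying $S_\alpha(q_0)$ gives $z=\partial_1(w)$, so $z$ dies in $H_0S_\alpha(P_*)$.

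For the second isomorphism with $n>0$, the relevant map is the restriction of $S_\alpha(i_{n+1})$ to $\im S_\alpha(i_n)$, and it lands in $\im S_\alpha(i_{n+1})$ tautologically. Its kernel is $\im S_\alpha(i_n)\cap\im S_\alpha(f_{n+1})$; for an element $\xi=S_\alpha(i_n)(a)=S_\alpha(f_{n+1})(b)$ of this intersection, applying $S_\alpha(q_n)$ gives $\partial_{n+1}(b)=S_\alpha(q_ni_n)(a)=0$, so by exactness at $S_\alpha(P_{n+1})$ the element $b$ comes from the image of $\partial_{n+2}=S_\alpha(q_{n+1}f_{n+2})$, say $b$ arises from $c\in S_\alpha(P_{n+2})$, and then $\xi=S_\alpha((\Sigma f_{n+1})q_{n+1}f_{n+2})(c)=0$ because $(\Sigma f_{n+1})q_{n+1}=0$. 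For surjectivity, given an arbitrary $\xi\in S_\alpha(X_n)$ with $\eta=S_\alpha(i_{n+1})(\xi)$, one has $S_\alpha(q_n)(\xi)\in\im S_\alpha(q_n)=\ker S_\alpha(\Sigma f_n)$, so by $(\Sigma f_n)q_n=0$ this element is killed by the next differential of $S_\alpha(P_*)$ and hence, by exactness of that complex, lies in $\im\partial_{n+1}=S_\alpha(q_n)(\im S_\alpha(f_{n+1}))$; subtracting a suitable element of $\im S_\alpha(f_{n+1})=\ker S_\alpha(i_{n+1})$ from $\xi$ produces $\xi'\in\ker S_\alpha(q_n)=\im S_\alpha(i_n)$ with $S_\alpha(i_{n+1})(\xi')=\eta$. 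Finally, injectivity and surjectivity together show that for $n\geq1$ the subobject $\im S_\alpha(i_n)$ is a complement of $\ker S_\alpha(i_{n+1})$ in $S_\alpha(X_n)$; iterating the second isomorphism down to the first gives $\im S_\alpha(i_n)\cong H_0S_\alpha(P_*)$, whence $S_\alpha(X_n)\cong\ker S_\alpha(i_{n+1})\oplus H_0S_\alpha(P_*)$, which is the last assertion.

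I expect the main nuisance to be the internal-degree bookkeeping rather than any conceptual difficulty: the differentials $\partial_n=S_\alpha(q_{n-1}f_n)$ have internal degree $+1$, so ``$S_\alpha(P_*)$ is exact'' has to be read inside the graded abelian category $\Mod{\alpha}{\C C}$ (equivalently, as exactness of the ordinary complex $\cdots\to\Sigma^{-2}S_\alpha(P_2)\to\Sigma^{-1}S_\alpha(P_1)\to S_\alpha(P_0)$ obtained after absorbing suspensions), and every $\Sigma$ above must be tracked through this identification; this is handled formally by the conventions of Section~\ref{obstructiontheory}. The one step with genuine content is the surjectivity in the second isomorphism, since it is the sole point where the exactness of $S_\alpha(P_*)$ is used beyond the exact sequences coming directly from the triangles.
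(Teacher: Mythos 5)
Your proof is correct, and the hypotheses are used in exactly the right places: the three identifications $\ker S_\alpha(i_n)=\im S_\alpha(f_n)$, $\im S_\alpha(i_n)=\ker S_\alpha(q_n)$, $\im S_\alpha(q_n)=\ker S_\alpha(\Sigma f_n)$ come from exactness of $S_\alpha$ on the triangles, the exactness of $S_\alpha(P_*)$ is invoked at $S_\alpha(P_{n+1})$ (injectivity) and at $S_\alpha(P_n)$ (surjectivity), both legitimate since $n>0$, and element chases are unproblematic in $\Mod{\alpha}{\C C}$ because (co)kernels there are computed pointwise. The route differs from the paper's in packaging rather than in substance: the paper assembles all the triangles of the Postnikov system into a single exact couple in the graded category $\Mod{\alpha}{\C C}$, whose $E^1$-term is $S_\alpha(P_*)$ with differential induced by the $q_{n-1}f_n$; exactness of $S_\alpha(P_*)$ away from degree $0$ collapses the module term of the derived couple to $H_0S_\alpha(P_*)$ concentrated in degree $0$, and since $\im S_\alpha(i_*)$ is concentrated in degrees $>0$ the map $H_0S_\alpha(P_*)\to\im S_\alpha(i_*)$ of the derived couple vanishes, so exactness of the derived couple yields all three statements at once. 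Your injectivity and surjectivity chases are precisely the computations hidden in the passage to the derived couple, so the content coincides; what you gain is a self-contained elementary argument, while the paper's formulation is shorter and is reused verbatim for the truncated analogue (Lemma \ref{iso4}) and dovetails with the Adams spectral sequence of Section \ref{SS}. One cosmetic remark: in the surjectivity step the relation $(\Sigma f_n)q_n=0$ alone already gives $\partial_n S_\alpha(q_n)=0$, so you do not additionally need the identification $\im S_\alpha(q_n)=\ker S_\alpha(\Sigma f_n)$ there; either observation suffices.
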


\begin{proof}
The functor $S_{\alpha}$ takes exact triangles to exact sequences, therefore
$$\xymatrix{S_\alpha(X_*)\ar[rr]^{S_\alpha(i_*)}_{\scriptscriptstyle(+1,0)}&&S_\alpha(X_*)\ar[ld]_{S_\alpha(q_*)}^{\scriptscriptstyle(0,+1)}\\
&S_\alpha(P_*)\ar[lu]^{S_\alpha(f_*)}_{\scriptscriptstyle(-1,0)}&}$$
is an exact couple. Here the first degree corresponds to the subscript $*$, and the second degree is the internal degree in the graded abelian category $\Mod{\alpha}{\C{C}}$.

Since $S_\alpha(P_*)$ is exact in degrees $\neq 0$, the derived exact couple is
$$\xymatrix{\im S_\alpha(i_*)\ar[rr]_{\scriptscriptstyle(+1,0)}&&\im S_\alpha(i_*)\ar[ld]^{\scriptscriptstyle(-1,+1)}\\
&H_{0}S_{\alpha}(P_{*})\ar[lu]_{\scriptscriptstyle(0,0)}^0&}$$
with $H_{0}S_{\alpha}(P_{*})$ concentrated in degree $0$. Indeed, since $\im S_\alpha(i_*)$ is concentrated in degrees $>0$, the map $H_0 S_\alpha(P_*)\r\im S_\alpha(i_*)$ is the trivial morphism, hence the lemma follows.
\end{proof}

\begin{proof}[Proof of Theorem \ref{uneq}]
Let $(X_{*},P_{*})$ be a Postnikov system. Take a homotopy colimit as in \eqref{hcet}. We claim that $(\Hocolim_{n}X_{n},X_{*},P_{*})$ is a Postnikov resolution. Actually, it is only left to check that $S_{\alpha}(\Hocolim_{n}X_{n})=H_{0}S_{\alpha}(P_{*})$. By Lemma \ref{iso}, $S_{\alpha}\eqref{hcet2}$ can be identified with the endomorphism of
\begin{equation}\label{split1.1}
\displaystyle\left(\bigoplus\limits_{n> 0}H_{0}S_{\alpha}(P_{*})\right)\oplus \left(\bigoplus\limits_{n> 0}\ker S_{\alpha}(i_{n+1})\right)
\end{equation}
which decomposes as the identity on the second factor and \eqref{split2} on the first factor, compare the proof of Proposition \ref{hocolimpostres}. There we also check that this endomorphism is a split monomorphism. Proceeding as in that proof, we deduce that the cokernel of the monomorphism $S_{\alpha}\eqref{hcet2}$ is $H_{0}S_{\alpha}(P_{*})$. This cokernel can also be identified with  $S_{\alpha}(\Hocolim_{n}X_{n})$ by Lemma \ref{util}. This proves the claim and that $\Phi$ is surjective on objects.

Let $(X,X_{*},P_{*})$ and $(Y,Y_{*},Q_{*})$  be Postnikov resolutions and $(\psi_{*},\varphi_{*})\colon(X_{*},P_{*})\r (Y_{*},Q_{*})$ a morphism of Postnikov systems. We choose exact triangles defining homotopy colimits as in Proposition~\ref{hocolimpostres}. The following commutative square of solid arrows can be extended to a triangle morphism
$$\xymatrix{\displaystyle\coprod\limits_{n> 0}X_{n}\ar[rrr]^<(.2){\eqref{hcet2}}
\ar[dd]_{(\psi_{n})_{n> 0}}
&&&\displaystyle\coprod\limits_{n> 0}X_{n}\ar[ld]^{\scriptscriptstyle +1}_{(p_{n})_{n> 0}}\ar[dd]^{(\psi_{n})_{n> 0}}\\
&&X\ar[llu]\ar@{-->}[dd]^<(.2){h}&\\
\displaystyle\coprod\limits_{n> 0}Y_{n}\ar[rrr]^<(.2){\eqref{hcet2}}
&&&\displaystyle\coprod\limits_{n> 0}Y_{n}\ar[ld]_{\scriptscriptstyle +1}^{(p_{n})_{n> 0}}\\
&&Y\ar[llu]&}$$
Hence,  $(h,\psi_{*},\varphi_{*})\colon(X,X_{*},P_{*})\r (Y,Y_{*},Q_{*})$ is a morphism of Postnikov resolutions. This shows that $\Phi$ is full.

The functor $\Phi$ is faithful since, by definition, two morphisms of Postnikov resolutions are homotopic if and only if the underlying morphisms of Postnikov systems are.
\end{proof}

\begin{rem}\label{wlinext2}
By Theorem \ref{uneq} and Remark \ref{wlinext},
$$\C I^\infty\into\C T\st{\Phi\Psi}\onto \mathbf{Post}_\infty^\simeq$$
is a weak linear extension, the $\C T$\nobreakdash-bimodule $\C I^\infty$ is actually a $\mathbf{Post}_\infty^\simeq$\nobreakdash-bimodule and the weak linear extension is classified up to equivalence by a class in cohomology of categories
$$\{\C T\}\in H^2(\mathbf{Post}_\infty^\simeq,\C I^\infty).$$
It is interesting to notice that $\mathbf{Post}_\infty^\simeq$ only depends of the full subcategory of $\C C$-cellular objects in $\C T$, and that there are no non-trivial $\infty$\nobreakdash-$\C C$\nobreakdash-phantom maps between two $\C C$-cellular objects. Hence, the previous linear extension is a way of breaking $\C T$ into an $\infty$\nobreakdash-$\C C$\nobreakdash-phantom part and an $\infty$\nobreakdash-$\C C$\nobreakdash-phantomless part.
\end{rem}

\subsection{Truncated Postnikov systems and obstructions}\label{sec-truncated-post}

Our notion of truncated Postnikov system enriches that considered in \cite{rmtc} in a way which is suitable to develop an obstruction theory. We also define homotopy categories of truncated Postnikov systems.

\begin{defn}\label{truncated}
An \emph{$n$\nobreakdash-truncated Postnikov system} $(X_{\leq n},P_*)$, $n\geq 0$, is a diagram in $\C{T}$
\begin{equation*}
\xymatrix@!R@!C=4.9pt{0\ar[rr]^{i_{0}} &&X_0\ar[ld]^{q_{0}}_{\scriptscriptstyle+1}\ar[rr]^{i_{1}} &&X_1\ar[ld]^{q_{1}}_{\scriptscriptstyle+1}&\ar@{}[d]|{\displaystyle\cdots\cdots}&X_{n-1}\ar[rr]^{i_{n}} &&X_n\ar[ld]^{q_{n}}_{\scriptscriptstyle+1}\\
&P_0\ar[lu]^{f_{0}}&&P_1\ar[lu]^{f_{1}}&&&&P_n\ar[lu]^{f_{n}}&&
P_{n+1}\ar[lu]_{f_{n+1}}&&P_{n+2}\ar[ll]_{d_{n+2}}^{\scriptscriptstyle+1}&\ar[l]\cdots}
\end{equation*}
where the first $n+1$ triangles are exact, the \emph{cocycle condition}
$$f_{n+1}d_{n+2}=0$$
is satisfied,
and
the restricted Yoneda functor $S_\alpha$ maps
$$\xymatrix@C=13pt{ P_{0}
\ar@{<-}[rr]_-{ q_{0}f_{1} }^{\scriptscriptstyle+1}&&
 P_{1}
\ar@{<-}[r]&
\cdots\cdots
\ar@{<-}[r]&
 P_{n}
\ar@{<-}[rr]_-{q_{n}f_{n+1} }^{\scriptscriptstyle+1}&&
 P_{n+1}\ar@{<-}[rr]_-{d_{n+2} }^{\scriptscriptstyle+1}&&
 P_{n+2} \ar@{<-}[r]&\cdots}$$
to an exact sequence of projective objects in $\Mod{\alpha}{\C C}$.   For $0\leq k\leq n$ we denote
$$d_{k+1}=q_{k}f_{k+1}.$$

Notice that $X_{k}$ is $(k+1)$\nobreakdash-$\C C$-cellular, $0\leq k\leq n$. We will denote the structure morphisms by $f_{k}^{X}$, $0\leq k\leq n+1$, $i_{k}^{X}$, $q_{k}^{X}$, $0\leq k\leq n$,   and $d_{k}^{X}$, $k>0$, if we need to distinguish between different $n$\nobreakdash-truncated Postnikov systems.

A \emph{morphism of $n$\nobreakdash-truncated Postnikov systems}
$$(\psi_{\leq n},\varphi_{*}) \colon(X_{\leq n},P_{*}) \To (Y_{\leq n},Q_{*}) $$
is a diagram
$$\xymatrix@C=5.9pt@R=7pt{0\ar[rrr]\ar[dd]&&&X_0\ar[lld]\ar[rrr]\ar[dd]^{\psi_{0}}&&&X_1
\ar@{}[rrrdd]|{\displaystyle\;\; \cdots\cdots}
\ar[lld]\ar[dd]^{\psi_{1}}&&&X_{n-1}\ar[rrr]\ar[dd]^{\psi_{n-1}}&&&X_{n}\ar[lld]\ar[dd]^{\psi_{n}}\\
&P_0\ar[lu]\ar[dd]^<(.2){\varphi_{0}}&&&P_1\ar[lu]\ar[dd]^<(.2){\varphi_{1}}&&&&&&P_{n}\ar[lu]\ar[dd]^<(.2){\varphi_{n}}&&&P_{n+1}\ar[lu]\ar[dd]^{\varphi_{n+1}}
&&P_{n+2}\ar[ll]\ar[dd]^{\varphi_{n+2}}&\ar[l]\cdots\\
0\ar[rrr]|!{"2,2";"4,2"}{\hole} &&&Y_0\ar[lld]\ar[rrr]|!{"2,5";"4,5"}{\hole}
&&&Y_1\ar[lld]&&&Y_{n-1}\ar[rrr]|!{"2,11";"4,11"}{\hole}&&&Y_n\ar[lld]\\
&Q_0\ar[lu]&&&Q_1\ar[lu]&&&&&&Q_n\ar[lu]&&&Q_{n+1}\ar[lu]&&Q_{n+2}\ar[ll]&\ar[l]\cdots}$$
where all squares commute. Composition is defined in the obvious way.

A pair of morphisms of $n$\nobreakdash-truncated Postnikov systems
$$
(\psi_{\leq n},\varphi_{*}) ,(\bar\psi_{\leq n},\bar\varphi_{*}) \colon(X_{\leq n},P_{*}) \To (Y_{\leq n},Q_{*})
$$
are \emph{homotopic}  $(\psi_{\leq n},\varphi_{*}) \simeq (\bar\psi_{\leq n},\bar\varphi_{*})$ if $\psi_k-\bar \psi_k$ factors through $f_{k+1}\colon Q_{k+1}\r Y_k$ for $0\leq k\leq n$. This condition can be characterized in different ways for $k<n$, see Definition \ref{morpostres} (1--4).

The homotopy natural equivalence relation is additive: two morphisms are homotopic iff their difference $(\psi_{\leq n}-\bar \psi_{\leq n},\varphi_{*}-\bar \varphi_{*})$ is nullhomotopic. We denote by  $\mathbf{Post}_{n}$ the category of $n$\nobreakdash-truncated Postnikov systems and $\mathbf{Post}_{n}^{\simeq}$ its homotopy category. Both categories are additive and the natural projection $\mathbf{Post}_{n}\r \mathbf{Post}_{n}^{\simeq}$ is an additive functor.

The \emph{homology functor}
\begin{align*}
\mathbf{Post}_{n}&\To \Mod{\alpha}{\C{C}},\\
(X_{\leq n},P_{*})&\;\mapsto\; H_{0}S_{\alpha}(P_{*})=\coker S_\alpha(d_{1}),
\end{align*}
factors through the homotopy category,
\begin{align*}
\mathbf{Post}_{n}^{\simeq}&\To \Mod{\alpha}{\C{C}}.
\end{align*}
This factorization is an equivalence for $n=0$.

The \emph{$(n-1)$\nobreakdash-truncation functor}, $n>0$,
$$t_{n-1}\colon \mathbf{Post}_{n}\To \mathbf{Post}_{n-1}$$
is the functor $t_{n-1}(X_{\leq n},P_{*})=(X_{\leq n-1},P_{*})$ defined by forgetting $X_{n}$, $f_{n+1}$, $i_{n}$, and $q_{n}$, but not $d_{n+1}=q_{n}f_{n+1}$. This functor is additive and compatible with the homotopy relation, hence it induces an additive functor
$$t_{n-1}\colon \mathbf{Post}_{n}^{\simeq}\To \mathbf{Post}_{n-1}^{\simeq}.$$
\end{defn}

\begin{lem}\label{iso4}
Given an $n$\nobreakdash-truncated Postnikov system $(X_{\leq n},P_{*})$:
\begin{itemize}
\item $S_{\alpha}(i_{1}q_0^{-1})$ induces a degree $-1$ isomorphism $H_{0}S_{\alpha}(P_{*})\cong \im S_{\alpha}(i_{1})$,
\item $S_{\alpha}(i_{k+1})$ induces a degree $0$ isomorphism $\im S_{\alpha}(i_{k})\cong \im S_{\alpha}(i_{k+1})$ for $0<k< n$,
\item the natural projection $S_\alpha(X_{k})\onto\coker S_{\alpha}(f_{k+1})$ restricts to a degree $0$ isomorphism $\im S_{\alpha}(i_{k})\cong\coker S_{\alpha}(f_{k+1})$, for $0<k\leq n$.
\end{itemize}
In particular, for $0<k\leq n$,
$S_{\alpha}(X_{k})\cong H_{0}S_{\alpha}(P_{*})\oplus \im S_{\alpha}(f_{k+1})$.
\end{lem}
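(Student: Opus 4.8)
The plan is to follow the pattern of the proof of Lemma~\ref{iso}, the point being that all the exactness needed is already present: the $n+1$ exact triangles, together with the exactness of the complex $S_\alpha(P_*)$, which is part of the data. I would first isolate and prove, for $1\le k\le n$, the statement that inside $S_\alpha(X_k)$ the subobjects $\im S_\alpha(i_k)$ and $\im S_\alpha(f_{k+1})$ are complementary,
$$
S_\alpha(X_k)=\im S_\alpha(i_k)\oplus\im S_\alpha(f_{k+1}),
$$
with $S_\alpha(q_k)$ restricting to an isomorphism $\im S_\alpha(f_{k+1})\xrightarrow{\sim}\im S_\alpha(q_k)$; here I repeatedly use $\im S_\alpha(i_k)=\ker S_\alpha(q_k)$, which is the exactness of $S_\alpha$ on the $k$-th triangle. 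For the vanishing of the intersection, if $S_\alpha(f_{k+1})(x)$ lies in $\ker S_\alpha(q_k)$ then $S_\alpha(d_{k+1})(x)=S_\alpha(q_kf_{k+1})(x)=0$, so exactness of $S_\alpha(P_*)$ at $P_{k+1}$ gives $x\in\im S_\alpha(d_{k+2})$ and hence $S_\alpha(f_{k+1})(x)\in\im S_\alpha(f_{k+1}d_{k+2})=0$ — the identity $f_{k+1}d_{k+2}=0$ being the cocycle condition when $k=n$, and coming, when $k<n$, from rotating the $(k+1)$-st triangle (so that $f_{k+1}q_{k+1}=0$) together with $d_{k+2}=q_{k+1}f_{k+2}$. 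For the complementarity, I would check that $\im S_\alpha(q_k)=\ker S_\alpha(\Sigma f_k)$ (exactness of $S_\alpha$ on the $k$-th triangle) coincides with $\ker S_\alpha(\Sigma d_k)=\im S_\alpha(d_{k+1})=S_\alpha(q_k)\bigl(\im S_\alpha(f_{k+1})\bigr)$, where the first equality uses $\Sigma d_k=\Sigma q_{k-1}\circ\Sigma f_k$ and the injectivity of $S_\alpha(q_{k-1})$ on $\im S_\alpha(f_k)$ — which is the case $k-1$ of the intersection statement already proved, or is automatic for $k=1$ because $q_0$ is invertible — and the next equality is exactness of $S_\alpha(P_*)$ at $P_k$.

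With this in hand the three assertions follow immediately. Since $q_0$ is invertible and $\ker S_\alpha(i_1)=\im S_\alpha(f_1)$, the morphism $S_\alpha(i_1q_0^{-1})$ induces $H_0S_\alpha(P_*)=\coker S_\alpha(d_1)\cong\coker S_\alpha(f_1)\cong\im S_\alpha(i_1)$; this is the first bullet. For the second, $S_\alpha(i_{k+1})$ restricted to $\im S_\alpha(i_k)$ has kernel $\im S_\alpha(i_k)\cap\ker S_\alpha(i_{k+1})=\im S_\alpha(i_k)\cap\im S_\alpha(f_{k+1})=0$, and its image is $S_\alpha(i_{k+1})\bigl(\im S_\alpha(i_k)\bigr)=S_\alpha(i_{k+1})\bigl(\im S_\alpha(i_k)\oplus\im S_\alpha(f_{k+1})\bigr)=\im S_\alpha(i_{k+1})$, because $S_\alpha(i_{k+1})$ kills the complement $\im S_\alpha(f_{k+1})=\ker S_\alpha(i_{k+1})$. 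The third bullet is exactly the statement that the projection onto $\coker S_\alpha(f_{k+1})$ is an isomorphism when restricted to the complementary summand $\im S_\alpha(i_k)$. Finally, iterating the second bullet down to $k=1$ and applying the first gives $\im S_\alpha(i_k)\cong H_0S_\alpha(P_*)$ for $0<k\le n$, and substituting this into the direct sum decomposition yields the displayed consequence.

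The delicate part is the bookkeeping of internal degrees: the maps $q_k$ and the differentials $d_k$ carry a $\Sigma$-shift, so that all the identifications above, and the meaning of ``$S_\alpha(P_*)$ is exact'', are to be understood in the graded category $\Mod{\alpha}{\C C}$ after the evident shift identifications, exactly as in Definition~\ref{postnikov0}. A secondary point, and my reason for preferring the argument above to transcribing the exact-couple proof of Lemma~\ref{iso}, is that here the exact couple is truncated: there is no exact triangle past level $n$, so the case $k=n$ of the third bullet cannot simply be read off a derived couple. It is precisely the complementarity statement at $k=n$ — which rests on the cocycle condition rather than on a missing triangle — that takes care of this edge case.
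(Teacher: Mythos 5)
Your proof is correct, and it takes a genuinely different route from the paper's. The paper completes $f_{n+1}$ to an exact triangle with cone $X_{n+1}$, pads the resulting tower constantly above level $n+1$, and runs the exact couple $S_\alpha(X_*)\to S_\alpha(X_*)\to S_\alpha(P_*)$ through one derivation: the $E^2$\nobreakdash-term is concentrated in degrees $0$ (where it is $H_0S_\alpha(P_*)$) and $n+1$ (where it is $\ker S_\alpha(d_{n+1})$), the first two bullets are read off the derived couple, and the third follows from the isomorphism $\coker S_\alpha(f_{k+1})\cong\im S_\alpha(i_{k+1})$ induced by $S_\alpha(i_{k+1})$. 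You instead establish the internal decomposition $S_\alpha(X_k)=\im S_\alpha(i_k)\oplus\im S_\alpha(f_{k+1})$ directly in $\Mod{\alpha}{\C C}$, using only the long exact sequences of the $n+1$ given triangles, exactness of $S_\alpha(P_*)$, and the identity $f_{k+1}d_{k+2}=0$ (a triangle identity for $k<n$, the cocycle condition for $k=n$); all three bullets and the displayed consequence are then formal, and your degree bookkeeping matches the paper's conventions. What the paper's argument buys is uniformity with Lemma \ref{iso} and a direct link to the exact-couple formalism reused later for the Adams spectral sequence; what yours buys is that it is more elementary and makes explicit the one delicate point that the paper's wording leaves implicit: at the top step $k=n$, sparsity of $E^2$ yields surjectivity of the map induced by $S_\alpha(i_{n+1})$ on $\im S_\alpha(i_n)$, but its injectivity amounts to $\im S_\alpha(i_n)\cap\im S_\alpha(f_{n+1})=S_\alpha(f_{n+1})\bigl(\ker S_\alpha(d_{n+1})\bigr)=0$, which is precisely your intersection computation via exactness at $P_{n+1}$ and the cocycle condition. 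So your closing remark is right in substance — the truncation does create an edge case at $k=n$ resting on the cocycle condition — even though the paper does push a derived-couple argument through by adjoining the cone of $f_{n+1}$.
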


\begin{proof}
Extend $f_{n+1}$ to an exact triangle,
$$\xymatrix@!R@!C=13pt{\ar@{}[d]|{\displaystyle\cdots}&X_{n-1}\ar[rr]^{i_{n}} &&X_n\ar[ld]^{q_{n}}_{\scriptscriptstyle+1}\ar@{-->}[rr]^{i_{n+1}}&&X_{n+1}\ar@{-->}[ld]^{q_{n+1}}_{\scriptscriptstyle+1}\\
&&P_n\ar[lu]^{f_{n}}&&
P_{n+1}.\ar[lu]^{f_{n+1}}&}$$
Consider the following exact couple in $\Mod{\alpha}{\C{C}}$,
 $$\xymatrix{S_\alpha(X_{*})\ar[rr]^{S_\alpha(i_{*})}_{\scriptscriptstyle(+1,0)}&&S_\alpha(X_{*})\ar[ld]^{S_\alpha(q_{*})}_{\scriptscriptstyle(0,+1)}\\
&S_\alpha(P_{*}).\ar[lu]^{S_\alpha(f_{*})}_{\scriptscriptstyle(-1,0)}&}$$
Here for $k>n+1$ we set $X_k=X_{n+1}$, $P_k=0$ and $i_k=\id{X_{n+1}}$.  The $E^2$\nobreakdash-term of the induced spectral sequence is
\begin{align*}
E^2_{0}&=\coker S_\alpha(d_1)=H_0S_\alpha(P_{*}),&
E^2_{n+1}&=\ker S_\alpha(d_{n+1}),
\end{align*}
and $E^2_{k}=0$ otherwise.
The derived exact couple is
$$\xymatrix{\im S_\alpha(i_{*})\ar[rr]_{\scriptscriptstyle(+1,0)}^{i'_*}&&\im S_\alpha(i_{*})\ar[ld]_{\scriptscriptstyle(-1,+1)}^{q'_*} \\
&E^2_{*}.\ar[lu]_{\scriptscriptstyle(0,0)}^{f'_*}&}$$
Since $\im S_\alpha(i_k)$ is concentrated in degrees $k>0$, $q'_*$ contains an isomorphism $\im S_\alpha(i_1)\cong E^2_0=H_{0}S_{\alpha}(P_{*})$ whose inverse is induced by $S_\alpha(i_{1}q_0^{-1})$. By the sparsity of $E^2_{*}$, $i'_*$ contains isomorphisms $\im S_\alpha(i_{k})\cong \im S_\alpha(i_{k+1})$ induced by $S_\alpha(i_{k+1})$ for $0<k\leq n$. This finishes the proof since $\ker S_\alpha(i_{k})=\im S_{\alpha}(f_{k})$ and hence $S_\alpha(i_k)$ induces an isomorphism $\coker S_{\alpha}(f_{k})\cong\im S_{\alpha}(i_{k})$, $0<k\leq n+1$.
\end{proof}

\begin{rem}
Let $(X_{\leq n},P_{*})$ be an $n$\nobreakdash-truncated Postnikov system. The following inclusion defined by Lemma \ref{iso4}, $0<k\leq n+1$, which splits for $0<k\leq n$, has degree $-1$,
$$H_0S_\alpha(P_{*})\mathop{\subset}_{\scriptscriptstyle-1} S_\alpha(X_{k}).$$
Notice that $X_{n+1}$ is not part of the $n$\nobreakdash-truncated Postnikov system, it is simply a mapping cone of $f_{n+1}$, see the proof of Lemma \ref{iso4}.
\end{rem}

\begin{defn}\label{kappa}
Let $(X_{\leq n},P_*)$ be an $n$\nobreakdash-truncated Postnikov system. Extend $f_{n+1}$ to an exact triangle
$$
\xymatrix@!R@!C=13pt{\ar@{}[d]|{\displaystyle\cdots}&X_{n-1}\ar[rr]^{i_{n}} &&X_n\ar[ld]^{q_{n}}_{\scriptscriptstyle+1}\ar@{-->}[rr]^{i_{n+1}}&&X_{n+1}\ar@{-->}[ld]^{q_{n+1}}_{\scriptscriptstyle+1}\\
&&P_n\ar[lu]^{f_{n}}&&
P_{n+1}\ar[lu]^{f_{n+1}}&&P_{n+2}\ar[ll]^{d_{n+2}}_{\scriptscriptstyle+1}\ar@{-->}[lu]_{\bar{f}_{n+2}}&&P_{n+3}\ar[ll]^{d_{n+3}}_{\scriptscriptstyle+1}&\ar[l]\cdots}
$$
By the cocycle condition $f_{n+1}d_{n+2}=0$ there exists $\bar{f}_{n+2}$ with $d_{n+2}=q_{n+1}\bar{f}_{n+2}$. This construction does not yield an $(n+1)$\nobreakdash-truncated Postnikov system  since $\bar{f}_{n+2}d_{n+3}\neq 0$ in general. However, $q_{n+1}\bar{f}_{n+2}d_{n+3}=d_{n+2}d_{n+3}=0$, and then $S_\alpha(\bar{f}_{n+2}d_{n+3})$ factors through $\ker S_{\alpha}(q_{n+1})\cong \coker S_{\alpha}(f_{n+1})\cong H_{0}S_{\alpha}(P_{*})$, see Lemma \ref{iso4}, as
$$
S_\alpha(\bar{f}_{n+2}d_{n+3})\colon S_\alpha(P_{n+3})\mathop{\To}^{\tilde\kappa}_{\scriptscriptstyle +2} H_0S_\alpha(P_{*})\mathop{\subset}_{\scriptscriptstyle-1} S_\alpha(X_{n+1}).
$$
The morphism $\tilde\kappa$ satisfies $\tilde\kappa S_{\alpha}(d_{n+4})=0$ since $\bar{f}_{n+2}d_{n+3}d_{n+4}=0$.

The \emph{obstruction of an $n$\nobreakdash-truncated Postnikov system} $(X_{\leq n},P_*)$ is the element
$$\kappa(X_{\leq n},P_*)\in \ext_{\alpha,\C{C}}^{n+3,-1-n}(H_0S_\alpha(P_*),H_0S_\alpha(P_*))$$
represented by a morphism
$\tilde\kappa$
constructed as in the previous paragraph.
\end{defn}

This obstruction class is natural in the following sense.

\begin{prop}\label{naturality}
Given a morphism of $n$\nobreakdash-truncated Postnikov systems, $$(\psi_{\leq n},\varphi_{*}) \colon(X_{\leq n},P_{*}) \To (Y_{\leq n},Q_{*}),$$
the following equation holds in $\ext_{\alpha,\C{C}}^{n+3,-1-n}(H_0S_\alpha(P_*),H_0S_\alpha(Q_*))$,
$$H_0S_\alpha(\varphi_*)\cdot\kappa(X_{\leq n},P_*)=\kappa(Y_{\leq n},Q_*)\cdot H_0S_\alpha(\varphi_*).$$
\end{prop}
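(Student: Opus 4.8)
The plan is to represent both obstruction classes by explicit cocycles in the cochain complex computing $\ext_{\alpha,\C{C}}^{*,*}(H_{0}S_{\alpha}(P_{*}),-)$ and to exhibit their difference as a coboundary. The $P_{*}$-part of an $n$\nobreakdash-truncated Postnikov system makes $S_{\alpha}(P_{*})$ a projective resolution of $H_{0}S_{\alpha}(P_{*})$, so $\ext_{\alpha,\C{C}}^{n+3,-1-n}(H_{0}S_{\alpha}(P_{*}),M)$ is the cohomology in cochain degree $n+3$ of $\hom_{\alpha,\C{C}}^{*}(S_{\alpha}(P_{*}),M)$. Let $\tilde\kappa^{X}$ and $\tilde\kappa^{Y}$ be the cocycles constructed in Definition \ref{kappa}, from chosen exact triangles extending $f_{n+1}^{X}$ and $f_{n+1}^{Y}$ and chosen lifts $\bar f_{n+2}^{X}$, $\bar f_{n+2}^{Y}$. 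Since $(\psi_{\leq n},\varphi_{*})$ is a morphism of $n$\nobreakdash-truncated Postnikov systems, the squares $\varphi_{k}d_{k+1}^{X}=d_{k+1}^{Y}\varphi_{k+1}$ commute for all $k$, so $S_{\alpha}(\varphi_{*})$ is a chain map of projective resolutions lifting $H_{0}S_{\alpha}(\varphi_{*})$; hence $\kappa(Y_{\leq n},Q_{*})\cdot H_{0}S_{\alpha}(\varphi_{*})$ is represented by $\tilde\kappa^{Y}S_{\alpha}(\varphi_{n+3})$ and $H_{0}S_{\alpha}(\varphi_{*})\cdot\kappa(X_{\leq n},P_{*})$ by $H_{0}S_{\alpha}(\varphi_{*})\tilde\kappa^{X}$. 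It therefore suffices to find $\mu\colon S_{\alpha}(P_{n+2})\to H_{0}S_{\alpha}(Q_{*})$ with $H_{0}S_{\alpha}(\varphi_{*})\tilde\kappa^{X}-\tilde\kappa^{Y}S_{\alpha}(\varphi_{n+3})=\pm\,\mu\,S_{\alpha}(d_{n+3}^{X})$.

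First I would lift the morphism one further step. Completing $f_{n+1}^{X}$ and $f_{n+1}^{Y}$ to exact triangles gives $i_{n+1}^{X},q_{n+1}^{X}$ and $i_{n+1}^{Y},q_{n+1}^{Y}$, and the fill-in axiom for triangulated categories applied to $\psi_{n}f_{n+1}^{X}=f_{n+1}^{Y}\varphi_{n+1}$ produces $\psi_{n+1}\colon X_{n+1}\to Y_{n+1}$ with $\psi_{n+1}i_{n+1}^{X}=i_{n+1}^{Y}\psi_{n}$ and $q_{n+1}^{Y}\psi_{n+1}=\varphi_{n+1}q_{n+1}^{X}$. With $\bar f_{n+2}^{X}$, $\bar f_{n+2}^{Y}$ as in Definition \ref{kappa}, the morphism $\psi_{n+1}\bar f_{n+2}^{X}-\bar f_{n+2}^{Y}\varphi_{n+2}\colon P_{n+2}\to Y_{n+1}$ is annihilated by $q_{n+1}^{Y}$, because $q_{n+1}^{Y}\psi_{n+1}\bar f_{n+2}^{X}=\varphi_{n+1}q_{n+1}^{X}\bar f_{n+2}^{X}=\varphi_{n+1}d_{n+2}^{X}=d_{n+2}^{Y}\varphi_{n+2}=q_{n+1}^{Y}\bar f_{n+2}^{Y}\varphi_{n+2}$, the middle equality being a commuting square of the given morphism. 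Applying $\C{T}(P_{n+2},-)$ to the triangle $Q_{n+1}\to Y_{n}\to Y_{n+1}\to\Sigma Q_{n+1}$ then yields $h\colon P_{n+2}\to Y_{n}$ with $\psi_{n+1}\bar f_{n+2}^{X}-\bar f_{n+2}^{Y}\varphi_{n+2}=i_{n+1}^{Y}h$.

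Now I would precompose this identity with $d_{n+3}^{X}$, replace $\varphi_{n+2}d_{n+3}^{X}$ by $d_{n+3}^{Y}\varphi_{n+3}$, apply $S_{\alpha}$, and substitute the defining identities $S_{\alpha}(\bar f_{n+2}^{X}d_{n+3}^{X})=\iota^{X}\tilde\kappa^{X}$, $S_{\alpha}(\bar f_{n+2}^{Y}d_{n+3}^{Y})=\iota^{Y}\tilde\kappa^{Y}$ of Definition \ref{kappa}, where $\iota^{X}$ and $\iota^{Y}$ are the degree $-1$ inclusions of $\ker S_{\alpha}(q_{n+1}^{X})$, $\ker S_{\alpha}(q_{n+1}^{Y})$ identified with $H_{0}S_{\alpha}(P_{*})$, $H_{0}S_{\alpha}(Q_{*})$ via Lemma \ref{iso4}. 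This produces
$$S_{\alpha}(\psi_{n+1})\iota^{X}\tilde\kappa^{X}-\iota^{Y}\tilde\kappa^{Y}S_{\alpha}(\varphi_{n+3})=S_{\alpha}(i_{n+1}^{Y})S_{\alpha}(h)S_{\alpha}(d_{n+3}^{X}).$$
Since $q_{n+1}^{Y}i_{n+1}^{Y}=0$, the map $S_{\alpha}(i_{n+1}^{Y})$ factors as $\iota^{Y}\nu$ for a unique $\nu$, so the right-hand side equals $\iota^{Y}\mu\,S_{\alpha}(d_{n+3}^{X})$ with $\mu=\nu S_{\alpha}(h)$. The step I expect to be the main obstacle is the identity $S_{\alpha}(\psi_{n+1})\iota^{X}=\iota^{Y}\cdot H_{0}S_{\alpha}(\varphi_{*})$ --- that under the isomorphisms of Lemma \ref{iso4} the restriction of $S_{\alpha}(\psi_{n+1})$ to $\ker S_{\alpha}(q_{n+1}^{X})$ is $H_{0}S_{\alpha}(\varphi_{*})$. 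To settle it I would observe that $(\psi_{*},\varphi_{*})$ induces a morphism of the exact couples used in the proof of Lemma \ref{iso4}, hence a morphism of derived exact couples, and chase the squares $\psi_{k}i_{k}^{X}=i_{k}^{Y}\psi_{k-1}$ from $k=n+1$ down to $k=1$ to identify that restriction with the map $\coker S_{\alpha}(d_{1}^{X})\to\coker S_{\alpha}(d_{1}^{Y})$ induced by $S_{\alpha}(\varphi_{0})$, which is $H_{0}S_{\alpha}(\varphi_{*})$ by definition of the homology functor. Granting this and cancelling the monomorphism $\iota^{Y}$, the displayed equation becomes $H_{0}S_{\alpha}(\varphi_{*})\tilde\kappa^{X}-\tilde\kappa^{Y}S_{\alpha}(\varphi_{n+3})=\pm\,\mu\,S_{\alpha}(d_{n+3}^{X})$, which is a coboundary in $\hom_{\alpha,\C{C}}^{*}(S_{\alpha}(P_{*}),H_{0}S_{\alpha}(Q_{*}))$; passing to cohomology gives the asserted equality, and independence of all the choices made is automatic since, by Definition \ref{kappa}, the classes $\kappa$ do not depend on them.
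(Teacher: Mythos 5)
Your argument is correct and essentially coincides with the paper's proof: one chooses $\psi_{n+1}$ extending $\psi_{n},\varphi_{n+1}$ to a morphism of the extended triangles, observes that $q_{n+1}^{Y}$ annihilates $\psi_{n+1}\bar f_{n+2}^{X}-\bar f_{n+2}^{Y}\varphi_{n+2}$ so that (after applying $S_{\alpha}$) it factors through the copy of $H_0S_\alpha(Q_*)$ inside $S_\alpha(Y_{n+1})$, and then precomposes with $d_{n+3}^{X}$ and uses $\varphi_{n+2}d_{n+3}^{X}=d_{n+3}^{Y}\varphi_{n+3}$ to exhibit the difference of the representing cocycles as a coboundary. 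The only deviation is that you make explicit the compatibility $S_{\alpha}(\psi_{n+1})\iota^{X}=\iota^{Y}\cdot H_0S_\alpha(\varphi_*)$, checked by chasing the squares $\psi_{k}i_{k}^{X}=i_{k}^{Y}\psi_{k-1}$ down to $k=1$, which the paper uses implicitly.
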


\begin{proof}
Assume we have made choices for the definition of the two obstructions. Take $\psi_{n+1}$ extending $\psi_{n}$ and $\varphi_{n+1}$ to a triangle morphism,
$$\xymatrix@C=6pt@R=7pt{\ar@{}[dd]|{\displaystyle \cdots}&X_{n-1}\ar[rrr]\ar[dd]^<(.65){\psi_{n-1}}&&&X_{n}\ar[lld]\ar@{-->}[rrr]\ar[dd]^<(.65){\psi_{n}}&&&X_{n+1}\ar@{-->}[lld]
\ar@{-->}[dd]^<(.65){\psi_{n+1}}|!{"2,1";"2,10"}{\hole}\\
&&P_n\ar[lu]\ar[dd]^<(.2){\varphi_{n}}&&&P_{n+1}\ar[lu]\ar[dd]^<(.2){\varphi_{n+1}}&&&P_{n+2}\ar[lll]\ar@{-->}[lu]\ar[dd]^{\varphi_{n+2}}
&&P_{n+3}\ar[ll]\ar[dd]^{\varphi_{n+3}}&\ar[l]\cdots\\
&Y_{n-1}\ar[rrr]|!{"2,3";"4,3"}{\hole}
&&&Y_{n}\ar[lld]\ar@{-->}[rrr]|!{"2,6";"4,6"}{\hole}&&&Y_{n+1}\ar@{-->}[lld]\\
&&Q_n\ar[lu]&&&Q_{n+1}\ar[lu]&&&Q_{n+2}\ar@{-->}[lu]\ar[lll]&&Q_{n+3}\ar[ll]&\ar[l]\cdots}$$
The square containing $\psi_{n+1}$ and $\varphi_{n+2}$ need not commute. However,
\begin{align*}
q_{n+1}^{Y}\psi_{n+1}{\bar{f}}_{n+2}^{X}&=\varphi_{n+1}q_{n+1}^{X}{\bar{f}}_{n+2}^{X}=\varphi_{n+1}d_{n+2}^{X}\\
&=d_{n+2}^{Y}\varphi_{n+2}=q_{n+1}^{Y}{\bar{f}}_{n+2}^{Y}\varphi_{n+2},
\end{align*}
hence $S_\alpha(\psi_{n+1}{\bar{f}}_{n+2}^{X}-{\bar{f}}_{n+2}^{Y}\varphi_{n+2})$ factors as
$$
S_\alpha(\psi_{n+1}{\bar{f}}_{n+2}^{X}-{\bar{f}}_{n+2}^{Y}\varphi_{n+2})
\colon S_{\alpha}(P_{n+2})\mathop{\To}^\phi_{\scriptscriptstyle +1} H_0S_{\alpha}(Q_*)\mathop{\subset}_{\scriptscriptstyle -1} S_\alpha(Y_{n+1}).
$$
Moreover, since
\begin{align*}
(\psi_{n+1}{\bar{f}}_{n+2}^{X}-{\bar{f}}_{n+2}^{Y}\varphi_{n+2})d_{n+3}^{X}&=
\psi_{n+1}{\bar{f}}_{n+2}^{X}d_{n+3}^{X}-{\bar{f}}_{n+2}^{Y}d_{n+3}^{Y}\varphi_{n+3}
\end{align*}
we deduce that
$$\phi S_{\alpha}(d_{n+3}^{X})=H_0S_\alpha(\varphi_*)\tilde\kappa^{X}-\tilde\kappa^{Y}S_{\alpha}(\varphi_{n+3}),$$
hence we are done.
\end{proof}

A consequence of Proposition \ref{naturality} is that the construction of $\kappa(X_{\leq n},P_*)$ in Definition \ref{kappa} is independent of choices.

\begin{prop}\label{vanishing}
For an $n$\nobreakdash-truncated Postnikov system $(X_{\leq n},P_*)$,
$\kappa(X_{\leq n},P_*)=0$
if an only if there exists an $(n+1)$\nobreakdash-truncated Postnikov system $(X_{\leq n+1},P_*)$ whose $n$\nobreakdash-truncation is $(X_{\leq n},P_*)$.
\end{prop}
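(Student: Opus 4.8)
The plan is to read off both implications directly from the construction of $\kappa(X_{\leq n},P_*)$ in Definition \ref{kappa}, using two facts: that $\kappa$ is independent of the choices made in that construction (a consequence of Proposition \ref{naturality}), and that $S_\alpha$ is fully faithful on morphisms out of objects of $\add(\C{C})$, so that morphisms $P_k\to(-)$ can be both detected and produced inside $\Mod{\alpha}{\C{C}}$.

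For the ``only if'' direction I would compute the obstruction of $(X_{\leq n},P_*)$ using the data of a hypothetical extension $(X_{\leq n+1},P_*)$. Its $(n+1)$-st exact triangle extends $f_{n+1}$ to an exact triangle, hence is an admissible choice in Definition \ref{kappa}, and its structure morphism $f_{n+2}^{X}\colon P_{n+2}\to X_{n+1}$ satisfies $q_{n+1}^{X}f_{n+2}^{X}=d_{n+2}$, hence is an admissible choice of $\bar f_{n+2}$. With these choices $\bar f_{n+2}\,d_{n+3}=f_{n+2}^{X}\,d_{n+3}=0$ by the cocycle condition in $(X_{\leq n+1},P_*)$, so $\tilde\kappa=0$ and $\kappa(X_{\leq n},P_*)=0$.

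For the ``if'' direction I would start from the very choices of Definition \ref{kappa}: extend $f_{n+1}$ to an exact triangle producing $X_{n+1}$, $i_{n+1}$, $q_{n+1}$, pick $\bar f_{n+2}$ with $q_{n+1}\bar f_{n+2}=d_{n+2}$, and let $\tilde\kappa$ be the resulting cocycle. Since $\ext_{\alpha,\C{C}}^{n+3,-1-n}(H_0S_\alpha(P_*),H_0S_\alpha(P_*))$ is computed by the cochain complex $\hom^{*}_{\alpha,\C{C}}(S_\alpha(P_*),H_0S_\alpha(P_*))$ and $\kappa=0$, the cocycle $\tilde\kappa$ is a coboundary: there is $\sigma\colon S_\alpha(P_{n+2})\to H_0S_\alpha(P_*)$ (of internal degree such that, composed with the degree $-1$ inclusion $\iota\colon H_0S_\alpha(P_*)\hookrightarrow S_\alpha(X_{n+1})$ of Lemma \ref{iso4}, it has degree $0$) with $\tilde\kappa=\pm\,\sigma\,S_\alpha(d_{n+3})$. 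Since $S_\alpha(P_{n+2})$ is projective and $S_\alpha$ is full on $\C{T}(P_{n+2},X_{n+1})$, the map $\iota\sigma$ equals $S_\alpha(\rho)$ for a unique $\rho\colon P_{n+2}\to X_{n+1}$, and I would set $f_{n+2}:=\bar f_{n+2}\mp\rho$. Then $q_{n+1}\rho=0$ because $S_\alpha(q_{n+1})\circ\iota=0$ (Lemma \ref{iso4}) and $S_\alpha$ is faithful on $\C{T}(P_{n+2},P_{n+1})$, so $q_{n+1}f_{n+2}=d_{n+2}$; and $f_{n+2}\,d_{n+3}=0$ because $S_\alpha(\bar f_{n+2}\,d_{n+3})=\iota\tilde\kappa=\iota\sigma\,S_\alpha(d_{n+3})=S_\alpha(\rho\,d_{n+3})$ and $S_\alpha$ is faithful on $\C{T}(P_{n+3},X_{n+1})$. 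Replacing $\bar f_{n+2}$ by $f_{n+2}$ changes neither $q_{n+1}\bar f_{n+2}=d_{n+2}$ nor the complex $S_\alpha(P_*)$, so $(X_{\leq n},P_*)$ together with $X_{n+1}$, $i_{n+1}$, $q_{n+1}$ and $f_{n+2}$ forms an $(n+1)$-truncated Postnikov system whose $n$-truncation $t_n$ is $(X_{\leq n},P_*)$.

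The remaining verifications are routine: the first $n+2$ triangles are exact (the new one by construction), and $S_\alpha(P_*)$ is literally unchanged, hence still an exact sequence of projectives. The one point requiring care is the degree and sign bookkeeping --- checking that the coboundary $\sigma$ carries the internal degree that makes $\rho$ a degree $0$ morphism, and choosing the sign in $f_{n+2}=\bar f_{n+2}\mp\rho$ to match the differential of $\hom^{*}_{\alpha,\C{C}}(S_\alpha(P_*),-)$. Conceptually the statement is just the obstruction-theoretic dictionary: ``$\kappa=0$'' $\Leftrightarrow$ ``$\bar f_{n+2}$ can be corrected by a map $\rho$ so as to satisfy the cocycle condition'', i.e.\ so as to become the $(n+2)$-nd structure map of a genuine $(n+1)$-truncated Postnikov system.
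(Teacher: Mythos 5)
Your proof is correct and follows essentially the same route as the paper: the ``only if'' direction takes $\bar f_{n+2}=f_{n+2}$ so that the cocycle condition forces $\tilde\kappa=0$, and the ``if'' direction writes $\tilde\kappa$ as a coboundary $\zeta\, S_\alpha(d_{n+3})$, lifts $\iota\zeta$ along $S_\alpha$ (using that $S_\alpha$ is an isomorphism on $\C T(P_{n+2},X_{n+1})$) to a map $\rho$, and corrects $\bar f_{n+2}$ to $\bar f_{n+2}-\rho$. The only difference is cosmetic: you spell out the verifications $q_{n+1}\rho=0$ and $f_{n+2}d_{n+3}=0$ that the paper leaves implicit.
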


\begin{proof}
If $(X_{\leq n+1},P_*)$ exists we can take $\bar{f}_{n+2}=f_{n+2}$, hence the cocycle condition $f_{n+2}d_{n+3}=0$ implies that $\tilde\kappa=0$, so $\kappa(X_{\leq n},P_*)=0$.

Assume now that $\kappa(X_{\leq n},P_*)=0$. Suppose that we have made the necessary choices for the construction of $\tilde\kappa$. Since $\kappa(X_{\leq n},P_*)=0$ there exists a degree $+1$ morphism
$\zeta\colon S_\alpha(P_{n+2})\r H_0S_\alpha(P_*)$ such that $\tilde\kappa=\zeta d_{n+3}$.
The composite
$$
S_\alpha(P_{n+2})\mathop{\To}^{\zeta}_{\scriptscriptstyle+1} H_0S_\alpha(P_*)\mathop{\subset}_{\scriptscriptstyle -1} S_\alpha(X_{n+1})
$$
is the image by $S_\alpha$ of a unique $\phi\colon P_{n+2}\r X_{n+1}$. The equation $\tilde\kappa=\zeta d_{n+3}$ translates into $\phi d_{n+3}=\bar{f}_{n+2}d_{n+3}$. Hence $i_{n+1}$, $q_{n+1}$ and $f_{n+2}=\bar{f}_{n+2}-\phi$ extend $(X_{\leq n},P_*)$ to  an $(n+1)$\nobreakdash-truncated Postnikov system.
\end{proof}

\begin{defn}\label{theta}
Consider a couple of $n$\nobreakdash-truncated Postnikov systems $(X_{\leq n},P_*)$ and $(Y_{\leq n},Q_*)$, $n>0$, and a morphism between their $(n-1)$\nobreakdash-truncations,
$$(\psi_{\leq n-1},\varphi_*)\colon(X_{\leq n-1},P_*)\To(Y_{\leq n-1},Q_*).$$
Take $\psi_n'$ extending $\psi_{n-1}$ and $\varphi_{n}$ to an exact triangle morphism
$$\xymatrix@C=6pt@R=7pt{\cdots&X_{n-1}\ar[rrr]\ar[dd]_{\psi_{n-1}}&&&X_{n}\ar[lld]\ar@{-->}[dd]^{\psi_{n}'}\\
&&P_{n}\ar[lu]\ar[dd]^<(.2){\varphi_{n}}&&&P_{n+1}\ar[lu]\ar[dd]^{\varphi_{n+1}}
&&P_{n+2}\ar[ll]\ar[dd]^{\varphi_{n+2}}&\ar[l]\cdots\\
\cdots&Y_{n-1}\ar[rrr]|!{"2,3";"4,3"}{\hole}&&&Y_n\ar[lld]\\
&&Q_n\ar[lu]&&&Q_{n+1}\ar[lu]&&Q_{n+2}\ar[ll]&\ar[l]\cdots}$$
The square containing $\psi_{n}'$ and $\varphi_{n+1}$ need not commute, however
$$q_{n}^Y\psi_{n}'f_{n+1}^X=
\varphi_nq_n^Xf_{n+1}^X=\varphi_nd_{n+1}^X=d_{n+1}^Y\varphi_{n+1}=
q_{n}^Yf_{n+1}^Y\varphi_{n+1}.$$
Hence, by Lemma \ref{iso4}, $S_\alpha(\psi_{n}'f_{n+1}^X - f_{n+1}^Y\varphi_{n+1})$ factors through $\ker S_\alpha (q_n^Y)= \im S_\alpha (i_n^{Y})\cong H_0 S_\alpha(Q_*)$,
$$S_\alpha(\psi_{n}'f_{n+1}^X - f_{n+1}^Y\varphi_{n+1})\colon
S_\alpha(P_{n+1})\mathop{\To}^{\tilde\theta}_{\scriptscriptstyle+1}  H_0S_\alpha(Q_*)\mathop{\subset}_{\scriptscriptstyle-1} S_\alpha(Y_{n}).$$
The following equations show that $\tilde\theta S_{\alpha}(d_{n+2}^X)=0$,
\begin{align*}
(\psi_{n}'f_{n+1}^X - f_{n+1}^Y\varphi_{n+1})d_{n+2}^X&=
\psi_{n}'f_{n+1}^Xd_{n+2}^X - f_{n+1}^Y\varphi_{n+1}d_{n+2}^X\\
&=- f_{n+1}^Yd_{n+2}^Y\varphi_{n+2}=0.
\end{align*}
Here we use the cocycle condition for both $n$\nobreakdash-truncated Postnikov systems.

The \emph{obstruction of the morphism $(\psi_{\leq n-1},\varphi_*)$ relative to the initial $n$\nobreakdash-truncated Postnikov systems} is the element
$$\theta_{(X_{\leq n},P_*),(Y_{\leq n},Q_*)}
(\psi_{\leq n-1},\varphi_*)\in \ext_{\alpha,\C{C}}^{n+1,-n}(H_0S_\alpha(P_*),H_0S_\alpha(Q_*))$$
represented by a morphism $\tilde\theta$ constructed as above. We often omit the subscript of $\theta$ so as not to overload the notation. Notice that this obstruction is additive in the morphism,
$$\theta(\psi_{\leq n-1}+\bar\psi_{\leq n-1},\varphi_*+\bar\varphi_*)=\theta(\psi_{\leq n-1},\varphi_*)+\theta(\bar\psi_{\leq n-1},\bar\varphi_*).$$
\end{defn}

The following lemma allows to speak of the obstruction of a homotopy class.

\begin{lem}\label{loboto}
Given two $n$\nobreakdash-truncated Postnikov systems $(X_{\leq n},P_*)$ and $(Y_{\leq n},Q_*)$, $n>0$, and two  homotopic morphisms between their $(n-1)$\nobreakdash-truncations
$$(\psi_{\leq n-1},\varphi_{*})\simeq(\bar\psi_{\leq n-1},\bar\varphi_{*}) \colon(X_{\leq n-1},P_{*}) \To (Y_{\leq n-1},Q_{*}),$$
their obstructions coincide
$\theta(\psi_{\leq n-1},\varphi_*)
=
\theta(\bar\psi_{\leq n-1},\bar\varphi_*)$.
\end{lem}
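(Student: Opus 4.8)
The plan is to reduce the statement to a single vanishing assertion by additivity, and then to exploit the nullhomotopy to make a favourable choice in the construction of Definition \ref{theta}.

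By the additivity of $\theta$ recorded at the end of Definition \ref{theta},
$$\theta(\psi_{\leq n-1},\varphi_*)-\theta(\bar\psi_{\leq n-1},\bar\varphi_*)=\theta\bigl(\psi_{\leq n-1}-\bar\psi_{\leq n-1},\,\varphi_*-\bar\varphi_*\bigr),$$
and the morphism $(\psi_{\leq n-1}-\bar\psi_{\leq n-1},\varphi_*-\bar\varphi_*)$ is nullhomotopic. So it suffices to show $\theta(\psi_{\leq n-1},\varphi_*)=0$ whenever $(\psi_{\leq n-1},\varphi_*)$ is nullhomotopic. Inspecting Definition \ref{theta}, the only component of the morphism that enters the construction of $\theta$, besides the $\varphi_k$'s, is $\psi_{n-1}$; and for a nullhomotopic morphism we have a factorization $\psi_{n-1}=f_n^Y\sigma_{n-1}$ with $\sigma_{n-1}\colon X_{n-1}\r Q_n$, equivalently $i_n^Y\psi_{n-1}=0$.

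I would then run the recipe of Definition \ref{theta} with a specific auxiliary morphism $\psi_n'$. Since $i_n^Y\psi_{n-1}=0$, the morphism $\psi_n'$, whose restriction along $i_n^X$ must equal $i_n^Y\psi_{n-1}=0$, can be taken to factor through the connecting morphism of the triangle around $X_n$, say $\psi_n'=a\,q_n^X$. Rotating this triangle and the corresponding one for $Y$, and using the morphism identities $\psi_{n-1}f_n^X=f_n^Y\varphi_n$ and $\varphi_nd_{n+1}^X=d_{n+1}^Y\varphi_{n+1}$ of a morphism of truncated Postnikov systems, the existence of an $a$ for which $\psi_n'=a\,q_n^X$ is a valid extension reduces to the single identity $f_n^Y(\varphi_n-\sigma_{n-1}f_n^X)=0$, which holds precisely because $\psi_{n-1}=f_n^Y\sigma_{n-1}$; projectivity of $S_\alpha(P_{n+1})$ in $\Mod{\alpha}{\C C}$ absorbs the remaining freedom in $a$ (and, at the same time, shows that any two such choices alter the representing cocycle by a coboundary).

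With this choice $\psi_n'f_{n+1}^X=a\,d_{n+1}^X$ factors through $d_{n+1}^X$, and the defining morphism $\psi_n'f_{n+1}^X-f_{n+1}^Y\varphi_{n+1}$, which has zero composite with $q_n^Y$, factors through $i_n^Y$, say as $i_n^Y w$. Transporting along the degree $-1$ isomorphism $\ker S_\alpha(q_n^Y)=\im S_\alpha(i_n^Y)\cong H_0S_\alpha(Q_*)$ supplied by Lemma \ref{iso4}, the representing cocycle $\tilde\theta$ becomes $S_\alpha(w)$ followed by the projection $S_\alpha(Y_{n-1})\onto\coker S_\alpha(f_n^Y)\cong H_0S_\alpha(Q_*)$. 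One then checks, using the cocycle conditions of the two truncated systems (which give $w\,d_{n+2}^X\in\im f_n^Y$) together with a diagram chase through an Adams resolution of the $Y$-Postnikov system as in Lemma \ref{postnikovaadams}, that this cocycle is a coboundary for the complex $S_\alpha(P_*^X)$, i.e. of the form $\rho\circ S_\alpha(d_{n+1}^X)$. Hence $\theta(\psi_{\leq n-1},\varphi_*)=0$. I expect the real difficulty to lie in this last step — pinning down $\tilde\theta$ through the identifications of Lemma \ref{iso4} and upgrading ``cocycle'' to ``coboundary'' — together with the ever-present degree and sign bookkeeping in the triangulated category; the earlier steps are formal.
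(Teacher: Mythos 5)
Your opening moves track the paper: the reduction by additivity to showing that a nullhomotopic morphism has vanishing obstruction is exactly the paper's first step, and the observation that in this case the auxiliary morphism satisfies $\psi_n'i_n^X=i_n^Y\psi_{n-1}=0$, hence factors as $\psi_n'=\phi q_n^X$ so that $\psi_n'f_{n+1}^X=\phi d_{n+1}^X$, is also how the paper proceeds. (Incidentally, your construction of a special $a$ with $\psi_n'=a q_n^X$ is unnecessary, since \emph{every} admissible $\psi_n'$ is automatically of this form; and the lifting you invoke uses projectivity of $S_\alpha(P_n)$, not $S_\alpha(P_{n+1})$. This matters beyond economy: the paper deduces the choice-independence of $\theta$ \emph{from} this lemma, so a proof valid only for one specially built $\psi_n'$ would need a separate argument; here it is harmless only because all choices factor through $q_n^X$.)

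The genuine gap is the decisive step: you never prove that the representing morphism $\tilde\theta$ is a coboundary. What you do verify --- that $w\,d_{n+2}^X$ factors through $f_n^Y$, i.e.\ that $\tilde\theta$ is a cocycle --- is automatic for any representative (Definition \ref{theta} already records $\tilde\theta S_\alpha(d_{n+2}^X)=0$) and carries no information; the whole content of the lemma is the upgrade from cocycle to coboundary, and precisely there you write ``one then checks\dots by a diagram chase through an Adams resolution'' and concede that the real difficulty lies in that step. No Adams resolution enters the paper's argument, and it is unclear how such a chase would produce the cobounding element. The missing mechanism is this: nullhomotopy of the morphism forces the chain map $\varphi_*$ to be nullhomotopic (its induced map on $H_0$ vanishes, $S_\alpha(P_*)$ and $S_\alpha(Q_*)$ are exact complexes of projectives, and $S_\alpha$ is fully faithful on $\add(\C C)$), so one may write $\varphi_{n+1}=h_{n+1}d_{n+1}^X+d_{n+2}^Y h_{n+2}$ with $h_k$ in $\C T$. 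Together with the cocycle condition $f_{n+1}^Y d_{n+2}^Y=0$ this gives the exact identity $(\phi-f_{n+1}^Y h_{n+1})d_{n+1}^X=\psi_n'f_{n+1}^X-f_{n+1}^Y\varphi_{n+1}$, and then the $H_0S_\alpha(Q_*)$-component $\xi_1$ of $S_\alpha(\phi-f_{n+1}^Y h_{n+1})$ in the splitting $S_\alpha(Y_n)\cong H_0S_\alpha(Q_*)\oplus\im S_\alpha(f_{n+1}^Y)$ of Lemma \ref{iso4} exhibits $\tilde\theta=\xi_1 S_\alpha(d_{n+1}^X)$, a coboundary. (One can shortcut the correction by $h_{n+1}$ by noting that the projection onto the $H_0$-summand kills $\im S_\alpha(f_{n+1}^Y)$, but either way the identification has to be carried out at $Y_n$ through the splitting of Lemma \ref{iso4}, using the nullhomotopy data or the summand structure explicitly --- not deferred to an unspecified chase at $Y_{n-1}$.)
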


\begin{proof}
It is enough to check that the obstruction of a nullhomotopic morphism $(\psi_{\leq n-1},\varphi_{*})\simeq 0$ vanishes.
Since it is nullhomotopic $0=i_{n}^{Y}\psi_{n-1}=\psi'_{n}i_{n}^{X}$, so we can factor $\psi_{n}'=\phi q_{n}^{X}$. Moreover, $\varphi_{*}$ is nullhomotopic, so $\varphi_{n+1}=h_{n+1}d_{n+1}^{X}+d_{n+2}^{Y}h_{n+2}$ for certain $h_{n+1}$ and $h_{n+2}$,
$$
\xymatrix@C=8pt@R=8pt{\cdots&X_{n-1}\ar[rrr]\ar[dd]_{\psi_{n-1}}&&&X_{n}\ar[lld]\ar@{-->}[dd]^{\psi_{n}'}\\
&&P_{n}\ar[lu]\ar[dd]_<(.2){\varphi_{n}}
\ar@{-->}@(dr,l)[rrrdd]_<(.6){h_{n+1}}
\ar@{-->}[drr]^\phi
&&&P_{n+1}\ar[lu]\ar[dd]_<(.2){\varphi_{n+1}}\ar@{-->}[rrdd]^<(.4){h_{n+2}}
&&P_{n+2}\ar[ll]\ar[dd]^{\varphi_{n+2}}&\ar[l]\cdots\\
\cdots&Y_{n-1}\ar[rrr]|!{"2,3";"4,3"}{\hole}&&&Y_n\ar[lld]\\
&&Q_n\ar[lu]&&&Q_{n+1}\ar[lu]&&Q_{n+2}\ar[ll]&\ar[l]\cdots}
$$
Using the direct sum decomposition in Lemma \ref{iso4} we obtain
$$\binom{\xi_{1}}{\xi_{2}}=S_{\alpha}(\phi-f_{n+1}^{Y}h_{n+1})\colon S_{\alpha}(P_{n})\To S_{\alpha}(Y_{n})\cong H_{0}S_{\alpha}(Q_{*})\oplus \im S_{\alpha}(f_{n+1}^{Y}).$$
Then $\tilde{\theta}=\xi_{1}S_{\alpha}(d_{n+1}^{X})$ since
\begin{align*}
(\phi-f_{n+1}^{Y}h_{n+1})d_{n+1}^X&
=\phi d_{n+1}^{X}-f_{n+1}^{Y}h_{n+1}d_{n+1}^{X}-f_{n+1}^{Y}d_{n+2}^{Y}h_{n+2}\\
&=\phi q_{n}^{X}f_{n+1}^{X}-f_{n+1}^{Y}(h_{n+1}d_{n+1}^{X}+d_{n+2}^{Y}h_{n+2})\\
&=\psi_{n}'f_{n+1}^X - f_{n+1}^Y\varphi_{n+1}.
\end{align*}
Here we use the cocycle condition $f_{n+1}^{Y}d_{n+2}^{Y}=0$. Therefore $\theta(\psi_{\leq n-1},\varphi_*)=0$.
\end{proof}

As a consequence of Lemma \ref{loboto}, the obstruction of a morphism does not depend on choices.

\begin{prop}\label{dolorprofundo}
With the notation in Definition \ref{theta},
$$\theta_{(X_{\leq n},P_*),(Y_{\leq n},Q_*)}
(\psi_{\leq n-1},\varphi_*)=0$$
if an only if there exists a morphism $\psi_{n}\colon X_n\r Y_n$ extending $(\psi_{\leq n-1},\varphi_*)$ to a morphism $(\psi_{\leq n},\varphi_*)\colon (X_{\leq n},P_*)\r(Y_{\leq n},Q_*)$ of $n$\nobreakdash-truncated Postnikov systems.
\end{prop}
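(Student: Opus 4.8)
The plan is to prove both implications by comparing a hypothetical $\psi_n$ with the auxiliary morphism $\psi_n'$ that appears in Definition \ref{theta}, using three ingredients: that $S_\alpha$ induces a bijection on morphisms out of objects of $\add(\C C)$ (so in particular it is faithful there), Lemma \ref{iso4} (the degree $-1$ inclusion $H_0S_\alpha(Q_*)\hookrightarrow S_\alpha(Y_n)$ is a split monomorphism whose image is killed by $S_\alpha(q_n^Y)$), and Lemma \ref{loboto} (the value of $\theta$ does not depend on the choices made in its construction).

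For the ``if'' direction I would argue as follows. Suppose $\psi_n$ extends $(\psi_{\leq n-1},\varphi_*)$ to a morphism of $n$\nobreakdash-truncated Postnikov systems. Then $\psi_n$ already satisfies $\psi_ni_n^X=i_n^Y\psi_{n-1}$ and $q_n^Y\psi_n=\varphi_nq_n^X$, hence it is an admissible choice of ``$\psi_n'$'' in the construction of the representative $\tilde\theta$. Since moreover $\psi_nf_{n+1}^X=f_{n+1}^Y\varphi_{n+1}$, the morphism $S_\alpha(\psi_nf_{n+1}^X-f_{n+1}^Y\varphi_{n+1})$ is zero, so the factorization $\tilde\theta$ of it through the monomorphism $H_0S_\alpha(Q_*)\hookrightarrow S_\alpha(Y_n)$ must be zero, and therefore $\theta(\psi_{\leq n-1},\varphi_*)=0$ by Lemma \ref{loboto}.

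For the converse, assume $\theta(\psi_{\leq n-1},\varphi_*)=0$ and fix the choices of Definition \ref{theta}: a morphism $\psi_n'$ extending $\psi_{n-1},\varphi_n$ to an exact triangle morphism and the associated cocycle $\tilde\theta\colon S_\alpha(P_{n+1})\to H_0S_\alpha(Q_*)$ with $\tilde\theta\,S_\alpha(d_{n+2}^X)=0$. Since $\tilde\theta$ represents the trivial class in $\ext_{\alpha,\C C}^{n+1,-n}$ computed from the projective resolution $S_\alpha(P_*)$, there is $\zeta\colon S_\alpha(P_n)\to H_0S_\alpha(Q_*)$ with $\tilde\theta=\zeta\,S_\alpha(d_{n+1}^X)$, and one has $d_{n+1}^X=q_n^Xf_{n+1}^X$ by definition. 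Composing $\zeta$ with the degree $-1$ inclusion $H_0S_\alpha(Q_*)\hookrightarrow S_\alpha(Y_n)$ and using that $S_\alpha(P_n)$ is projective and $S_\alpha$ is bijective on $\C T(P_n,Y_n)$, I obtain a unique $\gamma\colon P_n\to Y_n$ with $S_\alpha(\gamma)$ equal to that composite; since $S_\alpha(q_n^Y)$ annihilates the image of the inclusion (Lemma \ref{iso4}), faithfulness gives $q_n^Y\gamma=0$. Then I would set $\psi_n:=\psi_n'-\gamma q_n^X$. The correction is killed by precomposition with $i_n^X$ (because $q_n^Xi_n^X=0$) and by postcomposition with $q_n^Y$ (because $q_n^Y\gamma=0$), so $\psi_n$ still satisfies $\psi_ni_n^X=i_n^Y\psi_{n-1}$ and $q_n^Y\psi_n=\varphi_nq_n^X$. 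Finally, $S_\alpha(\psi_nf_{n+1}^X-f_{n+1}^Y\varphi_{n+1})$ equals $S_\alpha(\psi_n'f_{n+1}^X-f_{n+1}^Y\varphi_{n+1})-S_\alpha(\gamma q_n^Xf_{n+1}^X)$; the first term is the inclusion composed with $\tilde\theta=\zeta\,S_\alpha(q_n^Xf_{n+1}^X)=S_\alpha(\gamma)\,S_\alpha(q_n^Xf_{n+1}^X)$, so the two terms cancel, and faithfulness of $S_\alpha$ on $\add(\C C)$ (applied to $P_{n+1}$) forces $\psi_nf_{n+1}^X=f_{n+1}^Y\varphi_{n+1}$. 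Thus $(\psi_{\leq n},\varphi_*)$ is a morphism of $n$\nobreakdash-truncated Postnikov systems.

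The step I expect to be the main obstacle is the degree and sign bookkeeping in the graded setting: one has to verify that $\zeta$, and hence $\gamma$, lies in the internal degree that makes $\gamma q_n^X$ a degree $0$ morphism $X_n\to Y_n$, and that the discrepancy $\tilde\theta$ produces on $f_{n+1}^X$ is exactly $S_\alpha(\gamma q_n^Xf_{n+1}^X)$, which is precisely where the identity $d_{n+1}^X=q_n^Xf_{n+1}^X$ is used. Apart from this, the argument is structurally parallel to the proofs of Proposition \ref{sextiende} and Proposition \ref{vanishing}, so I would model the detailed write-up on those.
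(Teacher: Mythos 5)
Your proposal is correct and follows essentially the same route as the paper: for the ``if'' direction take $\psi_n'=\psi_n$ so the representative $\tilde\theta$ vanishes, and for the converse factor $\tilde\theta=\zeta\,S_\alpha(d_{n+1}^X)$, lift $\zeta$ composed with the inclusion $H_0S_\alpha(Q_*)\subset S_\alpha(Y_n)$ to a morphism $\gamma\colon P_n\to Y_n$ with $q_n^Y\gamma=0$, and correct $\psi_n=\psi_n'-\gamma q_n^X$, exactly as in the paper's proof (where your $\gamma$ is called $\phi$). The degree bookkeeping you flag as a possible obstacle works out just as in the paper, and your explicit appeal to the bijectivity of $S_\alpha$ on morphisms out of $P_{n+1}\in\add(\C C)$ only makes explicit what the paper leaves implicit.
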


\begin{proof}
If $(\psi_{\leq n},\varphi_*)$ extends the given morphism we can take $\psi_n'=\psi_n$, hence $\tilde\theta=\psi_{n}f_{n+1}^X - f_{n+1}^Y\varphi_{n+1}=0$ and the obstruction vanishes.

Conversely, if the obstruction vanishes take $\xi\colon S_\alpha(P_n)\r H_0S_\alpha(Q_*)$ with $\tilde\theta=\xi S_\alpha(d_{n+1}^X)$. The composite
$$
S_\alpha(P_{n+1})\st{\xi}\To  H_0S_\alpha(Q_*)\mathop{\subset}_{\scriptscriptstyle-1} S_\alpha(Y_{n})
$$
is the image by $S_{\alpha}$ of a unique $\phi\colon P_{n}\st{\scriptscriptstyle-1}\r Y_{n}$, which must satisfy the two following equations
\begin{align*}
q_n^Y\phi&=0 ,\qquad
\phi d_{n+1}^X=\psi_{n}'f_{n+1}^X - f_{n+1}^Y\varphi_{n+1}.
\end{align*}
We can take $\psi_{n}=\psi_{n}'-\phi q_{n}^X$, since
\begin{align*}
\psi_ni_n^X&=(\psi_{n}'-\phi q_{n}^X)i_n^X
=\psi_n'i_n^X=i_n^Y\psi_{n-1},\\
q_n^Y\psi_n&=q_n^Y(\psi_{n}'-\phi d_{n}^X)
=q_n^Y\psi_n'
=\varphi_nq_n^X,\\
\psi_nf_{n+1}^X&=(\psi_{n}'-\phi q_{n}^X)f_{n+1}^X
=\psi_{n}'f_{n+1}^X-\phi d_{n+1}^X\\
&=\psi_{n}'f_{n+1}^X-(\psi_{n}'f_{n+1}^X - f_{n+1}^Y\varphi_{n+1})
=f_{n+1}^Y\varphi_{n+1}.
\end{align*}
\end{proof}

The following result shows that the obstruction $\theta$ in Definition \ref{theta} is a derivation.

\begin{prop}\label{derivation}
Given three $n$\nobreakdash-truncated Postnikov systems
$(X_{\leq n},P_*)$, $(Y_{\leq n},Q_*)$,  and $(Z_{\leq n},R_*)$,
and two composable morphisms between their $(n-1)$\nobreakdash-truncations,
$$\xymatrix@C=50pt{(X_{\leq n-1},P_{*}) \ar[r]^{(\psi_{\leq n-1},\varphi_{*})} & (Y_{\leq n-1},Q_{*})
\ar[r]^{(\bar\psi_{\leq n-1},\bar\varphi_{*})} & (Z_{\leq n-1},R_{*}),}$$
the following equation holds in $\ext_{\alpha,\C{C}}^{n+1,-n}(H_0S_\alpha(P_*),H_0S_\alpha(R_*))$,
\begin{align*}
\theta((\bar\psi_{\leq n-1},\bar\varphi_*)(\psi_{\leq n-1},\varphi_*))
&=
\theta(\bar\psi_{\leq n-1},\bar\varphi_*)\cdot H_0S_\alpha(\varphi_*)
+
H_0S_\alpha(\bar\varphi_*)\cdot \theta(\psi_{\leq n-1},\varphi_*).
\end{align*}
\end{prop}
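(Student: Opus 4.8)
The plan is to evaluate all three relative obstructions using \emph{one consistent set of choices} and then compare the resulting representatives. Since the obstruction of a morphism between $(n-1)$\nobreakdash-truncations is independent of the auxiliary lift (a consequence of Lemma \ref{loboto}), I first fix exact triangle morphisms $\psi_{n}'$ and $\bar\psi_{n}'$ at level $n$ extending $(\psi_{\leq n-1},\varphi_{*})$ and $(\bar\psi_{\leq n-1},\bar\varphi_{*})$ respectively, as in Definition \ref{theta}. Because a composite of exact triangle morphisms is again one and because $\psi_{n}'i_{n}^{X}=i_{n}^{Y}\psi_{n-1}$, $q_{n}^{Y}\psi_{n}'=\varphi_{n}q_{n}^{X}$ together with their barred analogues hold, the morphism $\bar\psi_{n}'\psi_{n}'$ is an admissible level\nobreakdash-$n$ lift of the composite $(\bar\psi_{\leq n-1}\psi_{\leq n-1},\bar\varphi_{*}\varphi_{*})$. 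Hence $\theta$ of the composite is represented by the factorisation through $H_{0}S_{\alpha}(R_{*})\mathop{\subset}_{\scriptscriptstyle-1}S_{\alpha}(Z_{n})$ of $S_{\alpha}\bigl(\bar\psi_{n}'\psi_{n}'f_{n+1}^{X}-f_{n+1}^{Z}\bar\varphi_{n+1}\varphi_{n+1}\bigr)$.

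The second step is to introduce the cross term
$$\bar\psi_{n}'\psi_{n}'f_{n+1}^{X}-f_{n+1}^{Z}\bar\varphi_{n+1}\varphi_{n+1}=\bar\psi_{n}'\bigl(\psi_{n}'f_{n+1}^{X}-f_{n+1}^{Y}\varphi_{n+1}\bigr)+\bigl(\bar\psi_{n}'f_{n+1}^{Y}-f_{n+1}^{Z}\bar\varphi_{n+1}\bigr)\varphi_{n+1}.$$
By construction, after applying $S_{\alpha}$ the first parenthesis equals $\bigl(H_{0}S_{\alpha}(Q_{*})\subset S_{\alpha}(Y_{n})\bigr)\circ\tilde\theta$, where $\tilde\theta$ represents $\theta(\psi_{\leq n-1},\varphi_{*})$, and the second equals $\bigl(H_{0}S_{\alpha}(R_{*})\subset S_{\alpha}(Z_{n})\bigr)\circ\tilde{\bar\theta}$, where $\tilde{\bar\theta}$ represents $\theta(\bar\psi_{\leq n-1},\bar\varphi_{*})$. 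The second summand therefore yields $\tilde{\bar\theta}\circ S_{\alpha}(\varphi_{n+1})$; since the morphism of Postnikov systems makes $\varphi_{*}$ a chain map $S_{\alpha}(P_{*})\to S_{\alpha}(Q_{*})$ of projective resolutions lifting $H_{0}S_{\alpha}(\varphi_{*})$, this represents $\theta(\bar\psi_{\leq n-1},\bar\varphi_{*})\cdot H_{0}S_{\alpha}(\varphi_{*})$.

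The main obstacle is the first summand, namely showing that $S_{\alpha}(\bar\psi_{n}')$ carries the inclusion $H_{0}S_{\alpha}(Q_{*})\mathop{\subset}_{\scriptscriptstyle-1}S_{\alpha}(Y_{n})$ of Lemma \ref{iso4} to $H_{0}S_{\alpha}(\bar\varphi_{*})$ post\nobreakdash-composed with the analogous inclusion $H_{0}S_{\alpha}(R_{*})\mathop{\subset}_{\scriptscriptstyle-1}S_{\alpha}(Z_{n})$. Unwinding Lemma \ref{iso4}, the first inclusion is $S_{\alpha}(i_{n}^{Y}i_{n-1}^{Y}\cdots i_{1}^{Y}(q_{0}^{Y})^{-1})$ restricted to $\coker S_{\alpha}(d_{1}^{Q})$; using the triangle\nobreakdash-morphism relations $\bar\psi_{k}i_{k}^{Y}=i_{k}^{Z}\bar\psi_{k-1}$ for $1\leq k\leq n$ and the level\nobreakdash-$0$ relation $q_{0}^{Z}\bar\psi_{0}=\bar\varphi_{0}q_{0}^{Y}$, one slides $\bar\psi_{n}'$ past all the $i^{Y}$'s, obtaining $S_{\alpha}(i_{n}^{Z}\cdots i_{1}^{Z}(q_{0}^{Z})^{-1})\circ S_{\alpha}(\bar\varphi_{0})$, and $S_{\alpha}(\bar\varphi_{0})$ induces $H_{0}S_{\alpha}(\bar\varphi_{*})$ on cokernels. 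Granting this, the first summand becomes $\bigl(H_{0}S_{\alpha}(R_{*})\subset S_{\alpha}(Z_{n})\bigr)\circ H_{0}S_{\alpha}(\bar\varphi_{*})\circ\tilde\theta$, which represents $H_{0}S_{\alpha}(\bar\varphi_{*})\cdot\theta(\psi_{\leq n-1},\varphi_{*})$.

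To conclude, since $n>0$ the inclusion $H_{0}S_{\alpha}(R_{*})\subset S_{\alpha}(Z_{n})$ is a split monomorphism by Lemma \ref{iso4}, so it may be cancelled from the sum of the two summands, giving the desired identity of representatives and hence the asserted equality in $\ext_{\alpha,\C{C}}^{n+1,-n}(H_{0}S_{\alpha}(P_{*}),H_{0}S_{\alpha}(R_{*}))$. Beyond the sliding computation just described, the only remaining care is consistent bookkeeping of internal degrees, the absence of signs in the statement reflecting that the chosen conventions make them cancel; this part is routine.
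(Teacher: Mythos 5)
Your proposal is correct and follows essentially the same route as the paper: fix lifts $\psi_{n}'$ and $\bar\psi_{n}'$, take $\bar\psi_{n}'\psi_{n}'$ as the lift for the composite, and split $\bar\psi_{n}'\psi_{n}'f_{n+1}^{X}-f_{n+1}^{Z}\bar\varphi_{n+1}\varphi_{n+1}$ into the same two cross terms to get the identity already at the level of representatives. The only difference is that you spell out the compatibility of $S_{\alpha}(\bar\psi_{n}')$ with the inclusions of Lemma \ref{iso4} (your ``sliding'' computation), a naturality step the paper's proof uses implicitly.
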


\begin{proof}
Assume we have chosen $\psi_{n}'$ and $\bar\psi_{n}'$ to define the morphisms $\tilde\theta^{\psi}$ and $\tilde{\theta}^{\bar\psi}$ representing the obstructions of the two given morphisms,
$$\xymatrix@C=6pt@R=7pt{
\cdots&X_{n-1}\ar[rrr]\ar[dd]_{\psi_{n-1}}&&&X_{n}\ar[lld]\ar@{-->}[dd]^{\psi_{n}'}\\
&&P_{n}\ar[lu]\ar[dd]^<(.2){\varphi_{n}}&&&P_{n+1}\ar[lu]\ar[dd]^{\varphi_{n+1}}
&&P_{n+2}\ar[ll]\ar[dd]^{\varphi_{n+2}}&\ar[l]\cdots\\
\cdots&Y_{n-1}\ar[rrr]|!{"2,3";"4,3"}{\hole}
\ar[dd]_{\bar\psi_{n-1}}&&&Y_n\ar[lld]\ar@{-->}[dd]^{\bar\psi_{n}'}\\
&&Q_n\ar[lu]\ar[dd]^<(.2){\bar\varphi_{n}}&&&Q_{n+1}\ar[lu]\ar[dd]^-{\bar\varphi_{n+1}}&&Q_{n+2}\ar[ll]\ar[dd]^-{\bar\varphi_{n+2}}&\ar[l]\cdots\\
\cdots&Z_{n-1}\ar[rrr]|!{"4,3";"6,3"}{\hole}
&&&Z_n\ar[lld]\\
&&R_n\ar[lu]&&&R_{n+1}\ar[lu]&&R_{n+2}\ar[ll]&\ar[l]\cdots
}$$
We can take $\bar\psi_{n+1}'\psi_{n+1}'$ to define the morphism $\tilde\theta^{\bar\psi\psi}$ representing the obstruction of the composition. With this choice, the equation already holds for representatives,
$$\tilde\theta^{\bar\psi\psi}=\tilde{\theta}^{\bar\psi}S_\alpha(\varphi_{n+1})+H_0 S_\alpha(\bar\varphi_*)\tilde\theta^{\psi},$$
since
\begin{align*}
&\hspace{-30pt}
(\bar\psi_{n}'f_{n+1}^Y - f_{n+1}^Z\bar\varphi_{n+1})\varphi_{n+1}
+\bar{\psi}_{n}'(\psi_{n}'f_{n+1}^X - f_{n+1}^Y\varphi_{n+1})
\\
={}&
\bar\psi_{n}'f_{n+1}^Y\varphi_{n+1} - f_{n+1}^Z\bar\varphi_{n+1}\varphi_{n+1}
+\bar{\psi}_{n}'\psi_{n}'f_{n+1}^X - \bar{\psi}_{n}'f_{n+1}^Y\varphi_{n+1}
\\
={}&(\bar{\psi}_{n}'\psi_{n}')f_{n+1}^X  - f_{n+1}^Z(\bar\varphi_{n+1}\varphi_{n+1}).
\end{align*}
\end{proof}

The following proposition shows that the obstruction of a morphism is non-trivial in general.

\begin{prop}\label{derivation2}
For any $n$\nobreakdash-truncated Postnikov system $(X_{\leq n},P_*)$
and any
$$\zeta\in \ext_{\alpha,\C{C}}^{n+1,-n}(H_0S_\alpha(P_*),H_0S_\alpha(P_*))$$
there exists another $n$\nobreakdash-truncated Postnikov system $(Y_{\leq n},Q_*)$
with the same  \mbox{$(n-1)$}-truncation $(X_{\leq n-1},P_*)=(Y_{\leq n-1},Q_*)$ such that
$$\theta_{(X_{\leq n},P_*),(Y_{\leq n},Q_*)}
(\id{(X_{\leq n-1},P_*)})=\zeta.$$
\end{prop}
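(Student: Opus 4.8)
The plan is to realize $\zeta$ by changing only the attaching map $f_{n+1}$ of the given $n$\nobreakdash-truncated Postnikov system, leaving the object $X_n$, the whole $n$\nobreakdash-th exact triangle $P_n\xrightarrow{f_n}X_{n-1}\xrightarrow{i_n}X_n\xrightarrow{q_n}\Sigma P_n$, and the sequence $P_*$ untouched. (Note that $n\geq 1$ is implicit here, since the statement refers to the $(n-1)$\nobreakdash-truncation.) Since $S_\alpha(P_*)$ is an exact sequence of projectives with cokernel $H_0S_\alpha(P_*)$, it is a projective resolution of $H_0S_\alpha(P_*)$, so $\ext^{n+1,-n}_{\alpha,\C C}(H_0S_\alpha(P_*),H_0S_\alpha(P_*))$ is computed from it, and $\zeta$ has a representative $\tilde\zeta\colon S_\alpha(P_{n+1})\to H_0S_\alpha(P_*)$ precisely of the kind appearing in the definition of $\tilde\theta$ in Definition \ref{theta}, i.e.\ a degree $+1$ morphism with cocycle condition $\tilde\zeta\,S_\alpha(d_{n+2})=0$. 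By Lemma \ref{iso4}, chaining the isomorphisms $H_0S_\alpha(P_*)\cong\im S_\alpha(i_1)\cong\cdots\cong\im S_\alpha(i_n)=\ker S_\alpha(q_n)$ yields a degree $-1$ split monomorphism $\iota\colon H_0S_\alpha(P_*)\hookrightarrow S_\alpha(X_n)$ with image $\ker S_\alpha(q_n)$. As $P_{n+1}$ lies in $\add(\C C)$, the composite $\iota\tilde\zeta$ is $S_\alpha(\rho)$ for a unique morphism $\rho\colon P_{n+1}\to X_n$, whose degree agrees with that of $f_{n+1}$.

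Next I set $Q_*:=P_*$, $Y_k:=X_k$, $i_k^Y:=i_k^X$, $q_k^Y:=q_k^X$, $f_k^Y:=f_k^X$ for $0\leq k\leq n$, $d_k^Y:=d_k^X$ for $k\geq n+2$, and
$$f_{n+1}^Y:=f_{n+1}^X-\rho.$$
The first $n+1$ triangles are unchanged, hence exact. Since $P_{n+1}$ and $P_{n+2}$ lie in $\add(\C C)$, the identities $q_n\rho=0$ and $\rho\,d_{n+2}=0$ may be checked after applying $S_\alpha$: $S_\alpha(q_n)\iota\tilde\zeta=0$ because $\im\iota=\ker S_\alpha(q_n)$, and $\iota\tilde\zeta\,S_\alpha(d_{n+2})=0$ by the cocycle condition. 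The first identity gives $q_n^Yf_{n+1}^Y=q_n^Xf_{n+1}^X-q_n^X\rho=d_{n+1}^X$, so the complex $S_\alpha(Q_*)$ coincides with $S_\alpha(P_*)$ and is an exact sequence of projectives; the second gives $f_{n+1}^Yd_{n+2}=f_{n+1}^Xd_{n+2}-\rho\,d_{n+2}=0$, the cocycle condition for $(Y_{\leq n},Q_*)$. Hence $(Y_{\leq n},Q_*)$ is an $n$\nobreakdash-truncated Postnikov system, and since $d_{n+1}^Y=d_{n+1}^X$ while all the remaining data agrees with that of $(X_{\leq n},P_*)$, its $(n-1)$\nobreakdash-truncation is exactly $(X_{\leq n-1},P_*)=(Y_{\leq n-1},Q_*)$.

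Finally I evaluate $\theta_{(X_{\leq n},P_*),(Y_{\leq n},Q_*)}(\id{(X_{\leq n-1},P_*)})$. In the recipe of Definition \ref{theta} one takes $\psi_{n-1}=\id{X_{n-1}}$, $\varphi_n=\id{P_n}$, $\varphi_{n+1}=\id{P_{n+1}}$, and a morphism $\psi_n'$ extending them along the $n$\nobreakdash-th triangles; since the two $n$\nobreakdash-th triangles are literally identical, I may take $\psi_n'=\id{X_n}$. Then $\psi_n'f_{n+1}^X-f_{n+1}^Y\varphi_{n+1}=f_{n+1}^X-f_{n+1}^Y=\rho$, and $S_\alpha(\rho)=\iota\tilde\zeta$ factors through $\iota$ exactly as $\tilde\zeta$; since $\iota$ is a monomorphism, the representative $\tilde\theta$ of the obstruction equals $\tilde\zeta$, whence $\theta_{(X_{\leq n},P_*),(Y_{\leq n},Q_*)}(\id{(X_{\leq n-1},P_*)})=\zeta$. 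This is precisely axiom (4) of an exact sequence of categories (Definition \ref{esc}) for the truncation step $t_{n-1}$. The main things to keep straight are the degree bookkeeping around $\iota$ and $\tilde\zeta$ and the repeated use of the fact, recalled in Section \ref{ryf}, that $S_\alpha$ is faithful on morphisms out of objects of $\add(\C C)$; both are routine given Lemma \ref{iso4} and that setup.
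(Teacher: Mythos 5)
Your proposal is correct and follows essentially the same route as the paper's proof: you keep all data except $f_{n+1}$, realize a cocycle representative $\tilde\zeta$ as a morphism $\rho\colon P_{n+1}\to X_n$ (the paper's $\phi$) via the inclusion of $H_0S_\alpha(P_*)$ into $S_\alpha(X_n)$ from Lemma \ref{iso4}, set $f_{n+1}^Y=f_{n+1}^X-\rho$, and compute the obstruction with $\psi_n'=\id{X_n}$ to get $\tilde\theta=\tilde\zeta$. The only difference is that you spell out explicitly the faithfulness of $S_\alpha$ on morphisms out of $\add(\C C)$ used to deduce $q_n\rho=0$ and $\rho d_{n+2}=0$, which the paper leaves implicit.
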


\begin{proof}
We define the $n$\nobreakdash-truncated Postnikov system $(Y_{\leq n},Q_*)$ as follows,
$X_k=Y_k$, $f_k^X=f_k^Y$, $i_k^X=i_k^Y$, $q_k^X=q_k^Y$, $0\leq k\leq n$,
$P_k=Q_k$, $k\geq 0$, $d_{k}^X=d_{k}^Y$, $k\geq n+2$. It is only left to define
$f_{n+1}^Y$.

Choose a morphism $\tilde\zeta\colon S_\alpha(P_{n+1})\st{\scriptscriptstyle+1}\r H_0S_\alpha(P_*)$ representing $\zeta$. The composite
$$
S_\alpha(P_{n+1})\mathop{\To}^{\tilde\zeta}_{\scriptscriptstyle+1}  H_0S_\alpha(Q_*)\mathop{\subset}_{\scriptscriptstyle -1} S_\alpha(X_{n})
$$
is the image by $S_\alpha$ of a unique $\phi\colon P_{n+1}\r X_n$, which must satisfy $q_n^X\phi=0$ and
$\phi d_{n+2}=0$, since $\tilde\zeta S_\alpha(d_{n+2})=0$. The morphism $f^Y_{n+1}=f_{n+1}^X-\phi$ yields an $n$\nobreakdash-truncated Postnikov system $(Y_{\leq n},Q_*)$ since the cocycle condition holds,
\begin{align*}
f^Y_{n+1}d_{n+2}&=(f_{n+1}^X-\phi)d_{n+2}=f_{n+1}^Xd_{n+2}-\phi d_{n+2}=0-0=0.
\end{align*}
To show that its $(n-1)$\nobreakdash-truncation is $(X_{\leq n-1},P_*)$, it is enough to notice that $d_{n+1}^Y=q_n^Y f_{n+1}^Y=q_n^X (f_{n+1}^X - \phi )=q_n^X f_{n+1}^X -0 = d_{n+1}^X$. In order to compute the obstruction of $\id{(X_{\leq n-1},P_*)}$ we can take $\psi_n'=\id{X_n}$, so $\tilde\theta=\tilde\zeta$ and the obstruction is~$\zeta$.
\end{proof}

\begin{defn}\label{iota}
Given a pair of $n$\nobreakdash-truncated Postnikov systems $(X_{\leq n},P_*)$ and $(Y_{\leq n},Q_*)$, $n>0$, any degree~$0$ morphism $\tilde\zeta\colon S_\alpha(P_n)\rightarrow H_0S_\alpha(Q_*)$ with $\tilde\zeta S_\alpha(d_{n+1}^X)=0$ gives rise to a morphism
$$\bar\imath(\tilde\zeta)\colon (X_{\leq n},P_*)\To (Y_{\leq n},Q_*)$$
whose only non-trivial component is
$g_{\tilde\zeta}q_n^X\colon X_n\r Y_n$,
$$\xymatrix@C=8pt@R=8pt{\cdots&X_{n-1}\ar[rrr]\ar[dd]_{0}&&&X_{n}\ar[lld]\ar[dd]|{g_{\tilde\zeta} q_n^X}\\
&&P_{n}\ar[lu]\ar[dd]_<(.2){0}
\ar@{-->}[rrd]_{g_{\tilde\zeta}}
&&&P_{n+1}\ar[lu]\ar[dd]^{0}
&&P_{n+2}\ar[ll]\ar[dd]^{0}&\ar[l]\cdots\\
\cdots&Y_{n-1}\ar[rrr]|!{"2,3";"4,3"}{\hole}&&&Y_n\ar[lld]\\
&&Q_n\ar[lu]&&&Q_{n+1}\ar[lu]&&Q_{n+2}\ar[ll]&\ar[l]\cdots}$$
Here $g_{\tilde\zeta}\colon P_n\st{\scriptscriptstyle-1}\r Y_n$ is the morphism whose image by $S_\alpha$ is
$$S_\alpha(P_{n})\mathop{\To}^{\tilde\zeta} H_0S_\alpha(Q_*)\mathop{\subset}^{\scriptscriptstyle -1} S_\alpha(Y_{n}).$$
This construction defines a natural homomorphism
$$\bar\imath\colon \ker\hom^{0}_{\alpha,\C{C}}(S_\alpha(d_{n+1}^X), H_0S_\alpha(Q_*))\To
\mathbf{Post}_n((X_{\leq n},P_*), (Y_{\leq n},Q_*)).$$
\end{defn}

\begin{prop}\label{iprimera}
The natural homomorphism $\bar\imath$ factors as
$$\imath\colon\ext^{n,-n}_{\alpha,\C{C}}(H_0S_\alpha(P_*), H_0S_\alpha(Q_*))\To\mathbf{Post}_n((X_{\leq n},P_*), (Y_{\leq n},Q_*)).$$
\end{prop}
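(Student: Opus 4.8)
The plan is to prove that $\bar\imath$ annihilates the subgroup of $n$-coboundaries, so that it descends to the quotient group, which is precisely $\ext^{n,-n}_{\alpha,\C{C}}(H_0S_\alpha(P_*),H_0S_\alpha(Q_*))$.

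First I would make the $\ext$ group explicit. By the very definition of an $n$-truncated Postnikov system (Definition \ref{truncated}), the functor $S_\alpha$ sends the diagram $P_*$, with its differentials $S_\alpha(d_k^X)$, to a projective resolution of $H_0S_\alpha(P_*)=\coker S_\alpha(d_1^X)$ in $\Mod{\alpha}{\C{C}}$. Hence $\ext^{n,-n}_{\alpha,\C{C}}(H_0S_\alpha(P_*),H_0S_\alpha(Q_*))$ is the cohomology in homological degree $n$ (and internal degree $-n$) of the cochain complex $\hom^{*}_{\alpha,\C{C}}(S_\alpha(P_\bullet),H_0S_\alpha(Q_*))$, whose differentials are induced by the $S_\alpha(d_k^X)$. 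The domain $\ker\hom^{0}_{\alpha,\C{C}}(S_\alpha(d_{n+1}^X),H_0S_\alpha(Q_*))$ of $\bar\imath$ is exactly the group of $n$-cocycles of this complex, and its $n$-coboundaries are the morphisms $\tilde\zeta=\tilde\eta\circ S_\alpha(d_n^X)$, where $\tilde\eta$ is a graded morphism out of $S_\alpha(P_{n-1})$ (this makes sense because $n>0$). So the proposition reduces to showing $\bar\imath(\tilde\zeta)=0$ for such a $\tilde\zeta$, and the induced map on the quotient is then the desired $\imath$.

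Second I would carry out this vanishing. The morphism of $n$-truncated Postnikov systems $\bar\imath(\tilde\zeta)$ has a single potentially non-zero component $g_{\tilde\zeta}q_n^X\colon X_n\r Y_n$, where $g_{\tilde\zeta}$ is the unique morphism with $S_\alpha(g_{\tilde\zeta})$ equal to the composite $S_\alpha(P_n)\xrightarrow{\tilde\zeta}H_0S_\alpha(Q_*)\hookrightarrow S_\alpha(Y_n)$, the last map being the degree $-1$ inclusion from Lemma \ref{iso4}. Writing $\tilde\zeta=\tilde\eta\circ S_\alpha(d_n^X)$ and using that $P_{n-1}$ lies in $\add(\C{C})$ — so that $S_\alpha$ induces a bijection $\C{T}^{*}(P_{n-1},Y_n)\cong\hom^{*}_{\alpha,\C{C}}(S_\alpha(P_{n-1}),S_\alpha(Y_n))$ — the composite of $\tilde\eta$ with the inclusion $H_0S_\alpha(Q_*)\hookrightarrow S_\alpha(Y_n)$ is $S_\alpha(\gamma)$ for a unique $\gamma\colon P_{n-1}\r Y_n$ in $\C{T}^{*}$. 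Then $S_\alpha(g_{\tilde\zeta})=S_\alpha(\gamma)\circ S_\alpha(d_n^X)=S_\alpha(\gamma\circ d_n^X)$, and since $P_n$ is also in $\add(\C{C})$ the bijection on $\C{T}^{*}(P_n,Y_n)$ forces the equality $g_{\tilde\zeta}=\gamma\circ d_n^X=\gamma\circ q_{n-1}^X\circ f_n^X$ of morphisms in $\C{T}$ itself.

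Finally, $g_{\tilde\zeta}q_n^X=\gamma\,q_{n-1}^X\,(f_n^X q_n^X)=0$, because $f_n^X$ and $q_n^X$ are consecutive morphisms (up to suspension) in the $n$-th exact triangle $P_n\xrightarrow{f_n^X}X_{n-1}\xrightarrow{i_n^X}X_n\xrightarrow{q_n^X}\Sigma P_n$ of $(X_{\leq n},P_*)$, so that $f_n^X q_n^X=0$. Hence $\bar\imath(\tilde\zeta)$ is the zero morphism, and $\bar\imath$ factors through the quotient $\ext^{n,-n}_{\alpha,\C{C}}(H_0S_\alpha(P_*),H_0S_\alpha(Q_*))$ as the claimed natural homomorphism $\imath$. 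I expect the only genuinely non-formal step to be the identification $g_{\tilde\zeta}=\gamma\circ d_n^X$ \emph{as morphisms in $\C{T}$} — this is what makes $\bar\imath(\tilde\zeta)$ vanish on the nose, and not merely up to homotopy, which matters since the target of $\imath$ is $\mathbf{Post}_n$ rather than $\mathbf{Post}_n^{\simeq}$; it rests essentially on the faithfulness of $S_\alpha$ on morphisms out of objects of $\add(\C{C})$. The remaining points, including the internal-degree bookkeeping that places cocycles and coboundaries in bidegree $(n,-n)$, are routine once one notes that each $q_k^X$ appearing in $d_k^X=q_{k-1}^X f_k^X$ has degree $+1$.
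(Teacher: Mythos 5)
Your argument is correct and is essentially the paper's own proof: the paper also reduces to showing that $\bar\imath$ kills any $\tilde\zeta$ factoring through $S_\alpha(d_n^X)$, which follows from $d_n^Xq_n^X=q_{n-1}^Xf_n^Xq_n^X=q_{n-1}^X\cdot 0=0$ (the identity $f_n^Xq_n^X=0$ from the rotated exact triangle, exactly your final step). Your explicit lifting of the factorization to $\C T$ via the bijection $\C{T}^{*}(P_{n-1},Y_n)\cong\hom^{*}_{\alpha,\C{C}}(S_\alpha(P_{n-1}),S_\alpha(Y_n))$ (valid because $S_\alpha(P_{n-1})$ is projective, hence $P_{n-1}$ is, up to isomorphism, in $\add(\C C)$) just spells out what the paper leaves implicit, so the vanishing indeed holds in $\mathbf{Post}_n$ on the nose.
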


\begin{proof}
It is enough to notice that if $\tilde\zeta$ factors through $S_\alpha(d_n^X)$ then $\bar\imath(\tilde\zeta)=0$. This follows from $d_n^Xq_n^X=q_{n-1}^Xi_n^Xq_n^X=q_{n-1}^X0=0$.
\end{proof}

The kernel of $\imath=\imath_{(X_{\leq n},P_*), (Y_{\leq n},Q_*)}$ and of its composition with the natural projection onto the homotopy category $\mathbf{Post}_n^{\simeq}$ can be computed by means of spectral sequences associated to the Postnikov system $(X_{\leq n},P_*)$. We omit the details to avoid further technicalities, compare \cite[page 340 and VI.5.16]{B89}.

\begin{prop}\label{isegunda}
Given a morphism of $n$\nobreakdash-truncated Postnikov systems $$(\psi_{\leq n},\varphi_*)\colon (X_{\leq n},P_*)\To (Y_{\leq n},Q_*),$$ its $(n-1)$\nobreakdash-truncation $(\psi_{\leq n-1},\varphi_*)$ is nullhomotopic if and only if $(\psi_{\leq n},\varphi_*)$ is homotopic to a morphism in the image of $\imath$ in Proposition \ref{iprimera}.
\end{prop}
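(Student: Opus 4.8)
The plan is to prove the two implications separately; the first is formal and the second reduces, after unwinding the homotopy relation, to a splitting argument based on Lemma~\ref{iso4}. Suppose first that $(\psi_{\leq n},\varphi_*)$ is homotopic to a morphism in the image of $\imath$; since $\imath$ factors $\bar\imath$, that morphism is $\bar\imath(\tilde\zeta)$ for some $\tilde\zeta$. By the construction of $\bar\imath$, every component of $\bar\imath(\tilde\zeta)$ is zero except $g_{\tilde\zeta}q_n^X\colon X_n\r Y_n$; in particular all its $\varphi$\nobreakdash-components vanish and $\psi_k=0$ for $k<n$, so its $(n-1)$\nobreakdash-truncation $t_{n-1}\bar\imath(\tilde\zeta)$ is the zero morphism $(X_{\leq n-1},P_*)\r(Y_{\leq n-1},Q_*)$. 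Since $t_{n-1}$ is additive and descends to the homotopy categories, $(\psi_{\leq n-1},\varphi_*)=t_{n-1}(\psi_{\leq n},\varphi_*)\simeq t_{n-1}\bar\imath(\tilde\zeta)=0$, i.e.~the $(n-1)$\nobreakdash-truncation is nullhomotopic.

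Conversely, assume $(\psi_{\leq n-1},\varphi_*)$ is nullhomotopic. Unwinding the homotopy relation for $(n-1)$\nobreakdash-truncated Postnikov systems, this means $\psi_k$ factors through $f_{k+1}^Y$ for $0\le k\le n-1$; write $\psi_{n-1}=f_n^Y\rho$. From the commuting square $i_n^Y\psi_{n-1}=\psi_n i_n^X$ and the relation $i_n^Y f_n^Y=0$ (consecutive maps of the exact triangle $Q_n\xrightarrow{f_n^Y}Y_{n-1}\xrightarrow{i_n^Y}Y_n\xrightarrow{q_n^Y}\Sigma Q_n$) we obtain $\psi_n i_n^X=0$, so the exact triangle $P_n\xrightarrow{f_n^X}X_{n-1}\xrightarrow{i_n^X}X_n\xrightarrow{q_n^X}\Sigma P_n$ yields a degree $-1$ morphism $g\colon P_n\r Y_n$ with $\psi_n=g\,q_n^X$. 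By Lemma~\ref{iso4} there is a splitting $S_\alpha(Y_n)\cong H_0S_\alpha(Q_*)\oplus\im S_\alpha(f_{n+1}^Y)$ in which $H_0S_\alpha(Q_*)$ is included via the canonical degree $-1$ monomorphism $\iota$; accordingly write $S_\alpha(g)=\iota\tilde\zeta+w$ with $w$ landing in $\im S_\alpha(f_{n+1}^Y)$. As $S_\alpha(\Sigma P_n)$ is projective and $S_\alpha(f_{n+1}^Y)$ is onto its image, $w=S_\alpha(f_{n+1}^Y)S_\alpha(v)$ for a morphism $v\colon\Sigma P_n\r Q_{n+1}$, which is unique because $\Sigma P_n\in\add(\C C)$; then $g-f_{n+1}^Y v=g_{\tilde\zeta}$ by faithfulness of $S_\alpha$ on maps out of $\Sigma P_n$.

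It remains to see that $\tilde\zeta$ lies in the domain of $\bar\imath$ (hence of $\imath$) and that $(\psi_{\leq n},\varphi_*)\simeq\bar\imath(\tilde\zeta)$. For the first point, the commuting square $\psi_n f_{n+1}^X=f_{n+1}^Y\varphi_{n+1}$ together with $d_{n+1}^X=q_n^X f_{n+1}^X$ gives $g\,d_{n+1}^X=f_{n+1}^Y\varphi_{n+1}$; hence $\iota\,\tilde\zeta\,S_\alpha(d_{n+1}^X)=S_\alpha(g\,d_{n+1}^X)-w\,S_\alpha(d_{n+1}^X)$ lies both in $\im\iota$ and in $\im S_\alpha(f_{n+1}^Y)$, so it vanishes because the sum is direct, and $\tilde\zeta S_\alpha(d_{n+1}^X)=0$ since $\iota$ is monic. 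For the second point, $\psi_n-g_{\tilde\zeta}q_n^X=f_{n+1}^Y(v\,q_n^X)$ factors through $f_{n+1}^Y$, and $\psi_k-0=\psi_k$ factors through $f_{k+1}^Y$ for $k<n$, which is exactly the homotopy relation $(\psi_{\leq n},\varphi_*)\simeq\bar\imath(\tilde\zeta)$.

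I expect no serious obstacle: the argument never leaves the realm of exact triangles and Lemma~\ref{iso4}. The step needing care is in the converse, namely peeling off the $\im S_\alpha(f_{n+1}^Y)$\nobreakdash-component of $S_\alpha(g)$ as a coboundary $f_{n+1}^Y v$ — which requires knowing this component is realized by an actual morphism $v$ in $\C T$, using $\Sigma P_n\in\add(\C C)$ — and then extracting the cocycle condition $\tilde\zeta S_\alpha(d_{n+1}^X)=0$ from the directness of the splitting together with the square relating $\psi_n$ with $\varphi_{n+1}$.
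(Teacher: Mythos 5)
Your proof is correct, and its skeleton coincides with the paper's: the forward implication is the same formal truncation argument, and in the converse you, like the paper, use $\psi_ni_n^X=i_n^Y\psi_{n-1}=0$ to factor $\psi_n=g\,q_n^X$, then split through the decomposition $S_\alpha(Y_n)\cong H_0S_\alpha(Q_*)\oplus\im S_\alpha(f_{n+1}^Y)$ of Lemma \ref{iso4} and finish with the same homotopy check against $\bar\imath(\tilde\zeta)$. The one genuine difference is how you obtain the cocycle condition $\tilde\zeta S_\alpha(d_{n+1}^X)=0$. The paper first asserts that $\varphi_*$ is chain nullhomotopic, writes $\varphi_{n+1}=h_{n+1}d_{n+1}^X+d_{n+2}^Yh_{n+2}$, and corrects $\phi$ (your $g$) to $\gamma=\phi-f_{n+1}^Yh_{n+1}$ so that $\gamma d_{n+1}^X=0$ holds on the nose, which makes the $H_0$\nobreakdash-component $\xi_1$ of $S_\alpha(\gamma)$ a cocycle automatically. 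You instead read the condition off the commuting square $\psi_nf_{n+1}^X=f_{n+1}^Y\varphi_{n+1}$, which forces $\iota\tilde\zeta S_\alpha(d_{n+1}^X)$ into $\im\iota\cap\im S_\alpha(f_{n+1}^Y)=0$. Since subtracting $f_{n+1}^Yh_{n+1}$ only alters the $\im S_\alpha(f_{n+1}^Y)$\nobreakdash-component, your $\tilde\zeta$ is the paper's $\xi_1$; what your route buys is that it bypasses the (implicit) projective comparison argument needed to justify ``$\varphi_*$ is nullhomotopic'', whose homotopies $h_{n+1},h_{n+2}$ are not part of the homotopy relation as defined, at the modest cost of explicitly lifting the complementary component to a morphism $v\colon\Sigma P_n\to Q_{n+1}$ — legitimate because $S_\alpha(P_n)$ is projective and $S_\alpha$ induces isomorphisms on morphism sets out of $\add(\C C)$, which is also what makes $g-f_{n+1}^Yv=g_{\tilde\zeta}$ and the final factorization through $f_{n+1}^Y$ valid.
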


\begin{proof}
The truncation of a morphism in the image of $\imath$ is trivial. Conversely, if $(\psi_{\leq n-1},\varphi_*)$ is nullhomotopic then $0=i_{n}^{Y}\psi_{n-1}=\psi_{n}i_{n}^{X}$, so we can factor $\psi_{n}=\phi q_{n}^{X}$. Moreover, $\varphi_{*}$ is nullhomotopic, so $\varphi_{n+1}=h_{n+1}d_{n+1}^{X}+d_{n+2}^{Y}h_{n+2}$ for certain $h_{n+1}$ and $h_{n+2}$,
$$
\xymatrix@C=8pt@R=8pt{\cdots&X_{n-1}\ar[rrr]\ar[dd]_{\psi_{n-1}}&&&X_{n}\ar[lld]\ar[dd]^{\psi_{n}}\\
&&P_{n}\ar[lu]\ar[dd]_<(.2){\varphi_{n}}
\ar@{-->}@(dr,l)[rrrdd]_<(.6){h_{n+1}}
\ar@{-->}[drr]^\phi
&&&P_{n+1}\ar[lu]\ar[dd]_<(.2){\varphi_{n+1}}\ar@{-->}[rrdd]^<(.4){h_{n+2}}
&&P_{n+2}\ar[ll]\ar[dd]^{\varphi_{n+2}}&\ar[l]\cdots\\
\cdots&Y_{n-1}\ar[rrr]|!{"2,3";"4,3"}{\hole}&&&Y_n\ar[lld]\\
&&Q_n\ar[lu]&&&Q_{n+1}\ar[lu]&&Q_{n+2}\ar[ll]&\ar[l]\cdots}
$$
If we denote $\gamma=\phi-f_{n+1}^Yh_{n+1}$ we have that $\gamma d_{n+1}^X=0$, since
\begin{align*}
\phi d_{n+1}^X&=\phi q_n^Xf_{n+1}^X=\psi_{n}f_{n+1}^X=f_{n+1}^Y\varphi_{n+1}\\
&=f_{n+1}^Y(h_{n+1}d_{n+1}^X+d_{n+2}^Yh_{n+2})=f_{n+1}^Yh_{n+1}d_{n+1}^X.
\end{align*}
Here we use the cocycle condition $f_{n+1}^Yd_{n+2}^Y=0$.

Using the direct sum decomposition in Lemma \ref{iso4},
$$\binom{\xi_{1}}{\xi_{2}}=S_{\alpha}(\gamma)\colon S_{\alpha}(P_{n})\To S_{\alpha}(Y_{n})\cong H_{0}S_{\alpha}(Q_{*})\oplus \im S_{\alpha}(f_{n+1}^{Y}).$$
Since $\gamma d_{n+1}^X=0$ we have
 $\xi_kS_\alpha(d_{n+1}^X)=0$, $k=1,2$. Let us check that $\bar\imath(\xi_1)$ is homotopic to $(\psi_{\leq n},\varphi_*)$. Notice that, since $(\psi_{\leq n-1},\varphi_*)$ is nullhomotopic, we only have to check that $\psi_{n}-g_{\xi_{1}}q_{n}^{X}=(\phi-g_{\xi_{1}})q_{n}^{X}$ factors through $f_{n+1}^Y\colon Q_{n+1}\r Y_n$ where $g_{\xi_{1}}$ is the morphism whose image by $S_\alpha$ is $\xi_1$. This is obvious since by construction the image of $S_{\alpha}(\phi-g_{\xi_{1}})=S_{\alpha}(\phi)-\xi_{1}$ lies on $\im S_{\alpha}(f_{n+1}^{Y})$ in the previous direct sum decomposition.
\end{proof}

\subsection{The obstruction of a module}\label{kappamodule}

In this short section we analyze the most basic of the obstructions in Section \ref{sec-truncated-post}.

\begin{defn}
The \emph{obstruction of an $\alpha$\nobreakdash-continuous $\C{C}$\nobreakdash-module} $M$ is  the obstruction of a $0$\nobreakdash-truncated Postnikov system $(X_{ 0},P_*)$ with homology  $H_0S_\alpha(P_*)=M$,
$$\kappa(M)=\kappa(X_{ 0},P_*)\in \ext_{\alpha,\C{C}}^{3,-1}(M,M).$$
\end{defn}

The following characterization of this obstruction extends \cite[Theorem 3.7]{rmtc}.

\begin{prop}\label{pequeno}
Given an $\alpha$\nobreakdash-continuous $\C{C}$\nobreakdash-module $M$, $\kappa(M)=0$
if an only if $M$ is a retract of a restricted representable functor $S_{\alpha}(X)$.
\end{prop}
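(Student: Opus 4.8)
The plan is to prove both implications by connecting the obstruction of the module $M$ to the obstruction $\kappa(X_{\leq 0},P_*)$ of an associated $0$-truncated Postnikov system, using the machinery already set up. First I would fix a projective presentation of $M$ in $\Mod{\alpha}{\C C}$, extended to a projective resolution $\cdots\r S_\alpha(P_2)\r S_\alpha(P_1)\r S_\alpha(P_0)\onto M$ by representables (possible since representables form a set of projective generators and $S_\alpha$ identifies $\add(\C C)$ with the projectives). Choosing $X_0=\Sigma P_0$ and $q_0=\id{}$, together with the composite maps $d_{n+1}$ in $\C T$ lifting the differentials of the resolution, produces a $0$-truncated Postnikov system $(X_{\leq 0},P_*)$ with $H_0S_\alpha(P_*)\cong M$; any two such choices give the same obstruction class $\kappa(M)$ by Proposition~\ref{naturality}.

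For the direction ``$\kappa(M)=0\Rightarrow M$ is a retract of some $S_\alpha(X)$'', the idea is an inductive lifting argument. By Proposition~\ref{vanishing}, $\kappa(X_{\leq 0},P_*)=0$ yields a $1$-truncated Postnikov system extending it. The obstruction to going further sits in $\ext_{\alpha,\C C}^{n+3,-1-n}$, which does not vanish in general; so I cannot simply iterate. Instead I would invoke the framework of Theorem~\ref{lololo}: the obstruction $\kappa(M)$ is precisely $\kappa_0$ applied to the object $\mathbf{Post}_0^{\simeq}\simeq\Mod{\alpha}{\C C}$ represented by $M$, and its vanishing means $M$ lifts to $\mathbf{Post}_1^{\simeq}$. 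Then I would use the standard trick (as in \cite{rmtc}) that one can modify the choice of lift at each stage using the freedom provided by condition (4)/Proposition~\ref{derivation2} to kill the next obstruction \emph{after passing to a retract}; more precisely, the telescoping/Eilenberg-swindle argument shows that $M\oplus(\text{something realizable})$ is realizable, hence $M$ is a retract of $S_\alpha(X)$ for the homotopy colimit $X$ of the resulting tower. Concretely: vanishing of $\kappa(M)$ gives a $1$-truncated system, and I would argue that the ``defect'' measured by the higher $\kappa_n$ can always be absorbed into a direct summand, so that the homotopy colimit $X=\Hocolim X_n$ of a suitable (possibly enlarged) Postnikov-type tower satisfies $S_\alpha(X)\cong M\oplus M'$.

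For the converse, ``$M$ a retract of $S_\alpha(X)$ $\Rightarrow$ $\kappa(M)=0$'', I would first note that $\kappa$ is natural (Proposition~\ref{naturality}), so it behaves well under retracts: if $M$ is a retract of $N=S_\alpha(X)$, then $\kappa(M)$ is a retract of $\kappa(N)$ in the appropriate $\ext$-bimodule sense, and it suffices to show $\kappa(S_\alpha(X))=0$. But $X$ has a genuine Postnikov resolution $(X,X_*,P_*)$ by Corollary~\ref{postexis}, whose underlying data restricts to a $0$-truncated Postnikov system with homology $S_\alpha(X)$; since this $0$-truncated system extends all the way to a full (infinite) Postnikov system, Proposition~\ref{vanishing} (applied at level $n=0$, using that an $(n+1)$-truncation exists because the full resolution exists) forces $\kappa(S_\alpha(X))=0$. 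Then naturality transports this to $\kappa(M)=0$.

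I expect the main obstacle to be the forward direction: turning the single vanishing $\kappa(M)=0$ into an actual object of $\C T$, since the higher obstructions $\kappa_n$, $n\geq 1$, need not vanish and there is no reason the lift to $\mathbf{Post}_1^{\simeq}$ extends to $\mathbf{Post}_\infty^{\simeq}$ on the nose. The resolution is the standard ``retract'' phenomenon already visible in Adams representability (Proposition~\ref{full1} and Corollary~\ref{dimensiones2}): one does not realize $M$ itself but $M$ up to a direct summand, using the flexibility in Proposition~\ref{derivation2} to replace a given truncated system by a homotopic one whose next obstruction is killed at the cost of adding a split piece. Making this swindle precise — identifying exactly which auxiliary realizable module to add and checking the homotopy colimit computes the right restricted representable via Proposition~\ref{hocolimpostres} and Lemma~\ref{util} — is the technical heart, but it parallels \cite[Theorem 3.7]{rmtc} closely enough that it should go through.
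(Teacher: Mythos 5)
Your converse direction is essentially the paper's argument and is fine: $\kappa(S_\alpha(X))=0$ because a Postnikov resolution of $X$ (Corollary \ref{postexis}) provides the $1$-truncated extension required by Proposition \ref{vanishing}, and then naturality (Proposition \ref{naturality}) applied to lifts of $i$ and $r$ gives $\kappa(M)=\id{M}\cdot\kappa(M)=ri\cdot\kappa(M)=r\cdot\kappa(S_\alpha(X))\cdot i=0$.

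The forward direction, however, has a genuine gap, and the gap comes from attacking a harder problem than the statement asks. You correctly get, from $\kappa(M)=0$ and Proposition \ref{vanishing}, a $1$-truncated Postnikov system $(X_{\leq 1},P_*)$ with $H_0S_\alpha(P_*)\cong M$, but then you try to continue lifting through all the stages $\mathbf{Post}_n^{\simeq}$, acknowledge that the higher obstructions $\kappa_n$ need not vanish, and appeal to an unspecified ``swindle'' producing a homotopy colimit $X$ with $S_\alpha(X)\cong M\oplus M'$. You never identify the auxiliary summand, never verify the colimit computation, and you concede this is the ``technical heart'' left undone — so as written the implication is not proved. Moreover, none of that machinery is needed: the statement only requires $M$ to be a \emph{retract} of some restricted representable, and the object $X_1$ of the $1$-truncated system is already an honest object of $\C T$ (a cone of $f_1\colon P_1\r X_0$). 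Lemma \ref{iso4} gives the splitting
$$S_\alpha(X_1)\cong H_0S_\alpha(P_*)\oplus\im S_\alpha(f_2)\cong M\oplus\im S_\alpha(f_2),$$
so $M$ is a retract of $S_\alpha(X_1)$ and the proof ends there. No passage to $\mathbf{Post}_\infty^{\simeq}$, no homotopy colimit (Proposition \ref{hocolimpostres}), and no vanishing of any $\kappa_n$ for $n\geq 1$ is required; the single extension step granted by $\kappa(M)=0$ already does the job.
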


\begin{proof}
If $\kappa(M)=0$ we can extend $(X_{0},P_{*})$ to a $1$\nobreakdash-truncated Postnikov system $(X_{\leq 1},P_{*})$ by Proposition \ref{vanishing}, and Lemma \ref{iso4} shows that $M$ is a direct summand of $S_{\alpha}(X_{1})$. Conversely, we always have $\kappa (S_{\alpha}(X))=0$ from the existence of Postnikov resolutions, see Corollary \ref{postexis} and Proposition \ref{vanishing}. Moreover, if
$$
\xymatrix{M \ar@<.1cm>[r]^-i & S_{\alpha}(X) \ar@<.1cm>[l]^-r & ri=\id{M}}
$$
is a retraction, then, by Proposition \ref{naturality},
\begin{align*}
\kappa(M)&=\id{M}\cdot\kappa(M)=ri\cdot\kappa(M)
=r\cdot\kappa(S_{\alpha}(X))\cdot i=0.
\end{align*}
\end{proof}

\begin{cor}
If $S_{\alpha}$ is full, then an $\alpha$\nobreakdash-continuous $\C{C}$\nobreakdash-module $M$ is isomorphic to a restricted representable functor, $M\cong S_{\alpha}(X)$, if and only if $\kappa(M)=0$.
\end{cor}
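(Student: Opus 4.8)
The plan is to deduce the statement from Proposition~\ref{pequeno} by upgrading ``retract'' to ``isomorphic''. One implication does not even use fullness: if $M\cong S_{\alpha}(Y)$ then $M$ is in particular a retract of $S_{\alpha}(Y)$, so $\kappa(M)=0$ by Proposition~\ref{pequeno}. For the converse, assume $\kappa(M)=0$. Proposition~\ref{pequeno} provides an object $X$ in $\C T$ and morphisms $i\colon M\r S_{\alpha}(X)$, $r\colon S_{\alpha}(X)\r M$ in $\Mod{\alpha}{\C C}$ with $ri=\id{M}$; equivalently, $M$ is a direct summand of $S_{\alpha}(X)$, and $e=ir$ is an idempotent endomorphism of $S_{\alpha}(X)$ with $\im(e)\cong M$. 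The task is to show that such a direct summand of a restricted representable functor is itself a restricted representable functor.

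Here the hypothesis that $S_{\alpha}$ is full does the job through the projective dimension characterization of the essential image. First, Proposition~\ref{full1} gives $\pd(S_{\alpha}(X))\leq 1$. Second, projective dimension cannot grow under passage to a direct summand, since $\ext^{2,q}_{\alpha,\C C}(M,-)$ is a retract of $\ext^{2,q}_{\alpha,\C C}(S_{\alpha}(X),-)=0$ for all $q$; hence $\pd(M)\leq 1$. Finally, Corollary~\ref{dimensiones2} asserts that, $S_{\alpha}$ being full, its essential image is exactly the class of $\alpha$\nobreakdash-continuous $\C C$\nobreakdash-modules of projective dimension $\leq 1$. Therefore $M\cong S_{\alpha}(Y)$ for some $Y$ in $\C T$, which is the remaining implication.

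The only genuine step is this passage from ``retract'' to ``isomorphic to a restricted representable''; the argument above outsources it to the $\pd$ bound, but it can also be carried out directly, and doing so makes clear why fullness is indispensable. Using fullness, lift $e$ to a morphism $f\colon X\r X$ with $S_{\alpha}(f)=e$, and let $Y=\Hocolim_{n}X_{n}$ be a homotopy colimit of the sequence $X\st{f}\r X\st{f}\r X\r\cdots$, which exists since $\C T$ has countable coproducts. Applying $S_{\alpha}$ to the defining exact triangle of $Y$, and using that $S_{\alpha}$ preserves countable coproducts and sends exact triangles to long exact sequences, one identifies $S_{\alpha}(Y)$ with the cokernel of the telescope map on $\coprod_{n>0}S_{\alpha}(X)$, that is, with the colimit in $\Mod{\alpha}{\C C}$ of $S_{\alpha}(X)\st{e}\r S_{\alpha}(X)\st{e}\r\cdots$; since $e$ is idempotent this colimit is $\im(e)\cong M$. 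Either way, the hypothesis that $S_{\alpha}$ is full is what allows the idempotent $e$ to be realized by an endomorphism of $X$ and, equivalently, the essential image of $S_{\alpha}$ to be closed under retracts; without it the corollary fails, which is precisely why it is stated under that hypothesis.
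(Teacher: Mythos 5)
Your argument is correct, and your primary route is genuinely different from the paper's. The paper proves the nontrivial implication by lifting the idempotent $ir$ to a morphism $f\colon X\r X$ using fullness and then splitting it with the telescope: it checks, as in the proof of Theorem \ref{uneq}, that $S_{\alpha}$ of $\Hocolim(X\st{f}\r X\st{f}\r\cdots)$ is $M$. You instead use fullness only through Proposition \ref{full1} to get $\pd(S_{\alpha}(X))\leq 1$, note that projective dimension does not increase on direct summands (legitimate here, since $\Mod{\alpha}{\C C}$ has enough projectives), and then invoke Corollary \ref{dimensiones2} (in fact Corollary \ref{dimensiones}(1) already suffices, since $\pd(M)\leq 1\leq 2$) to conclude representability; this is shorter but outsources the work to the obstruction theory summarized in Theorem \ref{lololo}, whereas the paper's telescope argument is more self-contained and produces the representing object explicitly. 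Your second, ``direct'' argument is precisely the paper's proof; the only point to flag there is that identifying $S_{\alpha}(Y)$ with the cokernel of the telescope map does not follow merely from preservation of coproducts plus the long exact sequence, which a priori exhibits $S_{\alpha}(Y)$ only as an extension of a kernel by that cokernel. The missing observation --- and this is exactly what the paper's reference to the proof of Theorem \ref{uneq} (via Proposition \ref{hocolimpostres} and Lemma \ref{replace}) supplies --- is that, because $e$ is a split idempotent on $S_{\alpha}(X)$, the map $S_{\alpha}$ applied to the telescope matrix is a split monomorphism, so the long exact sequence collapses and $S_{\alpha}(Y)\cong\im(e)\cong M$. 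Since your first argument is complete on its own, this is a presentational gloss rather than a gap.
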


\begin{proof}
If $M$ is a restricted representable functor, then $\kappa(M)=0$ by Proposition \ref{pequeno}. Conversely, if $\kappa(M)=0$, then we have a retraction
$$
\xymatrix{M \ar@<.1cm>[r]^-i & S_{\alpha}(X) \ar@<.1cm>[l]^-r & ri=\id{M}.}
$$
Since $S_{\alpha}$ is full $ir\colon S_{\alpha}(X)\r S_{\alpha}(X)$ is the image by $S_\alpha$ of some $f\colon X\r X$. One can check, as in the proof of Theorem \ref{uneq}, that $M$ is isomorphic to the image by $S_{\alpha}$ of
$$\Hocolim(X\st{f}\To X\st{f}\To X\st{f}\To \cdots).$$
\end{proof}

\subsection{Connection with the Adams spectral sequence}\label{SS}

Given a pair of objects $X$ and $Y$ in $\C T$, the \emph{Adams spectral sequence} is a conditionally convergent cohomological spectral sequence abutting to $\C T^{*}(X,Y)$ with $E_{2}$\nobreakdash-term $$E_{2}^{p,q}=\ext_{\alpha,\C C}^{p,q}(S_{\alpha}(X),S_{\alpha}(Y)),$$
cf.~\cite[Section~4]{Ch98}. It is defined by the exact couple
$$
\xymatrix{\C T(W_{*},Y)\ar@{<-}[rr]^{\C T(j_{*},Y)}_{\scriptscriptstyle(-1,0)}&&\C T(W_{*},Y)\ar@{<-}[ld]^{\C T(r_{*},Y)}_{\scriptscriptstyle(0,-1)}\\
&\C T(P_{*},Y)\ar@{<-}[lu]^{\C T(g_{*},Y)}_{\scriptscriptstyle(+1,0)}&}
$$
associated to an Adams resolution $(X,W_*,P_*)$ of $X$. Here we set $W_{-1}=X$. The induced decreasing filtration of $\C T^{*}(X,Y)$ is the filtration by powers of the ideal $\C I$ of $\C C$\nobreakdash-phantom maps. The spectral sequence strongly converges
if and only if $X$ is $n$-$\C C$-cellular for some $n$ \cite[Proposition 4.5]{Ch98}, or equivalently if $\C I^{n}(X,-)$ vanishes for some $n$. This is one of the reasons why the powers of the phantom ideal have attracted attention in the literature, as recalled in Remark \ref{classical}.

The following result relates the obstruction to the lifting of morphisms along $t_{n-1}$ in Definition \ref{theta} with the differentials of the Adams spectral sequence.

\begin{prop}
Let $(X,X_{*},P_{*})$ and $(Y,Y_{*},Q_{*})$ be Postnikov resolutions and let
$(\psi_{\leq n-1},\varphi_{*})\colon (X_{\leq n-1},P_{*})\r (Y_{\leq n-1},Q_{*})$ be a morphism of  $(n-1)$\nobreakdash-truncated Postnikov systems, $n>0$. The morphism of $\alpha$-continuous $\C C$-modules
$$H_{0}S_{\alpha}(\varphi_{*})\in \hom_{\alpha,\C C}(S_{\alpha}(X),S_{\alpha}(Y))=E_2^{0,0},$$
which lies in the second page of the previous Adams spectral sequence, lives actually in $E^{0,0}_{n+1}\subset E^{0,0}_{2}$ and
$d_{n+1}(H_{0}S_{\alpha}(\varphi_{*}))\in E_{n+1}^{n+1,-n}$ is represented by
$$\theta_{(X_{\leq n},P_{*}),(Y_{\leq n},Q_{*})}(\psi_{\leq n-1},\varphi_*)\in \ext_{\alpha,\C C}^{n+1,-n}(S_{\alpha}(X),S_{\alpha}(Y))=E_2^{n+1,-n}.$$
\end{prop}

\begin{proof}
Take an Adams resolution $(X,W_{*},P_{*})$ adapted to $(X,X_{*},P_{*})$ in the sense of Lemma \ref{postnikovaadams}. The morphisms $\phi_n$ and $\id{P_n}$, $n\geq 0$, define a morphism between the previous exact couple and the exact couple
$$
\xymatrix{\C T(X_{*},Y)\ar@{<-}[rr]^{\C T(i_{*},Y)}_{\scriptscriptstyle(-1,0)}&&\C T(X_{*},Y)\ar@{<-}[ld]^{\C T(q_{*},Y)}_{\scriptscriptstyle(0,-1)}\\
&\C T(P_{*},Y)\ar@{<-}[lu]^{\C T(f_{*},Y)}_{\scriptscriptstyle(+1,0)}&}
$$
associated to the Postnikov system $(X_{*},P_{*})$. This morphism is the identity on $E^1$\nobreakdash-terms, and hence on $E^k$\nobreakdash-terms for all $k\geq 1$. We can therefore compute the differentials of the Adams spectral sequence by using this second exact couple.

Let $n=1$. Since $H_{0}S_{\alpha}(\varphi_{*})$ is represented by $p_0^Y(q_0^Y)^{-1}\varphi_0q_0^X$, using the second exact couple it is clear that $d_2(H_{0}S_{\alpha}(\varphi_{*}))$ is represented by $p_1^Y\psi_1'f_2^X$.
$$
\xymatrix@!C=0pt@!R=0pt{
X &\\
&\ar[lu]0\ar[rrr]
^{i_0^X}\ar@{=}[ddd]&&&
\ar@(lu,r)[llllu]
^<(.6){p_0^X}
X_0\ar[ddd]^{\psi_{0}}\ar[lld]_{\scriptscriptstyle +1}^{q_0^X}
\ar[rrr]
^{i_1^X}&&&
\ar@(lu,r)@{-}[lllllllu]
_<(.1){p_1^X}
X_1\ar@{-->}[ddd]^{\psi'_{1}}\ar[lld]_{\scriptscriptstyle +1}^{q_1^X}
&&&
\cdots &\\
&&P_0\ar[lu]\ar[ddd]^<(0.1){\varphi_{0}}&&&P_1\ar[lu]\ar[ddd]^<(.1){\varphi_{1}}&&&P_2\ar[lu]_<(.5){f_2^X}\ar[ddd]^<(.1){\varphi_{2}}&&&&\\
Y&&&&\\
&\ar[lu]0\ar[rrr]|!{"2,3";"5,3"}{\hole}^{i_0^Y} &&&
\ar@(lu,r)[llllu]|!{"2,3";"5,3"}{\hole}|!{"2,2";"5,2"}{\hole}^<(.6){p_0^Y}
Y_0\ar[lld]_{\scriptscriptstyle +1}^{q_0^Y}
\ar[rrr]|!{"2,6";"5,6"}{\hole}^{i_1^Y}
&&&
\ar@(lu,r)@{-}[lllllllu]|!{"2,6";"5,6"}{\hole}|!{"2,5";"5,5"}{\hole}|!{"2,3";"5,3"}{\hole}|!{"2,2";"5,2"}{\hole}_<(.1){p_1^Y}
Y_1\ar[lld]_{\scriptscriptstyle +1}^{q_1^Y}
&&& \dots &\\
&&Q_0\ar[lu]^{f_0^Y}&&&Q_1\ar[lu]^{f_1^Y}&&&Q_2\ar[lu]^{f_2^Y}&&&}
$$
Moreover, $S_\alpha(p_1^Y\psi_1'f_2^X)=\tilde\theta$ by Lemma \ref{unsplit} since
$$p_1^Y(\psi_1'f_2^X-f_2^Y\varphi_2)=
p_1^Y\psi_1'f_2^X-p_1^Yf_2^Y\varphi_2=p_1^Y\psi_1'f_2^X-0\varphi_2=p_1^Y\psi_1'f_2^X.$$

If $n>1$ then $\psi_1'=\psi_1$ and $p_1^Y\psi_1'f_2^X=p_1^Yf_2^Y\varphi_2=0\varphi_2=0$ by Lemma \ref{unsplit}. In this way, by induction  $d_{k}(H_{0}S_{\alpha}(\varphi_{*}))=0$ for $1<k\leq n$ and
$d_{n+1}(H_{0}S_{\alpha}(\varphi_{*}))$ is represented by $p_n^Y\psi_n'f_{n+1}^X$. Moreover, $S_\alpha(p_n^Y\psi_n'f_{n+1}^X)=\tilde\theta$ by Lemma \ref{unsplit} since
\begin{align*}
p_n^Y(\psi_n'f_{n+1}^X-f_2^Y\varphi_{n+1})&=
p_{n}^Y\psi_{n}'f_{n+1}^X-p_{n}^Yf_{n+1}^Y\varphi_{n+1}\\
&=p_{n}^Y\psi_{n}'f_{n+1}^X-0\varphi_{n+1}=p_{n}^Y\psi_{n}'f_{n+1}^X.
\end{align*}
\end{proof}

\section{The first obstruction of an extension of representables}\label{sec-first-obs}\numberwithin{equation}{section}

A triangulated category is said to be \emph{algebraic} if it is a full triangulated subcategory of the homotopy category $K(\C A)$ of some additive category $\C A$, \mbox{cf.~\cite[\S7.5]{chicago}}. Recall the standing assumptions from Section \ref{ryf}: $\alpha$ is a regular cardinal, $\C T$ is a well generated triangulated category, and $\C{C}\subset\C{T}^{\alpha}$ is an essentially small full subcategory, closed under (de)suspensions and coproducts of less than $\alpha$ objects, that generates $\C{T}$. We do homological algebra in the abelian category $\Mod{\alpha}{\C C}$ of $\alpha$\nobreakdash-continuous (right) $\C C$\nobreakdash-modules, i.e.~functors $\C C^{\op}\r\Ab$ preserving products of less than $\alpha$ objects. This category is regarded as a graded abelian category with graded morphism objects \eqref{gradom} defined by using the suspension functor in $\C T$. Given $X$ in $\C T$, $S_{\alpha}(X)=\C{T}(-,X)_{|_{\C{C}}}$ is an example of $\alpha$\nobreakdash-continuous $\C C$\nobreakdash-module.

\begin{thm}\label{calculillo}
Let $\C T$ be an algebraic triangulated category. Suppose $F$ is an $\alpha$\nobreakdash-continuous $\C C$\nobreakdash-module fitting into a short exact sequence
\begin{equation*}\label{laext}
S_{\alpha}(Y)\st{a}\hookrightarrow F\st{b}\onto S_{\alpha}(X)
\end{equation*}
classified by
$$e_{F}\in \ext^{1,0}_{\C C}(S_{\alpha}(X),S_{\alpha}(Y)).$$
Then
the obstruction of $F$ is
$$\kappa(F)=a\cdot d_{2}(e_{F})\cdot b\in \ext^{3,-1}_{\C C}(F,F),$$
where $d_2$ is the second differential of the Adams spectral sequence in Section \ref{SS} abutting to $\C{T}^{*}(X,Y)$.
\end{thm}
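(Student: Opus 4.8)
The plan is to unwind both obstruction classes to explicit morphisms and identify them.  Recall $\kappa(F)\in\ext^{3,-1}_{\C C}(F,F)$ is computed from a $0$-truncated Postnikov system $(X_0,P_*)$ with $H_0S_\alpha(P_*)=F$, i.e.\ from the start of a projective resolution $P_*$ of $F$ in $\Mod{\alpha}{\C C}$.  First I would build such a resolution compatibly with the short exact sequence $S_\alpha(Y)\hookrightarrow F\twoheadrightarrow S_\alpha(X)$: choose Adams resolutions $(X,W_*,P_*^X)$ and $(Y,W_*',P_*^Y)$, set $P_n=P_n^X\oplus P_n^Y$, and assemble the horseshoe-lemma resolution of $F$, whose differential is block-upper-triangular with diagonal blocks those of $S_\alpha(P_*^X)$ and $S_\alpha(P_*^Y)$ and an off-diagonal block that records the extension class $e_F$ and its consequences.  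Since $\C T$ is algebraic, $X$ and $Y$ have models and the off-diagonal data can be realised by actual morphisms $P_n^X\to P_{n-1}^Y$ in $\C T$ (up to phantoms), using the dg-enhancement to lift the module-level nullhomotopies; this is the technical heart.

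Next I would run the definition of $\kappa$ from Definition \ref{kappa} on this assembled truncated Postnikov system.  The mapping cone $X_1$ of $f_2=f_2^X\oplus(\text{something involving }e_F)\oplus f_2^Y$ splits off the $X$- and $Y$-parts, and chasing $\bar f_3 d_4$ through $\ker S_\alpha(q_1)\cong H_0S_\alpha(P_*)=F$ yields a representative $\tilde\kappa\colon S_\alpha(P_3)\to F$.  Because the diagonal blocks give the \emph{honest} obstructions $\kappa(S_\alpha(X))$ and $\kappa(S_\alpha(Y))$, which vanish by the existence of Postnikov resolutions (Corollary \ref{postexis}, Proposition \ref{vanishing}), the only surviving contribution is the ``cross term'' built from $e_F$ iterated twice along the resolution differentials.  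Identifying that cross term with the image of $e_F$ under two steps of the Adams differential is exactly the claim: one step $d_2\colon \ext^{1,0}(S_\alpha(X),S_\alpha(Y))\to\ext^{3,-1}(S_\alpha(X),S_\alpha(Y))$ produces an element $d_2(e_F)$, and sandwiching with the structure maps $a$ and $b$ of the extension transports it into $\ext^{3,-1}_{\C C}(F,F)$.  The naturality of the Adams spectral sequence differentials (via the exact couple description in Section \ref{SS}) together with Proposition \ref{naturality} should make the bookkeeping line up: applying $a\cdot(-)\cdot b$ is exactly the composite $S_\alpha(P_3^X)\to S_\alpha(X)\xrightarrow{\,d_2(e_F)\,}S_\alpha(\Sigma^{-1}Y)\hookrightarrow F$ reinterpreted through the horseshoe resolution.

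I expect the main obstacle to be the \emph{construction step in the algebraic setting}: lifting the module-theoretic data (the extension $e_F$, the horseshoe differential, the chain homotopies witnessing $d^2=0$) to genuine morphisms and homotopies in $\C T$, since $S_\alpha$ is far from faithful and only $\add(\C C)$ maps faithfully to projectives.  This is precisely where algebraicity is indispensable — in a model $K(\C A)$ one works with strict complexes and strict nullhomotopies, so the obstruction $\kappa$ (which \emph{a priori} measures the failure of such strictifications) can be computed by an explicit cocycle, and the comparison with $d_2(e_F)$ becomes a finite diagram chase rather than an abstract coherence argument.  A secondary but routine difficulty is keeping track of internal degrees: the extension sits in degree $0$, $d_2$ raises homological degree by $2$ and lowers internal degree by $1$, and one must check these match the degrees $(n+3,-1-n)$ with $n=0$ appearing in Definition \ref{kappa}, i.e.\ $\ext^{3,-1}$, which they do.  Once the cocycle for $\kappa(F)$ is pinned down, the final identification $\kappa(F)=a\cdot d_2(e_F)\cdot b$ follows by comparing it against the cocycle description of $d_2$ coming from the exact couple $\C T(W_*,Y)$ attached to the chosen Adams resolution of $X$.
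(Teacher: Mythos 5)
Your proposal is correct in substance and follows essentially the same route as the paper: the paper also assembles a horseshoe-type resolution $Q_*\oplus P_*$ from a Postnikov resolution of $X$ and an Adams resolution of $Y$ with block-upper-triangular differential whose off-diagonal entries $s_*$ encode $e_F$, realizes everything by strict chain maps and nullhomotopies in the model $K(\C A)$, and then identifies the off-diagonal ``cross term'' cocycle for $\kappa(F)$ with a chain-level cocycle for $d_2(e_F)$ sandwiched by $a$ and $b$. The only refinement worth noting is that the paper disposes of the diagonal contributions not by citing vanishing of $\kappa(S_\alpha(X))$ and $\kappa(S_\alpha(Y))$ as classes, but by choosing standard triangles so that the diagonal relations $d^Xd^X=0$ and $d^Yd^Y=0$ hold strictly at chain level, so those blocks need no nullhomotopy at all.
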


This result is a paradigmatic example of a statement which makes sense for any triangulated category but which requires the use of models in its proof. The proof uses maps and homotopies in the category of complexes in $\C A$, actually homotopy classes of homotopies suffice, but we will not get into such technicalities. Nevertheless, this suggests that it should be enough to assume that $\C T$ is the homotopy category of a triangulated track category \cite{BM1,BM2}. This includes topological triangulated categories, i.e.~full triangulated subcategories of stable model categories. The proof in the non-additive setting is however more complicated. This is why we restrict to algebraic triangulated categories here. The proof is at the end of this section.

\begin{defn}
Let $C(\C A)$ be the category of chain complexes in an additive category $\C A$. Differentials of chain complexes in $\C A$ are denoted by $\partial$ and have degree $-1$. We add a superscript $\partial^A$ if we need to specify the complex $A$. A \emph{type $0$ standard exact triangle  starting at $A$} is a diagram in $C(\C A)$
$$\xymatrix@!=5pt{A\ar[rr]^{f}&&B\ar[ld]^{i}\\
&C_{f}\ar[lu]^{q}_{\scriptscriptstyle+1}&}$$
such that $C_{f}$ is the mapping cone of $f$,
$$(C_{f})_{n}=A_{n-1}\oplus B_{n},\qquad
\partial_{n}^{C_{f}}=\left(
\begin{array}{cc}
-\partial^{A}_{n-1}&0\\
f_{n-1}&\partial^{B}_{n}
\end{array}
\right),
$$
and $i$ and $q$ are given by
$$\xymatrix@C=40pt{
B_{n}\ar[r]^-{i_{n}=\binom{0}{1}}&(C_{f})_{n}=A_{n-1}\oplus B_{n}\ar[r]^-{q_{n}=(1,0)} &A_{n-1}.
}$$
The \emph{type $1$ standard exact triangle starting at $\Sigma A$ } and the \emph{type $2$ standard exact triangle starting at $A$} are
$$\xymatrix@!=5pt{\Sigma A\ar[rr]^{f}_{\scriptscriptstyle+1}&&B\ar[ld]^{i}\\
&C_{f}\ar[lu]^{q}&}\qquad\qquad\qquad
\xymatrix@!=5pt{A\ar[rr]^{f}&&B\ar[ld]^{i}_{\scriptscriptstyle+1}\\
&\Sigma^{-1} C_{f}\ar[lu]^{q}&}$$
respectively. Notice that, in all cases, $qi=0$ in $C(\C A)$.
\end{defn}

\begin{rem}
Recall that a chain map $\binom{g}{h}\colon D\r C_{f}$ is the same as a chain map $g\colon D\r \Sigma A$, given by morphisms $g_{n}\colon D_{n}\r A_{n-1}$ with $\partial^{A}_{n-1}g_{n}+g_{n-1}\partial_{n}^{D}=0$, together with a nullhomotopy $h\colon (\Sigma f)g\rr 0$, i.e.~a sequence of morphisms $h_{n}\colon D_{n}\r B_{n}$ with $f_{n-1}g_{n}+\partial_{n}^{B}h_{n}=h_{n-1}\partial_{n}^{D}$. Similarly, a chain map $(h,g)\colon C_{f}\r D$ is simply a map $g\colon B\r D$ together with a nullhomotopy $h\colon gf\rr 0$.
\end{rem}

Suppose for the rest of this section that $\C T$ is algebraic, and fix an embedding $\C T\subset K(\C A)$ which allows us to work with complexes in $\C A$. The following lemma shows how to compute $\kappa(F)$ by means of chain homotopies.

\begin{lem}\label{capa}
Let $F$ be an $\alpha$\nobreakdash-continuous $\C C$\nobreakdash-module and
$$\cdots\r R_{m}\st{d_m}\To R_{m-1}\r\cdots\r R_0$$
a sequence of morphisms in $C(\C A)$ whose homotopy classes lie in $\C T$ and map by $S_\alpha$ to a resolution of  $F$ in $\Mod{\alpha}{\C C}$. Let $h_{m}\colon d_{m}^{}d_{m+1}^{}\rr 0$ be nullhomotopies, $m=1,2$,
$$\xymatrix@l@!=40pt{
R_{0}&
R_{1}\ar[l]_{\scriptscriptstyle+1}^{d_{1}}&
R_{2}\ar[l]_{\scriptscriptstyle+1}^{d_{2}}
\ar@/^30pt/[ll]_{\scriptscriptstyle+2}="b"^{0}&
R_{3}\ar[l]_{\scriptscriptstyle+1}^{d_{3}}
\ar@/_30pt/[ll]^{\scriptscriptstyle+2}="a"_{0}
\ar@{=>}"1,3";"a"^{h_{2}}
\ar@{=>}"1,2";"b"^{h_{1}}
}$$
The degree $+2$ chain morphism $R_{3}\r R_{0}$ defined by the morphisms
$$h_{1,n-1}d_{3,n}^{}-d_{1,n-1}^{}h_{2,n}\colon R_{3,n}\To R_{0,n-2},\quad n\in\mathbb Z,$$
represents $\kappa(F)$.
\end{lem}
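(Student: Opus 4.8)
The plan is to realise the abstract construction of $\kappa(F)$ from Definition~\ref{kappa} (taken with $n=0$, cf.~Section~\ref{kappamodule}) concretely inside $C(\C A)$, using the fixed embedding $\C T\subset K(\C A)$, so that the morphism $\tilde\kappa$ which represents $\kappa(F)$ can be read off from the chain homotopies $h_1$ and $h_2$. First I would fix a $0$\nobreakdash-truncated Postnikov system $(X_0,P_*)$ with $H_0S_\alpha(P_*)=F$ built out of the given data: take $P_*=R_*$ (suitably reindexed by the internal shift carried by the $d_m$), $X_0=\Sigma R_0$, $q_0=1$, $f_1=d_1\colon R_1\r\Sigma R_0=X_0$, and $d^{P}_{k+1}=d_{k+1}$ for $k\geq1$. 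The cocycle condition $f_1d_2=d_1d_2=0$ in $\C T$ holds because $d_1d_2$ is nullhomotopic in $C(\C A)$ via $h_1$, and by hypothesis $S_\alpha(R_*)\onto F$ is the required resolution, so this is a legitimate $0$\nobreakdash-truncated Postnikov system.

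Following Definition~\ref{kappa}, I would then extend $f_1=d_1$ to the \emph{standard} exact triangle $R_1\st{d_1}\r X_0\st{i_1}\r C_{d_1}\st{q_1}\r\Sigma R_1$ with middle term the mapping cone $X_1=C_{d_1}$, equipped with its explicit differential, and with $i_1$, $q_1$ the standard inclusion and projection. The next step uses the cone dictionary recalled in the excerpt: a chain map $R_2\r C_{d_1}$ is the same datum as a chain map $R_2\r\Sigma R_1$ together with a nullhomotopy of its composite with $\Sigma d_1$; hence $\bar{f}_2=\binom{d_2}{h_1}\colon R_2\r C_{d_1}$ is a legitimate chain map with $q_1\bar{f}_2=d_2$, so it is an admissible choice for the $\bar{f}_{n+2}$ of Definition~\ref{kappa} (which is only well defined up to choices anyway, by Proposition~\ref{naturality}).

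The heart of the argument is then the computation $\bar{f}_2d_3=\binom{d_2d_3}{h_1d_3}\colon R_3\r C_{d_1}$, followed by using the nullhomotopy $h_2$ of $d_2d_3$ to produce an explicit chain homotopy $H=\binom{\pm h_2}{0}$ from $\bar{f}_2d_3$ to the chain map $i_1\circ k$, where $k=h_1d_3-d_1h_2\colon R_3\r R_0$ is exactly the morphism in the statement; this is a direct matrix computation with the mapping-cone differential and the two nullhomotopy identities. Since $\bar{f}_2d_3$ and $i_1k$ then coincide in $K(\C A)$, hence in $\C T$, we get $S_\alpha(\bar{f}_2d_3)=S_\alpha(i_1)\,S_\alpha(k)$; by Lemma~\ref{iso4}, $S_\alpha(i_1)$ identifies $\coker S_\alpha(f_1)=H_0S_\alpha(P_*)=F$ with $\ker S_\alpha(q_1)=\im S_\alpha(i_1)\subset S_\alpha(X_1)$, so $S_\alpha(k)$ is a lift of $\tilde\kappa$ along the degree $-1$ inclusion $F\subset S_\alpha(X_1)$. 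Therefore $\tilde\kappa$ equals the composite of $S_\alpha(k)$ with the projection $S_\alpha(R_0)\onto F$, which is to say $k$ is a degree $3$ cocycle in $\hom_{\alpha,\C C}^{*}(S_\alpha(R_*),F)$ representing $\kappa(F)\in\ext_{\alpha,\C C}^{3,-1}(F,F)$, as asserted.

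The main obstacle I expect is bookkeeping rather than anything conceptual: one must orient the standard triangle and the cone differential consistently with the paper's sign conventions, and keep careful track of the internal degrees — the boundary maps $d_m$ carry the degree $+1$ internal shift present in every Adams/Postnikov resolution, which is why $h_1d_3-d_1h_2$ has homological length $3$ but internal degree $-1$, matching $\ext_{\alpha,\C C}^{3,-1}$ — so that all signs in $H$ and in the cocycle condition $k\,S_\alpha(d_4)=0$ come out right. A minor point to dispatch along the way is that the identifications of $\coker S_\alpha(f_1)$ with $\ker S_\alpha(q_1)$ used above are precisely those of Lemma~\ref{iso4}, and that $\tilde\kappa$ is independent of the auxiliary choices by Proposition~\ref{naturality}, so the resulting element of $\ext_{\alpha,\C C}^{3,-1}(F,F)$ is genuinely $\kappa(F)$.
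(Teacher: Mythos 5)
Your proposal is correct and follows essentially the same route as the paper's own proof: you build the $0$\nobreakdash-truncated Postnikov system with $X_0=\Sigma R_0$, $f_1=d_1$, realize $X_1$ as the standard mapping cone $C_{d_1}$, choose $\bar f_2=\binom{d_2}{h_1}$, and deform $\bar f_2d_3$ by the homotopy $\binom{\pm h_2}{0}$ to land in the image of $i_1$, reading off $h_{1}d_{3}-d_{1}h_{2}$ as the representative of $\kappa(F)$ via Lemma~\ref{iso4}. This is exactly the paper's argument, so only the sign/indexing bookkeeping you already flag remains to be written out.
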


\begin{proof}

The nullhomotopies consist of morphisms $h_{m,n}\colon R_{m+1,n}\r R_{m-1,n-1}$ in $\C A$, $n\in\mathbb Z$, with
\begin{equation*}
d_{m,n-1}^{}d_{m+1,n}^{}= \partial_{n-1}^{R_{m-1}}h_{m,n}+h_{m,n-1} \partial_{n}^{R_{m+1}}.
\end{equation*}
Take $Z_{0}=\Sigma R_{0}$, $f_{1}^{}=d_{1}^{}$, and extend this morphism to a standard exact triangle of type $0$ starting at $R_{1}$. For the definition of $\kappa(F)$ we can take  $f_{2}'=\binom{d_{2}^{}}{h_{1}}\colon R_{2}\r Z_{1}=C_{d_{1}^{}}$ as in the following diagram
$$
\xymatrix@!R@!C=13pt{\Sigma R_{0}\ar@{-->}[rr]^{i_{ 1}}&&Z_{ 1}\ar@{-->}[ld]^{q_{ 1}}_{\scriptscriptstyle+1}\\
&
R_{ 1}\ar[lu]^{d_{ 1}}&&R_{ 2}\ar[ll]^{d_{ 2}}_{\scriptscriptstyle+1}\ar@{-->}[lu]_{f_{ 2}'}&&R_{ 3}\ar[ll]_{d_{ 3}}^{\scriptscriptstyle+1}&\ar[l]\cdots}
$$
Then $f_2'd_3$ is given by the following morphisms in $\C A$, $n\in\mathbb Z$:
\begin{align*}
\binom{d_{2,n-1}^{}}{h_{1,n-1}}d_{3,n}^{}&=
\binom{d_{2,n-1}^{}d_{3,n}^{}}{h_{1,n-1}d_{3,n}^{}}=\binom{\partial_{n-1}^{R_{1}}h_{2,n}+h_{2,n-1}\partial_{n}^{R_{3}}}{h_{1,n-1}d_{3,n}^{}}.
\end{align*}
We can deform this representative of the composite $f_2'd_3$ in $\C T$ by using the morphisms
$\binom{h_{2,n}}{0}\colon R_{3,n}\r R_{1,n-1}\oplus R_{0,n}$, $n\in\mathbb Z$, obtaining a chain morphism in the same homotopy class defined by the following morphisms in $\C A$, $n\in\mathbb Z$:
\begin{align*}
&\binom{d_{2,n-1}^{}}{h_{2,n-1}}d_{3,n}^{}-\binom{h_{2,n-1}}{0}\partial^{R_{3}}_{n}-
\partial_{n}^{C_{d_{1}^{}}}
\binom{h_{2,n}}{0}\\
=&
\binom{\partial_{n-1}^{R_{1}}h_{2,n}+h_{2,n-1}\partial_{n}^{R_{3}}}{h_{1,n-1}d_{3,n}^{}}
-\binom{h_{2,n-1}}{0}\partial^{R_{3}}_{n}-
\left(
\begin{smallmatrix}
-\partial_{n-1}^{R_{1}}&0\\
d_{1,n-1}^{}&\partial_{n}^{R_{0}}
\end{smallmatrix}
\right)
\binom{h_{2,n}}{0}\\
=&\binom{0}{h_{1,n-1}d_{3,n}^{}-d_{1,n-1}^{}h_{2,n}},
\end{align*}
hence we are done.
\end{proof}

\begin{lem}\label{dedos}
Let $(X,X_{*},P_{*})$ be a Postnikov resolution whose underlying Postnikov system consists of type $2$ standard triangles starting at $X_{m-1}$, $m\geq 0$, where $X_{-1}=0$,
\begin{equation*}
\xymatrix@!R@!C=8pt{0\ar[rr]^{i_{0}}&&X_0\ar[ld]^{\scriptscriptstyle +1}_{q_{0}}\ar[rr]^{i_{1}}&&X_1\ar[ld]^{\scriptscriptstyle +1}_{q_{1}}\ar[rr]^{i_{2}}&&X_2\ar[ld]^{\scriptscriptstyle +1}_{q_{2}}\ar[rr]^{i_{3}}&&X_3\ar[ld]^{\scriptscriptstyle +1}_{q_{3}}\ar@{}[rd]|{\displaystyle\cdots}\\
&P_0\ar[lu]^{f_{0}}&&P_1\ar[lu]^{f_{1}}&&P_2\ar[lu]^{f_{2}}&&P_3\ar[lu]^{f_{3}}&&}
\end{equation*}
Given $e\in\ext_{\alpha,\C C}^{s,t}(S_\alpha(X),S_\alpha(Y))=E^{s,t}_2$ represented by a chain map $\tilde e\colon P_s\r Y$ of degree $s+t$, if $l\colon \tilde ed_{s+1}^X\rr 0$ is a nullhomotopy, then the image of $e$ by the Adams spectral sequence's second differential $d_2(e)\in \ext_{\alpha,\C C}^{s+2,t-1}(S_\alpha(X),S_\alpha(Y))=E^{s+2,t-1}_2$ is represented by the chain map $P_{s+2}\r Y$ of degree $s+t+1$ defined by the following morphisms, $n\in\mathbb Z$:
$$-l_{n-1}d_{s+2,n}^X\colon P_{s+2,n}\To Y_{n-s-t-1}.$$
\end{lem}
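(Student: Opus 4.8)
The strategy is to compute the second differential $d_2(e)$ directly from the exact couple defining the Adams spectral sequence, using the explicit model provided by the Postnikov resolution built out of type $2$ standard exact triangles. Recall that $e\in E_2^{s,t}$ is, by definition, a class in the cohomology of the complex $\hom^*_{\alpha,\C C}(S_\alpha(P_*),S_\alpha(Y))$, so it is represented by a chain map $\tilde e\colon P_s\r Y$ of internal degree $s+t$ with $\tilde e d_{s+1}^X$ nullhomotopic; choosing such a nullhomotopy $l\colon \tilde e d_{s+1}^X\Rightarrow 0$ is exactly the choice needed to push $e$ one step along the couple. The key point, as in the proof of Lemma \ref{capa}, is that the connecting maps of the spectral sequence are realised on representatives by composing with the structure maps $i_k$, $q_k$, $f_k$ of the Postnikov system, and that the first $q_k^X$ can be taken to be literal (degree $+1$) identity-type morphisms once the triangles are the standard ones $X_{m-1}\r P_m\r \Sigma^{-1}C\r \cdots$.

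First I would unwind the differential $d_2$ in terms of the couple
$$
\xymatrix{\C T(X_{*},Y)\ar@{<-}[rr]^{\C T(i_{*},Y)}_{\scriptscriptstyle(-1,0)}&&\C T(X_{*},Y)\ar@{<-}[ld]^{\C T(q_{*},Y)}_{\scriptscriptstyle(0,-1)}\\
&\C T(P_{*},Y).\ar@{<-}[lu]^{\C T(f_{*},Y)}_{\scriptscriptstyle(+1,0)}&}
$$
Concretely, a class in $E_2^{s,t}$ represented by $\tilde e\colon P_s\r Y$ lifts to $X_s$ along $\C T(q_s,Y)$ precisely because $\tilde e f_{s+1}^X$ can be chosen nullhomotopic in the model, and the result of applying $\C T(i_{s}^X,Y)$ and then descending along $\C T(q_{s+2}^X,Y)$ to $P_{s+2}$ gives the next representative. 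In the standard-triangle model, $X_s$ is a shift of the mapping cone $C_{f_s^X}$, a lift of $\tilde e$ is given by the pair $\binom{\tilde e}{l}$ with $l$ the chosen nullhomotopy of $\tilde e d_{s+1}^X$ (here is where $l$ enters), and composing with $d_{s+2}^X$ and projecting to $P_{s+2}$ via $q_{s+2}^X$ extracts exactly the component $-l_{n-1}d^X_{s+2,n}$. This is the same bookkeeping computation as in Lemma \ref{capa} (indeed Lemma \ref{capa} is the special case $s=0$, $t=0$ applied twice, chained), so the verification is a matter of writing out the mapping-cone differentials and the chain-homotopy identities for $l$ and tracking signs; the term $\tilde e d_{s+1}^X=\partial l + l\partial$ is what guarantees that the component in the $P_s$-summand of the cone vanishes after the deformation, leaving only the $Y$-component $-l_{n-1}d^X_{s+2,n}$.

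The main obstacle is purely organisational rather than conceptual: one must be careful that the Postnikov resolution may be assumed to consist of type $2$ standard triangles starting at $X_{m-1}$ (this is a legitimate choice of model, since any object has an Adams resolution and, via Lemma \ref{postnikovaadams}, a Postnikov resolution, which inside $K(\C A)$ can be rigidified to standard triangles), and that the identification $E_2^{p,q}=\ext^{p,q}_{\alpha,\C C}(S_\alpha(X),S_\alpha(Y))$ of Section \ref{SS} is compatible with this rigidification. Granting that, the proof is an explicit chase: identify the lift $\binom{\tilde e}{l}\colon P_{s+2}\r X_s$ of the relevant class, compose with $d^X_{s+2}$, use the standard formula $\partial^{C}_{n}=\left(\begin{smallmatrix}-\partial_{n-1}&0\\ f_{n-1}&\partial_n\end{smallmatrix}\right)$ for the cone differential together with $d^X_{s,n-1}d^X_{s+1,n}=\partial h_{?}+h_?\partial$-type identities (i.e.\ the nullhomotopy $l$), and read off that the only surviving component of the resulting chain map into $\Sigma^{s+t+1}Y$ is $-l_{n-1}d^X_{s+2,n}$. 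No deep input beyond the lemmas already proved is required; the care needed is exactly in the signs coming from the shift conventions and from the $-\partial$ in the upper-left block of the cone differential.
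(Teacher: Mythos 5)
Your overall strategy---compute $d_2$ from the exact couple of the Postnikov system, realise the lifting step by an explicit chain-level map built from $\tilde e$ and the nullhomotopy $l$ in the standard-triangle model, and read off the composite with the next structure map---is the same as the paper's. But as written the plan has concrete errors that would derail the computation. First, the exact-couple chase is misstated: for the couple with $D=\C T(X_*,Y)$ and $E=\C T(P_*,Y)$, the differential $d_2$ on the class of $\tilde e$ is obtained by forming $\tilde e q_s^X\in\C T(X_s,Y)$, lifting it along $\C T(i_{s+1}^X,Y)$ to some $z\colon X_{s+1}\r Y$ with $z i_{s+1}^X=\tilde e q_s^X$ (this is exactly where the hypothesis $\tilde e d_{s+1}^X\simeq 0$ and the chosen $l$ enter), and then applying $\C T(f_{s+2}^X,Y)$, so that $d_2(e)=[z f_{s+2}^X]$. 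Your description instead applies $\C T(i_{s}^X,Y)$, which moves in the wrong filtration direction, and ``descends along $\C T(q_{s+2}^X,Y)$,'' which is not the map prescribed by the derived couple; followed literally this does not compute $d_2$. Second, the object you call ``the lift $\binom{\tilde e}{l}\colon P_{s+2}\r X_s$'' does not typecheck: neither $\tilde e$ nor $l$ has source $P_{s+2}$ or target $X_s$, and in the type $2$ model it is $P_m$, not $X_m$, that is a (de)suspended cone, namely $P_m=\Sigma^{-1}C_{i_m^X}$. The correct intermediate object is the degree $s+t+1$ chain map $(l,\tilde e q_s^X)\colon C_{f_{s+1}^X}\r Y$ \emph{out of} the cone of $f_{s+1}^X$, i.e.\ the map $\tilde e q_s^X$ together with the nullhomotopy $l$ of $(\tilde e q_s^X)f_{s+1}^X=\tilde e d_{s+1}^X$.

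The step that is genuinely missing, and which produces the sign in the statement, is the comparison of $X_{s+1}$ with $C_{f_{s+1}^X}$: in this model $X_{s+1}$ is not literally that cone, so one must write down the explicit homotopy equivalence $w\colon X_{s+1}\r C_{f_{s+1}^X}$ (inclusion of the middle summand) and check that the triangle comparing $i_{s+1}^X$ with the canonical inclusion $X_s\r C_{f_{s+1}^X}$ \emph{anti}commutes up to an explicit homotopy. It is this anticommutativity that shows the composite $(l,\tilde e q_s^X)\,w\,f_{s+2}^X$, whose components are $l_{n-1}d_{s+2,n}^X$, represents $-d_2(e)$ rather than $d_2(e)$; ``tracking the signs from the cone differential'' alone does not surface this. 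Finally, Lemma \ref{capa} is not a special case of the present lemma applied twice: it computes the obstruction $\kappa(F)$ from two nullhomotopies and is only combined with Lemma \ref{dedos} later, in the proof of Theorem \ref{calculillo}, so you cannot lean on it for the identification of $d_2$ itself.
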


\begin{proof}
The nullhomotopy $l$ is given by morphisms $l_{n}\colon P_{s+1,n}\r Y_{n-s-t}$ in $\C A$ satisfying
$\tilde e_{n-1}d_{s+1,n}^{X}=\partial_{n-s-t}^{Y}l_{n}+l_{n-1}\partial_{n}^{P_{s+1}}$, $n\in\mathbb Z$. This nullhomotopy and $\tilde eq_s^X$ define a degree $s+t+1$ morphism
$(l,\tilde eq_s^X)\colon C_{f_{s+1}^X}\r Y$. Since the exact triangles of $(X_*,P_*)$ are standard of type $2$, $P_{s+1}$ is the desuspension of the mapping cone of $i_{s+1}^X$, and $C_{f_{s+1}^X}$ is given by
$$
(C_{f_{s+1}^X})_n=X_{s,n-1}\oplus X_{s+1,n}\oplus X_{s,n},\qquad
\partial^{C_{f_{s+1}^X}}_n=\left(
\begin{array}{cc|c}
-\partial^{X_s}_{n-1}&0&0\\
i_{s+1,n}&\partial^{X_{s+1}}_n&0\\\hline&&\\[-10pt]
 1&0&\partial^{X_s}_n
\end{array}
\right).
$$
The inclusion of the middle direct summands
$$\left(
\begin{array}{c}
0\\1\\0
\end{array}
\right)\colon X_{s+1,n}\To (C_{f_{s+1}^{X}})_{n}= X_{s,n-1}\oplus X_{s+1,n}\oplus X_{s,n}$$
yield a homotopy equivalence $X_{s+1}\st{\sim}\r C_{f_{s+1}^X}$ such that the  triangle
$$\xymatrix{X_s\ar[r]^{i_{s+1}}\ar[rd]_-{\begin{array}{c}
\scriptstyle\text{inclusion into}\\[-4pt]
\scriptstyle\text{mapping cone}
                                        \end{array}}
&X_{s+1}\ar[d]^\sim\\
&C_{f_{s+1}^X}}$$
anticommutes up to the homotopy given by the morphisms
$$\left(
\begin{array}{c}
1\\0\\0
\end{array}
\right)\colon X_{s,n}\To (C_{f_{s+1}^{X}})_{n+1}= X_{s,n}\oplus X_{s+1,n+1}\oplus X_{s,n+1}.$$
Hence $-d_2(e)$ is represented by
$$\xymatrix{P_{s+2}\ar[r]^{f_{s+2}^{X}}&X_{s+1}\ar[r]^{\sim}& C_{f_{s+1}^{X}}\ar[rr]^{(l,\tilde eq_{s}^{X})}_{\scriptscriptstyle p+s+1}&&Y.}$$
This composite is defined by the morphisms $l_{n-1}d_{s+2,n}^X$, $n\in\mathbb Z$, hence we are done.
\end{proof}

\begin{rem}
It is always possible to represent a Postnikov system by type $2$ standard triangles as in the statement of Lemma \ref{dedos}. Moreover, in the conditions of that statement, if $\tilde e\colon P_s\r Y$ represents an element in $\ext_{\alpha,\C C}^{s,t}(S_\alpha(X),S_\alpha(Y))$ there must exist a nullhomotopy $l\colon \tilde ed_{s+1}^X\rr 0$ since $\tilde ed_{s+1}^X= 0$ in $\C T$.
\end{rem}

\begin{proof}[Proof of Theorem \ref{calculillo}]
Take a Postnikov resolution $(X,X_{*},P_{*})$ whose underlying Postnikov system $(X_{*},P_{*})$
consists of type $2$ standard triangles starting at $X_{m-1}$, $m\geq 0$, and an Adams resolution $(Y,W_{*},Q_{*})$ consisting of type $2$ standard triangles starting at $Y$ and $W_{m}$, $m\geq 0$.
By elementary homological algebra, there are degree $+1$ chain maps $s_{m}\colon P_{m}\r Q_{m-1}$, $m>0$, such that the morphisms
$$d_{m}^{Z}= \left(
\begin{array}{cc}
d_{m}^{Y}&s_{m}\\
0&d_{m}^{X}
\end{array}
\right)\colon  Q_{m}\oplus P_{m}\To Q_{m-1}\oplus P_{m-1},$$
map to a projective resolution of $F$ in $\Mod{\alpha}{\C C}$.
The element $e_{F}$ is represented by $-S_{\alpha}(s_{1})$.
Since these matrices define differentials in $\C T$, $d_{m}^{X}d_{m+1}^{X}=0$, $d_{m}^{Y}d_{m+1}^{Y}=0$, and
$d_{m}^{Y}s_{m+1}+s_{m}d_{m+1}^{X}=0$. The first two equations also hold at the level of chain maps by the properties of standard triangles.
For the third equation, we choose an arbitrary nullhomotopy $k_{m}\colon d_{m}^{Y}s_{m+1}+s_{m}d_{m+1}^{X}\rr 0$, defined by morphisms $k_{m,n}\colon P_{m+1,n}\r Q_{m-1,n-1}$, $n\in\mathbb Z$, satisfying
\begin{align}\label{eq2}
d_{m,n-1}^{Y}s_{m+1,n}+s_{m,n-1}d_{m+1,n}^{X}&=\partial^{Q_{m-1}}_{n-1}k_{m,n}+k_{m,n-1}\partial^{P_{m+1}}_{n}.
\end{align}
We take $h_{m}\colon d_{m}^{Z}d_{m+1}^{Z}\rr 0$, $m=1,2$, to be defined by
$$h_{m,n}=\left(
\begin{array}{cc}
0&k_{m,n}\\
0&0
\end{array}
\right),\quad n\in\mathbb Z.$$
By Lemma \ref{capa} the following morphisms define a chain morphism representing $\kappa(F)$,
$$\left(\begin{array}{cc}
0&k_{1,n-1}d_{3,n}^{X}-
d_{1,n-1}^{Y} k_{2,n}\\0&0
\end{array}\right)\colon Q_{3,n} \oplus P_{3,n}\To Q_{0,n-2} \oplus P_{0,n-2}.
$$
This shows that if $x\in \ext^{3,-1}_{\alpha,\C C}(S_{\alpha}(X),S_{\alpha}(Y))$ is the element represented by the chain map defined by the following morphisms, $n\in\mathbb Z$,
$$g_{0,n-2}(k_{1,n-1}d_{3,n}^{X}-
d_{1,n-1}^{Y} k_{2,n})
=g_{0,n-2}k_{1,n-1}d_{3,n}^{X}-
0 k_{2,n}=g_{0,n-2}k_{1,n-1}d_{3,n}^{X},$$
then
$$\kappa(F)=a\cdot x\cdot b.$$
We now identify this $x$ with $d_{2}(e_{F})$.

Take
$$\tilde e\colon P_{1}\mathop{\To}^{s_{1}}_{\scriptscriptstyle+1}Q_{0}\st{g_{0}}\To Y,\qquad
l_{n}=g_{0,n-1}k_{1,n},\quad n\in\mathbb Z.
$$
We must check that $l$ defined in this way is a homotopy. Indeed, since $g_{0}$ is a chain map
\begin{align*}
\partial_{n-1}^{Y}l_{n}+l_{n-1}\partial_{n}^{P_{2}}&=
\partial_{n-1}^{Y}g_{0,n-1}k_{1,n}+g_{0,n-2}k_{1,n-1}\partial_{n}^{P_{2}}\\
&=
g_{0,n-2}\partial_{n-1}^{Q_{0}}k_{1,n}+g_{0,n-2}k_{1,n-1}\partial_{n}^{P_{2}}\\
&=
g_{0,n-2}(\partial_{n-1}^{Q_{0}}k_{1,n}+k_{1,n-1}\partial_{n}^{P_{2}})\\
&=
g_{0,n-2}(d_{1,n-1}^{Y}s_{2,n}+s_{1,n-1}d_{2,n}^{X})\text{ by \eqref{eq2}}\\
&=0s_{2,n}+g_{0,n-2}s_{1,n-1}d_{2,n}^{X}\\
&=\tilde e_{n-1}d_{2,n}^{X}.
\end{align*}
Hence $d_{2}(e_{F})=x$ by Lemma \ref{dedos}.
\end{proof}

\section{A characterization of $\alpha$-compact objects}\label{sec-alphacpt}

The following theorem is used in Sections \ref{sec-for-rings} and \ref{sec-aleph_1} to prove that some categories satisfy \aro{\aleph_1} under the continuum hypothesis.

\begin{thm}\label{Improved_ThmC}
Let $\alpha$ be a regular cardinal. Suppose that $\beta$ is a cardinal satisfying one of the following hypotheses:
\begin{enumerate}
\item $\beta=(\gamma^{<\delta})^{+}$ for some $\gamma \geq \card \C T^{\alpha}$ and some regular cardinal $\delta\geq\alpha$.
\item $\beta> \card \C T^{\alpha}$ is inaccessible.
\end{enumerate}
Then $\C T^{\beta}$ is the full subcategory of objects $Z$ such that $\card \C T(Y,Z)<\beta$ for any $Y$ in~$\C T^{\alpha}$.
\end{thm}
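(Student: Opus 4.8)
\textbf{Plan of proof for Theorem \ref{Improved_ThmC}.}

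The plan is to prove the two inclusions separately. One inclusion is easy: if $Z$ is $\beta$\nobreakdash-compact and $Y$ is $\alpha$\nobreakdash-compact with $\alpha\leq\beta$, then $Y\in\C T^{\beta}$, and since $\C T^{\beta}$ is an essentially small triangulated subcategory closed under coproducts of $<\beta$ objects while $\C T$ is $\beta$\nobreakdash-compactly generated, the standard argument (cf.~\cite[Lemma 4.3.8 or \S4.2]{triang}) gives $\card\C T^{\beta}(Y,Z)<\beta$. Concretely, build an Adams-type resolution of $Z$ inside $\C T^{\beta}$ by coproducts of $<\beta$ objects of $\C T^{\alpha}$ — every $\beta$\nobreakdash-compact object is a homotopy colimit of such — and observe that $\C T(Y,-)$ commutes with the relevant coproducts up to size $<\beta$ because $Y$ is $\alpha$\nobreakdash-compact and $\alpha\leq\beta$; a cardinality bookkeeping then forces $\card\C T(Y,Z)<\beta$. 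The bound $\gamma\geq\card\C T^\alpha$ and the arithmetic $\gamma^{<\delta}<\beta$ (resp.\ inaccessibility of $\beta$) are exactly what is needed to control the size of the resolution.

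For the converse — the substantive direction — suppose $Z$ satisfies $\card\C T(Y,Z)<\beta$ for all $Y\in\C T^{\alpha}$; I must produce a presentation of $Z$ as a $\delta$\nobreakdash-filtered (hence in particular $\beta$\nobreakdash-small) homotopy colimit of $\alpha$\nobreakdash-compact objects, or rather exhibit $Z$ as $\beta$\nobreakdash-compact directly. The natural route is via the restricted Yoneda functor $S_{\alpha}\colon\C T\to\Mod{\alpha}{\C T^{\alpha}}$: the module $S_{\alpha}(Z)$ is a cohomological functor, and the hypothesis says each of its values $S_{\alpha}(Z)(Y)=\C T(Y,Z)$ has cardinality $<\beta$. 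Since $\card\C T^{\alpha}\leq\gamma<\beta$, the $\C T^{\alpha}$\nobreakdash-module $S_{\alpha}(Z)$ is itself ``$\beta$\nobreakdash-small'', i.e.\ it is a $\delta$\nobreakdash-filtered colimit of $\alpha$\nobreakdash-presentable sub/quotient modules indexed by a poset of size $<\beta$. Lifting this filtered presentation along the obstruction-theoretic machinery of Theorem \ref{lololo} (the functor $\C T\to\mathbf{Post}^{\simeq}_{\infty}$ is full and essentially surjective, and the relevant $\ext$ obstructions live in $\ext$ groups computed, via Remark \ref{canonica}, as derived inverse limits over that same small index poset), one realizes $Z$ up to $\infty$\nobreakdash-phantom maps as a homotopy colimit over a $\delta$\nobreakdash-filtered category of size $<\beta$ of $\alpha$\nobreakdash-compact objects. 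Because $\delta$\nobreakdash-filtered colimits of $\alpha$\nobreakdash-compact objects (with $\delta\geq\alpha$) are $\delta$\nobreakdash-compact and $\delta\leq\beta$, and because homotopy colimits over categories of size $<\beta$ of $\beta$\nobreakdash-compact objects are $\beta$\nobreakdash-compact, this yields $Z\in\C T^{\beta}$ after dealing with the $\infty$\nobreakdash-phantom discrepancy — which is harmless since $\C T^{\beta}$ is closed under retracts and the correcting object can be absorbed.

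\textbf{Main obstacle.} The hard part will be the converse direction, and specifically the passage from a bound on the \emph{values} $\card\C T(Y,Z)<\beta$ to an actual $\delta$\nobreakdash-filtered presentation of $Z$ (not merely of $S_\alpha(Z)$) by $\alpha$\nobreakdash-compact objects of controlled size. One must (i) choose a $\delta$\nobreakdash-filtered presentation of the module $S_\alpha(Z)$ over a poset $\Lambda$ with $\card\Lambda<\beta$, using the cardinal arithmetic hypothesis on $\beta$ to guarantee such a $\Lambda$ exists, and (ii) run the lifting inductively through the tower $\mathbf{Post}_n^\simeq$, at each stage choosing compatible lifts and checking the obstruction $\ext$ groups — which are $\lim^k$ over $\Lambda$ — vanish or can be corrected within size $<\beta$. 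The bookkeeping of cardinalities through the inductive lifting, and ensuring that the homotopy colimit one finally forms is genuinely indexed by a $<\beta$\nobreakdash-sized $\delta$\nobreakdash-filtered category (so that $\beta$\nobreakdash-compactness is preserved), is where all the work lies; the set-theoretic hypotheses $(\gamma^{<\delta})^+$ resp.\ inaccessibility are precisely calibrated to make this bookkeeping close.
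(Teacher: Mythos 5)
Your plan for the substantive inclusion (every $Z$ with $\card\C T(Y,Z)<\beta$ for all $Y\in\C T^{\alpha}$ is $\beta$\nobreakdash-compact) has genuine gaps. You propose to present the module $S_{\alpha}(Z)$ as a $\delta$\nobreakdash-filtered colimit over a poset of size $<\beta$ and then lift this through the obstruction tower to realize $Z$ as a ``$\delta$\nobreakdash-filtered homotopy colimit'' of $\alpha$\nobreakdash-compacts. But the theorem is stated for an arbitrary well generated triangulated category: filtered homotopy colimits indexed by a general $\delta$\nobreakdash-filtered category are simply not available there (only countable sequential ones are), so the object you want to form does not exist as described, and the assertion that such a colimit of $\alpha$\nobreakdash-compacts would be $\beta$\nobreakdash-compact is nowhere justified. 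Moreover, the obstruction-theoretic lifting you invoke is exactly the part you leave open (``checking the obstruction $\ext$ groups vanish or can be corrected''), and there is no reason for those groups to vanish -- indeed the failure of such realizability is the theme of the paper. The point you are missing is that no obstruction theory is needed: since $Z$ already exists, one resolves $Z$ itself. The hypothesis $\card\C T(Y,Z)<\beta$, together with regularity of $\beta$, makes the evaluation map $\coprod Y\to Z$ (over all $Y\in\C T^{\alpha}$ and all maps $Y\to Z$) a coproduct of $<\beta$ objects of $\C T^{\alpha}$ which is an epimorphism under $S_{\alpha}$; since the subcategory $\C S$ of such $Z$ is triangulated and closed under these coproducts (the cardinal arithmetic enters only through the elementary bound $\card\C T(Y,\coprod_{i\in I}X_i)<\beta$ for $\card I<\beta$), each successive cone in an Adams resolution of $Z$ stays in $\C S$, so one gets an Adams resolution with all $P_n$ in $\C T^{\beta}$; passing to the associated Postnikov resolution (Lemma \ref{postnikovaadams}) puts every $Z_n$ in $\C T^{\beta}$, and $Z=\Hocolim_n Z_n$ is a countable sequential homotopy colimit, hence in $\C T^{\beta}$. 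No filtered colimits, no obstruction classes, no phantom correction.

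Your ``easy'' direction also hides a circularity as written: to build an Adams-type resolution of a $\beta$\nobreakdash-compact $Z$ by coproducts of $<\beta$ objects of $\C T^{\alpha}$ you would use the evaluation map, whose index set has size $\sum_{Y}\card\C T(Y,Z)$ -- precisely the quantity you are trying to bound. The correct route is the opposite one: show that the subcategory $\C S$ is triangulated and closed under coproducts of $<\beta$ objects (using the characterization of its objects by the existence of an $S_{\alpha}$\nobreakdash-epimorphism from a $<\beta$\nobreakdash-coproduct of $\alpha$\nobreakdash-compacts, plus the fact that a coproduct of $S_{\alpha}$\nobreakdash-epimorphisms is an $S_{\alpha}$\nobreakdash-epimorphism), and contains $\C T^{\alpha}$; then $\C T^{\beta}\subset\C S$ follows from the minimality of $\C T^{\beta}$ as a $\beta$\nobreakdash-localizing subcategory containing the $\alpha$\nobreakdash-compact generators, rather than from resolving each $\beta$\nobreakdash-compact object individually.
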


One can easily produce  cardinals like $\beta$ in (1), however the existence of cardinals as in (2) depends on large cardinal principles. Theorem \ref{Improved_ThmC} recovers Krause's \cite[Theorem
C]{Kr02} by taking $\beta=(\gamma^{<\delta})^{+}$ as in (1) for $\gamma=\card \C T^{\alpha}$ and $\delta=\alpha^{+}$, i.e.~$\beta=((\card \C T^{\alpha})^{\alpha})^{+}$. Notice that the smallest cardinal $\beta$ we can take as in (1) is $\beta=((\card \C T^{\alpha})^{<\alpha})^{+}$, which is smaller than Krause's choice.

\begin{lem}\label{lem_Thm_C}
Let $\beta$ be as in the statement of Theorem \ref{Improved_ThmC}.
Given a set $I$ of $\card I<\beta$ and objects $X_{i},Y$ in $\C T^\alpha$, $i\in I$, then $\card\C T(Y,\amalg_{i\in
I}X_i)<\beta$.
\end{lem}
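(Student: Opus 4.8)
The plan is to estimate the cardinality of $\C T(Y,\coprod_{i\in I}X_i)$ by first reducing to the case where $I$ is a single element, or more precisely where the coproduct is $\alpha$-small, and then bootstrapping. The key structural input is that $\C T$ is $\alpha$-compactly generated and $Y$ is $\alpha$-compact, so a morphism $Y\r\coprod_{i\in I}X_i$ ``factors through $\alpha$-small subcoproducts'' in a suitable sense. More precisely, I would use the standard fact that if $Y$ is $\alpha$-compact then every map $Y\r\coprod_{i\in I}X_i$ factors through $\coprod_{i\in J}X_i$ for some $J\subseteq I$ with $\card J<\alpha$ (this is essentially the definition of $\alpha$-compactness combined with the fact that $\coprod_{i\in I}X_i=\colim_{J}\coprod_{i\in J}X_i$ over the $\alpha$-filtered poset of $\alpha$-small subsets $J\subseteq I$, together with the fact that $\C T(Y,-)$ commutes with such colimits). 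Hence
$$
\C T(Y,\coprod_{i\in I}X_i)=\bigcup_{\substack{J\subseteq I\\ \card J<\alpha}}\im\bigl(\C T(Y,\textstyle\coprod_{i\in J}X_i)\r\C T(Y,\coprod_{i\in I}X_i)\bigr),
$$
so it suffices to bound the number of $\alpha$-small subsets $J$ and, for each, the cardinality of $\C T(Y,\coprod_{i\in J}X_i)$.

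First I would handle the count of index sets: the number of subsets $J\subseteq I$ with $\card J<\alpha$ is at most $(\card I)^{<\alpha}\le \beta^{<\alpha}$, which under hypothesis (1) with $\delta\ge\alpha$ is $\le(\gamma^{<\delta})^{<\alpha}\le\gamma^{<\delta}<\beta$ (using $\card I<\beta=(\gamma^{<\delta})^{+}$, hence $\card I\le\gamma^{<\delta}$), and under hypothesis (2) is $<\beta$ since $\beta$ is inaccessible and $\card I<\beta$, $\alpha\le\card\C T^{\alpha}<\beta$. Second, for a fixed $\alpha$-small $J$, the object $\coprod_{i\in J}X_i$ lies in $\C T^{\alpha}$ since $\C T^{\alpha}$ is closed under coproducts of $<\alpha$ objects; therefore $\C T(Y,\coprod_{i\in J}X_i)$ is a hom-set between two objects of $\C T^{\alpha}$, so its cardinality is at most $\card\C T^{\alpha}\le\gamma<\beta$ in case (1) and $<\beta$ in case (2). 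Combining, $\C T(Y,\coprod_{i\in I}X_i)$ is a union of fewer than $\beta$ sets each of cardinality $<\beta$, hence has cardinality $<\beta$ because $\beta$ is regular (successor cardinals are regular, inaccessible cardinals are regular by definition), which finishes the proof.

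The main obstacle I anticipate is pinning down the factorization claim cleanly: one must be careful that $\C T(Y,-)$ genuinely commutes with the $\alpha$-filtered colimit $\coprod_{i\in I}X_i=\colim_{\card J<\alpha}\coprod_{i\in J}X_i$ in $\C T$. This is where $\alpha$-compactness of $Y$ is used in its full strength, and it requires knowing that this colimit is a homotopy colimit computed compatibly with the triangulated structure (so that $\C T(Y,-)$ preserves it). Since $\C T$ is well generated and $Y\in\C T^{\alpha}$, this is available from the general theory of $\alpha$-compact objects in \cite{triang}, in particular from the characterization of $\alpha$-compactness in terms of preservation of $\alpha$-coproducts and $\alpha$-filtered homotopy colimits; I would cite the relevant statement rather than reprove it. The arithmetic with $\gamma^{<\delta}$ versus $\beta^{<\alpha}$ is routine once $\delta\ge\alpha$ is invoked, but I would state it carefully to make sure the bound $\beta^{<\alpha}<\beta$ (resp.\ the number of $\alpha$-small subsets of $I$ is $<\beta$) is correctly derived under each of the two hypotheses.
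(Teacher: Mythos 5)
Your proof is correct and takes essentially the same route as the paper's: both use $\alpha$-smallness of $Y$ to reduce $\C T(Y,\coprod_{i\in I}X_i)$ to contributions from subcoproducts $\coprod_{i\in J}X_i$ with $\card J<\alpha$ (which lie in $\C T^{\alpha}$), bound the result by $(\card I)^{<\alpha}\cdot\card \C T^{\alpha}$, and verify this is $<\beta$ under each hypothesis, the successor case via $(\gamma^{<\delta})^{<\alpha}=\gamma^{<\delta}$ (the paper cites \cite[Lemma 2.10]{AR94}) and the inaccessible case via the strong limit property. The only remark worth making is that the factorization step you flag as the main obstacle needs no homotopy-colimit machinery: you only use surjectivity of the comparison map from the union over $\alpha$-small $J$, and that is precisely the definition of $Y$ being $\alpha$-small, which is how the paper invokes it.
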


\begin{proof}
Notice that $\beta>\card \C T^{\alpha}$ in both cases.
Since $Y$ is $\alpha$\nobreakdash-small,
$$\C T(Y,\coprod_{i\in
I}X_i)=\colim_{\begin{array}{c}\\[-15pt]
\scriptstyle J\subset I\\[-4pt]
\scriptstyle \card J<\alpha
\end{array}}
\C T^{\alpha}(Y,\coprod_{i\in
I}X_i).$$
The cardinal of this set is bounded above by $(\card I)^{<\alpha}\cdot\card \C T^{\alpha}$, so it is enough to check that $(\card I)^{<\alpha}<\beta$. If $\beta$ satisfies condition (2), the result follows from the strong limit property. Otherwise,
$(\card I)^{<\alpha}\leq (\gamma^{<\delta})^{<\alpha}=\gamma^{<\delta}<\beta$ by \cite[Lemma 2.10]{AR94}. \end{proof}

The following lemma is obvious.

\begin{lem}\label{lem_Thm_C0}
Let $\mathcal S$ be a class of objects in $\C T$ closed under (de)suspensions, $\Sigma \mathcal S=\mathcal S$, and $\beta$ an infinite cardinal. The full subcategory of objects $Z$ such that $\C T(Y,Z)<\beta$ for all $Y\in \mathcal S$ is triangulated.
\end{lem}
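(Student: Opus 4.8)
The plan is to show that the full subcategory $\C U\subseteq\C T$ consisting of the objects $Z$ with $\card\C T(Y,Z)<\beta$ for every $Y\in S$ is closed under the operations that make a full subcategory triangulated: it contains $0$, it is stable under $\Sigma$ and $\Sigma^{-1}$, and it is stable under taking cones (equivalently, if two of the three vertices of an exact triangle lie in $\C U$ then so does the third). Closure under isomorphism is automatic, since the defining condition only depends on $\C T(Y,-)$ and we are taking a full subcategory.

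First, $\C T(Y,0)=0$ has cardinal $1<\beta$, so $0\in\C U$. For the (de)suspensions, I would use that $\Sigma$ is an autoequivalence, so for $n\in\mathbb Z$ there is a bijection $\C T(Y,\Sigma^{n}Z)\cong\C T(\Sigma^{-n}Y,Z)$; since $\Sigma S=S$ we have $\Sigma^{-n}Y\in S$, and therefore $\card\C T(Y,\Sigma^{n}Z)<\beta$ whenever $Z\in\C U$. Hence $\Sigma^{n}Z\in\C U$ for all $n\in\mathbb Z$.

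The only mildly substantive point is stability under cones. Let $X\To Y\To Z\To\Sigma X$ be an exact triangle with $X,Y\in\C U$, and let $W\in S$. Applying $\C T(W,-)$ produces an exact sequence of abelian groups
$$\C T(W,Y)\To\C T(W,Z)\st{h}\To\C T(W,\Sigma X).$$
Here $\ker h$ is a quotient of $\C T(W,Y)$, hence of cardinal $<\beta$, and $\im h$ is a subgroup of $\C T(W,\Sigma X)\cong\C T(\Sigma^{-1}W,X)$, which has cardinal $<\beta$ because $\Sigma^{-1}W\in S$ and $X\in\C U$. Choosing coset representatives for $\ker h$ in $\C T(W,Z)$ gives a bijection $\C T(W,Z)\cong\ker h\times\im h$, so $\card\C T(W,Z)<\beta$, since a product of two cardinals each smaller than the infinite cardinal $\beta$ is again smaller than $\beta$ (using $\mu\cdot\mu=\mu$ for infinite $\mu$). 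Thus $Z\in\C U$, and rotating the triangle gives the remaining instances of the two-out-of-three property.

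I do not anticipate any genuine obstacle — this is why the lemma is labelled obvious. The one place to stay slightly careful is the elementary cardinal arithmetic: the long exact sequence only lets one control $\C T(W,Z)$ by an \emph{extension} of two small Hom-sets, and this stays below $\beta$ precisely because $\beta$ is infinite.
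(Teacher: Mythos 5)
Your proof is correct, and it is exactly the argument the paper has in mind: the paper states this lemma without proof (``obvious''), and the expected verification is precisely yours — closure under $0$ and (de)suspension via $\Sigma S=S$, and closure under cones via the long exact sequence, where $\C T(W,Z)$ is an extension of a subgroup of $\C T(\Sigma^{-1}W,X)$ by a quotient of $\C T(W,Y)$ and hence has cardinality $<\beta$ because $\beta$ is infinite.
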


We are now ready to prove Theorem \ref{Improved_ThmC}.

\begin{proof}[Proof of Theorem \ref{Improved_ThmC}]
Denote by $\C S$ the full subcategory of $\C T$ spanned by the objects $Z$ such that $\C T(Y,Z)<\beta$ for any $Y$ in $\C T^{\alpha}$. This subcategory is triangulated by Lemma \ref{lem_Thm_C0}. We claim that $Z$ is in $\C S$ if and only if there is an morphism $g_{0}\colon P_{0}=\amalg_{i\in I} X_{i}\r Z$ with $X_{i}$ in $\C T^{\alpha}$ and $\card I<\beta$, such that $S_{\alpha}(g_{0})$ is an epimorphism. If such a morphism exists, then for any $Y$ in $\C T^{\alpha}$, $\card \C T(Y,Z)\leq\card \C T(Y,\amalg_{i\in I}X_{i})<\beta$ by Lemma \ref{lem_Thm_C}, so $Z$ is in $\C S$. Conversely, if $Z$ is in $\C S$, consider the evaluation morphism $$g \colon P =\hspace{-10pt}\coprod_{\begin{array}{c}\\[-15pt]
\scriptstyle Y\in\mathcal S\\[-4pt]
\scriptstyle \C T(Y,Z)
\end{array}}\hspace{-10pt}Y\To Z,$$
where $\mathcal  S$ is a set of representatives of isomorphism classes of objects in $\C T^\alpha$.
The coproduct is indexed by a set of cardinality $\leq \sum_{Y\in \mathcal S}\card \C T(Y,Z)<\beta$, since $\card \C T^{\alpha}<\beta$ and $\beta$ is regular, and $S_{\alpha}(g )$ is clearly an epimorphism.

We now prove that $\C S=\C T^{\alpha}$. Given an object $Z$ in $\C S$, we can construct, as in Remark \ref{rem-ar}, an Adams resolution $(Z,W_{*},P_{*})$ where each $P_{n}$ is a direct sum of $<\beta$ objects in $\C T^{\alpha}\subset \C T^{\beta}$, so each $P_{n}$ is in $\C T^{\beta}$. Let $(Z,Z_{*},P_{*})$ be an associated Postnikov resolution, as in Lemma \ref{postnikovaadams}. It can be seen by induction that each $Z_{n}$ is in $\C T^{\beta}$ since we have exact triangles $P_n\to Z_{n-1}\to Z_n\to\Sigma P_n$. Hence $Z=\Hocolim_{n}Z_{n}$ is also in $\C T^{\beta}$ because, since $\beta>\aleph_{0}$, $\C T^{\beta}$ has countable coproducts. This proves $\C S\subset \C T^{\beta}$.

Since $\C T^{\beta}$ is the smallest $\beta$\nobreakdash-localizing subcategory containing a set of $\alpha$\nobreakdash-compact generators, in order to show $\C T^{\beta}\subset \C S$ it is enough to see that $\C S$ is $\beta$\nobreakdash-localizing, i.e.~closed under coproduct of $<\beta$ objects. Let $\{Z_{j}\}_{j\in J}$ be a set of objects in $\C S$ with $\card J<\beta$. By the first part of the proof there are morphisms $g_{j}\colon P_{j}\r Z_{j}$ such that $P_{j}$ is a coproduct of $<\beta$ objects in $\C T^{\alpha}$ and $S_{\alpha}(g_{j})$ is an epimorphism for all $i\in J$. Hence, the source of $\amalg_{j\in J} g_{j}\colon \amalg_{j\in J}P_{j}\r \amalg_{j\in J}Z_{j}$ is also a coproduct of $<\beta$ objects in $\C T^{\alpha}$. Moreover, $S_{\alpha}(g_{j})$ is an epimorphism by \cite[Theorem A]{Kr01}, therefore $\amalg_{j\in J}Z_{j}$ is in $\C S$.
\end{proof}

\begin{prop}\label{bound_to_card_T^beta}
Let $\C T$ be an $\alpha$\nobreakdash-compactly generated triangulated category and $\kappa>\alpha$ be regular cardinal such that
 either $\kappa$ is strongly inaccessible or $2^\lambda=\lambda^+$ for every
$\lambda<\kappa$. If $\card \C T^\alpha\leq \kappa$,
 then $\card \C T^\kappa=\kappa$.
\end{prop}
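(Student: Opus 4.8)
The plan is to establish the two inequalities separately; the lower bound is cheap and the upper bound is the real work.

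The lower bound $\card\C T^{\kappa}\geq\kappa$ is immediate from Lemma \ref{cotainferior}, since $\C T^{\kappa}$ is a non-trivial additive category (it contains $\C T^{\alpha}$) closed under coproducts of $<\kappa$ objects. The arithmetic fact I would isolate before attacking the upper bound is that, under either hypothesis on $\kappa$, one has $\mu^{\lambda}\leq\kappa$ for all $\mu\leq\kappa$ and all $\lambda<\kappa$, and hence $\kappa^{\lambda}=\kappa$ and $\sum_{\lambda<\kappa}\kappa^{\lambda}=\kappa$. Indeed $\kappa$ is regular, so any map $\lambda\to\kappa$ with $\lambda<\kappa$ has bounded image and $\mu^{\lambda}\leq\kappa\cdot\sup_{\nu<\kappa}\nu^{\lambda}$, with $\nu^{\lambda}<\kappa$ when $\kappa$ is strongly inaccessible (strong limit property) and $\nu^{\lambda}\leq 2^{\nu\cdot\lambda}=(\nu\cdot\lambda)^{+}\leq\kappa$ when $2^{\xi}=\xi^{+}$ for all $\xi<\kappa$; since $\aleph_{0}<\alpha<\kappa$ this also covers $\lambda=\aleph_{0}$. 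Throughout I would assume $\card\C T^{\alpha}<\kappa$, the case of equality being a routine variant.

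Next I would apply Theorem \ref{Improved_ThmC} with $\beta=\kappa$: the hypotheses on $\kappa$ guarantee it is admissible there — a strongly inaccessible $\kappa$ satisfies condition (2), while if $2^{\lambda}=\lambda^{+}$ below $\kappa$ then $\kappa$ is either a strongly inaccessible regular limit cardinal (again condition (2)) or a successor of the form required by condition (1). Consequently $\C T^{\kappa}$ is exactly the full subcategory of objects $Z$ with $\card\C T(Y,Z)<\kappa$ for every $Y$ in $\C T^{\alpha}$; in particular all morphism sets from $\alpha$-compact objects into $\C T^{\kappa}$ have cardinality $<\kappa$. Now fix $Z$ in $\C T^{\kappa}$. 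Since $S_{\alpha}(Z)(Y)=\C T(Y,Z)$ has cardinality $<\kappa$ for each $Y$ in $\C T^{\alpha}$ and there are $<\kappa$ isomorphism classes of such $Y$, the module $S_{\alpha}(Z)$ is generated by $<\kappa$ elements, so there is an epimorphism onto it from $S_{\alpha}(P_{0})$ with $P_{0}=\coprod_{i\in I_{0}}X_{0,i}$, the $X_{0,i}$ in $\C T^{\alpha}$ and $\card I_{0}<\kappa$. Iterating as in Remark \ref{rem-ar}, and observing that each $W_{n}$ arising in the construction again lies in $\C T^{\kappa}$ so the same bound applies, one obtains an Adams resolution $(Z,W_{*},P_{*})$ in which every $P_{n}$ is a coproduct of $<\kappa$ objects of $\C T^{\alpha}$. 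Passing to an associated Postnikov resolution (Lemma \ref{postnikovaadams}) and forming its homotopy colimit (Proposition \ref{hocolimpostres}) yields $Z\cong\Hocolim_{n}X_{n}$, where $X_{n}$ is a mapping cone of a morphism $f_{n}\colon P_{n}\to X_{n-1}$ and $X_{-1}=0$. Thus $Z$ is determined up to isomorphism by the sequence $(P_{n},f_{n})_{n\geq 0}$: at each stage the isomorphism type of $P_{n}$ lies in a set of size $\leq\sum_{\lambda<\kappa}(\card\C T^{\alpha})^{\lambda}\leq\kappa$, and, once $P_{n}$ and $X_{n-1}$ are fixed, $f_{n}$ ranges over $\C T(P_{n},X_{n-1})=\prod_{i\in I_{n}}\C T(X_{n,i},X_{n-1})$, of cardinality $\leq\kappa^{\card I_{n}}=\kappa$ since each factor is $<\kappa$ and $\card I_{n}<\kappa$. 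Hence there are at most $\kappa^{\aleph_{0}}=\kappa$ such sequences, so $\C T^{\kappa}$ has at most $\kappa$ isomorphism classes of objects.

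Finally, for the morphism sets: given $X,Y$ in $\C T^{\kappa}$, write $X\cong\Hocolim_{n}X_{n}$ as above and apply $\C T(-,Y)$ to the homotopy-colimit triangle to get the exact sequence
\[
0\longrightarrow{\lim_{n}}^{1}\C T(\Sigma X_{n},Y)\longrightarrow\C T(X,Y)\longrightarrow{\lim_{n}}\C T(X_{n},Y)\longrightarrow 0 .
\]
Applying $\C T(-,Y)$ to the triangle $P_{n}\to X_{n-1}\to X_{n}\to\Sigma P_{n}$ gives $\card\C T(X_{n},Y)\leq\card\C T(\Sigma P_{n},Y)\cdot\card\C T(X_{n-1},Y)$, and $\card\C T(\Sigma P_{n},Y)=\prod_{i\in I_{n}}\card\C T(\Sigma X_{n,i},Y)\leq\kappa$ because each $\Sigma X_{n,i}$ is $\alpha$-compact, $Y$ is $\kappa$-compact, and $\card I_{n}<\kappa$; inducting on $n$ with $X_{-1}=0$ yields $\card\C T(X_{n},Y)\leq\kappa$ for all $n$, and likewise with $\Sigma X_{n}$ in place of $X_{n}$. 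Hence $\card\C T(X,Y)\leq\kappa^{\aleph_{0}}=\kappa$, and together with the bound on isomorphism classes this gives $\card\C T^{\kappa}\leq\kappa$, completing the argument. The two places where the work concentrates are exactly where the hypotheses on $\kappa$ are used: confirming that $\kappa$ is admissible in Theorem \ref{Improved_ThmC} — this simultaneously describes $\C T^{\kappa}$ and makes the Hom-sets out of $\C T^{\alpha}$ small — and shepherding the cardinal bookkeeping so that every intermediate estimate, each of the shape $\kappa^{\lambda}$ or $\sum_{\lambda<\kappa}\kappa^{\lambda}$ with $\lambda<\kappa$, collapses back to $\kappa$.
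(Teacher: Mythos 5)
The decisive step of your argument is the application of Theorem \ref{Improved_ThmC} with $\beta=\kappa$, and this is where the proof breaks: that choice of $\beta$ is not admissible in general, and the conclusion you extract from it is actually false under the proposition's hypotheses. Your case analysis misses the regular successors $\kappa=\mu^{+}$ with $\cf\mu<\alpha$, which do satisfy ``$2^{\lambda}=\lambda^{+}$ for all $\lambda<\kappa$'' but are neither inaccessible nor of the form $(\gamma^{<\delta})^{+}$ with $\delta\geq\alpha$ regular: by K\"onig's lemma $\gamma^{<\delta}$ always has cofinality $\geq\delta\geq\alpha$ (whether or not the supremum $\sup_{\lambda<\delta}\gamma^{\lambda}$ is attained), so it can never equal $\mu$ when $\cf\mu<\alpha$. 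Concretely, take $\C T=D(\mathbb Z)$, $\alpha=\aleph_{1}$, $\kappa=\aleph_{\omega+1}$ under GCH: the object $Z=\coprod_{\aleph_\omega}\mathbb Z$ (in degree $0$) is a coproduct of $<\kappa$ $\alpha$-compact objects, hence lies in $\C T^{\kappa}$, while $Y=\coprod_{\aleph_0}\mathbb Z$ is $\aleph_{1}$-compact by Lemma \ref{lem-alpha-compact} and $\card\C T(Y,Z)=\card\prod_{\aleph_0}\bigl(\bigoplus_{\aleph_\omega}\mathbb Z\bigr)=\aleph_{\omega}^{\aleph_{0}}=\aleph_{\omega+1}=\kappa$. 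So your claimed characterization of $\C T^{\kappa}$, and the assertion that Hom-sets out of $\C T^{\alpha}$ into $\kappa$-compact objects have size $<\kappa$, fail in a legitimate instance of the proposition. The boundary case $\card\C T^{\alpha}=\kappa$, which you set aside as routine, is also not reachable by your method, since then neither hypothesis of Theorem \ref{Improved_ThmC} can hold with $\beta=\kappa$.

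This is not a cosmetic issue, because the strict bound $<\kappa$ is exactly what you use to get an epimorphism $S_{\alpha}(P_{0})\onto S_{\alpha}(Z)$ with $P_{0}$ a coproduct of $<\kappa$ objects of $\C T^{\alpha}$, hence a countable Adams tower all of whose layers are $<\kappa$-coproducts; with layers merely of size $\leq\kappa$ your enumeration of isomorphism classes by sequences $(P_{n},f_{n})$ gives only $\kappa^{\kappa}$ and collapses. The paper sidesteps this: it applies Theorem \ref{Improved_ThmC} with $\beta=\kappa^{+}=(\kappa^{<\kappa})^{+}$ (condition (1) with $\gamma=\kappa$, $\delta=\kappa$, using $\kappa^{<\kappa}=\kappa$), which is available in every case and yields the weaker but sufficient bound $\card\C T(Y,Z)\leq\kappa$, and it then counts objects not via Adams towers but via Neeman's transfinite construction of $\C T^{\kappa}$ as a continuous union $\bigcup_{\mu<\kappa}S_{\mu}$, each stage adding coproducts of $<\kappa$ objects and mapping cones, with $\kappa^{<\kappa}=\kappa$ controlling the size at every step. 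Your cardinal arithmetic, the $\lim^{1}$ bound on Hom-sets, and the tower-counting scheme are fine in themselves, and your route does work when $\kappa$ is inaccessible with $\card\C T^{\alpha}<\kappa$ or when $\kappa=\mu^{+}$ with $\cf\mu\geq\alpha$; but as written it does not prove the proposition in all the cases it covers.
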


\begin{proof}
Our assumptions on $\kappa$ and \cite[Theorem 5.20]{Je03} show that $\kappa^{<\kappa}=\kappa$. Taking $\beta=\kappa^{+}$ in Theorem \ref{Improved_ThmC}, we deduce that the size of morphism sets in $\C T^{\kappa^{+}}$ is $<\kappa^{+}$, i.e.~$\leq \kappa$. Hence the same is true for $\C T^{\kappa}\subset\C T^{\kappa^{+}}$.

By the proof of \cite[Lemma 3.2.4 and Proposition
3.2.5]{triang}, the set of objects $S_{\kappa}$ of $\C T^\kappa$ can be constructed as a continuous increasing union $S_{\kappa}=\bigcup_{\mu<\kappa}S_{\mu}$ starting with the set $S_{0}$ of objects of $\C T^\alpha$. The set $S_{\mu+1}$ is defined from $S_{\mu}$ by adding coproducts of $<\kappa$ objects in $S_{\mu}$ and  mapping cones of all possible morphisms between such coproducts. Assume that $\card S_{\mu}\leq \kappa$.  Adding coproducts of $<\kappa$ objects increases the cardinal at most to $(\card S_{\mu})^{<\kappa}\leq \kappa^{<\kappa}=\kappa$. Adding mapping cones neither increases the cardinal of $S_{\mu}$ since the size of morphism sets in $\C T^{\kappa}$ is~$\leq \kappa$.
\end{proof}

\begin{cor}\label{saltito}
Let $\C T$ be an $\aleph_0$\nobreakdash-compactly generated triangulated category. Assuming the continuum hypothesis, if $\card\C{T}^{\aleph_0}\leq\aleph_1$, then $\card\C{T}^{\aleph_1}=\aleph_1$.
\end{cor}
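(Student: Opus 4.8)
The plan is to read Corollary~\ref{saltito} off as the instance $\kappa=\aleph_1$ (with $\alpha=\aleph_0$) of Proposition~\ref{bound_to_card_T^beta}, so the whole task reduces to checking that the hypotheses of that proposition hold here. First, $\aleph_1>\aleph_0$ is regular. Second, for the cardinal-arithmetic alternative: $\aleph_1$ is not strongly inaccessible, but under the continuum hypothesis $2^{\aleph_0}=\aleph_1=\aleph_0^{+}$, and $\aleph_0$ is the only infinite cardinal $<\aleph_1$, so ``$2^{\lambda}=\lambda^{+}$ for every $\lambda<\kappa$'' holds. Third, the standing hypothesis $\card\C T^{\aleph_0}\leq\aleph_1$ is exactly the input $\card\C T^{\alpha}\leq\kappa$. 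Hence Proposition~\ref{bound_to_card_T^beta} yields $\card\C T^{\aleph_1}=\aleph_1$; the lower bound $\card\C T^{\aleph_1}\geq\aleph_1$ is in any case automatic from Lemma~\ref{cotainferior}, since $\C T^{\aleph_1}$ is a nontrivial additive category with coproducts of $<\aleph_1$ objects.

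For completeness I would recall, in this specialized form, the mechanism behind Proposition~\ref{bound_to_card_T^beta}. The continuum hypothesis forces $\aleph_1^{\aleph_0}\leq(2^{\aleph_0})^{\aleph_0}=2^{\aleph_0}=\aleph_1$, hence $\aleph_1^{<\aleph_1}=\aleph_1$ (cf.~\cite[Theorem~5.20]{Je03}). Applying Theorem~\ref{Improved_ThmC} with $\beta=\aleph_2=(\aleph_1^{<\aleph_1})^{+}$ then bounds, as in the proof of Proposition~\ref{bound_to_card_T^beta}, the morphism sets in $\C T^{\aleph_2}$, and so those in $\C T^{\aleph_1}\subseteq\C T^{\aleph_2}$, by $\aleph_1$. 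One then builds the set of objects of $\C T^{\aleph_1}$ as a continuous increasing union $\bigcup_{\mu<\aleph_1}S_{\mu}$ starting from $S_{0}=\operatorname{Ob}\C T^{\aleph_0}$ (cf.~\cite[Lemma~3.2.4 and Proposition~3.2.5]{triang}), where $S_{\mu+1}$ arises from $S_{\mu}$ by adjoining coproducts of $<\aleph_1$ of its objects together with mapping cones of all morphisms between these. A transfinite induction keeps $\card S_{\mu}\leq\aleph_1$: adjoining coproducts multiplies the cardinality by at most $(\card S_{\mu})^{<\aleph_1}\leq\aleph_1^{<\aleph_1}=\aleph_1$, adjoining mapping cones costs at most the size of the morphism sets, which is $\leq\aleph_1$, and the union over the regular cardinal $\aleph_1$ again has cardinality $\leq\aleph_1$.

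The only genuinely substantive point is the cardinal arithmetic: the equality $\aleph_1^{\aleph_0}=\aleph_1$ is precisely where the continuum hypothesis is used, and without it $2^{\aleph_0}$ — hence $\card\C T^{\aleph_1}$ for categories such as $D(\mathbb Z)$ — need not be $\aleph_1$; everything else is bookkeeping with the explicit construction of $\C T^{\aleph_1}$ and Theorem~\ref{Improved_ThmC}. Accordingly I expect the actual proof to be a one- or two-line deduction from Proposition~\ref{bound_to_card_T^beta}, preceded only by the remark that $\aleph_1$ satisfies its arithmetic hypothesis under the continuum hypothesis.
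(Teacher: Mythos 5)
Your proposal is correct and is exactly the paper's (implicit) proof: Corollary \ref{saltito} is stated as the immediate instance of Proposition \ref{bound_to_card_T^beta} with $\alpha=\aleph_0$, $\kappa=\aleph_1$, the continuum hypothesis supplying $2^{\aleph_0}=\aleph_1=\aleph_0^{+}$ (so the alternative ``$2^\lambda=\lambda^+$ for all $\lambda<\kappa$'' holds) and $\card\C T^{\aleph_0}\leq\aleph_1$ being the input. Your recapitulation of the mechanism — $\aleph_1^{<\aleph_1}=\aleph_1$ under CH, Theorem \ref{Improved_ThmC} with $\beta=\aleph_2$ to bound morphism sets, and Neeman's transfinite construction of the objects of $\C T^{\aleph_1}$ — reproduces the paper's proof of that proposition, so nothing further is needed.
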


\providecommand{\bysame}{\leavevmode\hbox to3em{\hrulefill}\thinspace}
\providecommand{\MRhref}[2]{
  \href{http://www.ams.org/mathscinet-getitem?mr=#1}{#2}
}
\providecommand{\href}[2]{#2}

\end{document}